\theoremstyle{definition}
\newtheorem{thm}{Theorem}
\newtheorem{cor}[thm]{Corollary}
\newtheorem{prop}[thm]{Proposition}
\newtheorem{defn}[thm]{Definition}
\newtheorem{Q}[thm]{Question}
\newtheorem{rmk}[thm]{Remark}
\journal{Annals of Pure and Applied Logic}
\begin{document}

\begin{frontmatter}




\title{Comparing Peano Arithmetic, Basic Law~V, and Hume's Principle}


\author{Sean Walsh}

\address{\href{http://www.bbk.ac.uk/philosophy/}{Department of Philosophy}, Birkbeck, University of London \\ Mailing Address: Department of Philosophy, Birkbeck College, Malet Street, London WC1E 7HX, UK \\ Email: \href{mailto:swalsh108@gmail.com}{swalsh108@gmail.com} or \href{mailto:s.walsh@bbk.ac.uk}{s.walsh@bbk.ac.uk} \\ Website: \href{http://www.swalsh108.org}{http://www.swalsh108.org}}

\begin{abstract}
This paper presents new constructions of models of Hume's Principle and Basic Law~V with restricted amounts of comprehension. The techniques used in these constructions are drawn from hyperarithmetic theory and the model theory of fields, and formalizing these techniques within various subsystems of second-order Peano arithmetic allows one to put upper and lower bounds on the interpretability strength of these theories and hence to compare these theories to the canonical subsystems of second-order arithmetic. The main results of this paper are: (i)~there is a consistent extension of the hypearithmetic fragment of Basic Law V which interprets the hyperarithmetic fragment of second-order Peano arithmetic (cf. Corollary~\ref{HPPA:thm:main:cor} and Figure~\ref{HPPA:figure2}), and (ii)~the hyperarithmetic fragment of Hume's Principle does not interpret the hyperarithmetic fragment of second-order Peano arithmetic (cf. Corollary~\ref{HPPA:cor:thebomb} and Figure~\ref{HPPA:figure2}), so that in this specific sense there is no predicative version of Frege's Theorem.
\end{abstract}

\begin{keyword}
Second-order arithmetic \sep Basic Law~V \sep Hume's Principle \sep hyperarithmetic \sep recursively saturated \sep interpretability


\MSC 03F35 \sep 03D65 \sep 03C60 \sep 03F25


\end{keyword}

\end{frontmatter}


\newpage

\tableofcontents

\newpage

\section{Introduction, Definitions, and Overview of Main Results}


\subsection{Introduction}

Second-order Peano arithmetic and its subsystems have been studied for many decades by mathematical logicians  (cf. \cite{Simpson2009aa}), and the resulting theory continues to be the subject of current research and a source of open problems. More recently, philosophers of mathematics have begun to study systems closely related to  second-order Peano arithmetic (cf. \cite{Burgess2005}). One of these systems, namely, Hume's~Principle, constitutes an axiomatization of~cardinality which is similar to the notion of cardinality defined in Zermelo-Frankel set theory. The contemporary philosophical interest in this principle stems from Crispin Wright's suggestion that it can serve as the centerpiece of a revitalized version of Frege's logicism (cf. \cite{Wright1983}, \cite{Wright1999}, \cite{MacBride2003aa}). Frege himself focused his logicism around a principle called Basic Law~V, which in effect codified an alternative conception of set. While Russell's paradox shows that Basic Law~V is inconsistent with the unrestricted comprehension schema (cf. Proposition~\ref{HPPA:form:thebadformtheprop}), this principle has garnered renewed attention due to Ferreira and Wehmeier's recent proof that it is consistent with the hyperarithmetic comprehension schema (\cite{Ferreira2002}, cf. \cite{Wehmeier1999,Wehmeier2004aa} and Remark~\ref{hppa:rmk:fw}).

The goal of this paper is to apply methods from the subsystems of second-order Peano arithmetic to the subsystems of Basic Law~V and Hume's~Principle. In particular, we use methods from hyperarithmetic theory to build models of subsystems of Basic Law~V (\S~\ref{HPPA:secdfadsfsadfkkk}), and we use recursively saturated models and ideas from the model theory of fields to build models of subsystems of Hume's~Principle and Basic~Law~V (\S~\ref{HPPA:bssssssss}). Our primary application of these new constructions is to compare the interpretability strength of the subsystems of second-order Peano arithmetic to the subsystems of Basic~Law~V and Hume's~Principle. For, one of the few known ways to show that one theory is of {\it strictly} greater interpretability strength than another theory  is to show that the first proves the consistency of the second (cf. Proposition~\ref{HPPA:conpropa}).  Hence, by formalizing our constructions, we can compare the interpretability strength of subsystems of Hume's~Principle and Basic~Law~V to subsystems of Peano arithmetic. Our main results about interpretability are summarized in \S~\ref{HPPA:mainresultsasas} and on Figure~\ref{HPPA:figure2}. Prior to summarizing these results, we first present formal definitions of the theories and subsystems of Hume's Principle and Basic Law~V (\S\S~\ref{HPPA:sec:342314213}-\ref{HPPA:sec:342314213ddd}) and then describe what is and is not known about the provability relation among these subsystems (\S~\ref{HPPA:sec:342314213ddd} and Figure~\ref{HPPA:figure1}).

\subsection{Definition of the Signatures and Theories of ${\tt PA}^{2}$, ${\tt BL}^{2}$ and ${\tt HP}^{2}$}\label{HPPA:sec:342314213}

The signature of ${\tt PA}^{2}$ is a many-sorted signature, with sorts for numbers as well as a sort for sets of numbers. The theory  ${\tt PA}^{2}$ is a natural set of axioms for the following many-sorted structure in this signature:
\begin{equation}\label{HPPA:eqn:standardmodel}
(\omega, 0,s,+,\times, \leq, P(\omega))
\end{equation}
This structure satisfies the eight-axioms of Robinson's {\tt Q}
\begin{tabbing}
(Q1) $sx\neq 0$ \;\;\;\;\;\;\;\;\;\;\;\;\;\;\;\;\;\;\;\;   \=  (Q2) $sx=sy\rightarrow x=y$  \;\;\;\;\;\;\;\;\;\;\;\;\;\;\;\;\;\;\;\;    \= (Q3) $x\neq 0\rightarrow \exists \; w \; x=sw$   \\
(Q4) $x+0=x$    \>  (Q5) $x+sy=s(x+y)$  \>   (Q6) $x\cdot 0 =0$ \\
  (Q7) $x\cdot sy=x\cdot y +x$  \> (Q8) $x\leq y \leftrightarrow \exists \; z \; x+z=y$  \>  
\end{tabbing}
and the mathematical induction axiom 
\begin{equation}\label{HPPA:eqn:MIaxiom}
\forall \; F \; [F(0) \; \& \; \forall \; n \; F(n)\rightarrow F(s(n)) ] \rightarrow [\forall \; n \; F(n)]
\end{equation}
and each instance of the comprehension schema  (where $F$ does not occur free in $\varphi$)
\begin{equation}\label{HPPA:eqn:pacompsche}
\exists \; F \; \forall \; n \; [F(n) \leftrightarrow \varphi(n)]
\end{equation}
Here, the formula $\varphi$ is allowed to contain free object~variables (in addition to~$n$) and free set variables (with the exception of $F$). Hence, what an instance of this comprehension schema says is that if $\varphi(n)$ is a formula with parameters, then there is a set $F$ corresponding to it. This all in place, we are now in a position to define:

\begin{defn}\label{HPPA:defn:the:PA2}
The theory ${\tt PA}^{2}$ or ${\tt CA}^{2}$ or {\it second-order Peano arithmetic} consists of Q1-Q8, the mathematical induction axiom~(\ref{HPPA:eqn:MIaxiom}), and each instance of the comprehension schema~(\ref{HPPA:eqn:pacompsche}) (cf. \cite{Simpson2009aa} p.~4). \index{Second-order Peano Arithmetic,  ${\tt PA}^{2}$ or ${\tt CA}^{2}$, Definition~\ref{HPPA:defn:the:PA2}}
\end{defn}

\noindent The name ${\tt CA}^{2}$ is also given to ${\tt PA}^{2}$ because it reminds us of comprehension.

The signature of  ${\tt HP}^{2}$ and ${\tt BL}^{2}$ is likewise a many-sorted signature, with sorts for objects as well as sorts for $n$-ary relations on objects, and with an additional function symbol from the unary relation sort to the object sort. The unary relations are written as $A,B,C,F,G,H,X,Y,Z$ and will be called ${\it sets}$, and the $n$-ary relation symbols for $n>1$ are written as $f,g,h,P,Q,R,S$ and will be called {\it relations}. Occasionally when we want to say something about both sets and relations, we will talk about all $n$-ary relations for $n\geq 1$. The additional function symbol is denoted by $\#$ in the case of ${\tt HP}^{2}$ and by  $\partial$ in the case of ${\tt BL}^{2}$. So the signatures of ${\tt HP}^{2}$ and ${\tt BL}^{2}$ are exactly the same: it is merely for the sake of convenience and clarity that we use  $\#$ in the context of ${\tt HP}^{2}$ and $\partial$ in the context of ${\tt BL}^{2}$. Hence, structures in this signature have the form
\begin{equation}\label{HPPA:eqn:standardmodel2}
(M, S_{1}, S_{2}, \ldots, \#)
\end{equation}
where $M$ is a set, $S_{n}\subseteq P(M^{n})$ and $\#:S_{1}\rightarrow M$. Note that the function $\#$ only goes from $S_{1}$ to $M$, so that the relations from $S_{n}$ for $n>1$ are not in the domain of this function.

It is worth pausing for a moment to dwell on a technical point. Formally, the signature of ${\tt PA}^{2}$ also contains a binary relation symbol~$E$~which holds between an object and a set and which, in the standard model from (\ref{HPPA:eqn:standardmodel}), is interpreted by the~$\in$~relation from the ambient set-theory. In structures where this holds, let us say that the symbol~$E$ is interpreted {\it absolutely}. It is easy to see that every structure in the signature of ${\tt PA}^{2}$ is isomorphic to a structure that interprets this symbol absolutely, and it is for this reason that this symbol is typically suppressed when describing structures. Likewise, formally the signature of ${\tt HP}^{2}$ and ${\tt BL}^{2}$ contains $(n+1)$-ary relation symbols $E_{n}$, which hold between $n$-tuples of objects and $n$-ary relations. Further, there is an obvious generalization of the notion of absoluteness for structures in this signature, such that~the structure from~(\ref{HPPA:eqn:standardmodel2}) interprets~$E_{n}$ absolutely, and such that every structure in this signature is isomorphic to a structure which interprets~$E_{n}$ absolutely. Hence, as in the case of second-order Peano arithmetic, in what follows, these symbols will be suppressed when describing structures, and it will be assumed that every structure in this signature has the form of~(\ref{HPPA:eqn:standardmodel2}).

Hume's~Principle and Basic Law~V can now be defined. {\it Hume's~Principle} is the following axiom in the signature of structure~(\ref{HPPA:eqn:standardmodel2}):\index{Hume's Principle, the sentence, equation~(\ref{HPPA:eqn:introooooo:hp})}
\begin{equation}\label{HPPA:eqn:introooooo:hp}
\#X =\#Y \Longleftrightarrow \exists \; \mbox{ bijection } f:X\rightarrow Y
\end{equation}
Here, the notion of bijectivity is defined in terms of functionality, injectivity, and surjectivity in the obvious manner. The axiom {\it Basic~Law~V} is the following sentence in this signature:\index{Basic Law~V, the sentence, equation~(\ref{HPPA:eqn:introooooo:bl5})}
\begin{equation}\label{HPPA:eqn:introooooo:bl5}
\partial X =\partial Y \Longleftrightarrow X=Y
\end{equation}
Here, two sets are said to be equal if they are coextensive; formally, the equality of coextensive sets can be taken to be an axiom of all the theories considered in this paper. The important thing to note here is that $(M, S_{1}, S_{2}, \ldots, \partial)$ is a model of Basic Law~V if and only if the function $\partial:S_{1}\rightarrow M$ is an injection. That is, Basic Law~V mandates that a very simple relation holds between $S_{1}$ and $M$. There is no analogue of this in the case of Hume's~Principle, since the right-hand side of~(\ref{HPPA:eqn:introooooo:hp}) contains a higher-order quantifier.

Nevertheless, there are many natural models of Hume's~Principle, and examining these models is the easiest way to define the theories  ${\tt HP}^{2}$ and ${\tt BL}^{2}$. In particular, if $\alpha$ is an ordinal which is not a cardinal, and if  $\#$ is interpreted as cardinality, then the following structure is a model of Hume's~Principle:
\begin{equation}\label{HPPA:eqn:sm3}
(\alpha, P(\alpha), P(\alpha^{2}), \ldots, \#)
\end{equation}
Restricting attention to ordinals $\alpha$ that are not cardinals serves the purpose of ensuring that $\#(\alpha)<\alpha$, so that $\mathrm{dom}(\#_{\alpha})$ is $P(\alpha)$ and so that $\mathrm{rng}(\#_{\alpha})$ is a subset of $\alpha$. For all $n$-ary relation variables $R$ and all $n\geq 1$, this structure also satisfies each instance of the following comprehension schema (where $R$ does not occur free in $\varphi(\overline{z})$) 
\begin{equation}\label{HPPA:introoo:comp333}
\exists \; R \; \forall \; \overline{n} \; [\overline{n}\in R \leftrightarrow \varphi(\overline{n})]
\end{equation}
This comprehension schema is simply the generalization of the comprehension schema from ${\tt PA}^{2}$, namely (\ref{HPPA:eqn:pacompsche}), to the $n$-ary relations for all $n\geq 1$. Here, as with  (\ref{HPPA:eqn:pacompsche}), the formula $\varphi$ is allowed to include free object variables (in addition to $\overline{n}$) and free relation variables of any arity $m\geq 1$ (with the exception of $R$). Hence, we can now define the following theories:
\begin{defn}\label{HPPA:defn:the:HP2}
The theory ${\tt HP}^{2}$ is the theory that is given by Hume's~Principle (\ref{HPPA:eqn:introooooo:hp}) and the comprehension schema (\ref{HPPA:introoo:comp333}). \index{Hume's Principle, the theory, ${\tt HP}^{2}$, Definition~\ref{HPPA:defn:the:HP2}}
\end{defn}

\begin{defn}\label{HPPA:defn:the:BL2}
The theory ${\tt BL}^{2}$ is the theory which is given by Basic Law~V (\ref{HPPA:eqn:introooooo:bl5}) and the comprehension schema (\ref{HPPA:introoo:comp333}).\index{Basic Law~V, the theory, ${\tt BL}^{2}$, Definition~\ref{HPPA:defn:the:BL2}}
\end{defn}

The primary focus of this paper is on subsystems of ${\tt HP}^{2}$ and ${\tt BL}^{2}$ that are generated by restrictions on the complexity of the formulas appearing in the comprehension schema~(\ref{HPPA:introoo:comp333}). This is due to the fact that we seek to compare the interpretability strength of these subsystems to those of second-order Peano arithmetic. However, unlike in the case of ${\tt PA}^{2}$ and ${\tt HP}^{2}$, attention {\it must} be restricted to these subsystems in the case of ${\tt BL}^{2}$. For, it is not difficult to see that Russell's paradox shows that ${\tt BL}^{2}$ is inconsistent:

\begin{prop}\label{HPPA:form:thebadformtheprop}
${\tt BL}^{2}$ is inconsistent.
\end{prop}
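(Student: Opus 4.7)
The plan is to reproduce Russell's paradox in this signature, using Basic Law~V (the injectivity of $\partial : S_{1} \to M$) together with full comprehension~(\ref{HPPA:introoo:comp333}) to form a Russell-style set and then derive a contradiction from considering whether its $\partial$-image belongs to it.

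First I would apply the comprehension schema~(\ref{HPPA:introoo:comp333}) to the formula
\[
\varphi(n) \;\equiv\; \exists \; X \; \bigl(n = \partial X \;\wedge\; \neg \, X(n)\bigr),
\]
which is a formula of the signature in which the set variable $R$ does not occur free. This yields a set $R$ with the property that for every object $n$, $R(n) \leftrightarrow \exists \; X \; (n = \partial X \wedge \neg X(n))$. Since $R \in S_{1}$, we may form the object $r := \partial R \in M$.

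Next I would split on whether $R(r)$ holds. If $R(r)$ holds, then by the defining property of $R$ there is some $X$ with $r = \partial X$ and $\neg X(r)$; but then $\partial R = \partial X$, so Basic Law~V~(\ref{HPPA:eqn:introooooo:bl5}) gives $R = X$ (recall that coextensive sets are identified), whence $\neg R(r)$, a contradiction. Conversely, if $\neg R(r)$ holds, then $X := R$ itself witnesses the existential in $\varphi(r)$ (since $r = \partial R$ and $\neg R(r)$), so $R(r)$ holds, again a contradiction.

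There is no real obstacle here: the only subtle point is that Basic Law~V is being used in the form that $\partial$ is an injection modulo the identification of coextensive sets, which is exactly what is asserted immediately after equation~(\ref{HPPA:eqn:introooooo:bl5}). Both directions of the dichotomy then yield an outright contradiction in ${\tt BL}^{2}$, establishing the proposition.
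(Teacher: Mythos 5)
Your proposal is correct and follows essentially the same route as the paper: apply comprehension to the Russell formula $\exists X\,(n=\partial X \wedge n\notin X)$, form $\partial R$, and derive a contradiction in both cases using the injectivity of $\partial$ given by Basic Law~V. Your handling of the second case (using $R$ itself as the witness) is in fact slightly cleaner than the paper's wording of that case.
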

\begin{proof}
By applying the comprehension schema~(\ref{HPPA:introoo:comp333}) to the formula
\begin{equation}\label{HPPA:form:thebadform}
\varphi(x)\equiv \exists \; Y \; \partial(Y)=x \; \& \; x\notin Y
\end{equation}
it follows that ${\tt BL}^{2}$ proves that there is set~$X$ that satisfies
\begin{equation}\label{HPPA:form:thebadfor3}
\forall \; x \;[x\in X\Longleftrightarrow (\exists \; Y \; \partial(Y)=x \; \& \; x\notin Y)]
\end{equation}
There are then two cases: either $\partial(X)\in X$ or $\partial(X)\notin X$. Case one: suppose that $\partial(X)\in X$. Then by the left-to-right direction of equation~(\ref{HPPA:form:thebadfor3}), it follows that there is $Y$ such that $\partial(Y)=\partial(X)$ and $\partial(X)\notin Y$. But $\partial(Y)=\partial(X)$ and Basic Law~V imply that $Y=X$, so that $\partial(X)\notin X$, which contradicts our case assumption. Case two: suppose that $\partial(X)\notin X$. Then by the right-to-left direction of equation~(\ref{HPPA:form:thebadfor3}), it follows that for any $Y$ we have that $\partial(Y) =\partial(X)$ implies $\partial(X)\notin Y$. But then $\partial(X)=\partial(X)$ implies $\partial(X)\notin X$, which contradicts our case assumption.
\end{proof}

\noindent Hence ${\tt BL}^{2}$ is inconsistent and does not have any models, unlike the theories ${\tt PA}^{2}$ and ${\tt HP}^{2}$, which respectively have the canonical models (\ref{HPPA:eqn:standardmodel}) and (\ref{HPPA:eqn:sm3}).

\subsection{Definition of the Subsystems of ${\tt PA}^{2}$, ${\tt BL}^{2}$ and ${\tt HP}^{2}$}

So if one wants to study Basic~Law~V, one needs to pass to subsystems of Basic~Law~V that do not allow instances of the comprehension schema~(\ref{HPPA:introoo:comp333}) applied to formulas like the one in (\ref{HPPA:form:thebadform}). To this end, let us introduce the following natural hierarchy of formulas in the signature of ${\tt BL}^{2}$ and ${\tt HP}^{2}$. A formula $\varphi$, perhaps with free object~variables $\overline{z}$ and free  relation~variables $\overline{R}$ of different arities $m\geq 1$, is called {\it arithmetical or $\Pi^{1}_{0}$ or $\Sigma^{1}_{0}$} if it does not contain any bound $m$-ary relation variables for any $m\geq 1$. Further, if $m\geq 1$ and $R$ is an $m$-ary relation variable and $\varphi(R)$ is a $\Sigma^{1}_{n}$-formula, then $\exists \; R \; \varphi(R)$ is a $\Sigma^{1}_{n}$-formula and $\forall \; R \; \varphi(R)$ is a $\Pi^{1}_{n+1}$-formula. Likewise, if $m\geq 1$ and $R$ is an $m$-ary relation variable and $\varphi(R)$ is $\Pi^{1}_{n}$-formula, then $\exists \; R \; \varphi(R)$ is a $\Sigma^{1}_{n+1}$-formula and $\forall \; R \; \varphi(R)$ is a $\Pi^{1}_{n}$-formula. 

That is, in this hierarchy of formulas, one is allowed to accumulate arbitrarily many existential relation quantifiers of different arities $m\geq 1$ in front of a~$\Sigma^{1}_{n}$-formula and still remain~$\Sigma^{1}_{n}$, and likewise one is allowed to accumulate arbitrarily many universal relation quantifiers of different arities $m\geq 1$ in front of a~$\Pi^{1}_{n}$-formula and still remain~$\Pi^{1}_{n}$. It is only the change from a universal relation quantifier of some arity $m\geq 1$ to an existential relation quantifier of some arity $m\geq 1$ (or vice-versa) which increases the complexity of the sentence in this hierarchy. For instance, if $X$ is set variable and $R$ and $S$ are binary relation variables, then the following formulas are respectively $\Sigma^{1}_{1}, \Pi^{1}_{1}, \Sigma^{1}_{2}, \Pi^{1}_{2}$:
\begin{align}
& \exists \; X \; \forall \; x \; R(x, \#X) \\
& \forall \; R \; \forall \; X \; \exists \; y \; [\forall \; x \; R(x,y)\rightarrow y=\partial X] \\
& \exists \; X \; \forall \; R \; [\exists \; x \; R(x,x) \rightarrow R(\#X, \#X)] \\
& \forall \; R \; \exists \; X \; \exists \; S \; \forall \; y \; [(\forall \; x \; x\in X \leftrightarrow \neg Sxy) \rightarrow R(\partial X, y)]
\end{align}
\noindent Finally, it is worth explicitly noting that not all formulas are included in our hierarchy of formulas. For instance, we have said nothing about the complexity of formulas which include alternations of object~quantifiers and set~quantifiers, such as the following formula:
\begin{equation}
\forall \; X \; \exists \; y \; \forall \; Z \; [R(\#X,\#Z)\rightarrow R(y,\#Z)]
\end{equation}
However, this is not a serious omission, since so long as one includes enough of the comprehension schema~(\ref{HPPA:introoo:comp333}) to guarantee the existence of the singleton set~$\{n\}$ for each element $n$, the above formula is equivalent to the following $\Pi^{1}_{3}$-formula
\begin{equation}
\forall \; X \; \exists \; Y \; \forall \; Z \; [\exists \; y\in Y \; \&\; \forall \; z\in Y \; z=y] \;\& \;[R(\#X,\#Z)\rightarrow R(y,\#Z)]
\end{equation}
That is, we can correct for this omission by treating object~quantifiers as set~quantifiers over singleton~sets when they occur in alternation of object~quantifiers and set~quantifiers.

Using this hierarchy of formulas, one can define the subsystems of ${\tt BL}^{2}$ and ${\tt HP}^{2}$ by restricting the complexity of formulas which appear in the comprehension schema~(\ref{HPPA:introoo:comp333}). For the following definition, let us recall that ${\tt CA}^{2}$ is another name for ${\tt PA}^{2}$ (cf. Definition~\ref{HPPA:defn:the:PA2}). The idea behind the following definition is then that ${\tt AC}$ reminds us of the axiom of choice and is the result of inverting the letters in ${\tt CA}$, which reminds us of comprehension. So with the exception of the choice schema, each of the schemas which figure in the below definition asserts the existence of a certain class of definable sets and relations:

\begin{defn}\label{hppa:index:subsystems}
Suppose that ${\tt XY}^{2}$ is one of ${\tt CA}^{2}$, ${\tt BL}^{2}$, or ${\tt HP}^{2}$. Then we can define the following four subsystems of ${\tt XY}^{2}$:

\noindent (i)~The subsystem ${\tt AXY_{0}}$ is ${\tt XY}^{2}$ but with the comprehension scheme~(\ref{HPPA:introoo:comp333}) restricted to arithmetical formulas.\index{Arithmetical subsystem ${\tt AXY_{0}}$ of ${\tt XY}^{2}$, Definition~\ref{hppa:index:subsystems}~(i)}

\noindent  (ii)~The subsystem  ${\tt \Delta^{1}_{1}-XY_{0}}$ is ${\tt XY}^{2}$ but with the comprehension scheme~(\ref{HPPA:introoo:comp333}) replaced by the following schema, which is called the {\it ${\tt \Delta^{1}_{1}}$-comprehension schema} or {\it hyperarithmetic comprehension schema}, wherein $\varphi$ is a $\Sigma^{1}_{1}$-formula and $\psi$ is a $\Pi^{1}_{1}$-formula:\index{Hyperarithmetical subsystem ${\tt \Delta^{1}_{1}-XY_{0}}$ of ${\tt XY}^{2}$, Definition~\ref{hppa:index:subsystems}~(ii)}
\begin{equation}\label{HPPA:eqn:hypcompscheme}
[\forall \; \overline{n}\; \varphi(\overline{n})\leftrightarrow \psi(\overline{n})] \rightarrow \; [\exists \; R \; \forall \; \overline{n} \; \overline{n} \in R \leftrightarrow \varphi(\overline{n})]
\end{equation}

\noindent  (iii)~The subsystem  ${\tt \Sigma^{1}_{1}- YX_{0}}$ is ${\tt AXY_{0}}$ and the following schema, which is called the {\it ${\tt \Sigma^{1}_{1}}$-choice schema}, wherein $\varphi$ is a $\Sigma^{1}_{1}$-formula:\index{$\Sigma^{1}_{1}$-choice subsystem ${\tt \Sigma^{1}_{1}- YX_{0}}$  of ${\tt XY}^{2}$, Definition~\ref{hppa:index:subsystems}~(iii)}
\begin{equation}\label{HPPA:eqn:thechoiceschema}
[\forall \; \overline{n} \; \exists \; P \; \varphi(\overline{n},P)] \rightarrow [\exists \; R\; \forall \; \overline{n} \; \forall \; P \; (\forall \; \overline{m}\; (\overline{m}\in P \leftrightarrow \overline{n}\overline{m}\in R)) \rightarrow\varphi(\overline{n}, P)]
\end{equation}

\noindent  (iv)~The subsystem  ${\tt \Pi^{1}_{n}- XY_{0}}$ is ${\tt XY}^{2}$ but with the comprehension schema~(\ref{HPPA:introoo:comp333}) restricted to ${\tt \Pi^{1}_{n}}$-formulas.\index{$\Pi^{1}_{n}$-choice subsystem ${\tt \Pi^{1}_{n}- XY_{0}}$   of ${\tt XY}^{2}$, Definition~\ref{hppa:index:subsystems}~(iv)}

Further, in all these schemata, $\varphi$ and $\psi$ are allowed to contain free object~variables (in addition to $\overline{n}$) and free~relation variables of any arity $m\geq 1$ (with the exception of $R$).
\end{defn}

The intuition behind the choice schema~(\ref{HPPA:eqn:thechoiceschema}) can be made clearer as follows. Suppose that a structure $(M, S_{1}, S_{2}, \ldots, \#)$ is a model of ${\tt \Sigma^{1}_{1}-PH}_{0}$ and that the antecedent of a given instance of the ${\tt \Sigma^{1}_{1}}$-choice schema~(\ref{HPPA:eqn:thechoiceschema}) holds. Then ~${\tt \Sigma^{1}_{1}-PH}_{0}$ asserts the existence of a relation~$R$, which for the sake of simplicity we can assume to be a binary relation. For each object $n$ in $M$,  the following set is then guaranteed to exist~in~$S_{1}$ by the  arithmetic comprehension schema~(which is included in ${\tt \Sigma^{1}_{1}-PH}_{0}$):
\begin{equation}
R_{n} = \{m: Rnm\}
\end{equation}
So it follows that $(M, S_{1}, S_{2}, \ldots, \#) \models \varphi(n, R_{n})$ for every $n$ in $M$. Hence, in the situation where for every $n$ there is a choice of $P$ such that $\varphi(n,P)$, the ${\tt \Sigma^{1}_{1}}$-choice schema  asserts that there is a uniform way to make these choices, in that there is an $R$ such that its columns $R_{n}$ satisfy $\varphi(n,R_{n})$ for each $n$.

Note, however, that the map $(R,n)\mapsto \#(R_{n})$ is {\it not} a function symbol in the signature of ${\tt HP}^{2}$ or ${\tt BL}^{2}$. For instance, given a binary relation $R$, the comprehension schema~(\ref{HPPA:introoo:comp333}) restricted to arithmetical formulas does {\it not} in general guarantee the existence of the binary relation 
\begin{align}
\{ (n,m): \#(R_{n})=m\} & = \{(n,m): \exists \; X \; (\forall \; x \;  x\in X\leftrightarrow Rnx) \; \& \; \#X=m\} \notag \\  &=   \{(n,m): \forall \; X \; (\forall \; x \;  x\in X\leftrightarrow Rnx) \; \rightarrow \; \#X=m\} \label{HPPA:eqn:counterthecounter}
\end{align}
For, as these definitions make evident, one will in general need the hyperarithmetic comprehension schema~(\ref{HPPA:eqn:hypcompscheme}) in order to show that this relation exists (cf. Propositions~\ref{hppa:binaryiswierd1}-\ref{hppa:binaryiswierd2}). This example underscores an important fact: intuitively simple relations expressible via the maps $\#$ or $\partial$ may be quite complex when explicitly written out in terms of the primitives of the signature. Since our interest in this paper is on restrictions of the comprehension schema, this fact will be particularly important to keep in mind throughout this paper. (In \S~\ref{HPPA:hppa:furtherquestions}, we raise the question of what happens when one {\it does} include function symbols $(R,n)\mapsto \#(R_{n})$ in the signature, so that relations like the one defined in equation~(\ref{HPPA:eqn:counterthecounter}) would count as arithmetical.)

\subsection{Summary of Results about the Provability Relation}\label{HPPA:sec:342314213ddd}

Our primary concern in this paper is with the interpretability relation between subsystems of  ${\tt PA}^{2}$, ${\tt HP}^{2}$, and ${\tt BL}^{2}$, and we summarize our results in the next section (\S~\ref{HPPA:mainresultsasas}). However, since provability implies interpretability, and since the provability relation is intrinsically interesting, in this section we record what is known about this relation among the subsystems of ${\tt PA}^{2}$, ${\tt HP}^{2}$, and ${\tt BL}^{2}$. This is summarized in Figure~\ref{HPPA:figure1}, where the double arrows indicate that the provability implication is irreversible, and where the negated arrows indicate that the provability implication fails, and where the arrows with question marks beside them indicate that the provability implication is unknown.

\begin{figure}
\begin{center}
\[
\xymatrix{
 &{\tt \Pi^{1}_{1}-CA}_{0}\ar@{=>}[d] & & & \\
{\tt \Sigma^{1}_{1}-LB}_{0}\ar@/^/[d]  &{\tt \Sigma^{1}_{1}-AC}_{0}\ar@{=>}[d] & {\tt \Pi^{1}_{1}-HP}_{0}\ar@{=>}[dr] \ar@/^/[rr]|{?} & & \ar@/^/[ll]|{|}{\tt \Sigma^{1}_{1}-PH}_{0}\ar@{=>}[dl]\\
\ar@/^/[u]^{?}{\tt \Delta^{1}_{1}-BL}_{0} \ar@{=>}[d] &{\tt \Delta^{1}_{1}-CA}_{0}\ar@{=>}[d] && {\tt \Delta^{1}_{1}-HP}_{0}\ar@{=>}[d] & \\
{\tt ABL}_{0} & {\tt ACA}_{0} & &{\tt AHP}_{0} & 
}
\]
\caption{Provability Relation in Subsystems of ${\tt BL}^{2}$, ${\tt PA}^{2}$,  and ${\tt HP}^{2}$}
\label{HPPA:figure1}
\end{center}
\end{figure}
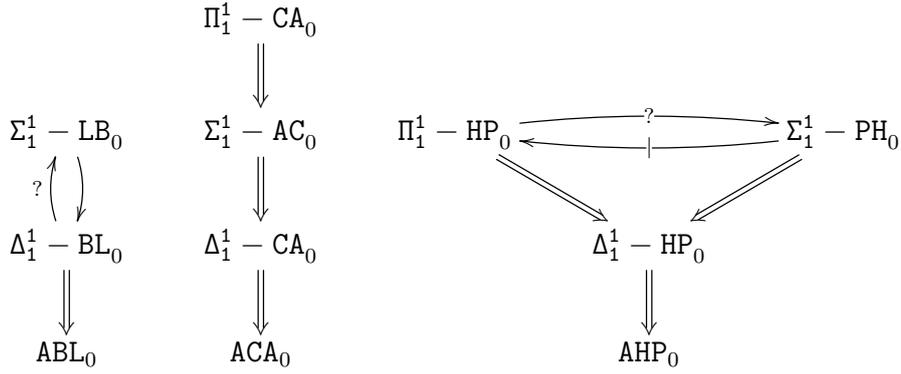

Each of the positive provability relations in in Figure~\ref{HPPA:figure1} follows immediately from the definitions, except for the fact that ${\tt \Pi^{1}_{1}-CA}_{0}$ proves ${\tt \Sigma^{1}_{1}-AC}_{0}$ and the fact that $\Sigma^{1}_{1}$-choice implies $\Delta^{1}_{1}$-comprehension. For the former, see Simpson~\cite{Simpson2009aa} Theorem~V.8.3 pp.~205-206. For the latter, the proof from Simpson~\cite{Simpson2009aa} Theorem~VII.6.6~(i) p.~295 carries over to the setting of ${\tt HP}^{2}$ and ${\tt BL}^{2}$, as we verify now:
\begin{prop}\label{HPPA:prop:choicedefn}
${\tt \Sigma^{1}_{1}-AC}_{0} \rightarrow {\tt \Delta^{1}_{1}-CA}_{0}$, and ${\tt \Sigma^{1}_{1}-PH}_{0} \rightarrow {\tt \Delta^{1}_{1}-HP}_{0}$, and ${\tt \Sigma^{1}_{1}-LB}_{0} \rightarrow {\tt \Delta^{1}_{1}-BL}_{0}$
\end{prop}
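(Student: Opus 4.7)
The plan is to adapt Simpson's proof of Theorem~VII.6.6(i) uniformly to all three settings. Since the $\Sigma^1_1$-choice schema and the comprehension schema speak only about second-order existence, and the extra function symbols $\#$ and $\partial$ (together with their defining axioms, Hume's Principle or Basic Law~V) play no role in the derivation, a single argument handles ${\tt CA}$, ${\tt HP}$, and ${\tt BL}$ at once. Fix an instance of $\Delta^1_1$-comprehension: suppose $\varphi(\overline{n})$ is $\Sigma^1_1$, $\psi(\overline{n})$ is $\Pi^1_1$, and $\forall\,\overline{n}\,[\varphi(\overline{n}) \leftrightarrow \psi(\overline{n})]$.

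First I would put $\varphi$ and $\psi$ into prenex form, writing $\varphi(\overline{n}) \equiv \exists\,P\,\varphi_{0}(\overline{n},P)$ and $\psi(\overline{n}) \equiv \forall\,Q\,\psi_{0}(\overline{n},Q)$, where $\varphi_{0}$ and $\psi_{0}$ are arithmetical. Note that in the ${\tt HP}^{2}$ and ${\tt BL}^{2}$ signatures a $\Sigma^{1}_{1}$-formula may begin with a block of existential relation quantifiers of various arities $m\geq 1$; these can be packaged into a single relation $P$ of higher arity by using extra object coordinates as ``tags'' and then decoded arithmetically. Second, I would observe that the biconditional hypothesis, combined with the law of excluded middle, gives
\begin{equation*}
\forall\,\overline{n}\;\exists\,P\;[\,\varphi_{0}(\overline{n},P)\;\vee\;\neg\psi_{0}(\overline{n},P)\,]
\end{equation*}
since if $\varphi(\overline{n})$ holds then some $P$ witnesses $\varphi_{0}(\overline{n},P)$, and if $\varphi(\overline{n})$ fails then $\psi(\overline{n})$ fails and some $P$ witnesses $\neg\psi_{0}(\overline{n},P)$. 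The disjunction on the inside is arithmetical, so the displayed statement is a legitimate antecedent for $\Sigma^{1}_{1}$-choice.

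Third, I would apply the $\Sigma^{1}_{1}$-choice schema to obtain a relation $R$ whose columns $R_{\overline{n}}$ satisfy $\varphi_{0}(\overline{n},R_{\overline{n}}) \vee \neg\psi_{0}(\overline{n},R_{\overline{n}})$ for every $\overline{n}$. Finally, using the arithmetical comprehension that is contained in each of the three choice systems, I would form the set $X = \{\,\overline{n}:\varphi_{0}(\overline{n},R_{\overline{n}})\,\}$ and verify that $X$ is the set required by $\Delta^1_1$-comprehension. Indeed, $\overline{n}\in X$ implies $\exists\,P\,\varphi_{0}(\overline{n},P)$, i.e.\ $\varphi(\overline{n})$; conversely, if $\varphi(\overline{n})$ then by hypothesis $\psi(\overline{n})$, so $\psi_{0}(\overline{n},R_{\overline{n}})$, and the disjunction from Step~two forces $\varphi_{0}(\overline{n},R_{\overline{n}})$, so that $\overline{n}\in X$.

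The argument is largely routine, and I expect the only subtle point to be in Step one, namely the normal-form and packaging step for the ${\tt HP}^{2}$ and ${\tt BL}^{2}$ signatures where relation variables occur in several arities; once the packaging is in place, the core of the proof is identical to the single-sort case in Simpson.
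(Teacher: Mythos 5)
Your argument is correct and follows the same overall skeleton as the paper's proof: convert the biconditional into the disjunction $\forall\,\overline{n}\,(\varphi(\overline{n})\vee\neg\psi(\overline{n}))$, massage this into an antecedent for the $\Sigma^{1}_{1}$-choice schema, apply choice, and then carve out the desired set by arithmetical comprehension. The two proofs differ only in the massaging step, and the difference is worth noting. You prenex $\varphi$ and $\neg\psi$ to extract arithmetical matrices and apply choice to the arithmetical disjunction $\varphi_{0}(\overline{n},P)\vee\neg\psi_{0}(\overline{n},P)$; this uses only the arithmetical instance of the choice schema, but it puts all the signature-specific work into the packaging lemma you flag at the end (collapsing blocks of relation quantifiers of varying arities into a single relation variable, padding the witness for $\varphi_{0}$ and the counterexample for $\psi_{0}$ to a common arity, and securing enough definably distinct ``tag'' elements to do the coding --- all doable, but this is exactly the fiddly part). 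The paper instead applies choice directly to the $\Sigma^{1}_{1}$ formula $(\varphi(\overline{z})\wedge a\in Z)\vee(\neg\psi(\overline{z})\wedge a\notin Z)$, where $Z$ is a single unary ``flag'' set whose membership of a fixed parameter $a$ records which disjunct holds; since Definition~\ref{hppa:index:subsystems}~(iii) allows $\Sigma^{1}_{1}$ matrices in the choice schema, no prenexing or packaging is needed at all, and the only price is a few lines verifying that a parameter $a\in M$ exists (namely $0$, $\#\emptyset$, or $\partial\emptyset$, which is why the paper opens with the remark about $M$ being non-empty). So your route trades the paper's flag trick for a normal-form lemma; either works, but if you write yours up you should actually carry out the packaging step rather than defer it, since that is where the multi-sorted signature genuinely intrudes.
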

\begin{proof}
Let $\mathcal{M}=(M, S, \ldots)$ be a model of ${\tt \Sigma^{1}_{1}-AC}_{0}$ (resp. ${\tt \Sigma^{1}_{1}-PH}_{0}$, ${\tt \Sigma^{1}_{1}-LB}_{0}$). By standard conventions, $\mathcal{M}$ is non-empty. However, nothing in these standard conventions requires that $M$ be non-empty as opposed to say $S$. But, in the case of ${\tt \Sigma^{1}_{1}-AC}_{0}$ we have that $0\in M$, and in the case of ${\tt \Sigma^{1}_{1}-PH}_{0}$ we have that $\#\emptyset\in M$, and likewise in the case of ${\tt \Sigma^{1}_{1}-LB}_{0}$ we have that $\partial\emptyset\in M$. Hence, for the remainder of the proof, fix parameter $a\in M$. Suppose that $\mathcal{M}\models \forall \; \overline{z}\; \varphi(\overline{z})\leftrightarrow \psi(\overline{z})$, where $\varphi$ is $ \Sigma^{1}_{1}$ and $\psi$ is $ \Pi^{1}_{1}$. Then $\mathcal{M}\models \forall \; \overline{z}\; \varphi(\overline{z}) \vee \neg \psi(\overline{z})$. Then by the arithmetical comprehension schema, $\mathcal{M}\models \forall \; \overline{z}\; \exists \; Z \; (\varphi(\overline{z}) \; \wedge \; a\in Z) \vee (\neg \psi(\overline{z}) \wedge a\notin Z)$. By the $ \Sigma^{1}_{1}$-Choice Schema, there is $R$ such that 
\begin{equation}\label{HPPA:eqn:57343214314}
\mathcal{M}\models \forall \; \overline{z} \; \forall \; Z \; (\forall x\; x\in Z \leftrightarrow R\overline{z} x) \rightarrow [(\varphi(\overline{z}) \; \wedge \; a\in Z) \vee (\neg \psi(\overline{z}) \wedge a\notin Z)]
\end{equation}
By the arithmetical comprehension schema, there is $W$ such that $\overline{z}\in W$ if and only if $R\overline{z}a$. Then we claim that $\overline{z}\in W$ if and only if $\varphi(\overline{z})$. For, suppose that $\overline{z}\in W$, so that $R\overline{z}a$. Then $Z=\{x: R\overline{z}x\}$ exists by the arithmetical comprehension schema, and we have $a\in Z$. Then by (\ref{HPPA:eqn:57343214314}), it follows that $\varphi(\overline{z})$. Conversely, suppose that  $\overline{z}\notin W$, so that $\neg R\overline{z}a$.  Then $Z=\{x: R\overline{z}x\}$ exists by the arithmetical comprehension schema, and we have $a\notin Z$.  Then by (\ref{HPPA:eqn:57343214314}), it follows that $\neg\psi(\overline{z})$ and hence $\neg\varphi(\overline{z})$. Hence, in fact we have established that $\overline{z}\in W$ if and only if $\varphi(\overline{z})$. So $\mathcal{M}$ models ${\tt \Delta^{1}_{1}-CA}_{0}$ (resp. ${\tt \Delta^{1}_{1}-HP}_{0}$, ${\tt \Delta^{1}_{1}-BL}_{0}$).
\end{proof}

The known non-provability relations in Figure~\ref{HPPA:figure1} are not difficult to verify. In the case of the subsystems of ${\tt HP}^{2}$, we can read these results off of the results for the subsystems of ${\tt PA}^{2}$, as the proof of Proposition~\ref{HPPA:prop:28} indicates. In the case of the subsystems of ${\tt BL}^{2}$, the only known result we have is that ${\tt ABL}_{0}$ does not prove ${\tt \Delta^{1}_{1}-BL}_{0}$, and this is shown in Proposition~\ref{HPPA:prop:280}. In \S~\ref{HPPA:hppa:furtherquestions}, we list the remaining unknown questions about the provability relation, namely, the question of whether ${\tt \Delta^{1}_{1}-BL}_{0}$ implies ${\tt \Sigma^{1}_{1}-LB}_{0}$ and whether ${\tt \Pi^{1}_{1}-HP}_{0}$ implies ${\tt \Sigma^{1}_{1}-PH}_{0}$.

\subsection{Summary of Results about the Interpretability Relation}\label{HPPA:mainresultsasas}

Most of the formal work done on the the subsystems of ${\tt PA}^{2}$, ${\tt HP}^{2}$, ${\tt BL}^{2}$ has concerned the interpretability strength of these theories. A theory $T_{0}$ {\it is interpretable in} a theory $T_{1}$ ($T_{0}\leq_{\mathrm{I}} T_{1}$) if every model $M_{1}$ of $T_{1}$ uniformly defines without parameters some model $M_{0}$ of $T_{0}$, where ``uniform'' has the sense that e.g. a binary relation symbol $R$ in the signature of $T_{0}$ is defined by one and the same formula $\varphi(x,y)$ in each model $M_{1}$ of $T_{1}$.
(For a more syntactic definition, see Lindstr\"om~\cite{Lindstrom2003aa}~p.~96 or H\'ajek~and~Pudl\'ak~\cite{Hajek1998} pp.~148-149). Since this relation is reflexive and transitive, one can define the associated notions
\begin{align}
\mbox{\;\;\;\;\;}\mbox{\;\;\;\;\;}\mbox{\;\;\;\;\;}\mbox{\;\;\;\;\;}T_{0}\equiv_{\mathrm{I}} T_{1} \Longleftrightarrow T_{0}\leq_{\mathrm{I}} T_{1} \mbox{\; \& \;} T_{1}\leq_{\mathrm{I}} T_{0} \label{HPPA:eqn:theequivalence} & & \\
\mbox{\;\;\;\;\;}\mbox{\;\;\;\;\;}\mbox{\;\;\;\;\;}\mbox{\;\;\;\;\;} T_{0}<_{\mathrm{I}} T_{1} \Longleftrightarrow T_{0}\leq_{\mathrm{I}} T_{1} \mbox{\; \& \;}T_{1}\nleq_{\mathrm{I}} T_{0}\label{HPPA:eqn:theequivalence2} & & 
\end{align}
The relation $\leq_{\mathrm{I}}$ is then a partial order on the set of equivalence classes of theories under the equivalence relation~$\equiv_{\mathrm{I}}$. Since this partial order is in fact a linear order in many natural cases, it can be intuitively conceived as a measure of the strength of the theory. This order is also connected to the formal notion of consistency strength by the following proposition:

\begin{prop}\label{HPPA:conpropa}
Suppose $T_{1}$ is a finitely axiomatizable theory such that ${\tt ACA}_{0}\subseteq T_{1}\subseteq {\tt PA}^{2}$, and suppose that $T_{0}$ is a computable theory in a computable signature. Then
\begin{align}
& T_{1}\vdash \mathrm{Con}(T_{0}) \; \Longrightarrow \;  T_{1}\nleq_{\mathrm{I}} T_{0} \label{HPPA:eqn:theonlyway2}\\
& [T_{0}\leq_{\mathrm{I}} T_{1} \; \& \; T_{1}\vdash \mathrm{Con}(T_{0})] \; \Longrightarrow \;  T_{0}<_{\mathrm{I}} T_{1} \label{HPPA:eqn:theonlyway}
\end{align}
\end{prop}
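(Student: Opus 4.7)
The plan is to deduce (\ref{HPPA:eqn:theonlyway}) from (\ref{HPPA:eqn:theonlyway2}) for free, and then to prove (\ref{HPPA:eqn:theonlyway2}) by a standard argument combining formalized preservation of consistency under interpretations with G\"odel's second incompleteness theorem. For the first reduction, observe that if both $T_{0}\leq_{\mathrm{I}} T_{1}$ and $T_{1}\vdash \mathrm{Con}(T_{0})$ hold, then (\ref{HPPA:eqn:theonlyway2}) gives $T_{1}\nleq_{\mathrm{I}} T_{0}$, which combined with $T_{0}\leq_{\mathrm{I}} T_{1}$ yields $T_{0}<_{\mathrm{I}} T_{1}$ by the definition in (\ref{HPPA:eqn:theequivalence2}).

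For (\ref{HPPA:eqn:theonlyway2}) itself, I would argue by contraposition. Supposing $T_{1}\leq_{\mathrm{I}} T_{0}$, let $I$ be the associated translation of formulas; since both $T_{0}$ and $T_{1}$ are computable theories in computable signatures, their proof predicates are representable by $\Sigma^{0}_{1}$-formulas and $I$ itself by a primitive recursive function. The key step of the plan is then to formalize, inside a sufficiently strong base theory, the elementary syntactic induction showing that $I$ sends $T_{1}$-proofs of $\varphi$ to $T_{0}$-proofs of $I(\varphi)$, from which one extracts $\mathrm{Con}(T_{0})\rightarrow \mathrm{Con}(T_{1})$. This formalization goes through in ${\tt ACA}_{0}$, and hence in $T_{1}$ since ${\tt ACA}_{0}\subseteq T_{1}$, giving $T_{1}\vdash \mathrm{Con}(T_{0})\rightarrow \mathrm{Con}(T_{1})$.

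Combining this conditional with the hypothesis $T_{1}\vdash \mathrm{Con}(T_{0})$ by modus ponens yields $T_{1}\vdash \mathrm{Con}(T_{1})$. I would then apply G\"odel's second incompleteness theorem to derive a contradiction: $T_{1}$ is consistent (being a subtheory of ${\tt PA}^{2}$, which has the standard model~(\ref{HPPA:eqn:standardmodel})), it is finitely axiomatizable by hypothesis, and it contains enough arithmetic via ${\tt ACA}_{0}\subseteq T_{1}$ for the usual derivability conditions to hold, so $T_{1}\nvdash \mathrm{Con}(T_{1})$. This contradiction forces $T_{1}\nleq_{\mathrm{I}} T_{0}$, completing the contrapositive proof of (\ref{HPPA:eqn:theonlyway2}).

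The main obstacle I foresee is not the high-level strategy, which is classical, but the careful bookkeeping needed to formalize preservation of provability under $I$ inside ${\tt ACA}_{0}$ and to extract $\mathrm{Con}(T_{0})\rightarrow \mathrm{Con}(T_{1})$ as a single arithmetic sentence. This is precisely where every hypothesis of the proposition is used: computability of $T_{0}$ and its signature ensures $\mathrm{Con}(T_{0})$ is $\Pi^{0}_{1}$ and that $I$ is coded by a primitive recursive function, finite axiomatizability of $T_{1}$ ensures that $\mathrm{Con}(T_{1})$ is itself a single sentence rather than a schema (so that G\"odel's theorem applies uniformly), and ${\tt ACA}_{0}\subseteq T_{1}\subseteq {\tt PA}^{2}$ provides both the arithmetic strength to formalize the induction and the consistency needed to invoke G\"odel. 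All of this is a well-trodden piece of proof theory (cf.\ Lindstr\"om~\cite{Lindstrom2003aa} or H\'ajek and Pudl\'ak~\cite{Hajek1998}).
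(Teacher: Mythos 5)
Your proposal is correct, but it takes a genuinely different route from the one sketched in the paper. You argue syntactically: formalize in ${\tt ACA}_{0}$ the translation of $T_{1}$-proofs into $T_{0}$-proofs under the interpretation $I$, extract $T_{1}\vdash \mathrm{Con}(T_{0})\rightarrow \mathrm{Con}(T_{1})$, and conclude $T_{1}\vdash\mathrm{Con}(T_{1})$, contradicting G\"odel~II. The paper instead argues semantically: from $T_{1}\vdash\mathrm{Con}(T_{0})$ and the arithmetized completeness theorem (Simpson, Theorem~IV.3.3), $T_{1}$ proves there is a model $M_{0}$ of $T_{0}$; the interpretation, being witnessed by finitely many $T_{0}$-axioms, then lets $T_{1}$ define inside $M_{0}$ a model of $T_{1}$, so again $T_{1}\vdash\mathrm{Con}(T_{1})$. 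Both converge on the same contradiction, and both derive~(\ref{HPPA:eqn:theonlyway}) from~(\ref{HPPA:eqn:theonlyway2}) in the same one-line way. Your route is the one actually carried out in the reference the paper cites for the formal proof (Lindstr\"om, Ch.~7) and works over weaker base theories, since it avoids the model-existence machinery; the paper's route is shorter to sketch and harmonizes with its later strategy of establishing consistency by constructing models. One small correction: finite axiomatizability of $T_{1}$ is not needed to make $\mathrm{Con}(T_{1})$ a single sentence --- that holds for any computably axiomatized theory --- but rather to ensure that the base theory can verify that \emph{every} axiom of $T_{1}$ has its translation provable in $T_{0}$ (one simply fixes the finitely many concrete $T_{0}$-proofs); for infinitely axiomatized $T_{1}$ this step is exactly where the argument would break down. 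Since the hypothesis is available and you do invoke it, this is a misattribution of its role rather than a gap.
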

\begin{proof}
(Sketch) For (\ref{HPPA:eqn:theonlyway2}), note that if $T_{1}\vdash \mathrm{Con}(T_{0})$, then $T_{1}$ proves that there is a model $M_{0}$ of $T_{0}$ (cf. Simpson~\cite{Simpson2009aa} Theorem~IV.3.3 p.~140). But if $T_{1}\leq_{\mathrm{I}} T_{0}$ and $T_{1}$ is finitely axiomatizable, then this interpretation is due to a finite number of the axioms of $T_{0}$. Further, since $T_{0}$ is computable, this can be accurately represented in $T_{1}$, so that inside $T_{1}$ the model $M_{0}$ of $T_{0}$ defines a model $M_{1}$ of $T_{1}$, which likewise exists since the theory inside which we are working (namely $T_{1}$ itself) includes arithmetical comprehension. But then $T_{1}$ would prove $\mathrm{Con}(T_{1})$, which contradicts G\"odel's Second Incompleteness Theorem. (For a formal proof, see Lindstr\"om~\cite{Lindstrom2003aa} Chapter~7 Corollary~1 p.~97). Note that (\ref{HPPA:eqn:theonlyway}) follows immediately from (\ref{HPPA:eqn:theonlyway2}) and definition~(\ref{HPPA:eqn:theequivalence2}).
\end{proof}

\noindent In what follows, we will apply this proposition to $T_{1}={\tt ACA}_{0}$ itself or $T_{1}={\tt \Pi^{1}_{1}-CA}_{0}$, both of which are known to be finitely axiomatizable (cf. Simpson~\cite{Simpson2009aa} Lemma~VIII.1.5 pp.~311-312 and Lemma~VI.1.1 pp.~217-218).

The major previous results on the interpretability strength of the subsystems of ${\tt PA}^{2}$, ${\tt HP}^{2}$, ${\tt BL}^{2}$ can be described as follows. In the 19th Century, Frege in essence showed that ${\tt PA}^{2}\leq_{\mathrm{I}} {\tt HP}^{2}$ (cf. Frege~\cite{Frege1884aa}, \cite{Boolos1995aa}, Boolos~and~Heck~\cite{Boolos1998aa}), and recently Heck (\cite{Heck2000ab} p.~192) and Linnebo (\cite{Linnebo2004} p.~161) noted that Frege's proofs in fact show that ${\tt \Pi^{1}_{1}-CA_{0}}\leq_{\mathrm{I}} {\tt \Pi^{1}_{1}-HP_{0}}$ (cf. \S~\ref{hppa:subsec:frege19}, Corollary~\ref{HPPA:thm:howweroll}). Further, Boolos (\cite{Boolos1987})  showed that the converse holds (cf. Corollary~\ref{hppa:thm:boolos}), so that one has ${\tt \Pi^{1}_{1}-CA_{0}}\equiv_{\mathrm{I}} {\tt \Pi^{1}_{1}-HP_{0}}$ (cf. Corollary~\ref{HPPA:thm:howweroll57}). Heck (\cite{Heck1996}) then showed that ${\tt ABL}_{0}$ interprets Robinson's {\tt Q}, and Ganea and Visser (\cite{Ganea2007}, \cite{Visser2009ab}) independently showed that the converse holds, so that ${\tt ABL}_{0}\equiv_{\mathrm{I}}{\tt Q}$. Likewise, Burgess (\cite{Burgess2005}) showed that ${\tt AHP}_{0}$ interprets Robinson's~Q. Finally, Ferreira and Wehmeier (\cite{Ferreira2002}) showed that ${\tt \Delta^{1}_{1}-BL}_{0}$ is consistent and a slight modification of their proof shows that ${\tt \Sigma^{1}_{1}-LB}_{0}$ is consistent, and inspection of this proof shows that ${\tt \Sigma^{1}_{1}-LB}_{0}<_{\mathrm{I}} {\tt \Pi^{1}_{1}-CA}_{0}$. These previous results and our new results are summarized in Figure~\ref{HPPA:figure2}, where the double arrows indicate that the provability relation is irreversible, and where the single arrows indicate that the provability relation may or may not be irreversible. That is, in the diagram $T_{1}\Rightarrow T_{0}$ means $T_{0}<_{\mathrm{I}} T_{1}$ and $T_{1}\rightarrow T_{0}$ means $T_{0}\leq_{\mathrm{I}} T_{1}$.

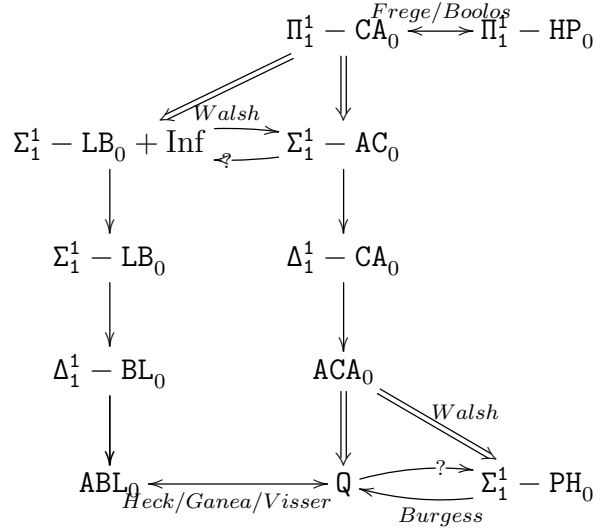
\begin{figure}
\begin{center}
\[
\xymatrix{
 &{\tt \Pi^{1}_{1}-CA}_{0}\ar@{=>}[d] \ar@{=>}[dl]\ar@{<->}[r]^{Frege/Boolos}&{\tt \Pi^{1}_{1}-HP}_{0}  \\
{\tt \Sigma^{1}_{1}-LB}_{0}+\mathrm{Inf}  \ar@/^/[r]^{Walsh} \ar@{->}[d] & \ar@/^/[l]|{?} {\tt \Sigma^{1}_{1}-AC}_{0}\ar@{->}[d] &  \\
{\tt \Sigma^{1}_{1}-LB}_{0} \ar@{->}[d] &{\tt \Delta^{1}_{1}-CA}_{0}\ar@{->}[d] &\\
{\tt \Delta^{1}_{1}-BL}_{0} \ar@{->}[d] \ar@{->}[d] & {\tt ACA}_{0} \ar@{=>}[d] & \\  
{\tt ABL}_{0} & \ar@{<->}[l]^{Heck/Ganea/Visser} {\tt Q} \ar@/^/[r]|{?}  & \ar@/^/[l]^{Burgess} \ar@{<=}[ul]_{Walsh}{\tt \Sigma^{1}_{1}-PH}_{0} 
}
\]
\caption{Interpretability Relation in Subsystems of ${\tt BL}^{2}$, ${\tt PA}^{2}$,  and ${\tt HP}^{2}$}
\label{HPPA:figure2}
\end{center}
\end{figure}


Our new results establish upper and lower bounds on consistent subsystems of ${\tt BL}^{2}$ and ${\tt HP}^{2}$ by (i)~finding new constructions of models of these theories, (ii)~noting that the constructions can be formalized in theories such as ${\tt ACA}_{0}$ and ${\tt \Pi^{1}_{1}-CA}_{0}$, and (iii)~applying Proposition~\ref{HPPA:conpropa}. Our first main new result, Theorem~\ref{HPPA:thm:main}, is a construction of a model $\mathcal{M}$ of ${\tt \Sigma^{1}_{1}-LB}_{0}$ using ideas from higher recursion theory (cf. Sacks~\cite{Sacks1990} Part~A). This structure $\mathcal{M}$ models a finite extension of ${\tt \Sigma^{1}_{1}-LB}_{0}$ called ${\tt \Sigma^{1}_{1}-LB}_{0}+\mathrm{Inf}$ which interprets ${\tt \Sigma^{1}_{1}-AC}_{0}$. Moreover, since this construction is formalizable in ${\tt \Pi^{1}_{1}-CA}_{0}$, we have that Proposition~\ref{HPPA:conpropa} implies that ${\tt \Sigma^{1}_{1}-LB}_{0}+\mathrm{Inf}<_{\mathrm{I}} {\tt \Pi^{1}_{1}-CA}_{0}$.

Our second set of results concerns new constructions of models of ${\tt \Delta^{1}_{1}-BL}_{0}$ and ${\tt \Sigma^{1}_{1}-PH}_{0}$ and ${\tt \Delta^{1}_{1}-HP}_{0}+\neg {\tt \Sigma^{1}_{1}-PH}_{0}$. These results are all based on a generalization of a theorem of Barwise-Schlipf and Ferreira-Wehmeier which allows us to built models of these theories on~top of various recursively saturated structures (cf. Theorem~\ref{HPPA:thm:metatheorem}). In particular, we show that if $k$ is a countable recursively saturated o-minimal expansion of a real-closed field, then then there is a function $\#: D(k)\rightarrow k$, where $D(k^{n})$ denotes the definable subsets of $k^{n}$, such that the structure
\begin{equation}
(k, D(k), D(k^{2}), \ldots, \#)
\end{equation}
is a model of ${\tt \Sigma^{1}_{1}-PH}_{0}$. Moreover, we note that this construction can be formalized in ${\tt ACA}_{0}$ for fields with ${\tt ACA}_{0}$-provable quantifier~elimination, so that by Proposition~\ref{HPPA:conpropa}, we have ${\tt \Sigma^{1}_{1}-PH}_{0}<_{\mathrm{I}}{\tt ACA}_{0}$ (cf. Corollary~\ref{HPPA:cor:thebomb}). Further, we show that if $k$ is a saturated algebraically closed field, then there is a there is a function $\#: D(k)\rightarrow k$, where $D(k^{n})$ denotes the definable subsets of $k^{n}$, such that the structure
\begin{equation}
(k, D(k), D(k^{2}), \ldots, \#)
\end{equation}
is a model of ${\tt \Delta^{1}_{1}-HP}_{0}+\neg {\tt \Sigma^{1}_{1}-PH}_{0}$. Further, we can use this construction to answer an open question of Linnebo (cf. Remark~\ref{hppa:rmk:linnebo} and Proposition~\ref{HPPA:lineeeddafdsfa}). However, we do not presently know whether this construction can be formalized in ${\tt ACA}_{0}$, although we have reduced it to the question of whether Ax's Theorem can be formalized in ${\tt ACA}_{0}$ (cf. Remark~\ref{HPPA:axrmk} and Question~\ref{HPPA:axq}). Finally, we show that if $k$ is a countable recursively saturated separably closed field of finite imperfection degree, then there is a function $\partial: D(k)\rightarrow k$, where $D(k^{n})$ denotes the definable subsets of $k^{n}$, such that the structure
\begin{equation}
(k, D(k), D(k^{2}), \ldots, \partial)
\end{equation}
is a model of ${\tt \Delta^{1}_{1}-BL}_{0}$ (cf. Theorem~\ref{HPPA:thmasdfadsf}). However, we do not presently know whether this construction can be formalized in ${\tt ACA}_{0}$, although we have reduced this question to the question of whether the proof of the elimination of imaginaries for separably closed fields can be formalized in ${\tt ACA}_{0}$ (cf. Remark~\ref{HPPA:seprmk} and Question~\ref{HPPA:sepq}).

\section{Standard Models of ${\tt HP}^{2}$ and Associated Results}

Prior to turning to the primary results of this paper in \S\S~\ref{HPPA:secdfadsfsadfkkk}-\ref{HPPA:bssssssss}, the relationship between ${\tt PA}^{2}$ and ${\tt HP}^{2}$ is briefly explored in this section. On the one hand, in \S~\ref{hppa:subsec:frege19}, a brief self-contained proof of Frege and Boolos's result that ${\tt PA}^{2}$ and ${\tt HP}^{2}$ are mutually interpretable is presented (cf. Corollary~\ref{HPPA:thm:howweroll57}). On the other hand, in \S~\ref{HPPA:sec:21431234231423434}, some of the ways in which the standard models of ${\tt HP}^{2}$ are similar to and different from the standard models of ${\tt PA}^{2}$ are examined. The standard model of ${\tt PA}^{2}$ is the structure from~equation~(\ref{HPPA:eqn:standardmodel}), namely,  $(\omega, 0,s,+,\times, \leq, P(\omega))$, while the standard models of ${\tt HP}^{2}$ are the structures from equation~(\ref{HPPA:eqn:sm3}), namely, structures of the form $(\alpha, P(\alpha), P(\alpha^{2}), \ldots, \#_{\alpha})$, where $\alpha$ is an ordinal which is not a cardinal and where $\#_{\alpha}:P(\alpha)\rightarrow \alpha$ denotes cardinality. In \S~\ref{HPPA:sec:21431234231423434}, it is shown that these standard models of ${\tt HP}^{2}$ depend only on the cardinality of $\alpha$ for $\alpha\geq \omega+\omega$ (Proposition~\ref{HPPA:prop:newiso2}~(i)), and further that they can have many automorphisms, unlike the standard model of ${\tt PA}^{2}$ (cf. Proposition~\ref{HPPA:prop:newautomorphic}~(iv)). Finally, it is shown that there is an analogue of the relative categoricity of ${\tt PA}^{2}$ in the setting of ${\tt HP}^{2}$ (cf. Proposition~\ref{hppa:youcantchangeme} and Remark~\ref{rmk:hppa:youcantchangeme}).

\subsection{Models of ${\tt HP}^{2}$ from Infinite Cardinals}\label{HPPA:sec:21431234231423434}

\begin{prop}\label{HPPA:prop:newiso} Suppose  $\alpha,\beta$ are ordinals that are not cardinals, and consider the structures $(\alpha, P(\alpha), P(\alpha^{2}), \ldots, \#_{\alpha})$ and $(\beta, P(\beta), P(\beta^{2}), \ldots, \#_{\beta})$, where $\#_{\alpha}: P(\alpha)\rightarrow \alpha$ and $\#_{\beta}: P(\beta)\rightarrow \beta$ denote cardinality. 
\begin{enumerate}
\item[(i)] The structures $(\alpha, P(\alpha), P(\alpha^{2}), \ldots, \#_{\alpha})$ and $(\beta, P(\beta), P(\beta^{2}), \ldots, \#_{\beta})$ model ${\tt HP}^{2}$.
\item[(ii)] If $\alpha=\omega+k+1$ where $k\geq 0$, then $\left|\alpha-\mathrm{rng}(\#_{\alpha})\right|=k$
\item[(iii)] If $\alpha\geq \omega+\omega$, then $\left|\alpha-\mathrm{rng}(\#_{\alpha})\right|=\left|\alpha\right|$.
\item[(iv)] The structures $(\alpha, P(\alpha), P(\alpha^{2}), \ldots, \#_{\alpha})$ and $(\beta, P(\beta), P(\beta^{2}), \ldots, \#_{\beta})$ are isomorphic if and only if $\alpha=\beta$ or $\alpha, \beta\geq \omega+\omega$ and $\left|\alpha\right|=\left|\beta\right|$.
\end{enumerate}
\end{prop}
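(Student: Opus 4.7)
For part~(i), the plan is to verify the two axioms of ${\tt HP}^{2}$ directly. Hume's Principle reduces to a tautology since $\#_{\alpha}$ and $\#_{\beta}$ are cardinality functions and the second-order existential quantifier over bijections ranges over the full $S_{2}=P(\alpha^{2})$, while the comprehension schema holds because each $S_{n}$ is the full powerset $P(\alpha^{n})$. For part~(ii), the key observation is that when $\alpha$ is countable the cardinalities of subsets of $\alpha$ are exactly $\{0, 1, 2, \ldots\} \cup \{\omega\} = \omega + 1$, so for $\alpha = \omega + k + 1$ one gets $\mathrm{rng}(\#_{\alpha}) = \omega + 1$ and the set difference $\alpha - (\omega+1) = \{\omega + 1, \ldots, \omega + k\}$ has exactly $k$ elements.

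For part~(iii), let $\kappa = |\alpha|$. First identify $\mathrm{rng}(\#_{\alpha})$ as the collection of cardinals $\lambda \leq \kappa$; these all lie in $\alpha$ since $\alpha$, not being a cardinal, strictly exceeds $\kappa$. The upper bound $|\alpha - \mathrm{rng}(\#_{\alpha})| \leq \kappa$ is trivial, so the work is in the lower bound. The plan is to exhibit $\kappa$ many non-cardinal ordinals in $\alpha$, using the fact that the map $\gamma \mapsto \gamma + 1$ injects all but at most one element of $\alpha$ into the set of successor ordinals below $\alpha$, yielding $\kappa$ many successors; discarding the countably many below $\omega + 1$ leaves $\kappa$ many, each of which is automatically a non-cardinal. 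This dispatches the lower bound uniformly, even in the case where $\kappa$ happens to be a fixed point of the aleph function and hence the number of cardinals $\leq \kappa$ itself equals $\kappa$.

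For the ``if'' direction of part~(iv), the case $\alpha = \beta$ is trivial, and in the case $\alpha, \beta \geq \omega + \omega$ with $|\alpha| = |\beta| = \kappa$ the ranges $\mathrm{rng}(\#_{\alpha})$ and $\mathrm{rng}(\#_{\beta})$ coincide as the set of cardinals $\leq \kappa$; by part~(iii) the two complements both have cardinality $\kappa$, so picking any bijection $\sigma$ between them and extending by the identity on the common range produces a bijection $\pi : \alpha \to \beta$ that fixes every cardinal $\leq \kappa$. The induced map $X \mapsto \pi[X]$ preserves the full powerset relations automatically, and preserves $\#$ since $\pi(\#_{\alpha}(X)) = \#_{\alpha}(X) = |X| = |\pi[X]| = \#_{\beta}(\pi[X])$.

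For the ``only if'' direction, suppose $\pi : \alpha \to \beta$ induces an isomorphism. From $\pi(\#_{\alpha}(X)) = \#_{\beta}(\pi[X]) = |X| = \#_{\alpha}(X)$ it follows that $\pi$ fixes every element of $\mathrm{rng}(\#_{\alpha})$; being a bijection then forces $|\alpha| = |\beta|$, hence $\mathrm{rng}(\#_{\alpha}) = \mathrm{rng}(\#_{\beta})$, and so $\pi$ restricts to a bijection between the two complements, which must therefore have equal cardinality. If $\alpha < \omega + \omega$, write $\alpha = \omega + k + 1$ so that by~(ii) this common cardinality is the finite $k$; then~(iii) rules out $\beta \geq \omega + \omega$, so $\beta = \omega + k' + 1$, and another application of~(ii) gives $k' = k$ and hence $\alpha = \beta$. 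The symmetric case with $\beta < \omega + \omega$ is handled the same way. The main content of the whole proposition is really part~(iii); once that is in hand, the remaining pieces are straightforward applications of the definition of $\mathrm{rng}(\#_{\alpha})$ and bookkeeping about the two complements.
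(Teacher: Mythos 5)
Your proof is correct, and for parts (i)--(iii) and the ``if'' direction of (iv) it is essentially the paper's argument: the same injection $\gamma\mapsto\gamma+1$ into the successor (hence non-cardinal) ordinals establishes the lower bound in (iii), and the same ``bijection fixing $\mathrm{rng}(\#_{\alpha})$ pointwise, extended by direct images'' yields the isomorphism in (iv). The one place you diverge is the ``only if'' direction of (iv): the paper derives a contradiction via elementary equivalence (the structures can express ``there are exactly $k$ elements outside $\mathrm{rng}(\#)$,'' so unequal complement sizes preclude isomorphism), whereas you observe directly that any isomorphism $\pi$ must satisfy $\pi(\#_{\alpha}(X))=\#_{\beta}(\pi[X])=|X|=\#_{\alpha}(X)$ and hence fixes $\mathrm{rng}(\#_{\alpha})$ pointwise, so it restricts to a bijection between the two complements. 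Your version is slightly more structural and is in fact the germ of what the paper proves separately later about automorphisms of these models (that they fix the range of $\#$ pointwise); the paper's version is shorter at this spot but leans on the observation that complement-size is expressible in the language. Both are sound, and your closing remark that part (iii) carries the real content is accurate.
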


\begin{proof}
For~(i), note that restricting attention to ordinals $\alpha$ which are not cardinals serves the purpose of ensuring that $\#(\alpha)<\alpha$, so that $\mathrm{dom}(\#_{\alpha})$ is $P(\alpha)$ and so that $\mathrm{rng}(\#_{\alpha})$ is a subset of $\alpha$. Further, note that $ (\alpha, P(\alpha), P(\alpha^{2}), \ldots, \#_{\alpha})$ satisfies Hume's~Principle by the definition of cardinality. Further, note that by the Power Set Axiom and the Separation Axiom, the structure $(\alpha, P(\alpha), P(\alpha^{2}), \ldots, \#_{\alpha})$ satisfies the full comprehension schema. Hence, in fact $(\alpha, P(\alpha), P(\alpha^{2}),\ldots, \#_{\alpha})$ is a model of ${\tt HP}^{2}$. 

For~(ii), note that $\alpha-\mathrm{rng}(\#_{\alpha})=\{\omega+1, \ldots, \omega+k\}$, which has cardinality $k$.

For~(iii), note that since $\alpha\geq \omega+\omega$, we have that $\alpha-\omega$ is infinite, and hence $\left|\alpha\right|=\left|\alpha-\omega\right|$. Case One: $\alpha$ is a limit ordinal. Then the mapping from $\alpha-\omega$ to $\alpha-\mathrm{rng}(\#_{\alpha})$ given by $\beta\mapsto\beta+1$ is an injection. Case Two: $\alpha$ is a successor ordinal. Then $\alpha=\gamma+n$ where $n>0$ and $\gamma$ is a limit ordinal. Then $\left|\alpha\right|=\left|\alpha-\omega\right|=\left| \gamma-\omega\right|$. Then the mapping from $\gamma-\omega$ to $\alpha-\mathrm{rng}(\#_{\alpha})$ given by $\beta\mapsto \beta+1$ is an injection. Hence in both cases we have $\left|\alpha-\mathrm{rng}(\#_{\alpha})\right|=\left|\alpha\right|$.

For~(iv), suppose that the two structures are isomorphic. Then this isomorphism induces a bijection from $\alpha$ onto $\beta$, and hence $\alpha$ and $\beta$ have the same cardinality. Further, suppose for the sake of contradiction that $\alpha\neq \beta$ and it is not the case that  $\alpha, \beta\geq \omega+\omega$. If $\alpha<\beta<\omega+\omega$, then by part~(ii) we have that $\left|\alpha-\mathrm{rng}(\#_{\alpha})\right|<\left|\beta-\mathrm{rng}(\#_{\beta})\right|<\omega$, and so the two structures are not elementarily equivalent and hence not isomorphic, which is a contradiction. If $\alpha<\omega+\omega\leq \beta$, then by parts~(ii) and~(iii) we have that $\left|\alpha-\mathrm{rng}(\#_{\alpha})\right|<\omega\leq \left|\beta-\mathrm{rng}(\#_{\beta})\right|$, and so the two structures are not elementarily equivalent and hence not isomorphic, which is a contradiction. Hence, in fact, we must have that $\alpha=\beta$ or $\alpha, \beta\geq \omega+\omega$ and $\left|\alpha\right|=\left|\beta\right|$.

Conversely, suppose that $\alpha, \beta\geq \omega+\omega$ have the same cardinality, so that $\mathrm{rng}(\#_{\alpha})=\mathrm{rng}(\#_{\beta})$ by definition, and hence that $\left| \alpha-\mathrm{rng}(\#_{\alpha})\right|=\left|\alpha\right|=\left|\beta\right|=\left|\beta-\mathrm{rng}(\#_{\beta})\right|$ by part~(iii). Hence choose a bijection $f:\alpha\rightarrow \beta$ such that $f(\gamma)=\gamma$ on $\mathrm{rng}(\#_{\alpha})$.  Extend $f$ to a bijection $\overline{f}:  P(\alpha)\rightarrow  P(\beta)$ by setting $\overline{f}(X)=\{f(x): x\in X\}$. Since $f(\gamma)=\gamma$ on $\mathrm{rng}(\#_{\alpha})$ and since $f$ is a bijection, we have that 
\begin{equation}
\overline{f}(\#_{\alpha} (X)) = f(\left|X\right|)= \left|X\right| = \left| \{f(x): x\in X\}\right|=\left|\overline{f}(X)\right|=\#_{\beta}  (\overline{f}(X))
\end{equation}
Hence, $\overline{f}$ is an isomorphism.
\end{proof}


\begin{defn}\label{HPPA:eqn:canon}\index{$H_{\kappa}$, canonical models of ${\tt HP}^{2}$, Definition~\ref{HPPA:eqn:canon}}
If $\kappa$ is a cardinal, then define the ordinal 
\begin{equation}
H_{\kappa}=\begin{cases}
\omega+\kappa+1      & \text{if $\kappa<\omega$}, \\
\omega+\omega      & \text{if $\kappa=\omega$} \\
\kappa+1      & \text{if $\kappa>\omega$}.
\end{cases}
\end{equation}
and define the structure
\begin{equation}
\mathcal{H}_{\kappa}=(H_{\kappa}, P(H_{\kappa}), P(H_{\kappa}^{2}), \ldots, \#_{\kappa})
\end{equation}
where $\#_{\kappa}: P(H_{\kappa})\rightarrow H_{\kappa}$ denotes cardinality.
\end{defn}

\begin{prop}\label{HPPA:prop:newiso2}

\begin{enumerate}

\item[]

\item[(i)] For every ordinal $\alpha$ that is not a cardinal, there is exactly one cardinal $\kappa$ such that the structure  $\mathcal{H}_{\kappa}$ is isomorphic to the structure $(\alpha, P(\alpha), P(\alpha^{2}), \ldots, \#_{\alpha})$, where $\#_{\alpha}: P(\alpha)\rightarrow \alpha$  denotes cardinality. 

\item[(ii)] If $\kappa$ is a cardinal then $\left| H_{\kappa}-\mathrm{rng}(\#_{\kappa})\right|=\kappa$.

\item[(iii)] If $\kappa, \lambda$ are cardinals, then $\mathcal{H}_{\kappa}$ and $\mathcal{H}_{\lambda}$ are isomorphic if and only if $\kappa=\lambda$.

\end{enumerate}
\end{prop}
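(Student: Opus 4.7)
The plan is to derive all three parts as corollaries of Proposition~\ref{HPPA:prop:newiso} by a case analysis on the three clauses in the definition of $H_{\kappa}$. I would prove the parts in the order (ii), (iii), (i), because (ii) provides the invariant needed for (iii), and uniqueness in (i) follows immediately from (iii).

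First I would establish (ii) by splitting into the three cases in Definition~\ref{HPPA:eqn:canon}. If $\kappa<\omega$, then $H_{\kappa}=\omega+\kappa+1$ has exactly the shape required by Proposition~\ref{HPPA:prop:newiso}(ii), which gives $|H_{\kappa}-\mathrm{rng}(\#_{\kappa})|=\kappa$. If $\kappa=\omega$, then $H_{\kappa}=\omega+\omega\geq\omega+\omega$, and Proposition~\ref{HPPA:prop:newiso}(iii) yields $|H_{\kappa}-\mathrm{rng}(\#_{\kappa})|=|H_{\kappa}|=\omega=\kappa$. If $\kappa>\omega$, then $H_{\kappa}=\kappa+1\geq\omega+\omega$, and Proposition~\ref{HPPA:prop:newiso}(iii) again gives $|H_{\kappa}-\mathrm{rng}(\#_{\kappa})|=|\kappa+1|=\kappa$. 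The only mildly delicate point is verifying that the $H_{\kappa}$ produced in the second and third cases really do satisfy $H_{\kappa}\geq\omega+\omega$, which is the hypothesis of Proposition~\ref{HPPA:prop:newiso}(iii); this is immediate but should be remarked on explicitly.

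Next I would prove (iii). The quantity $|H_{\kappa}-\mathrm{rng}(\#_{\kappa})|$ is an isomorphism invariant (since both the domain $H_{\kappa}$ and the function $\#_{\kappa}$ are part of the structure), and by (ii) it equals $\kappa$. Hence $\mathcal{H}_{\kappa}\cong\mathcal{H}_{\lambda}$ forces $\kappa=\lambda$; the converse is trivial.

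Finally, for (i), let $\alpha$ be an ordinal that is not a cardinal. Since every ordinal $\leq\omega$ is a cardinal, we have $\alpha>\omega$. I would then split on whether $\alpha<\omega+\omega$ or $\alpha\geq\omega+\omega$. In the first case, $\alpha$ has the form $\omega+k+1$ for some $k\geq 0$ (since $\omega$ and $\omega+0$ are handled: $\omega$ is a cardinal and we wrote $\alpha>\omega$), so taking $\kappa=k$ gives $H_{\kappa}=\alpha$ and the two structures are literally identical. In the second case, take $\kappa=|\alpha|$: then $H_{\kappa}$ is $\omega+\omega$ or $\kappa+1$, both $\geq\omega+\omega$, and $|H_{\kappa}|=\kappa=|\alpha|$, so Proposition~\ref{HPPA:prop:newiso}(iv) delivers the required isomorphism. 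Uniqueness of $\kappa$ is then immediate from (iii). No step is really an obstacle; the only thing requiring attention is bookkeeping the small cases ($\kappa=0$, $\kappa=\omega$) so the case division in (ii) and the choice of $\kappa$ in (i) fit together cleanly with Proposition~\ref{HPPA:prop:newiso}.
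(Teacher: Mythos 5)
Your proposal is correct, and parts (i) and (ii) track the paper's proof almost exactly (the paper handles the $\kappa<\omega$ and $\kappa=\omega$ cases of (ii) by writing down $H_{\kappa}-\mathrm{rng}(\#_{\kappa})$ explicitly rather than citing Proposition~\ref{HPPA:prop:newiso}, but that is cosmetic). The genuine divergence is in (iii): you deduce $\kappa=\lambda$ from the single isomorphism invariant $\left|H_{\kappa}-\mathrm{rng}(\#_{\kappa})\right|$, whereas the paper runs a four-way case split on $\kappa<\lambda$, arguing non-elementary-equivalence when $\kappa<\omega$ (the structures disagree on ``there are exactly $\kappa$ elements outside $\mathrm{rng}(\#)$'') and comparing cardinalities of the first-order parts when $\kappa\geq\omega$. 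Your route is shorter and uniform; the paper's buys the slightly stronger information that in the small cases the structures are not even elementarily equivalent. The one step you should make explicit in your version of (iii) is why the invariant really is an invariant: an isomorphism of these structures consists of a bijection $f$ on the object sort together with the induced bijection $\overline{f}$ on the (full) power-set sorts, and since $\overline{f}$ is onto $P(H_{\lambda})$ and $f(\#_{\kappa}X)=\#_{\lambda}\overline{f}(X)$, the map $f$ carries $\mathrm{rng}(\#_{\kappa})$ onto $\mathrm{rng}(\#_{\lambda})$ and hence restricts to a bijection of the complements. With that line added, your argument is complete.
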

\begin{proof}
For~(ii), there are three cases. First, suppose that $\kappa=k<\omega$. Then $H_{\kappa}-\mathrm{rng}(\#_{\kappa})=\{\omega+1,\ldots, \omega+k\}$. Second, suppose that $\kappa=\omega$. Then $H_{\kappa}-\mathrm{rng}(\#_{\kappa})=\{\omega+n: 0<n<\omega\}$. Third, suppose that $\kappa>\omega$. Then by Proposition~\ref{HPPA:prop:newiso}~(iii), $\left|H_{\kappa}-\mathrm{rng}(\#_{\kappa})\right|=\left|\kappa+1-\mathrm{rng}(\#)\right| = \left|\kappa+1\right|=\kappa$.

For~(iii), note that the right-to-left direction is trivial. For the left-to-right direction, suppose for the sake of contradiction that $\mathcal{H}_{\kappa}$ and $\mathcal{H}_{\lambda}$ are isomorphic and that $\kappa\neq \lambda$. Then without loss of generality, $\kappa<\lambda$. First suppose that $\kappa<\lambda<\omega$. Then part~(ii) implies that $\mathcal{H}_{\kappa}$ and $\mathcal{H}_{\lambda}$ are not elementarily equivalent, since $\mathcal{H}_{\kappa}$ models that there are exactly $\kappa$ elements not in the range of $\#$, whereas $\mathcal{H}_{\kappa}$ models that there are exactly $\lambda$ elements not in the range of $\#$. Second suppose that $\kappa<\omega\leq \lambda$. Then likewise the structures  $\mathcal{H}_{\kappa}$ and $\mathcal{H}_{\lambda}$ are not elementarily equivalent, since $\mathcal{H}_{\kappa}$ models that there are exactly $\kappa$ many elements not in the range of $\#$, whereas $\mathcal{H}_{\lambda}$ models that there are at least $\kappa+1$ many elements not in the range of $\#$. Third, suppose that $\kappa=\omega<\lambda$. But this cannot happen, since the isomorphism from   $\mathcal{H}_{\kappa}$ and $\mathcal{H}_{\lambda}$  would induce a bijection between the first-order parts of these structures, which, respectively, have cardinality $\omega$ and $\lambda>\omega$. Fourth, suppose that $\omega<\kappa<\lambda$. Again this cannot happen, since the isomorphism from   $\mathcal{H}_{\kappa}$ and $\mathcal{H}_{\lambda}$  would induce a bijection between the first-order parts of these structures, which respectively, have cardinality $\kappa$ and $\lambda>\kappa$.

For~(i), note that uniqueness follows from part~(iii). For existence, there are two cases. If $\alpha<\omega+\omega$, then $\alpha=\omega+k+1$ where $k\geq 0$. Then of course the structure $(\alpha, P(\alpha), P(\alpha^{2}), \ldots, \#_{\alpha})$ is identical with the structure $\mathcal{H}_{k}$. If $\alpha\geq \omega+\omega$, then by Proposition~\ref{HPPA:prop:newiso}~(iv), we have that $(\alpha, P(\alpha), P(\alpha^{2}), \ldots, \#_{\alpha})$ is isomorphic to $H_{\left|\alpha\right|}$.
\end{proof}

\begin{prop}\label{HPPA:prop:newautomorphic}
Suppose that $\kappa$ is a cardinal.
\begin{enumerate}
\item[(i)] If $\beta, \gamma\in (H_{\kappa}-\mathrm{rng}(\#_{\kappa}))$ then there is $f\in \mathrm{Aut}(\mathcal{H}_{\kappa})$ such that $f(\beta)=\gamma$.
\item[(ii)] If $X\subseteq H_{\kappa}$ is $\emptyset$-definable in $\mathcal{H}_{\kappa}$ then $X\subseteq \mathrm{rng}(\#_{\kappa})$ or $(H_{\kappa}-\mathrm{rng}(\#_{\kappa}))\subseteq X$.
\item[(iii)] If $\beta\in \mathrm{rng}(\#_{\kappa})$ and $f\in \mathrm{Aut}(\mathcal{H}_{\kappa})$ then $f(\beta)=\beta$.
\item[(iv)] $\mathrm{Aut}(\mathcal{H}_{\kappa})$ and $\mathrm{Aut}(\kappa)$ are isomorphic, where we view $\kappa$ as a structure in the empty signature.
\end{enumerate}
\end{prop}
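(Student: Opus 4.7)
The plan is to prove the four parts in the order (i), (iii), (ii), (iv), since each later part leans on the earlier ones. The governing idea is that elements of $H_\kappa - \mathrm{rng}(\#_\kappa)$ are ``permutable'' while elements of $\mathrm{rng}(\#_\kappa)$ are cardinal invariants and hence rigid, and that any permutation of the first set lifts canonically to an automorphism of the whole structure $\mathcal{H}_\kappa$.

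For~(i), given $\beta,\gamma\in H_\kappa-\mathrm{rng}(\#_\kappa)$, I would let $f:H_\kappa\to H_\kappa$ be the transposition swapping $\beta$ and $\gamma$ and fixing everything else, and extend $f$ to each relation sort $P(H_\kappa^n)$ by $\overline{f}(R)=\{(f(x_1),\ldots,f(x_n)): (x_1,\ldots,x_n)\in R\}$. The resulting map is a bijection on every sort, so the only non-trivial point is to verify compatibility with $\#_\kappa$, i.e.\ that $f(\#_\kappa X)=\#_\kappa(\overline{f}(X))$ for every $X\in P(H_\kappa)$. Since $f$ is a bijection of $H_\kappa$, $\overline{f}(X)$ has the same cardinality as $X$, so $\#_\kappa(\overline{f}(X))=\#_\kappa(X)$. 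Because $\#_\kappa(X)\in \mathrm{rng}(\#_\kappa)$ while $\beta,\gamma\notin \mathrm{rng}(\#_\kappa)$, $f$ fixes $\#_\kappa(X)$, giving the desired equality.

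For~(iii), if $\beta=\#_\kappa X\in \mathrm{rng}(\#_\kappa)$ and $f\in \mathrm{Aut}(\mathcal{H}_\kappa)$, then writing $\overline{f}$ for the induced action on the relation sorts, I have $f(\beta)=f(\#_\kappa X)=\#_\kappa(\overline{f}(X))=\#_\kappa X=\beta$, where the middle equality uses the fact that $f$ is an isomorphism and the next-to-last equality uses that $\overline{f}$ restricts to a bijection from $X$ onto $\overline{f}(X)$. Part~(ii) then follows immediately: if the $\emptyset$-definable set $X$ contains some $\beta\notin \mathrm{rng}(\#_\kappa)$, then for every $\gamma\in H_\kappa-\mathrm{rng}(\#_\kappa)$ part~(i) supplies an automorphism sending $\beta$ to $\gamma$, so $\gamma\in X$; otherwise $X\subseteq \mathrm{rng}(\#_\kappa)$.

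For~(iv), I would define $\Phi:\mathrm{Aut}(\mathcal{H}_\kappa)\to \mathrm{Sym}(H_\kappa-\mathrm{rng}(\#_\kappa))$ by restriction, which is well-defined by~(iii) and clearly a group homomorphism. Injectivity: if $\Phi(f)=\mathrm{id}$ then by~(iii) $f$ is the identity on $H_\kappa$, and then $\overline{f}$ is the identity on every relation sort. Surjectivity: given any permutation $g$ of $H_\kappa-\mathrm{rng}(\#_\kappa)$, extend by the identity on $\mathrm{rng}(\#_\kappa)$ to a bijection $f$ of $H_\kappa$, and lift to $\overline{f}$ on each $P(H_\kappa^n)$; the compatibility check $f(\#_\kappa X)=\#_\kappa(\overline{f}(X))$ goes through exactly as in~(i) since $\#_\kappa(\overline{f}(X))=|X|=\#_\kappa X$ and $f$ fixes $\mathrm{rng}(\#_\kappa)$ pointwise. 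Thus $\Phi$ is a group isomorphism, and since $\left|H_\kappa-\mathrm{rng}(\#_\kappa)\right|=\kappa$ by Proposition~\ref{HPPA:prop:newiso2}~(ii), the codomain $\mathrm{Sym}(H_\kappa-\mathrm{rng}(\#_\kappa))$ is isomorphic to $\mathrm{Aut}(\kappa)$, the symmetric group on a set of size $\kappa$. The main obstacle throughout is the bookkeeping needed to confirm that a permutation of the first-order sort genuinely lifts to an automorphism commuting with $\#_\kappa$, but this reduces in every case to the trivial fact that a bijection preserves cardinality.
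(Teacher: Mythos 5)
Your proof is correct, and parts (i), (ii), and (iv) follow essentially the same route as the paper: the transposition lifted by $\overline{f}(X)=\{f(x):x\in X\}$ for (i), the homogeneity argument for (ii), and for (iv) the identification of $\mathrm{Aut}(\mathcal{H}_{\kappa})$ with the permutations of $H_{\kappa}-\mathrm{rng}(\#_{\kappa})$ combined with Proposition~\ref{HPPA:prop:newiso2}~(ii) --- the only cosmetic difference being that you restrict automorphisms down to the complement of the range while the paper extends permutations of the complement up to automorphisms. Where you genuinely diverge is part~(iii). The paper argues by contradiction: it first shows that the ordering $\leq$ on $\mathrm{rng}(\#_{\kappa})$ is $\emptyset$-definable (equation~(\ref{eqndsafdafdasf3233})), so that any automorphism moving $\beta\in\mathrm{rng}(\#_{\kappa})$ would, by iteration of $f$ or $f^{-1}$, produce an infinite descending sequence of ordinals. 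Your argument is a direct two-line computation: $f(\#_{\kappa}X)=\#_{\kappa}(\overline{f}(X))=\#_{\kappa}(X)$, the first equality because $f$ commutes with the function symbol and the second because $f\upharpoonright X$ is a bijection onto $\overline{f}(X)$. This is more elementary --- it needs no well-foundedness and no definability of the order --- and it is the same compatibility check you already perform in (i) and (iv), so it unifies the proof; what the paper's detour buys is the explicit definability of $\leq$ on the range, which it reuses later (Remark~\ref{indepndendasfd8788}) to manufacture sentences independent of ${\tt HP}^{2}$. One small caveat worth making explicit in your write-up: the step ``$\overline{f}$ is determined by $f$ and restricts to a bijection of $X$ onto $\overline{f}(X)$'' relies on the convention that the membership relations $E_{n}$ are interpreted absolutely, which the paper fixes in \S~\ref{HPPA:sec:342314213}; with that in hand your argument is complete.
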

\begin{proof}
(i) Let $f:H_{\kappa}\rightarrow H_{\kappa}$ by setting $f(\gamma)=\beta$, $f(\beta)=\gamma$, and let $f$ be the identity otherwise, so that $f$ is a bijection of $H_{\kappa}$. Extend $f$ to a mapping $\overline{f}:\mathcal{H}_{\kappa}\rightarrow \mathcal{H}_{\kappa}$ by setting $\overline{f}(X)=\{f(x): x\in X\}$. Then $\overline{f}$ is clearly a bijection since $f$ is a bijection. To show that it is an automorphism of the structure $\mathcal{H}_{\kappa}$, it suffices to show that $\overline{f}(\#_{\kappa}X)=\#_{\kappa}\overline{f}(X)$. But, since $f$ is the identity on $\mathrm{rng}(\#_{\kappa})$, we have that $\overline{f}(\#_{\kappa}X)=f(\#_{\kappa}X)=\#_{\kappa}X$, and since $f$ is a bijection, we have that $f\upharpoonright X: X\rightarrow \overline{f}(X)$ is a bijection, and so $\#_{\kappa}X=\#_{\kappa}\overline{f}(X)$. Hence, in fact $\overline{f}$ is an automorphism of $\mathcal{H}_{\kappa}$ which sends $\beta$ to $\gamma$.

(ii) Suppose that $X\subseteq H_{k}$ is $\emptyset$-definable in $\mathcal{H}_{\kappa}$, but it is not the case that $X\subseteq \mathrm{rng}(\#_{\kappa})$ or $(H_{\kappa}-\mathrm{rng}(\#_{\kappa}))\subseteq X$. Then there is $\beta\in X\cap (H_{\kappa}-\mathrm{rng}(\#_{\kappa}))$ and $\gamma\in (H_{\kappa}-\mathrm{rng}(\#_{\kappa}))\cap (H_{\kappa}-X)$. By part~(i), there is $f\in \mathrm{Aut}(\mathcal{H}_{\kappa})$ such that $f(\beta)=\gamma$. But since $X$ is $\emptyset$-definable, we have that $\beta\in X$ if and only if $\gamma=f(\beta)\in X$, which is a contradiction.

(iii) Suppose that $\beta\in \mathrm{rng}(\#_{\kappa})$ and $f\in \mathrm{Aut}(\mathcal{H}_{\kappa})$ and $f(\beta)\neq \beta$. Since $\mathrm{rng}(\#_{\kappa})$ is $\emptyset$-definable and $\beta\in \mathrm{rng}(\#_{\kappa})$, we have that $f(\beta)\in \mathrm{rng}(\#_{\kappa})$. Case One: $f(\beta)<\beta$. Note that the relation $<$ on $\mathrm{rng}(\#_{\kappa})$ is $\emptyset$-definable, since on $\mathrm{rng}(\#_{\kappa})$ we have
\begin{equation}\label{eqndsafdafdasf3233}
\lambda\leq \lambda^{\prime} \Longleftrightarrow \mathcal{H}_{\kappa}\models \exists \; X \; \exists \; Y \; \#_{\kappa}(X)=\lambda \; \& \; \#_{\kappa}(Y)=\lambda^{\prime} \; \& \; \exists \; \mbox{ injective } f: X\rightarrow Y
\end{equation}
Then our case assumption $f(\beta)<\beta$ implies $f(f(\beta))<f(\beta)<\beta$ and so we obtain an infinite decreasing sequence of ordinals, which is a contradiction. Case Two: $\beta<f(\beta)$. Since $f\in \mathrm{Aut}(\mathcal{H}_{\kappa})$ we have that $f^{-1}\in \mathrm{Aut}(\mathcal{H}_{\kappa})$, and since $\beta<f(\beta)$ we have $f^{-1}(\beta)<\beta$, since again the relation $<$ on  $\mathrm{rng}(\#_{\kappa})$ is $\emptyset$-definable. Hence, by iterating $f^{-1}(f^{-1}(\beta))<f^{-1}(\beta)<\beta$ as before, we again obtain an infinite decreasing sequence of ordinals, which is a contradiction.

(iv) If $X$ is a set viewed as a structure in the empty signature, then $\mathrm{Aut}(X)$ is just the set of permutations of $X$, and hence if $X$ and $Y$ have the same cardinality, then $\mathrm{Aut}(X)$ and $\mathrm{Aut}(Y)$ are isomorphic as groups. Hence by Proposition~\ref{HPPA:prop:newiso2}~(ii), we have that $\mathrm{Aut}(\kappa)$ and $\mathrm{Aut}(H_{\kappa}-\mathrm{rng}(\#))$ are isomorphic as groups. So it suffices to find a group isomorphism $F: \mathrm{Aut}(H_{\kappa}-\mathrm{rng}(\#))\rightarrow \mathrm{Aut}(\mathcal{H}_{\kappa})$.

To this end, given a bijection $f:H_{\kappa}\rightarrow H_{\kappa}$, extend $f$ to a mapping $\overline{f}:\mathcal{H}_{\kappa}\rightarrow \mathcal{H}_{\kappa}$ by setting $\overline{f}(X)=\{f(x): x\in X\}$, so that $\overline{f}:\mathcal{H}_{\kappa}\rightarrow \mathcal{H}_{\kappa}$ is a bijection. Then we claim that
\begin{equation}\label{HPPA:eqn:equal}
\overline{f}\in \mathrm{Aut}(\mathcal{H}_{\kappa}) \Longleftrightarrow f\upharpoonright (\mathrm{rng}(\#_{\kappa}))= \mathrm{id}_{\mathrm{rng}(\#_{\kappa})}
\end{equation}
The left-to-right direction follows directly from part~(iii). For the right-to-left direction, it suffices to show that $\overline{f}(\#_{\kappa}X)=\#_{\kappa}\overline{f}(X)$. Since $f$ is the identity on $\mathrm{rng}(\#_{\kappa})$, we have that $\overline{f}(\#_{\kappa}X)=f(\#_{\kappa}X)=\#_{\kappa}X$, and since $f$ is a bijection, we have that $f\upharpoonright X: X\rightarrow \overline{f}(X)$ is a bijection, and so $\#_{\kappa}X=\#_{\kappa}\overline{f}(X)$. Hence, equation~(\ref{HPPA:eqn:equal}) does hold, and so we can define $F:  \mathrm{Aut}(H_{\kappa}-\mathrm{rng}(\#_{\kappa})) \rightarrow \mathrm{Aut}(\mathcal{H}_{\kappa})$ by setting $F(g)=\overline{f}$, where $f$ is $g$ on $H_{\kappa}-\mathrm{rng}(\#_{\kappa})$ and where $f$ is the identity on $\mathrm{rng}(\#_{\kappa})$. Since $F(g_{1}\circ g_{2})=F(g_{1})\circ F(g_{2})$, we have that $F$ witnesses the group isomorphism between $\mathrm{Aut}(H_{\kappa}-\mathrm{rng}(\#_{\kappa}))$ and $\mathrm{Aut}(\mathcal{H}_{\kappa})$.
\end{proof}

\begin{rmk}\label{indepndendasfd8788}
The proof of the theorem above shows one how to construct many natural examples of sentences that are independent of ${\tt HP}^{2}$. For instance, in equation~(\ref{eqndsafdafdasf3233}), it was shown how to define the ordering in $\mathcal{H}_{\kappa}$. Using this, one can form a sentence $\varphi$ such that $\mathcal{H}_{\kappa}\models \varphi$ if and only if $\kappa$ is an infinite  successor cardinal, so that $\mathcal{H}_{\omega_{2}}\models {\tt HP}^{2}+\varphi$ and $\mathcal{H}_{\omega_{\omega}}\models {\tt HP}^{2}+\neg \varphi$. This contrasts starkly with the case of ${\tt PA}^{2}$, where there are comparatively few known examples of natural independent sentences.
\end{rmk}

\begin{rmk}
The structures $\mathcal{H}_{\kappa}$ for $\kappa<\omega$ from Definition~\ref{HPPA:eqn:canon} are on one level very different: for, they are not elementarily equivalent since $\mathcal{H}_{\kappa}$ models that there are exactly $\kappa$-many elements that are not in the range of the~$\#$-function. However, on another level, these structures are very similar to each other: for, when $\kappa<\omega$, it is easy to see that $\mathcal{H}_{\kappa}$ is isomorphic to the structure $(\omega, P(\omega), P(\omega^{2}), \ldots, \#_{\kappa}^{\ast})$, where $\#_{\kappa}^{\ast}(X)=0$ if $X$ is infinite and where $\#_{\kappa}^{\ast}(X)=\kappa+1+\left|X\right|$ if $X$ is finite. Further, when one restricts to the ranges of the $\#_{\kappa}^{\ast}$-functions, the induced structures $(\mathrm{rng}(\#_{\kappa}^{\ast}), P(\omega)\cap P(\mathrm{rng}(\#_{\kappa}^{\ast})), P(\omega)\cap P(\mathrm{rng}(\#_{\kappa}^{\ast})^{2}), \ldots, \#_{\kappa}^{\ast})$ are all isomorphic to the structure $(\omega, P(\omega), P(\omega^{2}), \ldots, \#^{\ast})$ where $\#^{\ast}(X)=0$ if $X$ is infinite and where $\#^{\ast}(X)=1+\left|X\right|$ if $X$ is finite. As the next theorem indicates, this is a very general phenomenon among models of ${\tt HP}^{2}$: namely, so long as different $\#$-functions on one and the same underlying set can in some sense see each other, they yield isomorphic structures when one restricts attention to their ranges.
\end{rmk}

\begin{prop}\label{hppa:youcantchangeme}
Suppose that $(M, S_{1}, S_{2}, \ldots, \#_{1}, \#_{2})$ is a structure where $S_{n}\subseteq P(M^{n})$ and where $\#_{i}:S_{1}\rightarrow M$. Suppose further that the structures $(M, S_{1}, S_{2}, \ldots, \#_{i})$ are models of ${\tt HP}^{2}$ for $i\in \{1,2\}$, and further that the structure  $(M, S_{1}, S_{2}, \ldots, \#_{1}, \#_{2})$ satisfies every instance of the comprehension schema~(\ref{HPPA:introoo:comp333}), in the signature that includes both of the function symbols $\#_{1}, \#_{2}$. Finally, for $i\in \{1,2\}$, define the following induced structure:
\begin{equation}
\mathcal{N}_{i}=(\mathrm{rng}(\#_{i}), S_{1}\cap P(\mathrm{rng}(\#_{i})), S_{2}\cap P(\mathrm{rng}(\#_{i})^{2}), \ldots, \#_{i})
\end{equation}
Then $\mathcal{N}_{1}$ and $\mathcal{N}_{2}$ are isomorphic models of ${\tt HP}^{2}$.
\end{prop}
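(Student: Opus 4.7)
The plan is first to verify that each $\mathcal{N}_i$ is a model of ${\tt HP}^{2}$, and then to exhibit an explicit isomorphism induced by the assignment $\#_{1}X\mapsto \#_{2}X$. For the first task, Hume's Principle holds in $\mathcal{N}_i$ because for $X,Y\in S_1\cap P(\mathrm{rng}(\#_i))$ any bijection supplied by HP in the ambient structure $(M,S_1,S_2,\ldots,\#_i)$ already has its graph inside $\mathrm{rng}(\#_i)^2$, and conversely any bijection in $\mathcal{N}_i$ is already a bijection in the ambient structure. The comprehension schema for $\mathcal{N}_i$ follows by relativizing quantifiers in a given formula---object quantifiers to $\mathrm{rng}(\#_i)=\{x:\exists\, X\;\#_i X=x\}$ and $n$-ary relation quantifiers to subsets of $\mathrm{rng}(\#_i)^n$---and then applying comprehension in the ambient structure, which automatically produces a relation lying inside $\mathrm{rng}(\#_i)^n$.

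For the isomorphism, define $f:\mathrm{rng}(\#_1)\to \mathrm{rng}(\#_2)$ by $f(\#_1 X)=\#_2 X$. Well-definedness and injectivity both follow from applying HP for $\#_1$ and $\#_2$ symmetrically: the equality $\#_1 X=\#_1 Y$ yields, via HP for $\#_1$, a bijection from $X$ to $Y$ lying in $S_2$, which via HP for $\#_2$ delivers $\#_2 X=\#_2 Y$, and the reverse direction gives injectivity; surjectivity is immediate. Extend $f$ to relations by setting $\bar{f}(R)=\{(f(a_1),\ldots,f(a_n)): (a_1,\ldots,a_n)\in R\}$ for $R\in S_n\cap P(\mathrm{rng}(\#_1)^n)$; the analogous construction with $f^{-1}$ furnishes the inverse, so $\bar{f}$ is a bijection between the two hierarchies of relations. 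Preservation of the membership relations is automatic from the pointwise definition, while preservation of $\#$ reduces to showing $\#_2 X=\#_2 \bar{f}(X)$ for $X\in S_1\cap P(\mathrm{rng}(\#_1))$; here $f\upharpoonright X$ is a bijection between $X$ and $\bar{f}(X)=f[X]$, so HP for $\#_2$ yields the equality.

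The crux of the argument---and the step where the hypothesis that comprehension holds in the \emph{combined} signature is indispensable---is verifying that the auxiliary sets used in the construction actually lie in the appropriate $S_n$. In particular, the graph $P=\{(a,b): \exists\, X\;(\#_1 X=a\,\wedge\,\#_2 X=b)\}$ of $f$ is defined by a formula that essentially mixes $\#_1$ and $\#_2$, so its membership in $S_2$ requires comprehension in the extended signature; the images $\bar{f}(R)$ and the restrictions $f\upharpoonright X$ witnessing preservation of $\#$ are then obtained from $P$ by further applications of comprehension. Without this hypothesis, the map $\bar{f}$ could well fail to have its image inside the relevant relation sorts, and so $\mathcal{N}_1$ and $\mathcal{N}_2$ would not even be guaranteed to be comparable as structures in the same signature.
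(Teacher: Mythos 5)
Your proposal is correct and follows essentially the same route as the paper's proof: the map $\#_{1}X\mapsto\#_{2}X$, its well-definedness and bijectivity via Hume's Principle for each $\#_{i}$, the use of comprehension in the combined signature to place its graph in $S_{2}$, and the extension to an isomorphism of the induced structures. You additionally spell out the (routine) verification that each $\mathcal{N}_{i}$ models ${\tt HP}^{2}$, which the paper leaves implicit; otherwise the arguments coincide.
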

\begin{proof}
First we define a bijection $\Gamma: \mathrm{rng}\#_{1}\rightarrow \mathrm{rng}\#_{2}$. If $\#_{1}X\in \mathrm{rng}\#_{1}$ where $X\in S_1$, then we define $\Gamma(\#_{1}X)=\#_{2}X$. Note that $\Gamma: \mathrm{rng}\#_{1}\rightarrow \mathrm{rng}\#_{2}$ is well-defined: if $\#_{1}X=\#_{1}Y$ then we need to show that $\#_{2} X=\#_{2} Y$. This follows, since
\begin{equation}
\#_{1}X=\#_{1}Y \Longrightarrow [\exists \; \mbox{ bijection } f:X\rightarrow Y] \Longrightarrow \#_{2}X=\#_{2}Y
\end{equation}
Next, note that  $\Gamma: \mathrm{rng}\#_{1}\rightarrow \mathrm{rng}\#_{2}$ is injective:
\begin{equation}
\Gamma(\#_{1}X)=\Gamma(\#_{1}Y)\Longrightarrow \#_{2}X=\#_{2}Y \Longrightarrow [\exists \; \mbox{ bijection } f:X\rightarrow Y] \Longrightarrow \#_{1}X=\#_{1}Y
\end{equation}
Finally, note that $\Gamma: \mathrm{rng}\#_{1}\rightarrow \mathrm{rng}\#_{2}$ is surjective: if $\#_{2}X\in \mathrm{rng}\#_{2}$ then by definition $\Gamma(\#_{1} X)=\#_{2} X$. Hence, in fact  $\Gamma: \mathrm{rng}\#_{1}\rightarrow \mathrm{rng}\#_{2}$ is a bijection. Further, note that the graph of $\Gamma$ is in $S_{2}$ since one has the equality  
\begin{equation}
\mathrm{graph}(\Gamma)=\{(x,y)\in M^{2}: \exists \; Z \; \#_{1}(Z)=x \; \& \; \#_{2}(Z)=y\}
\end{equation}
and since it was assumed that the structure  $(M, S_{1}, S_{2}, \ldots, \#_{1}, \#_{2})$ satisfies every instance of the comprehension schema~(\ref{HPPA:introoo:comp333}) in the signature that includes both of the function symbols $\#_{1}, \#_{2}$. Now, extend to $\overline{\Gamma}: \mathcal{N}_{1}\rightarrow \mathcal{N}_{2}$ by setting $\overline{\Gamma}(X)=\{\Gamma(x): x\in X\}$, which exists in $S_{1}$ since the graph of $\Gamma$ is in $S_{2}$. Then $\overline{\Gamma}: \mathcal{N}_{1}\rightarrow \mathcal{N}_{2}$ is an isomorphism, because 
\begin{equation}
\overline{\Gamma}(\#_{1}X)=\Gamma(\#_{1}X)=\#_{2} X = \#_{2} \{\Gamma(x): x\in X\} = \#_{2} \overline{\Gamma}(X),
\end{equation}
where the first and second equalities follow respectively from the definitions of $\overline{\Gamma}$ and $\Gamma$, and where the third equality follows from the fact that $\Gamma: X\rightarrow \{\Gamma(x): x\in X\}$ is a bijection whose graph is in $S_{2}$, and where the last equality follows from the definition of~$\overline{\Gamma}$.
\end{proof}

\begin{rmk}\label{rmk:hppa:youcantchangeme}
The previous proposition can be thought of as an analogue of the relative categoricity results for models of ${\tt PA}^{2}$. In the 19th Century, Dedekind showed that any two models $(M,+,\times, P(M), P(M^{2}),\ldots)$ and $(N,\oplus, \otimes, P(N), P(N^{2}),\ldots)$ of ${\tt PA}^{2}$ are isomorphic (\cite{Dedekind1888} \S~132, cf. Shapiro~\cite{Shapiro1991} Theorem 4.8 p.~82). However, it is not difficult to see that Dedekind's result can be relativized, in the following way: if $(M,+,\times, \oplus, \otimes, S_{1}, S_{2},\ldots)$ is a structure where $S_{n}\subseteq P(M^{n})$ such that $(M,+,\times, S_{1}, S_{2}, \ldots)$ and $(M,\oplus,\otimes, S_{1}, S_{2}, \ldots)$ are models of ${\tt PA}^{2}$ and such that $(M,+,\times, \oplus, \otimes, S_{1}, S_{2},\ldots)$ satisfies every instance of the comprehension schema~(\ref{HPPA:introoo:comp333}) in the signature of $+,\times,\oplus,\otimes$, then $(M,+,\times, S_{1}, S_{2}, \ldots)$ and $(M,\oplus,\otimes, S_{1}, S_{2}, \ldots)$ are isomorphic (cf. Parsons~\cite{Parsons2008} \S~49 pp.~279~ff). The previous proposition is simply the analogue of this phenomenon in the setting of ${\tt HP}^{2}$.
\end{rmk}

\subsection{The Mutual Interpretability of ${\tt PA}^{2}$ and ${\tt HP}^{2}$}\label{hppa:subsec:frege19}

The goal of this section is to present a brief and self-contained proof of the result that ${\tt PA}^{2}$ is mutually interpretable with ${\tt HP}^{2}$ (Corollary~\ref{HPPA:thm:howweroll57}). One half of this result, namely, the interpretability of ${\tt HP}^{2}$ in ${\tt PA}^{2}$ is due to Boolos (Corollary~\ref{hppa:thm:boolos}). The other half of the result, namely, the interpretability of ${\tt PA}^{2}$ in ${\tt HP}^{2}$ is now called Frege's Theorem, namely (Corollary~\ref{HPPA:thm:howweroll}). The proof of Frege's Theorem can be broken down into two steps: first, the proof that ${\tt PA}^{2}$ is interpretable in the theory consisting of (Q1)-(Q2) and the comprehension schema~(\ref{HPPA:eqn:pacompsche}) (cf. Theorem~\ref{HPPA:thm:howweroll3}), and second the argument that this latter theory is interpretable in ${\tt HP}^{2}$ (cf. Theorem~\ref{HPPA:thm:howweroll2}). Elements of the first step can be found in Dedekind (cf. \cite{Dedekind1888} \S~72), and elements of this second step can be traced back to Frege (cf. Boolos and Heck \cite{Boolos1998aa}). 

However, the modern presentation stems from Wright~\cite{Wright1983} pp.~154-169 (cf. also Boolos~\cite{Boolos1996aa}). The warrant for including a proof of this result here is two-fold: (i)~the proof presented here is slightly briefer than other published presentations, and (ii)~the proof presented here is slightly different from other published presentations in that it is centered around the notion of Dedekind-finiteness, defined in terms of the lack of injective non-surjective functions, as opposed to Frege's ancestral notion (cf. the relation $X\nprec X$ in Proposition~\ref{HPPA:prop24123423141243} and Theorem~\ref{HPPA:thm:howweroll2}).

The observations recorded in this section about the $\Pi^{1}_{n}$-comprehension schema are due to Heck (\cite{Heck2000ab} p.~192) and Linnebo (\cite{Linnebo2004} p.~161). The trick of defining the graph of addition and multiplication in terms of its initial segments in the proof of Theorem~\ref{HPPA:thm:howweroll3} is adapted from Burgess~and~Hazen~\cite{Burgess1998}~pp.~6-10, although their concern there was not with Frege's Theorem.

\begin{thm}\label{HPPA:thm:howweroll3}
${\tt PA}^{2}$ is interpretable in the theory consisting of (Q1)-(Q2) and the comprehension schema~(\ref{HPPA:eqn:pacompsche}). More generally, ${\tt \Pi^{1}_{n}- CA}_{0}$ is interpretable in the theory consisting of (Q1)-(Q2) and the comprehension schema~(\ref{HPPA:eqn:pacompsche}) restricted to $\Pi^{1}_{n}$-formulas for $n>0$.
\end{thm}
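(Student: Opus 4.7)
The plan is to carry out Dedekind's construction of $\mathbb{N}$ inside the host theory. I define the interpreted domain of natural numbers by the $\Pi^{1}_{1}$-formula
\[
N(x) \;\Longleftrightarrow\; \forall \; X \; \bigl[\, 0 \in X \;\&\; \forall \; y \; (y \in X \rightarrow sy \in X) \;\rightarrow\; x \in X \,\bigr].
\]
Under $\Pi^{1}_{n}$-comprehension with $n \geq 1$ (or the full schema), the set $N$ exists; I would interpret the set sort of ${\tt PA}^{2}$ as $\{X : X \subseteq N\}$, keeping $0$ and $s$ as given. Axiom~(Q3) on $N$ is obtained by applying the defining clause of $N$ to the set $\{y \in N : y = 0 \vee \exists \; w \in N \; y = sw\}$. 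The mathematical induction axiom~(\ref{HPPA:eqn:MIaxiom}) on the interpretation is immediate from the definition of $N$.

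The main work is to interpret $+$, $\cdot$, and $\leq$. Here I would use the Burgess--Hazen trick of defining the graphs through their \emph{unique} initial segments. Call a ternary relation $P$ on $N$ a \emph{partial addition table up to $b \in N$} if $P(a, 0, a)$ holds for every $a \in N$; $P$ is functional in its first two coordinates; its second-coordinate projection equals $\{c \in N : c \leq_{D} b\}$, where $\leq_{D}$ is the Dedekind ordering ``every successor-closed set containing $a$ contains $b$''; and $P(a, c, d) \wedge c <_{D} b \rightarrow P(a, sc, sd)$. By induction on $b \in N$ one shows existence and uniqueness of such a $P$ and that these tables cohere under restriction. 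Addition on $N$ is then interpreted by the $\Sigma^{1}_{1}$-formula
\[
\theta_{+}(a, b, c) \;\Longleftrightarrow\; \exists \; P \; \bigl[\, P \text{ is a partial addition table up to } b \;\wedge\; P(a, b, c) \,\bigr],
\]
which by uniqueness is provably equivalent to the $\Pi^{1}_{1}$-dual obtained by swapping $\exists$ with $\forall$ and $\wedge$ with $\rightarrow$. Multiplication is handled analogously, and (Q8) is defined directly from $\theta_{+}$. Axioms~(Q4)--(Q7) on $N$ then follow by routine induction.

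Finally, for the comprehension schema of the interpreted theory, I would verify that relativizing any formula $\varphi$ of the ${\tt PA}^{2}$-signature -- bounding object quantifiers to $N$, bounding set quantifiers to subsets of $N$, and replacing atomic occurrences of $+$, $\cdot$, $\leq$ with $\theta_{+}$, $\theta_{\cdot}$, $\theta_{\leq}$ -- does not raise analytical complexity when $n \geq 1$: the bounding clauses are $\Pi^{1}_{1}$ and the function-symbol replacements are $\Delta^{1}_{1}$, both absorbed into any $\Pi^{1}_{n}$-class. Hence $\Pi^{1}_{n}$-comprehension in the host yields $\Pi^{1}_{n}$-comprehension in the interpretation, and the unrestricted schema yields the unrestricted schema. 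The main obstacle is producing $\theta_{+}$ and $\theta_{\cdot}$ as genuinely $\Delta^{1}_{1}$-definable relations without any appeal to a $\Sigma^{1}_{1}$-choice principle; the initial-segment device sidesteps this precisely because each partial table is forced to be unique by induction on its upper endpoint, so the $\Sigma^{1}_{1}$ and $\Pi^{1}_{1}$ descriptions of the graph coincide without a choice principle.
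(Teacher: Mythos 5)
Your proposal is correct and follows essentially the same route as the paper: define $N$ as the intersection of all inductive sets, relativize everything to $N$, and obtain $+$ and $\cdot$ via the Burgess--Hazen device of initial segments of their graphs, with uniqueness making the $\Sigma^{1}_{1}$ and $\Pi^{1}_{1}$ descriptions coincide. The only point to watch is that your clause requiring the second-coordinate projection of a partial table to equal $\{c : c \leq_{D} b\}$ smuggles the $\Pi^{1}_{1}$ relation $\leq_{D}$ inside the existential quantifier over $P$, which would naively push $\theta_{+}$ above $\Delta^{1}_{1}$ for the $n=1$ case; the paper avoids this by requiring only the arithmetical conditions $(x,0,x)\in G$ and downward closure, and you should drop or weaken that clause accordingly.
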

\begin{proof}
Suppose that we are working with structure $\mathcal{M}=(M,S_{1}, S_{2}, \ldots, 0,s)$ that satisfies (Q1)-(Q2) and the comprehension schema~(\ref{HPPA:eqn:pacompsche}) restricted to $\Pi^{1}_{n}$-formulas for $n>0$. In what follows, we will refer respectively to the element $0$ and the function $s$ as ``zero'' and ``successor.'' It must be shown how to uniformly define a model of ${\tt \Pi^{1}_{n}- CA}_{0}$  within this structure. We say that $X$ in $S_{1}$ is {\it inductive} if it contains zero and is closed under successor. Let $N$ be the intersection of all the inductive sets $X$ in $S_{1}$, which exists in $S_{1}$ by $\Pi^{1}_{1}$-comprehension. Note that zero is in $N$ by construction, and note that $N$ is closed under successor: for, if $a$ is in $N$ then $a$ is contained in every inductive set $X$, and by definition of inductive sets, it follows that the successor of $a$ is contained in every inductive set $X$, which is to say that the successor of $a$ is in $N$.

Hence, we can define the structure $\mathcal{N}=(N,S_{1}\cap P(N), S_{2}\cap P(N^{2}), \ldots, 0,s)$ uniformly within $\mathcal{M}$. This structure then satisfies (Q1)-(Q2) since $\mathcal{M}$ satisfies (Q1)-(Q2). Further, $\mathcal{N}$ satisfies the Mathematical Induction Axiom~(\ref{HPPA:eqn:MIaxiom}), since if $F\in S_{1}\cap P(N)$ contains zero and is closed under successor, then $F\in S_{1}$ contains zero and is closed under successor, and so by definition of $N$, it follows that $N\subseteq F\subseteq N$. For (Q3), let $X$ be the subset of $N$ for which the conclusion holds, i.e., $X=\{a\in N: a\neq 0\rightarrow \exists \; w \in N \; x=sw\}$. Clearly zero is in $X$, and suppose that $a\in X\subseteq N$: then of course $sa=sw$ for some $w\in N$, namely $w=a$, and hence $sa\in X$. Hence, by the Mathematical Induction Axiom~(\ref{HPPA:eqn:MIaxiom}), it follows that $X=N$. Finally, before turning to the remainder of the axioms of Robinson's~Q, note that since $\mathcal{M}$ satisfies $\Pi^{1}_{n}$-comprehension, we have that $\mathcal{N}$ satisfies $\Pi^{1}_{n}$-comprehension as well, since the second-order parts of $\mathcal{N}$ are just the second-order parts of $\mathcal{M}$ restricted to subsets of $N$.

To verify axioms Q4-Q5 of Robinson's~Q, we must first define addition. Let $x+y=z$ if and only if there is a graph of a partial function $G\subseteq N^{3}$ such that $(x,y,z)\in G\subseteq N^{3}$ and
\begin{equation}\label{HPPA:eqn:dfadsfasdfadsf33}
(x,0,x)\in G \; \& \;  [(x,sy,z)\in G \rightarrow \; \exists \; w \; sw=z \; \&\; (x,y,w)\in G]
\end{equation}
That is, we define the graph of addition as the union of its initial segments. Note that this graph of addition exists by the $\Pi^{1}_{1}$-Comprehension Schema. Further, note that addition is well-defined on its domain. Suppose that $G_{0}$ and $G_{1}$ are partial functions which satisfy equation~(\ref{HPPA:eqn:dfadsfasdfadsf33}) and fix an arbitrary $x$ and let $Y = \{y\in N : \forall\; \; z_{0}, z_{1} \; (x,y,z_{0})\in G_{0} \; \&\;  (x,y,z_{1})\in G_{1} \rightarrow z_{0}=z_{1}\}$. Clearly, $0\in Y$ and if $y\in Y$ and $(x, sy, z_{0})\in G_{0}$ and $(x, sy, z_{1})\in G_{1}$ then there is $w_{0}, w_{1}$ such that $sw_{0}=z_{0}$ and $sw_{1}=z_{1}$ and $(x,y,w_{0})\in G_{0}$ and $(x,y,w_{1})\in G_{1}$. Then since $y\in Y$ we have $w_{0}=w_{1}$ and hence $z_{0}=sw_{0}=sw_{1}=z_{1}$. Hence, in fact, addition is a well-defined function on its domain. To show that it is a total function, fix an arbitrary $x$ and let $Y = \{y\in N: \exists \; z\; x+y=z\}$. Clearly, $0\in Y$, since we can choose $G=\{(x,0,x)\}$. Suppose that $y\in Y$, say, with $(x,y,z)\in G$. To see that $sy\in Y$, set $G^{\prime} = G\cup \{(x,sy,sz)\}$. Then clearly $G^{\prime}$ also satisfies equation~(\ref{HPPA:eqn:dfadsfasdfadsf33}). Hence, in fact, addition is a total function. Finally, the verification of Q4 and Q5 follows directly from our construction in equation~(\ref{HPPA:eqn:dfadsfasdfadsf33}). To verify Q6-Q7, just define multiplication analogously.
\end{proof}

\begin{rmk}
Hence, it remains to show that the theory consisting of (Q1)-(Q2) and the comprehension schema~(\ref{HPPA:eqn:pacompsche}) is interpretable in ${\tt HP}^{2}$. In preparation for this result (Theorem~\ref{HPPA:thm:howweroll2}), we first record some elementary considerations in the following proposition.
\end{rmk}

\begin{prop}\label{HPPA:prop24123423141243}
Suppose that $(M, S_{1}, S_{2}, \ldots, \#)$ models ${\tt AHP}_{0}$. For $X,Y$ in $S_{1}$, define $X\prec Y$ if and only if there is injective non-surjective function $f:X\rightarrow Y$ such that $\mathrm{graph}(f)$ is in $S_{2}$. Then for $a,b\in M$ and $X,U,A,B$ in $S_{1}$, it follows that

\begin{enumerate}

\item[(i)] If $a\notin X$ and $X\cup \{a\} \prec X\cup \{a\}$ then $X\prec X$.

\item[(ii)] If $a\notin X$ and $U \prec X\cup \{a\}$ then $U\prec X$ or $\#U=\#X$.

\item[(iii)] If $a\in A, b\in B$, then $\#A=\#B$ if and only if $\#(A-\{a\})=\#(B-\{b\})$

\item[(iv)] If $X\neq \emptyset$ then $\emptyset \prec X$

\item[(v)] $X\nprec \emptyset$

\end{enumerate}

\end{prop}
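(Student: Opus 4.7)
My plan is to handle the five parts roughly in increasing order of difficulty, since (iv) and (v) follow immediately from the definitions while (i) and (ii) require a small ``surgery'' construction. For (iv), the empty function $\emptyset \to X$ is vacuously injective and, because $X$ is nonempty, non-surjective; its graph exists in $S_2$ by arithmetical comprehension. For (v), any function into $\emptyset$ forces its domain to be empty, but then the empty function is vacuously surjective, so $X \nprec \emptyset$.

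For (iii), I would use Hume's Principle to convert between $\#$-equalities and the existence of bijections. The right-to-left direction is immediate: extend a bijection $g \colon A \setminus \{a\} \to B \setminus \{b\}$ by sending $a \mapsto b$ to get a bijection $A \to B$, and apply Hume's Principle. For the left-to-right direction, start with a bijection $f \colon A \to B$ supplied by Hume's Principle. If $f(a) = b$, then $f \restriction (A \setminus \{a\})$ is already a bijection onto $B \setminus \{b\}$; otherwise, transpose the values of $f$ at $a$ and at $f^{-1}(b)$ to produce a new bijection $A \to B$ which sends $a$ to $b$, and then restrict. Arithmetical comprehension guarantees that the graphs of the newly constructed functions lie in $S_2$.

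Parts (i) and (ii) form the technical core. The idea is to take an injective non-surjective $f$ and perform a one-point modification to eliminate the element $a$ from the picture. For (i), split on whether $a \in \mathrm{rng}(f)$. If $a \notin \mathrm{rng}(f)$, then $f \restriction X$ already maps $X$ into $X$ and remains injective and non-surjective. If $a \in \mathrm{rng}(f)$, say $f(x_0) = a$, pick a witness $b \in (X \cup \{a\}) \setminus \mathrm{rng}(f)$; necessarily $b \in X$. When $x_0 = a$, the restriction $f \restriction X$ already avoids $a$ and works. When $x_0 \in X$, define $g \colon X \to X$ by $g(x_0) = f(a)$ and $g(x) = f(x)$ for $x \neq x_0$; injectivity follows from that of $f$, and $b \notin \mathrm{rng}(g)$ by construction. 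Part (ii) proceeds analogously: if $a \in \mathrm{rng}(f)$ with $f(u_0) = a$, redirect $u_0$ to some $b \in X \setminus \mathrm{rng}(f)$ to obtain $g \colon U \to X$; then either $g$ is non-surjective (yielding $U \prec X$) or $g$ is a bijection (yielding $\#U = \#X$ by Hume's Principle). In every case the modified graph is arithmetically definable from $f$, $a$, $b$, $u_0$, so it belongs to $S_2$.

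The main obstacle is the bookkeeping in the surgery step: one has to keep careful track of which element was displaced and verify that the chosen witness to non-surjectivity still lies outside the new range after the modification. The case split on whether $a \in \mathrm{rng}(f)$ and, in the positive case, whether the preimage of $a$ equals $a$ itself, is the crux of the argument.
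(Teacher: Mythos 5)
Your proposal is correct and follows essentially the same route as the paper: each part is proved by a one-point modification (``surgery'') of the given injection or bijection, with arithmetical comprehension supplying the graph of the modified function. The only cosmetic difference is that you organize the case split in (i)--(ii) around whether $a\in\mathrm{rng}(f)$, whereas the paper splits on whether $f(X)\subseteq X$; these come to the same thing.
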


\begin{proof}
For~(i), suppose that $f: X\cup \{a\} \rightarrow X\cup \{a\}$ is an injection that is not a surjection. If $f(X)\subseteq X$ and $f:X\rightarrow X$ is surjective, then $f(a)=a$ and hence $f: X\cup \{a\} \rightarrow X\cup \{a\}$ would be surjective, contrary to hypothesis; hence when $f(X)\subseteq X$, it must be the case that $f:X\rightarrow X$ is injective but not surjective. On the other hand, when $f(X)\nsubseteq X$ then say $f(y)=a$ where $y\in X$ and $f(a)=z\in X$, and hence define $g:X\rightarrow X$ by $g(y)=z$ and $g=f$ otherwise. Then $g$ is injective and misses the same point that $f$ does. Further, the graph of $g$ exists by the arithmetical comprehension schema.

For~(ii), suppose that $f: U \rightarrow X\cup \{a\}$ is an injection which is not a surjection. If $f(U)\subseteq X$ then $\#U=\#X$ when $f: U\rightarrow X$ is a bijection and $U\prec X$ otherwise. If $f(U)\nsubseteq X$ then say $f(y)=a$ and $f$ misses $b\in X$, in which case we define an injective function $g: U\rightarrow X$ by $g(y)=b$ and $g=f$ otherwise. The graph of $g$ exists by the arithmetical comprehension schema. If $g$ is a bijection, then $\#U=\#X$ and $U\prec X$ otherwise.

For~(iii), suppose that $a\in A$ and $b\in B$ and let us first establish the left-to-right direction. So suppose that $f:A \rightarrow B$ is a bijection. If $f(a)=b$ then $f\upharpoonright (A-\{a\})$ is the desired bijection. If $f(a)=d$ for $d\neq b$ and $f(c)=b$ for $c\neq a$, then define a bijection $g:(A-\{a\})\rightarrow (B-\{b\})$ by $g(c)=d$ and $g=f$ otherwise. The graph of this function~$g$ then exists by the arithmetical comprehension schema. Now let us establish the right-to-left direction. Suppose that $g:(A-\{a\})\rightarrow (B-\{b\})$ is a bijection. Then define $f:A\rightarrow B$ by $f(a)=b$ and $f=g$ otherwise. Then the graph of $f$ exists by the arithmetical comprehension schema and $f$ is a bijection since $g$ was a bijection.

For~(iv), note that the ``empty'' binary relation witnesses that there is an injective non-surjective function from $\emptyset$ to $X$.

For~(v), note that if $X\prec\emptyset$, then there would be an injective non-surjective function $f:X\rightarrow \emptyset$, which would imply that there was an element in $\emptyset\setminus \mathrm{rng}(f)$, which would imply that there was some element in $\emptyset$. 

\end{proof}

\begin{rmk}
It is well-known that the chief difficulty in the proof of the following theorem is establishing the totality of the successor function (cf. remarks to this effect in Wright~\cite{Wright1983} p.~161). Prior to looking at the proof, it is helpful to think about what happens on the standard models $(\alpha, P(\alpha), P(\alpha^{2}), \ldots, \#)$ from \S~\ref{HPPA:sec:21431234231423434}, where $\alpha$ is an ordinal which is not a cardinal and where $\#:P(\alpha)\rightarrow \alpha$ is cardinality. It is easy to see that~$\omega$ is uniformly definable in each of these structures. Further, it is easy to see that for each $n\in \omega$, it follows that 
\begin{equation}
\{\#W: W\prec \{0, \ldots, n\}\} = \{0, \ldots, n\}
\end{equation}
where as in the previous proposition, $X\prec Y$ if and only if there is injective non-surjective function $f:X\rightarrow Y$. From this we see that
\begin{equation}\label{HPPA:eqn:ddsafdsaf67869dsfa476}
\{0, \ldots, n\}\nprec \{0, \ldots, n\} \; \& \; \#\{0, \ldots, n\}=\#\{\#W: W\prec \{0, \ldots, n\}\}
\end{equation}
as well as
\begin{eqnarray}
s(\#\{0, \ldots, n\})& =& s(n+1)=n+2 =\#(\{0, \ldots,n\} \cup\{n+1\})\nonumber \\
&=& \#(\{\#W: W\prec \{0, \ldots, n\}\} \cup \{\#(\{0, \ldots, n\})\})
\end{eqnarray}
The entire idea of the below proof is to show that we can replicate these considerations in arbitrary models of ${\tt HP}^{2}$. So in such an arbitrary model, we will define an analogue $N$ of $\omega$, and for analogues $X$ of $\{0, \ldots, n\}$, we will find that
\begin{equation}\label{HPPA:eqn:341234832132143aadfasdf}
s(\#X)= \#(\{\#W: W\prec X\} \cup \{\#X\})
\end{equation}
This, in any case, is the heuristic explanation of the proof of the totality of the successor function in the following theorem.
\end{rmk}

\begin{thm}\label{HPPA:thm:howweroll2} 
The theory consisting of (Q1)-(Q2) and the comprehension schema~(\ref{HPPA:eqn:pacompsche}) is interpretable in ${\tt HP}^{2}$. More generally, the theory consisting of (Q1)-(Q2) and the comprehension schema~(\ref{HPPA:eqn:pacompsche})  restricted to $\Pi^{1}_{n}$-formulas is interpretable in ${\tt \Pi^{1}_{n}-HP}_{0}$ for $n>0$.
\end{thm}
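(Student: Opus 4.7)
The plan is to start with a model $\mathcal{M}=(M, S_{1}, S_{2}, \ldots, \#)$ of ${\tt \Pi^{1}_{n}-HP}_{0}$ (with $n \geq 1$) and to define uniformly in $\mathcal{M}$ an interpretation of (Q1)-(Q2) together with the $\Pi^{1}_{n}$-restricted comprehension schema; Theorem~\ref{HPPA:thm:howweroll3} will then bootstrap this to an interpretation of all of ${\tt \Pi^{1}_{n}-CA}_{0}$. First I would set $0 = \#\emptyset$ and define the successor relation arithmetically by
\begin{equation*}
S(x, y) \;\equiv\; \exists \; X \; \exists \; z \; (z \notin X \;\wedge\; x = \#X \;\wedge\; y = \#(X \cup \{z\})).
\end{equation*}
Proposition~\ref{HPPA:prop24123423141243}(iii) shows in one stroke that $S$ is both functional and injective. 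Let $N$ be the intersection (formed by $\Pi^{1}_{1}$-comprehension) of all $X \in S_{1}$ containing $0$ and closed under $S$. Under this interpretation, (Q2) is precisely the injectivity of $S$, and (Q1) reduces to $s(x) \neq 0$: the witness $\#(X \cup \{z\})$ is the cardinality of a nonempty set, so by Hume's Principle (together with Proposition~\ref{HPPA:prop24123423141243}(v)) it cannot equal $\#\emptyset$. The $\Pi^{1}_{n}$-comprehension schema on $N$ is inherited from $\mathcal{M}$ by restriction to subsets of $N^{k}$, so the whole argument reduces to showing that $S$ is \emph{total} on $N$.

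For totality, I would follow the heuristic at equation~(\ref{HPPA:eqn:341234832132143aadfasdf}) and define the candidate successor $\sigma(n) = \#(n_{\downarrow} \cup \{n\})$, where $n_{\downarrow} = \{\#W : W \prec X\}$ for any $X$ with $n = \#X$. This set exists by $\Sigma^{1}_{1}$-comprehension (derivable from $\Pi^{1}_{1}$-comprehension by complementation) and does not depend on the choice of representative $X$, since $\prec$ is preserved under bijection. The key is to prove by induction on $n \in N$ the joint invariant $\psi(n) \equiv (n \notin n_{\downarrow} \,\wedge\, n = \#n_{\downarrow})$; once $\psi(n)$ holds, the pair $(X,z)=(n_{\downarrow},n)$ witnesses $S(n, \sigma(n))$ directly. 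The base case $n=0$ follows from Proposition~\ref{HPPA:prop24123423141243}(v), which forces $0_{\downarrow}=\emptyset$. For the inductive step, Proposition~\ref{HPPA:prop24123423141243}(ii) applied with $X = n_{\downarrow}$ and $a = n$ (whose hypothesis $a \notin X$ is given by the first conjunct of $\psi(n)$) yields $\sigma(n)_{\downarrow} = n_{\downarrow}\cup\{n\}$, from which $\#\sigma(n)_{\downarrow} = \sigma(n)$ is immediate. The contrapositive of Proposition~\ref{HPPA:prop24123423141243}(i), combined with both conjuncts of $\psi(n)$, then delivers $\sigma(n) \notin \sigma(n)_{\downarrow}$, closing the induction.

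The main obstacle is the delicate way in which the two conjuncts of $\psi$ interlock across the induction: the proof that $\sigma(n) \notin \sigma(n)_{\downarrow}$ needs the equation $n = \#n_{\downarrow}$ in order to identify $\sigma(n)_{\downarrow}$ with $n_{\downarrow}\cup\{n\}$ via Proposition~\ref{HPPA:prop24123423141243}(ii), while the proof that $\#\sigma(n)_{\downarrow} = \sigma(n)$ requires $n \notin n_{\downarrow}$ to ensure that $n$ is a genuinely new element when adjoined to $n_{\downarrow}$. This interlocking forces the joint induction and is why parts~(i),~(ii),~(iii), and~(v) of Proposition~\ref{HPPA:prop24123423141243} must all be deployed in concert; as the remark preceding the theorem notes, the chief difficulty is exactly this totality of successor, and the combinatorial content of that totality is encapsulated in the joint induction on $\psi$.
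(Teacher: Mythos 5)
Your proposal is correct and follows essentially the same route as the paper's proof: the same successor relation (stated by adjoining a point rather than deleting one, which is equivalent via Proposition~\ref{HPPA:prop24123423141243}(iii)), the same definition of $N$ as the intersection of inductive sets, and the same inductive invariant for totality --- your $\psi(n)$ is exactly the defining condition of the paper's set $Z$ in equation~(\ref{HPPA:eqn:dafdsfasdf08080808}), since $n\notin n_{\downarrow}$ is equivalent to $X\nprec X$ for any representative $X$ of $n$. The deployment of parts (i), (ii), (iii), (v) of Proposition~\ref{HPPA:prop24123423141243} in the base case and inductive step matches the paper's argument step for step.
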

\begin{proof}
Suppose that we are working with structure $\mathcal{M}=(M,S_{1}, S_{2}, \ldots, \#)$ that satisfies ${\tt \Pi^{1}_{n}-HP}_{0}$. It must be shown how to uniformly define a model of (Q1)-(Q2) and the comprehension schema~(\ref{HPPA:eqn:pacompsche})  restricted to $\Pi^{1}_{n}$-formulas. Define $0=\#\emptyset$ and define $s(x,y)$ if and only if there is $X,Y$ in $S_{1}$ such that $\#X=x, \#Y=y$, and there is $b\in Y$ such that $\# X=\# (Y-\{b\})$. That is, $s(x,y)$ says that $x,y$ are respectively cardinalities of sets $X,Y$ and the cardinality of $X$ is equal to the cardinality of $Y$ minus one point. Note that the relation $s$ exists in $S_{2}$ by the $\Pi^{1}_{1}$-comprehension schema.  In what follows, we will respectively refer to the element $0$ and the relation $s$ as ``zero'' and ``successor,'' keeping in mind that formally $s$ is a binary relation. Then say that $X$ in $S_{1}$ is ${\it inductive}$ if it contains zero and is closed under successors, that is, if $x\in X$ and $s(x,y)$ then $y\in X$. Then define $N$ to be the intersection of all the inductive sets, so that $N$ is in $S_{1}$ by the $\Pi^{1}_{1}$-comprehension schema. Now we show that (i)~$s$ is a well-defined function on its domain, and that (ii)~$s$ is a total function on $N$, and that (iii)~$s$~maps elements of $N$ to elements of $N$, and that (iv)~$s$~satisfies axioms Q1-Q2 on $N$.

For~(i), to see that $s$ is well-defined, suppose that $s(x,y)$ and $s(x,z)$. Then $x=\#X$, $y=\#Y$, $z=\#Z$ and there exists $b\in Y, c\in Z$ such that $\# X=\# (Y-\{b\})=\#(Z-\{c\})$. Then by the right-to-left direction of Proposition~\ref{HPPA:prop24123423141243}~(iii), it follows that $\#Y=\#Z$ and hence $y=\#Y=\#Z=z$. Hence, $s$ is a well-defined function on its domain.

For~(ii), recall from Proposition~\ref{HPPA:prop24123423141243} that for $X,Y$ in  $S_{1}$, we say $X\prec Y$ if and only if there is an injective non-surjective function $f:X\rightarrow Y$ such that $\mathrm{graph}(f)$ is in $S_{2}$. Then by iterated applications of $\Pi^{1}_{1}$-comprehension, the following exist in $S_{2}$ and $S_{1}$ respectively
\begin{eqnarray}
R & = & \{(\# W, \# X): W\prec X\} \\
Z & = & \{\# X: X\nprec X \; \&\;  \exists \; Y \; (\forall \; w \; w\in Y \leftrightarrow (w, \# X)\in R ) \; \& \; \#X=\#Y\}
\end{eqnarray}
Note that
\begin{equation}\label{HPPA:eqn:dafdsfasdf08080808}
Z = \{ \# X : X\nprec X \; \& \; \#X=\#(\{\# W : W\prec X\})\}
\end{equation}
(It may be heuristically helpful to compare this with equation~(\ref{HPPA:eqn:ddsafdsaf67869dsfa476})). Suppose that $\# X$ is in $Z$. Then $X\nprec X$ and  $\#X=\#(\{\# W : W\prec X\})$. Then 
\begin{equation}
s(\# X,\# (\{\# W: W\prec X\}\cup \{\# X\}))
\end{equation}
(Likewise, it may be helpful to compare this with equation~(\ref{HPPA:eqn:341234832132143aadfasdf})). Hence, we have the inclusion $Z\subseteq \{x: \exists \; y \; s(x,y)\}$, and so it suffices to show that $Z$ is inductive.

Clearly, $0\in Z$. Suppose that $\# X$ is in $Z$, so that $X\nprec X$ and $\#X=\#(\{\# W : W\prec X\})$. Then $s(\# X,\# (\{\# W: W\prec X\}\cup \{\# X\}))$. Since successor is well-defined on its domain by part~(i), it suffices to show that $\# (\{\# W: W\prec X\}\cup \{\# X\})$ is in $Z$. We have $\{\# W : W\prec X\}\nprec \{\# W : W\prec X\}$. Since $\# X\notin \{\# W : W\prec X\}$, it follows from Proposition~\ref{HPPA:prop24123423141243}~(i)  that $\{\# W : W\prec X\}\cup \{\# X\} \nprec \{\# W : W\prec X\}\cup \{\# X\}$. Hence, $\# (\{\# W: W\prec X\}\cup \{\# X\})$ satisfies the first conjunct of $Z$ in equation~(\ref{HPPA:eqn:dafdsfasdf08080808}). To see that  $\# (\{\# W: W\prec X\}\cup \{\# X\})$ satisfies the second conjunct of $Z$ in equation~(\ref{HPPA:eqn:dafdsfasdf08080808}), it suffices to show that
\begin{equation}\label{HPPA:eqn:9999777989088}
\{\# W : W\prec X\}\cup \{\# X\} = \{\# U : U\prec \{\# W : W\prec X\}\cup \{\# X\}\}
\end{equation}
For the left-to-right direction, suppose first that $W\prec X$. Since $X$ is bijective with $\{\# W : W\prec X\}$, we have that $W\prec \{\# W : W\prec X\}\cup \{\# X\}$. Continuing with the left-to-right direction, suppose now that $\#U=\#X$. Since $X$ is bijective with $\{\# W : W\prec X\}$, we have that $\#U=\#( \{\# W : W\prec X\})$ and hence $U\prec \{\# W : W\prec X\}\cup \{\# X\}$. For the right-to-left direction, suppose that $U\prec \{\# W : W\prec X\}\cup \{\# X\}$. Since $\# X\notin \{\# W : W\prec X\}$, we have by Proposition~\ref{HPPA:prop24123423141243}~(ii) that $\#U=\#(\{\# W : W\prec X\})=\# X$ or $U\prec \{\# W : W\prec X\}$. Hence, in fact equation~(\ref{HPPA:eqn:9999777989088}) holds. It follows that $\# (\{\# W: W\prec X\}\cup \{\# X\})$ is in $Z$. Hence, $Z$ is an inductive set, and as mentioned at the close of the above paragraph, it thus follows that successor is a total function on $N$.

(iii) Now we show that successor maps elements of $N$ to elements of $N$. Suppose that $a$ is in $N$. Then by definition, $a$ is contained in every inductive set, and by parts~(i)-(ii), it follows that there is unique~$b$ such that $s(a,b)$, from which it follows that $b$ is contained in every inductive set, so that $b$ is contained in $N$ as well. Hence, successor maps elements of $N$ to elements of $N$.

(iv) Finally, we note that the successor function~$s$ satisfies axioms (Q1)-(Q2). To see that it satisfies (Q1), note that if $s\#X=0=\#\emptyset$, then $\emptyset$ would be bijective with a non-empty set, which is a contradiction. To see that it satisfies (Q2), suppose that $s\# X = s\# Y$. Then $s\# X=\# A$ where  $\# X=\# (A-\{a\})$ for some $a\in A$ and $s\# Y=\# B$ where $\# Y=\# (B-\{b\})$ for some $b\in B$. Then the left-to-right direction of Proposition~\ref{HPPA:prop24123423141243}~(iii) implies that $\#X=\#(A-\{a\})=\#(B-\{b\})=\# Y$.

Putting this all together, we can uniformly define the structure $\mathcal{N}=(N,S_{1}\cap P(N), S_{2}\cap P(N^{2}), \ldots, 0, s)$ which satisfies (Q1)-(Q2).  Finally, note that since $\mathcal{M}$ satisfies $\Pi^{1}_{n}$-comprehension, we have that $\mathcal{N}$ satisfies $\Pi^{1}_{n}$-comprehension as well, since the second-order parts of $\mathcal{N}$ are just the second-order parts of $\mathcal{M}$ restricted to subsets of $N$.
\end{proof}

\begin{cor}\label{HPPA:thm:howweroll} 
${\tt PA}^{2}$ is interpretable in ${\tt HP}^{2}$. More generally, ${\tt \Pi^{1}_{n}-CA}_{0}$ is interpretable in ${\tt \Pi^{1}_{n}-HP}_{0}$ for $n>0$.
\end{cor}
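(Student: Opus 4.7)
The plan is to obtain the corollary as an immediate consequence of the two preceding theorems by composing their interpretations and invoking the transitivity of the interpretability relation $\leq_{\mathrm{I}}$ noted in \S\ref{HPPA:mainresultsasas}. Explicitly, writing $T_n$ for the theory consisting of (Q1)-(Q2) together with the comprehension schema~(\ref{HPPA:eqn:pacompsche}) restricted to $\Pi^{1}_{n}$-formulas, Theorem~\ref{HPPA:thm:howweroll2} yields $T_n \leq_{\mathrm{I}} {\tt \Pi^{1}_{n}\text{-}HP}_{0}$ and Theorem~\ref{HPPA:thm:howweroll3} yields ${\tt \Pi^{1}_{n}\text{-}CA}_{0} \leq_{\mathrm{I}} T_n$. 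Chaining these gives ${\tt \Pi^{1}_{n}\text{-}CA}_{0} \leq_{\mathrm{I}} {\tt \Pi^{1}_{n}\text{-}HP}_{0}$, which is the second (more general) statement of the corollary.

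For the first statement, I would simply point out that a proof of ${\tt PA}^{2} \leq_{\mathrm{I}} {\tt HP}^{2}$ requires no additional work, because the interpretations constructed in Theorems~\ref{HPPA:thm:howweroll3} and~\ref{HPPA:thm:howweroll2} are given by the same syntactic translation regardless of the complexity level $n$: in Theorem~\ref{HPPA:thm:howweroll2} the domain $N$ and the zero-and-successor structure $(N, 0, s)$ are defined from $(M, S_1, S_2, \ldots, \#)$ by a fixed formula, and in Theorem~\ref{HPPA:thm:howweroll3} addition and multiplication are defined on $N$ by a further fixed formula. Thus the composed translation sends each instance of the full comprehension schema for ${\tt PA}^{2}$ to a theorem of ${\tt HP}^{2}$, since any such instance lies in $\Pi^{1}_{n}$ for some $n$ and hence is provable in ${\tt \Pi^{1}_{n}\text{-}HP}_{0} \subseteq {\tt HP}^{2}$.

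There is no real obstacle here beyond bookkeeping: the two theorems were deliberately formulated with matching $\Pi^{1}_{n}$-indices so that their interpretations compose cleanly. The only point worth checking is that the translation of a $\Pi^{1}_{n}$-formula of the language of ${\tt \Pi^{1}_{n}\text{-}CA}_{0}$ under the composed interpretation remains $\Pi^{1}_{n}$ (up to provable equivalence) in the language of ${\tt \Pi^{1}_{n}\text{-}HP}_{0}$, so that the restricted comprehension instances in the target theory suffice. Since the relativizations to $N$ introduced by the interpretation only add bounded object-quantifiers of the form $\forall x\in N$ and $\exists x\in N$ (with $N$ itself defined by a $\Pi^{1}_{1}$ predicate in the base theory), and since $\Pi^{1}_{n}$ is closed under such quantifiers for $n\geq 1$, this classification is preserved. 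Hence the composed interpretation witnesses both ${\tt \Pi^{1}_{n}\text{-}CA}_{0} \leq_{\mathrm{I}} {\tt \Pi^{1}_{n}\text{-}HP}_{0}$ and ${\tt PA}^{2} \leq_{\mathrm{I}} {\tt HP}^{2}$, completing the corollary.
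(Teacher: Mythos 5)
Your proposal is correct and follows exactly the paper's own route: the corollary is obtained by composing the interpretation of ${\tt \Pi^{1}_{n}-CA}_{0}$ in the theory (Q1)--(Q2) plus $\Pi^{1}_{n}$-comprehension (Theorem~\ref{HPPA:thm:howweroll3}) with the interpretation of that theory in ${\tt \Pi^{1}_{n}-HP}_{0}$ (Theorem~\ref{HPPA:thm:howweroll2}), using transitivity of $\leq_{\mathrm{I}}$. The paper states this in one line; your additional bookkeeping about the uniformity of the translation across levels $n$ is consistent with, though not required by, the paper's argument.
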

\begin{proof}
This follows immediately from Theorem~\ref{HPPA:thm:howweroll2} and Theorem~\ref{HPPA:thm:howweroll3}.\index{Frege's Theorem, Corollary~\ref{HPPA:thm:howweroll}}
\end{proof}

\begin{rmk}
The following theorem was first noted by Boolos (\cite{Boolos1987}).  We include here for the sake of having a relatively self-contained presentation of the main results in this area, and because we will use Boolos' construction to transfer facts about the provability relation from subsystems of ${\tt PA}^{2}$ to subsystems of ${\tt HP}^{2}$ (cf. the proofs of Proposition~\ref{HPPA:prop:28} and Proposition~\ref{hppa:binaryiswierd1}).
\end{rmk}

\begin{thm}\label{hppa:thm:boolos}
${\tt HP}^{2}$ is interpretable in ${\tt PA}^{2}$. More generally, ${\tt \Pi^{1}_{n}-HP}_{0}$ is interpretable in ${\tt \Pi^{1}_{n}-CA}_{0}$ for $n>0$, and ${\tt \Sigma^{1}_{1}-PH}_{0}$ is interpretable in ${\tt \Sigma^{1}_{1}-AC}_{0}$ and ${\tt AHP}_{0}$ is interpretable in ${\tt ACA}_{0}$.
\end{thm}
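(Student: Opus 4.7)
The plan is to construct, within any model $\mathcal{M}$ of ${\tt PA}^{2}$, an interpretation of ${\tt HP}^{2}$ whose first-order domain is $\omega$. I would fix a primitive recursive pairing function coding $\omega^{n}$ into $\omega$ and interpret the $n$-ary relation sort $S_{n}$ ($n\geq 1$) as the sets of natural numbers in $\mathcal{M}$, viewed as $n$-ary relations via this pairing. The membership symbols $E_{n}$ are then interpreted absolutely through the pairing code, so what remains is to define the function $\#: S_{1}\to \omega$ in such a way that Hume's Principle holds, i.e., so that $\#X=\#Y$ if and only if there is a bijection $f:X\to Y$ whose graph lies in $S_{2}$.

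The key observation is that, in any model of ${\tt PA}^{2}$, equinumerosity of subsets of $\omega$ is entirely determined by cardinality, and any two unbounded subsets of $\omega$ are internally bijective by comparison of their order-preserving enumerations, which exist by arithmetical comprehension. This suggests setting $\#X=0$ if $X$ is unbounded in $\omega$, and $\#X=n+1$ if $X$ is finite with exactly $n$ elements. Finiteness is arithmetical (``$\exists m\,\forall k\in X\; k\leq m$''), and on finite sets the cardinality $|X|$ is primitive recursive in $X$, so the relation $\#X=y$ is itself arithmetical. With this definition, Hume's Principle is immediate: sets with the same $\#$-value have the same internal cardinality and therefore admit a bijection whose graph is arithmetically definable from $X$ and $Y$, while sets with different $\#$-values cannot be equinumerous.

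To verify the appropriate comprehension (or choice) schema in the target theory, I would note that since $\#X=y$ is arithmetical, every atomic occurrence of $\#$ in a $\Pi^{1}_{n}$- (respectively $\Sigma^{1}_{1}$- or arithmetical) formula of the signature of ${\tt HP}^{2}$ translates, under the interpretation, into a formula of the same complexity class in the signature of ${\tt PA}^{2}$. Quantifiers over $n$-ary relations translate, via the pairing code, into quantifiers over sets of naturals, again of the same complexity. Hence $\Pi^{1}_{n}$-comprehension, $\Sigma^{1}_{1}$-choice, or arithmetical comprehension in the source theory delivers exactly the witnessing relations required by the corresponding scheme in the image theory, yielding all four parallel interpretability claims at once. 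The one point to watch is the complexity accounting: replacing $\#$ by its arithmetical definition must not raise a formula's level in the $\Pi^{1}_{n}/\Sigma^{1}_{1}$ hierarchy, but this is automatic because the inserted subformulas contain no new relation quantifiers and so cannot disturb the prefix.
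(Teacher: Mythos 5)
Your proposal is correct and is essentially the paper's own proof (Boolos's construction): you define $\#X=|X|+1$ for finite $X$ and $\#X=0$ for infinite $X$, observe that the graph of $\#$ is arithmetical so that substituting it into formulas does not raise their level in the $\Pi^{1}_{n}/\Sigma^{1}_{1}$ hierarchy, and use the ${\tt ACA}_{0}$-provable fact that any two infinite sets are bijective (Simpson, Lemma II.3.6) to verify Hume's Principle. The only cosmetic difference is that you make the pairing-function coding of $n$-ary relations explicit, which the paper leaves implicit.
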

\begin{proof}
We begin with the proof of the interpretability of ${\tt AHP}_{0}$ in ${\tt ACA}_{0}$. We will note how this proof yields all the other results as well. Let us work in a model $\mathcal{M}=(M,S_{1}, S_{2}, \ldots, \oplus,\otimes)$ of ${\tt ACA}_{0}$, where $S_{n}\subseteq P(M^{n})$. We must show how to uniformly define a model of ${\tt AHP}_{0}$. Consider the model $\mathcal{N}=(M, S_{1}, S_{2}, \ldots, \#)$ where $\#(X)=n+1$ if $\left|X\right|=n$, and where $\#(X)=0$ if $X$ is infinite. Then $\mathcal{N}$ is clearly definable in $\mathcal{M}$ since the graph of $X$ is arithmetically definable. Further, since this graph is arithmetically definable, it follows that $\mathcal{N}$ satisfies the arithmetical comprehension schema. Further, by Simpson~\cite{Simpson2009aa} Lemma~II.3.6 p.~70, ${\tt ACA}_{0}$ proves that any two infinite sets are bijective, so that $\mathcal{N}$ is a model of ${\tt AHP}_{0}$. Hence, in fact we have that  ${\tt AHP}_{0}$ is interpretable in ${\tt ACA}_{0}$. Further, it is obvious from this construction that $\mathcal{N}$ will satisfy whatever comprehension schemas $\mathcal{M}$ satisfies.
\end{proof}

\begin{cor}\label{HPPA:thm:howweroll57} 
${\tt PA}^{2}$ is mutually interpretable with ${\tt HP}^{2}$. More generally, ${\tt \Pi^{1}_{n}-CA}_{0}$ is mutually interpretable with ${\tt \Pi^{1}_{n}-HP}_{0}$ for $n>0$.
\end{cor}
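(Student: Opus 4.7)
The plan is to observe that this corollary is an immediate combination of the two preceding theorems together with the definition of mutual interpretability from equation~(\ref{HPPA:eqn:theequivalence}). Recall that $T_{0} \equiv_{\mathrm{I}} T_{1}$ means that $T_{0} \leq_{\mathrm{I}} T_{1}$ and $T_{1} \leq_{\mathrm{I}} T_{0}$, i.e., each of the two theories interprets the other. So all we need is a pair of interpretations going in opposite directions, and both are already in hand.

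First, I would cite Corollary~\ref{HPPA:thm:howweroll}, which establishes that ${\tt PA}^{2}$ is interpretable in ${\tt HP}^{2}$, and more generally that ${\tt \Pi^{1}_{n}-CA}_{0}$ is interpretable in ${\tt \Pi^{1}_{n}-HP}_{0}$ for each $n>0$. This gives one direction of the mutual interpretability. Then I would cite Theorem~\ref{hppa:thm:boolos}, which establishes the converse: ${\tt HP}^{2}$ is interpretable in ${\tt PA}^{2}$, and more generally ${\tt \Pi^{1}_{n}-HP}_{0}$ is interpretable in ${\tt \Pi^{1}_{n}-CA}_{0}$ for each $n>0$. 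Putting these two together and invoking the definition of $\equiv_{\mathrm{I}}$ from equation~(\ref{HPPA:eqn:theequivalence}) yields exactly the claim that ${\tt PA}^{2} \equiv_{\mathrm{I}} {\tt HP}^{2}$, and likewise that ${\tt \Pi^{1}_{n}-CA}_{0} \equiv_{\mathrm{I}} {\tt \Pi^{1}_{n}-HP}_{0}$ for each $n>0$.

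There is no real obstacle here: all of the substantive work has already been carried out in Theorems~\ref{HPPA:thm:howweroll3}, \ref{HPPA:thm:howweroll2}, and \ref{hppa:thm:boolos}, with the first two combining via Corollary~\ref{HPPA:thm:howweroll} to yield the Frege-Theorem direction and the third providing the Boolos-style converse. The only thing worth verifying carefully is that both interpretations are of the uniform, parameter-free kind required by the definition of $\leq_{\mathrm{I}}$ given in \S~\ref{HPPA:mainresultsasas}; inspection of the proofs shows that in each case the interpreting formulas (for $N$, $0$, $s$, $\#$, etc.) are fixed formulas of the signature that do not mention parameters, so the definitional requirement is met. Hence the proof is a one-line appeal to the two previous theorems.
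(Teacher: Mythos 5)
Your proposal is correct and matches the paper's proof exactly: the corollary is obtained by combining Corollary~\ref{HPPA:thm:howweroll} with Theorem~\ref{hppa:thm:boolos} and appealing to the definition of $\equiv_{\mathrm{I}}$. The extra remark about the interpretations being uniform and parameter-free is a reasonable sanity check but adds nothing beyond what the paper already relies on.
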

\begin{proof}
This follows immediately from Corollary~\ref{HPPA:thm:howweroll} and Theorem~\ref{hppa:thm:boolos}.
\end{proof}

\section{Standard Models of Subsystems of ${\tt BL}^{2}$ and Associated Results}\label{HPPA:secdfadsfsadfkkk}
 
The primary goal of this section is to study models of subsystems of ${\tt BL}^{2}$ that are standard in the sense that they have the form $(\omega, S_{1}, S_{2}, \ldots, \partial)$, where the sets $S_{n}\subseteq P(\omega^{n})$ all come from some antecedently fixed computational class (e.g. the recursive sets, the arithmetical sets, the hyperarithmetical sets, etc.). The main result of this section is Theorem~\ref{HPPA:thm:main} which gives a construction of a standard model of the hyperarithmetic subsystem of ${\tt BL}_{0}$ in terms of the hyperarithmetic subsets of natural numbers. Further, this construction isolates a certain sentence $\mathrm{Inf}$ (cf. Definition~\ref{HPPA:eqn:defn:inf}) such that  ${\tt \Sigma^{1}_{1}-AC}_{0}\leq_{\mathrm{I}} {\tt \Sigma^{1}_{1}-LB}_{0}+\mathrm{Inf}<_{\mathrm{I}} {\tt \Pi^{1}_{1}-CA}_{0}$ (cf. Corollary~\ref{HPPA:thm:main:cor} and Figure~\ref{HPPA:figure2}). 

In the preliminary section \S~\ref{hppa:sec:injectsurj}, we record some elementary facts about arbitrary models of subsystems of~${\tt BL}^{2}$, focusing in particular on the fact that arbitrary models of the hyperarithmetic subsystems of~${\tt BL}^{2}$ require the existence of injective non-surjective functions (cf. Proposition~\ref{HPPA:prop:whatitporves}). Such functions are important both because they are used to define the sentence~$\mathrm{Inf}$ (cf. Definition~\ref{HPPA:eqn:defn:inf}) and because such functions are not required to exist by the hyperarithmetic subsystems of~${\tt HP}^{2}$ (cf. Remark~\ref{hppa:rmk:thecomparisonsadfasd}). Further, in the preliminary section \S~\ref{HPPA:sbusec}, we review some elementary facts about hyperarithmetic theory, which we will employ in \S~\ref{HPPA:ehdfasdfasdfsd}. We also use these facts to fill in some parts of the provability relation (cf. Propositions~\ref{HPPA:appdfadsfad}-\ref{HPPA:prop:28} and Figure~\ref{HPPA:figure1}). Finally, in \S~\ref{HPPA:ehdfasdfasdfsd}, we turn to the main results of this section, namely the aforementioned Theorem~\ref{HPPA:thm:main} and Corollary~\ref{HPPA:thm:main:cor}.

\subsection{Generalities on Models of Subsystems of ${\tt BL}^{2}$}\label{hppa:sec:injectsurj}

\begin{prop}\label{hppa:propaboutarithm}
Suppose that $Y\subseteq M$ is definable with parameters by an arithmetical formula in the structure $(M, S_{1}, S_{2}, \ldots, \partial)$ (resp. in the structure $(M, S_{1}, S_{2}, \ldots, \#)$). Then $Y$ is definable with parameters by an arithmetical formula that does not contain any instances of $\partial$ (resp. does not contain any instances of $\#$).
\end{prop}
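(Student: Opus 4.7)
The plan is to use the fact that in an arithmetical formula there are no bound relation variables, so every occurrence of the function symbol $\partial$ (or $\#$) must be applied to a set variable that is free, i.e., a parameter. Such an occurrence evaluates to a fixed element of $M$, which can then be absorbed into the object parameters. I will handle the case of $\partial$; the argument for $\#$ is identical.

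To make this precise, let $\varphi(\overline{z}, \overline{R})$ be an arithmetical formula defining $Y$ with parameters $\overline{a}\in M$ and $\overline{A}$ from the relation sorts. Because no relation variable is quantified in $\varphi$, every set-sorted subterm of $\varphi$ is a free set variable, and hence every subterm of the form $\partial(\cdot)$ has shape $\partial(X_i)$ for some free set variable $X_i$ among $\overline{R}$. (Recall that $\partial$ takes a unary relation and returns an object, so nestings like $\partial(\partial(\cdot))$ are not even well-formed in the signature.) Let $\partial(X_{i_1}), \ldots, \partial(X_{i_k})$ be an enumeration (without repetitions) of the free set variables to which $\partial$ is applied in $\varphi$, and introduce fresh object variables $w_1, \ldots, w_k$. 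Let $\varphi^{\ast}(\overline{z}, \overline{w}, \overline{R})$ be the formula obtained from $\varphi$ by replacing every occurrence of $\partial(X_{i_j})$ by $w_j$.

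By construction $\varphi^{\ast}$ is arithmetical, contains no occurrences of $\partial$, and has the same free variables as $\varphi$ together with the new object variables $\overline{w}$. Setting $b_j := \partial(A_{i_j}) \in M$ and interpreting $\overline{w}$ by $\overline{b}$, an easy induction on the complexity of $\varphi$ shows that for every $y \in M$,
\begin{equation}
(M, S_{1}, S_{2}, \ldots, \partial) \models \varphi(y, \overline{a}, \overline{A}) \; \Longleftrightarrow \; (M, S_{1}, S_{2}, \ldots, \partial) \models \varphi^{\ast}(y, \overline{a}, \overline{b}, \overline{A}),
\end{equation}
because the only difference between the two formulas lies in the subterms $\partial(X_{i_j})$, which in the first formula are evaluated to $b_j$ by definition, and in the second formula are directly denoted by the parameter $b_j$. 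Hence $Y$ is defined by $\varphi^{\ast}$ with parameters $\overline{a}, \overline{b}, \overline{A}$, and $\varphi^{\ast}$ contains no occurrences of $\partial$, as required.

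The only potential subtlety — and the reason why the conclusion can be asserted at all — is the signature-level observation that $\partial$ (and symmetrically $\#$) takes only unary relations as arguments, so that applications of these function symbols cannot hide behind deeper term structure. Once this is noted, the proof is essentially a syntactic substitution argument, and no use of the comprehension schema or of Basic Law~V/Hume's Principle itself is needed.
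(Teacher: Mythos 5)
Your proposal is correct and follows essentially the same route as the paper's proof: since an arithmetical formula has no bound relation variables, every occurrence of $\partial$ (resp.\ $\#$) is applied to a free set variable, i.e.\ a parameter from $S_{1}$, so its value is a fixed element of $M$ that can be absorbed as an object parameter. Your write-up merely makes the substitution and the induction on formula complexity explicit, which the paper leaves implicit.
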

\begin{proof}
If $Y\subseteq M$ is definable in $(M, S_{1}, S_{2}, \ldots, \partial)$ by an arithmetical formula $\varphi$, and if $\partial(P)$ appears in $\varphi$, then $P$ is not free in $\varphi$ but rather is a parameter from $S_{1}$ and hence $a=\partial(P)$ is a parameter from~$M$. So, replacing parameters from $S_{1}$ with parameters from $M$, it follows that the set $Y$ is also definable by an arithmetical formula that does {\it not} contain any instances of $\partial$.
\end{proof}

\begin{prop}\label{HPPA:prop:truethat}
Suppose that $M$ is a structure and $\partial: D(M)\rightarrow M$ is an injection, where $D(M^{n})$ is the definable subsets of $M^{n}$. Then $(M, D(M), D(M^{2}), \ldots, \partial)$ is a model of ${\tt ABL}_{0}$.
\end{prop}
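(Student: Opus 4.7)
The plan is to verify the two defining components of ${\tt ABL}_0$ separately in the proposed structure $\mathcal{M} = (M, D(M), D(M^2), \ldots, \partial)$: first, Basic Law~V (equation~(\ref{HPPA:eqn:introooooo:bl5})), and second, every instance of the comprehension schema~(\ref{HPPA:introoo:comp333}) restricted to arithmetical formulas.

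For Basic Law~V, I would argue as follows. Since the coextensionality convention is in force, two elements $X, Y$ of $S_1 = D(M)$ are counted as equal in $\mathcal{M}$ precisely when they coincide as subsets of $M$. The hypothesis is that $\partial: D(M) \to M$ is an injection, so $\partial X = \partial Y$ implies $X = Y$ as subsets of $M$, and conversely $X = Y$ obviously yields $\partial X = \partial Y$ since $\partial$ is a function. This establishes the biconditional~(\ref{HPPA:eqn:introooooo:bl5}) in $\mathcal{M}$.

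For arithmetical comprehension, fix an arithmetical formula $\varphi(\overline{n})$ in the signature of ${\tt BL}^2$, possibly with free object parameters from $M$ and free relation parameters from $S_k = D(M^k)$ for various $k \geq 1$, and possibly containing subterms of the form $\partial(P)$. I would then mimic the reduction recorded in Proposition~\ref{hppa:propaboutarithm}: because $\varphi$ is arithmetical, every occurrence of $\partial$ is applied to a free relation variable that is instantiated by a concrete parameter $P \in D(M)$, and hence $\partial(P)$ is a fixed element $a \in M$ which can be substituted for the term $\partial(P)$ directly. After this substitution, the formula contains no occurrence of $\partial$. Similarly, each free relation parameter $R \in D(M^k)$ appearing in $\varphi$ is by definition defined in $M$ by some formula $\psi_R(\overline{x}, \overline{c})$ in the signature of $M$ with parameters $\overline{c}$ from $M$; replacing each atomic subformula $R(\overline{t})$ by $\psi_R(\overline{t}, \overline{c})$ yields a formula in the original signature of $M$ with only object parameters from $M$.

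The resulting formula $\varphi^\ast(\overline{n})$ defines a subset of $M^n$ (where $n$ is the length of $\overline{n}$) purely in the signature of $M$ with parameters, so by definition this subset belongs to $D(M^n) = S_n$. This furnishes the witness $R$ required by the corresponding instance of~(\ref{HPPA:introoo:comp333}), completing the verification. The only mild obstacle is bookkeeping: ensuring that the substitution procedure for $\partial$-terms and for relation parameters is legitimate for an arbitrary arithmetical formula (in particular, verifying it handles iterated compositions like $\partial(P)$ appearing inside atomic formulas of the form $R(\ldots, \partial(P), \ldots)$), but this is purely syntactic and follows from a straightforward induction on the complexity of $\varphi$, exactly in the spirit of Proposition~\ref{hppa:propaboutarithm}.
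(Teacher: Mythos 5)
Your proposal is correct and follows essentially the same route as the paper's proof: Basic Law~V from injectivity of $\partial$, and arithmetical comprehension by eliminating $\partial$-terms via the parameter-substitution observation of Proposition~\ref{hppa:propaboutarithm} and then noting the resulting set is definable in $M$. Your additional step of unwinding relation parameters into their defining formulas is just a more explicit rendering of the paper's appeal to $D(M)$ being closed under arithmetical comprehension.
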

\begin{proof}
It is a model of Basic~Law~V since $\partial$ is an injection (cf. discussion subsequent to (\ref{HPPA:eqn:introooooo:bl5})). Further, it satisfies the arithmetical comprehension schema, since if $X\subseteq M$ is defined by an arithmetical formula, then by Proposition~\ref{hppa:propaboutarithm} it is defined by an arithmetical formula which does not include any instances of $\partial$. Hence, since $D(M)$  is closed under arithmetical comprehension, it follows that $X$ is in $D(M)$, so that the structure $(M, D(M), D(M^{2}), \ldots, \partial)$ satisfies the arithmetical comprehension schema.
\end{proof}

\begin{prop}\label{HPPA:prop:theinjection} Suppose that $(M, S_{1}, S_{2}, \ldots, \partial)$ is a model of ${\tt \Delta^{1}_{1}-BL}_{0}$. (a) Then there is a injective function $s:M\rightarrow M$ such that $s(x)=\partial(\{x\})$ and such that $\mathrm{graph}(s)$ is in $S_{2}$. (b) Further, there is a function  $s:M^{n}\rightarrow M$ such that $s(x_{1}, \ldots, x_{n})=\partial(\{x_{1}, \ldots, x_{n}\})$ and such that $\mathrm{graph}(s)$ is in $S_{n+1}$.
\end{prop}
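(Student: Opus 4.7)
The plan is to construct $s$ pointwise using the axiom Basic~Law~V to get a well-defined function, and then to show that its graph lies in $S_{n+1}$ by exhibiting both a $\Sigma^{1}_{1}$ and a $\Pi^{1}_{1}$ definition and invoking the hyperarithmetic comprehension schema~(\ref{HPPA:eqn:hypcompscheme}).

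For part~(a), first note that arithmetical comprehension (which is included in ${\tt \Delta^{1}_{1}-BL}_{0}$) applied to the formula $\varphi(w)\equiv w=x$ guarantees that for every $x\in M$ the singleton $\{x\}$ exists in $S_{1}$. Hence the value $\partial(\{x\})$ is defined for every $x\in M$, and setting $s(x)=\partial(\{x\})$ yields a total function. Injectivity is immediate: if $s(x)=s(y)$ then $\partial(\{x\})=\partial(\{y\})$, so by Basic~Law~V~(\ref{HPPA:eqn:introooooo:bl5}) we have $\{x\}=\{y\}$, and hence $x=y$.

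The main obstacle, and the step that forces us to work in ${\tt \Delta^{1}_{1}-BL}_{0}$ rather than ${\tt ABL}_{0}$, is the verification that $\mathrm{graph}(s)\in S_{2}$. Here I would mimic the approach taken in the displayed equation~(\ref{HPPA:eqn:counterthecounter}) following the choice schema discussion: write the graph as the set of pairs $(x,y)$ satisfying $\varphi(x,y)$ with $\varphi$ the $\Sigma^{1}_{1}$ formula
\begin{equation}
\exists \; Z \; [(\forall \; w \; w\in Z \leftrightarrow w=x) \; \& \; \partial(Z)=y]
\end{equation}
and simultaneously as the set of pairs satisfying $\psi(x,y)$ with $\psi$ the $\Pi^{1}_{1}$ formula
\begin{equation}
\forall \; Z \; [(\forall \; w \; w\in Z \leftrightarrow w=x) \; \rightarrow \; \partial(Z)=y]
\end{equation}
Since the singleton $\{x\}$ exists and is unique by extensionality, these two formulas are equivalent in $(M,S_{1},S_{2},\ldots,\partial)$. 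The hyperarithmetic comprehension schema~(\ref{HPPA:eqn:hypcompscheme}) then produces a binary relation $R\in S_{2}$ such that $(x,y)\in R$ iff $y=\partial(\{x\})$, which is exactly $\mathrm{graph}(s)$.

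For part~(b), the argument is structurally identical. Arithmetical comprehension applied to $\varphi(w)\equiv (w=x_{1}\vee\cdots\vee w=x_{n})$ produces $\{x_{1},\ldots,x_{n}\}\in S_{1}$ uniformly in the parameters, so $s(x_{1},\ldots,x_{n})=\partial(\{x_{1},\ldots,x_{n}\})$ is total. Its graph admits a $\Sigma^{1}_{1}$ and a $\Pi^{1}_{1}$ definition obtained by replacing the inner defining formula for the singleton with the defining formula for $\{x_{1},\ldots,x_{n}\}$, and hence lies in $S_{n+1}$ by the $\Delta^{1}_{1}$-comprehension schema. (Note that part~(b) does not claim injectivity, which would fail anyway for $n>1$ owing to the symmetry of the unordered $n$-tuple.) The whole proof really rests on one observation: intuitively simple maps like $x\mapsto\partial(\{x\})$ are not arithmetical in the primitive signature, and it is precisely for such maps that the $\Delta^{1}_{1}$ schema is needed, as already flagged in the discussion of equation~(\ref{HPPA:eqn:counterthecounter}).
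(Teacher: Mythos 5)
Your proposal is correct and follows essentially the same route as the paper's own proof: the same pair of $\Sigma^{1}_{1}$ and $\Pi^{1}_{1}$ definitions of the graph, the same appeal to $\Delta^{1}_{1}$-comprehension, and the same use of Basic Law~V for injectivity. The only cosmetic difference is that the paper spells out both directions of the equivalence and the well-definedness/totality checks a little more explicitly, but the content is identical.
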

\begin{proof}
The proof of (b) is identical to the proof of (a), so we present only the proof of (a). It suffices to show three things: first, that the graph of this function is $\Delta^{1}_{1}$, second that this function is well-defined and total, and third that the function is injective. Note that the following $\Sigma^{1}_{1}$ and $\Pi^{1}_{1}$-definitions of $s(x)=y$ agree:
\begin{equation}
[\exists \; X \;  (\forall \; z \; z\in X \leftrightarrow z=x) \; \& \; \partial X=y]\Longleftrightarrow [\forall \; Y \;  (\forall \; z \; z\in Y \leftrightarrow z=x) \rightarrow \partial Y=y]
\end{equation}
Suppose that the left-hand-side of this equation holds and that $Y=\{x\}$. Then $Y=X$ and hence $\partial(Y)=\partial(X)=y$. Conversely, suppose that the right-hand-side of this equation holds. By arithmetical comprehension, form the set $X=\{x\}$. Then by the right-hand-side it is the case that $\partial(X)=y$. Hence, by $\Delta^{1}_{1}$-comprehension, there is an $s$ such that $s(x,y)$ if and only if both the left-hand-side and the right-hand-side of the above equation holds with respect to $x$ and $y$. To see that the function is well-defined, suppose that the left-hand-side holds both of $x$ and $y$ and of~$x$ and~$z$. By arithmetical comprehension, form the set $Y=\{x\}$. Then the right-hand-side implies that $y=\partial(Y)=z$. Hence, the function is well-defined. Further, it is everywhere defined because given $x$ one can use arithmetical comprehension to form $X=\{x\}$, and hence $x$ and $y=\partial(X)$ will satisfy the right-hand-side. Finally, to see that the function $X$ is injective, suppose that $s(x)=s(y)$. Then $\partial(\{x\})=\partial(\{y\})$. By Basic~Law~V, it follows that $\{x\}=\{y\}$ and hence that $x=y$.
\end{proof}

\begin{rmk}
The following proposition generalizes the construction in the Russell Paradox (cf.~Proposition~(\ref{HPPA:form:thebadformtheprop})). Note that in the following proposition, the term $\mathrm{rng}\partial$ is employed to designate the range of the function $\partial$. However, this set need not exist in the second-order parts of any of the models under consideration, even though it is is defined by a $\Sigma^{1}_{1}$-formula in these models.
\end{rmk}

\begin{prop}\label{HPPA:prop:genrussel}
Suppose that $(M, S_{1}, S_{2}, \ldots, \partial)$ is a model of ${\tt \Delta^{1}_{1}-BL}_{0}$. For every $A$ in $S_{1}$ such that $A \subseteq \mathrm{rng}\partial$, there is $B$ in $S_{1}$ such that $B\subseteq A$ and $\partial B\in \mathrm{rng}\partial-A$.
\end{prop}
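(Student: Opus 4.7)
The plan is to mimic the Russell paradox construction from Proposition~\ref{HPPA:form:thebadformtheprop}, but restrict the ``bad'' set to lie inside the given parameter $A$. Specifically, I would define
\begin{equation}
B = \{x \in A : \exists \; Y \; (\partial Y = x \; \& \; x \notin Y)\}.
\end{equation}
The first task is to show that $B$ is in $S_1$. The defining condition is $\Sigma^1_1$ as written, but using the hypothesis $A \subseteq \mathrm{rng}\,\partial$ together with Basic~Law~V, I would argue that for $x \in A$ the clause $\exists \; Y \; (\partial Y = x \; \& \; x \notin Y)$ is equivalent to the $\Pi^1_1$-clause $\forall \; Y \; (\partial Y = x \rightarrow x \notin Y)$: the forward direction uses that $\partial$ is injective on $S_1$, so any $Y'$ with $\partial Y' = x$ must equal the $Y$ given by the existential quantifier; the reverse direction uses that some such $Y$ must exist because $x \in A \subseteq \mathrm{rng}\,\partial$. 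Hence $B$ exists in $S_1$ by the $\Delta^1_1$-comprehension schema (with $A$ as a parameter, cf.~(\ref{HPPA:eqn:hypcompscheme})).

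Clearly $B \subseteq A$ by construction, so what remains is to verify $\partial B \in \mathrm{rng}\,\partial - A$. The containment $\partial B \in \mathrm{rng}\,\partial$ is automatic, so I need only derive a contradiction from the assumption $\partial B \in A$. I would split into the two familiar Russell-paradox cases. If $\partial B \in B$, then by the defining clause of $B$ there is $Y$ with $\partial Y = \partial B$ and $\partial B \notin Y$; Basic~Law~V forces $Y = B$, so $\partial B \notin B$, a contradiction. If $\partial B \notin B$, then since $\partial B \in A$ by assumption, the failure of $\partial B$ to lie in $B$ means there is \emph{no} $Y$ with $\partial Y = \partial B$ and $\partial B \notin Y$; instantiating $Y = B$ yields $\partial B \in B$, again a contradiction. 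Both cases being impossible, we conclude $\partial B \notin A$, so that $\partial B \in \mathrm{rng}\,\partial - A$ as required.

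The only real obstacle is the first step, namely verifying that $B$ is actually in $S_1$. Without the hypothesis $A \subseteq \mathrm{rng}\,\partial$, the $\Sigma^1_1$ and $\Pi^1_1$ definitions above would not coincide, and $\Delta^1_1$-comprehension would not directly apply; the use of $A \subseteq \mathrm{rng}\,\partial$ to establish the equivalence of these two definitions is what makes the argument go through within ${\tt \Delta^{1}_{1}-BL}_{0}$ rather than requiring full $\Sigma^1_1$-comprehension. Once $B$ is known to exist in $S_1$, the remaining Russell-style dichotomy is routine.
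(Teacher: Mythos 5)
Your proposal is correct and follows essentially the same route as the paper's own proof: the same relativized Russell set $B = A \cap \{x : \exists Y\,(\partial Y = x \;\&\; x\notin Y)\}$, the same use of Basic Law~V (injectivity of $\partial$) and the hypothesis $A\subseteq\mathrm{rng}\,\partial$ to show the $\Sigma^1_1$ and $\Pi^1_1$ forms agree so that $\Delta^1_1$-comprehension applies, and the same two-case Russell dichotomy to conclude $\partial B\notin A$. You have also correctly identified the one non-routine point, namely that the existence of $B$ is exactly where $A\subseteq\mathrm{rng}\,\partial$ is needed.
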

\begin{proof}
First we claim that for all $x$ it is the case that
\begin{equation}\label{HPPA:eqn:1324132412342134}
[\exists \; X \; x\in A \; \& \; \partial X=x \;\&\; x\notin X] \Longleftrightarrow [\forall \; Y \; x\in A\; \& \; (\partial Y=x \rightarrow x\notin Y)]
\end{equation}
Suppose that the left-hand-side holds, i.e., suppose that $x\in A \; \& \; \partial X=x \;\&\; x\notin X$, and further suppose that $Y$ is such that $\partial Y=x$. Then $\partial X=x=\partial Y$ and Basic~Law~V implies that $X=Y$. Conversely, suppose that the right-hand-side holds, i.e., suppose it is the case that $\forall \; Y \; x\in A\; \& \; (\partial Y=x \rightarrow x\notin Y)$. Since $x\in A\subseteq \mathrm{rng}\partial$, there is $X$ such that $\partial X=x$, and hence $x\notin X$. The claim is proved, and, hence, by the $\Delta^{1}_{1}$-Comprehension Schema, there exists $B$ such that $x\in B$ if and only if both the left-hand-side and right-hand-side of (\ref{HPPA:eqn:1324132412342134}) hold with respect to $x$. Note that it follows automatically from the left-hand-side that $B\subseteq A$. So it remains to show that $\partial B\in \mathrm{rng}\partial-A$. Suppose not. Then $\partial B\in \mathrm{rng}\partial \cap A$. Then either $\partial B\in B$ or $\partial B \notin B$. If $\partial B\in B$ then by right-hand-side we have $\partial B\notin B$, which is a contradiction. If $\partial B \notin B$, then by the left-hand-side we have that $\forall \; X \; \partial B\notin A \; \vee \; \partial X \neq \partial B \;\vee\; \partial B\in X$. Applying this to $X=B$ we have that   $\partial B\notin A \; \vee \; \partial B\neq \partial B \;\vee\; \partial B\in B$. Since by hypothesis we have that $\partial B \in \mathrm{rng}\partial \cap A$, we must conclude that $\partial B\in B$, which again contradicts our supposition. Hence, in fact, $\partial B\in \mathrm{rng}\partial-A$.
\end{proof}


\begin{rmk}\label{hppa:rmk:thecomparisonsadfasd}
The following corollary is important because it shows that satisfying ${\tt \Delta^{1}_{1}-BL}_{0}$ requires the existence of injective non-surjective functions. As we note in Proposition~\ref{HPPA:prop:whatitporves222} and later in Corollary~\ref{hppa:corcorcor3434123431}, this is not the case with ${\tt ABL}_{0}$ and ${\tt \Delta^{1}_{1}-HP}_{0}$.
\end{rmk}

\begin{cor}\label{HPPA:prop:whatitporves}
Suppose that $(M, S_{1}, S_{2}, \ldots, \partial)$ is a model of ${\tt \Delta^{1}_{1}-BL}_{0}$. Then there is a injective non-surjective function $s:M\rightarrow M$ such that $\mathrm{graph}(s)$ is in $S_{2}$ and such that $s(x)=\partial(\{x\})$.
\end{cor}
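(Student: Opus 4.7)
The plan is to combine Proposition~\ref{HPPA:prop:theinjection} (which already supplies the function $s$ and its injectivity) with Proposition~\ref{HPPA:prop:genrussel} (the generalized Russell construction) in order to exhibit a witness in $M \setminus \mathrm{rng}(s)$. The existence of $s$ and the fact that $\mathrm{graph}(s) \in S_{2}$ are immediate from Proposition~\ref{HPPA:prop:theinjection}, so the entire content of the corollary reduces to producing a single element of $M$ not of the form $\partial(\{x\})$.

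First I would form the set $A = \mathrm{rng}(s) = \{y \in M : \exists \, x \; s(x)=y\}$. Since $\mathrm{graph}(s)\in S_{2}$ by Proposition~\ref{HPPA:prop:theinjection}, the formula defining $A$ is arithmetical in this parameter, and so $A\in S_{1}$ by the arithmetical comprehension schema (which is included in ${\tt \Delta^{1}_{1}-BL}_{0}$). Moreover $A\subseteq \mathrm{rng}\partial$, because every $s(x)=\partial(\{x\})$ is by definition in the range of $\partial$.

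Next I would invoke Proposition~\ref{HPPA:prop:genrussel} applied to this $A$: it yields a $B\in S_{1}$ with $B\subseteq A$ and $\partial B \in \mathrm{rng}\partial - A$. In particular $\partial B \in M$ but $\partial B \notin \mathrm{rng}(s)$, so $s$ fails to be surjective, as desired.

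I do not anticipate a genuine obstacle here, since the two previous propositions were engineered precisely to combine in this way; the only point requiring mild attention is verifying that $\mathrm{rng}(s)$ lies in $S_{1}$, which is why Proposition~\ref{HPPA:prop:theinjection} was stated with the stronger conclusion that $\mathrm{graph}(s)\in S_{2}$ rather than merely the existence of $s$ as a function symbol. Once that observation is in place, the remainder of the argument is a single application of Proposition~\ref{HPPA:prop:genrussel}.
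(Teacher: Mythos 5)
Your proposal is correct and follows essentially the same route as the paper: the paper's proof likewise obtains $s$ from Proposition~\ref{HPPA:prop:theinjection} and then applies Proposition~\ref{HPPA:prop:genrussel} to $A=\mathrm{rng}(s)$ to produce $\partial B\in\mathrm{rng}\,\partial-\mathrm{rng}(s)$. Your explicit check that $\mathrm{rng}(s)\in S_{1}$ via arithmetical comprehension is a detail the paper leaves implicit, but it is exactly the right justification.
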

\begin{proof}
By Proposition~\ref{HPPA:prop:theinjection} there is an injective function $s:M\rightarrow M$ such that $\mathrm{rng}(s)\subseteq \mathrm{rng}\partial$ and such that $\mathrm{graph}(s)$ is in $S_{2}$ and such that $s(x)=\partial(\{x\})$. By Proposition~\ref{HPPA:prop:genrussel}, there is $B$ in $S_{1}$ such that $B\subseteq \mathrm{rng}(s)$ and $\partial B\in \mathrm{rng}\partial-\mathrm{rng}(s)$. Hence, $s:M\rightarrow M$ is not surjective.
\end{proof}

\begin{prop}\label{HPPA:prop:whatitporves222}

There is a structure $(M, S_{1}, S_{2}, \ldots)$ such that 
\begin{itemize}

\item[(i)] For any injection $\partial:S_{1}\rightarrow M$ it is the case that $(M, S_{1}, S_{2}, \ldots, \partial)$ models both the theory~${\tt ABL}_{0}$ as well as the sentence that expresses that there are no injective non-surjective functions $f:M\rightarrow M$.

\item[(ii)] There is no injection $\partial:S_{1}\rightarrow M$ such that $(M, S_{1}, S_{2}, \ldots, \partial)$ models ${\tt \Delta^{1}_{1}-BL}_{0}$.

\end{itemize}

\end{prop}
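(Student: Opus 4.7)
The plan is to take $M$ to be a countably infinite set in the empty signature, and to let $S_n = D(M^n)$ denote the subsets of $M^n$ that are first-order definable with parameters in the pure language of equality. The theory of an infinite set in the empty signature has quantifier elimination, so $S_1$ consists precisely of the finite and cofinite subsets of $M$; in particular $|S_1| = |M|$, and we may fix any injection $\partial : S_1 \to M$. By Proposition~\ref{HPPA:prop:truethat}, every such structure $(M, S_1, S_2, \ldots, \partial)$ is a model of ${\tt ABL}_0$, so the bulk of the work is to verify the second clause of~(i), namely that no $R \in S_2$ is the graph of an injective non-surjective function $f : M \to M$.

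To establish this, I would argue via quantifier elimination that any $R \in S_2$ is defined by a finite Boolean combination of atomic formulas of the form $x = c_i$, $y = c_j$, or $x = y$, for some finite tuple of parameters $c_1, \ldots, c_k \in M$. For any non-parameter $a \in M \setminus \{c_1, \ldots, c_k\}$, the vertical section $R_a = \{b : (a,b) \in R\}$ depends only on the complete $1$-type of $a$ over $\{c_1, \ldots, c_k\}$, which is the same for every non-parameter. A short case analysis on the disjuncts of the defining formula then shows that, if $R$ is a functional graph, the uniform section $R_a$ must be either (a)~a fixed singleton $\{c_j\}$ independent of $a$, in which case $f$ collapses every non-parameter onto $c_j$ and is not injective, or (b)~$\{a\}$, in which case $f$ acts as the identity on $M \setminus \{c_1, \ldots, c_k\}$. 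In the latter case, for $f$ to be injective on $M$ its values on the parameters $c_1, \ldots, c_k$ cannot coincide with any non-parameter (else collision with the identity image), so $f$ restricts to an injection of the finite set $\{c_1, \ldots, c_k\}$ into itself, hence to a bijection. Either way, $f$ fails to be injective non-surjective, proving (i).

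Part~(ii) is then an immediate consequence of (i) and Corollary~\ref{HPPA:prop:whatitporves}: any injection $\partial : S_1 \to M$ for which $(M, S_1, S_2, \ldots, \partial)$ modeled ${\tt \Delta^{1}_{1}-BL}_0$ would, by that corollary, supply an injective non-surjective $s : M \to M$ with $\mathrm{graph}(s) \in S_2$, contradicting what was just shown in~(i).

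The main obstacle will be the type-theoretic analysis in the second paragraph; once that is pinned down, the rest of the proposition follows quickly from the previously established results (Propositions~\ref{HPPA:prop:truethat} and~\ref{HPPA:prop:whatitporves}). Everything else is routine: quantifier elimination for the empty signature is standard, and Proposition~\ref{hppa:propaboutarithm} ensures that passing to the signature containing $\partial$ does not enlarge the class of parameter-definable subsets of $M^n$ beyond $D(M^n)$.
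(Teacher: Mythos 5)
Your proposal is correct, but it takes a genuinely different route from the paper. The paper takes $M$ to be an algebraically closed field with $S_{n}=D(M^{n})$ and rules out definable injective non-surjective maps $M\rightarrow M$ by citing Ax's Theorem (Theorem~\ref{HPPA:prop:axtheorem}); parts (i) and (ii) then follow from Proposition~\ref{HPPA:prop:truethat} and Corollary~\ref{HPPA:prop:whatitporves} exactly as in your last two paragraphs. You instead take $M$ to be a countably infinite pure set and carry out the elimination-of-injective-non-surjective-functions step by hand, via quantifier elimination in the empty signature: a definable functional graph must, on the cofinite set of non-parameters, either be constant (hence non-injective) or the identity (hence, if injective, a bijection, since it then permutes the finite parameter set). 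Your type-theoretic analysis is sound --- the section $R_{a}$ for a non-parameter $a$ is a Boolean combination of the conditions $y=c_{j}$ and $y=a$, uniform in $a$, so the only singleton possibilities are a fixed $\{c_{j}\}$ or $\{a\}$, and totality of $f$ rules out the empty section. What your approach buys is self-containedness: it avoids any appeal to Ax's Theorem, which the paper merely quotes and whose provability in ${\tt ACA}_{0}$ it explicitly leaves open (Question~\ref{HPPA:axq}). What the paper's choice buys is thematic economy --- algebraically closed fields and Ax's Theorem are already in play in \S~\ref{hppa:lastsec2} (Theorem~\ref{hppa:coolacfthm} and Corollary~\ref{hppa:corcorcor3434123431}), so no new structure or lemma is introduced. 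Both structures are instances of the same underlying phenomenon (strong minimality forcing definable injections to be surjections), and either one establishes the proposition.
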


\begin{proof}
Let $M$ be an algebraically closed field (cf. Marker~\cite{Marker2002} Example~4.3.10 p.~140) and let $S_{n}=D(M^{n})$, i.e. the definable subsets of $M^{n}$. Suppose that $s:M\rightarrow M$ was an injective surjective function whose graph was in $S_{2}=D(M^{2})$. Then this implies that there is a definable injective non-surjective function $s: M\rightarrow M$, which contradicts Ax's Theorem (cf. Theorem~\ref{HPPA:prop:axtheorem}). For~(i), note that by Proposition~\ref{HPPA:prop:truethat}, the structure $(M, S_{1}, S_{2}, \ldots, \partial)$ is a model of ${\tt ABL}_{0}$ for any injection $\partial:D(k)\rightarrow k$. For~(ii), note that if there was such an injection $\partial:S_{1}\rightarrow M$, then by Corollary~\ref{HPPA:prop:whatitporves}, there would be an injective non-surjective $s: M\rightarrow M$ such that $\mathrm{graph}(s)$ is in $S_{2}$, which is a contradiction.
\end{proof}

\subsection{Hyperarithmetic Theory and Some Related Elementary Results}\label{HPPA:sbusec}

\begin{defn}
Suppose that $X,Y\in 2^{\omega}$. Then $X\leq_{T} Y$ if $X$ is Turing computable from $Y$ or if $X$ is $\Delta^{0,Y}_{1}$. Further, $X\leq_{a} Y$ if $X$ is arithmetical in $Y$ or if there is $n>0$ such that $X$ is $\Delta^{0,Y}_{n}$. Finally, $X\leq_{h} Y$ if $X$ is hyperarithmetic in $Y$ or if $X$ is $\Delta^{1,Y}_{1}$ (For computational definitions of these reducibilities and proofs that they correspond with the relevant definability notion, see respectively Soare~\cite{Soare1987} p.~64, Odifreddi~\cite{Odifreddi1989aa} p.~375, Sacks~\cite{Sacks1990} p.~44).
\end{defn}

\begin{defn}\label{hppa:defn:standard3333}
Suppose that $Y\in 2^{\omega}$. Then define
\begin{align}
& \mathrm{REC}(Y)=\{X\in 2^{\omega}: X\leq_{T} Y\} \\
& \mathrm{ARITH}(Y)=\{X\in 2^{\omega}: X\leq_{a} Y\} \\
& \mathrm{HYP}(Y)=\{X\in 2^{\omega}: X\leq_{h} Y\}
\end{align}
Further, let $\mathrm{REC}=\mathrm{REC}(\emptyset)$ and $\mathrm{ARITH}=\mathrm{ARITH}(\emptyset)$ and $\mathrm{HYP}=\mathrm{HYP}(\emptyset)$ (cf. Simpson~\cite{Simpson2009aa} Remark~I.7.5. p.~25, Example~I.11.2 p.~39).
\end{defn}

\begin{rmk}
Recall that structures in the language of ${\tt HP}^{2}$ and ${\tt BL}^{2}$ have the form $(M, S_{1}, S_{2}, \ldots, \#)$, where $S_{n}\subseteq P(M^{n})$ and $\#:S_{1}\rightarrow M$ (cf. equation~(\ref{HPPA:eqn:standardmodel2})). If $\#:\mathrm{HYP}(Y)\rightarrow \omega$, then $(\omega, \mathrm{HYP}(Y), \#)$ will be used as an abbreviation for the structure $(\omega, S_{1}, S_{2}, \ldots, \#)$, where $S_{n}\subseteq P(\omega^{n})$ is the set of $n$-ary relations whose graph is in $\mathrm{HYP}(Y)$ under any standard computable pairing function. Similarly, in what follows, we will sometimes use the abbreviations  $(\omega, \mathrm{REC}(Y), \#)$ and  $(\omega, \mathrm{ARITH}(Y), \#)$.
\end{rmk}


\begin{prop}
The relation $X\leq_{h} Y$ is $\Pi^{1}_{1}$.
\end{prop}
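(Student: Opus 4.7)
The natural strategy is to invoke Kleene's characterization of hyperarithmetic reducibility via the system of ordinal notations, which provides a $\Pi^{1,Y}_1$ enumeration of $\mathrm{HYP}(Y)$. Specifically, the classical equivalence (cf. Sacks~\cite{Sacks1990}~Ch.~II) says that $X \leq_{h} Y$ if and only if there exist natural numbers $a$ and $e$ with $a \in \mathcal{O}^Y$ and $X = \{ n : \Phi^{H^Y_a}_e(n) \downarrow = 1\}$, where $\mathcal{O}^Y$ is Kleene's set of $Y$-ordinal notations and $H^Y_a$ is the associated level of the $Y$-hyperarithmetic hierarchy.

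With this in hand, the plan has two steps. First I would recall the two standard facts from hyperarithmetic theory: (i) $\mathcal{O}^Y$ is $\Pi^{1,Y}_1$, and (ii) the relation $\{(a,n) : a \in \mathcal{O}^Y \wedge n \in H^Y_a\}$ admits a uniformly $\Pi^{1,Y}_1$-in-$(a,n)$ definition. Using (ii), I would unfold the oracle Turing computation $\Phi^{H^Y_a}_e(n) \downarrow = 1$ by replacing each oracle query $q \in H^Y_a$ with its uniform $\Pi^{1,Y}_1$ description and show that the auxiliary relation
\[
R(a,e,X,Y) \;:\equiv\; a \in \mathcal{O}^Y \;\wedge\; \forall n \, [\, n \in X \leftrightarrow \Phi^{H^Y_a}_e(n) \downarrow = 1 \,]
\]
is $\Pi^1_1$ in $(a,e,X,Y)$. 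The universal quantifier $\forall n$ preserves the $\Pi^1_1$ bound, so the conjunction with the $\Pi^1_1$ clause $a \in \mathcal{O}^Y$ remains $\Pi^1_1$.

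Second, since $X \leq_{h} Y$ is equivalent to $\exists a \, \exists e \, R(a,e,X,Y)$, I would invoke the closure of $\Pi^1_1$ under existential natural number quantifiers (a standard consequence, by duality, of the column-coding trick showing that $\Sigma^1_1$ is closed under universal number quantifiers). This collapses $\exists a \, \exists e \, R$ back to a $\Pi^1_1$ predicate in $(X,Y)$, yielding the conclusion.

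The main obstacle is the technical verification that $R(a,e,X,Y)$ is genuinely $\Pi^1_1$ and does not slip up to $\Sigma^1_2$. The delicate point is that the uniform $\Pi^{1,Y}_1$ definition of $H^Y_a$ must be substituted into the $\Sigma^0_1$-in-oracle description of Turing computation in a way that preserves the $\Pi^{1,Y}_1$ bound; positive queries are handled by their $\Pi^{1,Y}_1$ definition, while the complementary queries $q \notin H^Y_a$ for $a \in \mathcal{O}^Y$ rely on the companion fact that this complement is also uniformly $\Pi^{1,Y}_1$. Once this uniformity is in hand, the quantifier count is routine; by contrast, an attempt to prove the proposition directly from the raw definition ``$X$ is $\Delta^{1,Y}_1$'' via universal $\Sigma^1_1$ and $\Pi^1_1$ predicates yields only a $\Sigma^1_2$ bound, which is why the detour through $\mathcal{O}^Y$ is essential.
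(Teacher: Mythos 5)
Your argument is correct and is exactly the standard proof that the paper defers to (its ``proof'' is simply a citation of Sacks~\cite{Sacks1990}~p.~45): characterize $X\leq_{h}Y$ via $a\in\mathcal{O}^{Y}$ and $X=\{e\}^{H^{Y}_{a}}$, use the uniform $\Pi^{1,Y}_{1}$-ness of both $\{(a,n):a\in\mathcal{O}^{Y}\,\&\,n\in H^{Y}_{a}\}$ and its companion with $n\notin H^{Y}_{a}$ to make the oracle computation clause and its negation both $\Pi^{1}_{1}$, and close under number quantifiers. You correctly isolate the one delicate point (handling negative oracle queries via the $\Pi^{1,Y}_{1}$ definition of the complement of $H^{Y}_{a}$), so there is nothing to add.
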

\begin{proof}
See Sacks~\cite{Sacks1990} p.~45.
\end{proof}

\begin{thm}\label{HPPA:thm:kleenerestricted} (Kleene's Theorem on Restricted Quantification) \index{Kleene's Theorem on Restricted Quantification, Theorem~HPPA:thm:kleenerestricted}
Suppose that $\varphi(X,Y)$ is a $\Pi^{1}_{1}$ predicate. Then $\exists \; X\leq_{h} Y \; \varphi(X,Y)$ is a $\Pi^{1}_{1}$-predicate. Moreover, this is provable in ${\tt \Pi^{1}_{1}-CA_{0}}$.
\end{thm}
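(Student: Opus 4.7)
The plan is to reduce the hyperarithmetic second-order existential quantifier $\exists X \leq_{h} Y$ to ordinary existential number quantifiers, and then invoke closure of $\Pi^{1}_{1}$ under the latter. First, I would recall Kleene's characterization of the hyperarithmetic hierarchy via the $Y$-relativized system of ordinal notations $\mathcal{O}^{Y}$: for each $a \in \mathcal{O}^{Y}$ there is a uniformly defined hyperarithmetic level $H^{Y}_{a}$, and $X \leq_{h} Y$ holds if and only if there exist $a \in \mathcal{O}^{Y}$ and $e \in \omega$ with $X = \{e\}^{H^{Y}_{a}}$. The predicate $a \in \mathcal{O}^{Y}$ is $\Pi^{1}_{1}$ in $(Y,a)$, and the map $(Y,a) \mapsto H^{Y}_{a}$ is $\Delta^{1}_{1}$ uniformly in $(Y,a)$ on this $\Pi^{1}_{1}$ domain. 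All of this can be formalized in ${\tt \Pi^{1}_{1}-CA}_{0}$ along the lines of Chapter~VII of~\cite{Simpson2009aa}.

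Second, using this characterization, the formula $\exists X \leq_{h} Y\; \varphi(X,Y)$ is ${\tt \Pi^{1}_{1}-CA}_{0}$-provably equivalent to
\begin{equation*}
\exists a \, \exists e \, \bigl[\, a \in \mathcal{O}^{Y} \wedge \varphi(\{e\}^{H^{Y}_{a}}, Y) \,\bigr].
\end{equation*}
Writing $\varphi(X,Y) \equiv \forall Z\, \theta(X,Y,Z)$ with $\theta$ arithmetic, the bracketed predicate is $\Pi^{1}_{1}$ in $(Y,a,e)$: the conjunct $a \in \mathcal{O}^{Y}$ is directly $\Pi^{1}_{1}$, and substituting $\{e\}^{H^{Y}_{a}}$ into the arithmetic matrix $\theta$ can be performed uniformly, using the $\Delta^{1}_{1}$-definability of $H^{Y}_{a}$ to rewrite $n \in \{e\}^{H^{Y}_{a}}$ as a $\Pi^{1}_{1}$ condition and absorbing the resulting set quantifiers into the leading $\forall Z$.

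Third, I would apply closure of $\Pi^{1}_{1}$ under existential number quantification twice, to contract the $\exists a$ and $\exists e$ quantifiers. This closure is itself provable in ${\tt \Pi^{1}_{1}-CA}_{0}$: given the negation $\forall n\, \exists X\, \neg\theta(n,X)$ of the desired formula, the $\Sigma^{1}_{1}$-choice schema (which is a theorem of ${\tt \Pi^{1}_{1}-CA}_{0}$ by~\cite{Simpson2009aa} Theorem~V.8.3) produces a uniform witnessing sequence $\langle X_{n} : n \in \omega\rangle$, whose join contradicts the hypothesis. The principal technical obstacle is not in this contraction step but rather in the preliminary formalization of $\mathcal{O}^{Y}$ and the levels $H^{Y}_{a}$ within ${\tt \Pi^{1}_{1}-CA}_{0}$, with enough uniformity to justify the substitution in step two. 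This is standard but demanding; fortunately it is treated in full in Chapter~VII of~\cite{Simpson2009aa} and can be invoked directly.
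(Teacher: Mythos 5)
Your proposal is correct and is essentially the classical argument that the paper itself relies on: the paper does not prove this theorem but simply cites Kleene, Moschovakis (Theorem~4D.3), and Simpson (VIII.3.20), and those sources proceed exactly as you do --- replace $\exists\,X\leq_{h}Y$ by $\exists a\,\exists e$ ranging over hyperarithmetic-in-$Y$ indices, observe that $a\in\mathcal{O}^{Y}$ is $\Pi^{1}_{1}$ and that membership in $\{e\}^{H^{Y}_{a}}$ is uniformly $\Delta^{1}_{1}$ on that domain, and then contract the number quantifiers using closure of $\Pi^{1}_{1}$ under $\exists n$, which is dual to $\Sigma^{1}_{1}$-choice and hence available in ${\tt \Pi^{1}_{1}-CA}_{0}$. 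The one thing to add to your displayed formula is the conjunct, arithmetic in $H^{Y}_{a}$ and hence harmless, asserting that $\{e\}^{H^{Y}_{a}}$ is total and $0$--$1$-valued, so that substituting it into $\varphi$ is legitimate; this totality condition is implicit in your first paragraph's ``$X=\{e\}^{H^{Y}_{a}}$'' and is precisely the index predicate $P(Y\oplus X,\langle a,e\rangle)$ that the paper itself later exploits in the proof of Theorem~\ref{HPPA:thm:main}.
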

\begin{proof}
See Kleene~\cite{Kleene1959} and Moschovakis~\cite{Moschovakis1980} Theorem~4D.3~p.~220. That this theorem is provable in ${\tt \Pi^{1}_{1}-CA}_{0}$ was noted by Simpson~\cite{Simpson2009aa} VIII.3.20 p.~330.
\end{proof}


\begin{thm}\label{HPPA:thm:specgandy} (Spector-Gandy Theorem)\index{Spector-Gandy Theorem, Theorem~\ref{HPPA:thm:specgandy}}
Suppose that $\varphi(Y)$ is a $\Pi^{1}_{1}$-predicate. Then there is an arithmetic predicate $\psi(X,Y)$ such that $\varphi(Y)\leftrightarrow \exists \; X\leq_{h} Y \; \psi(X,Y)$.
\end{thm}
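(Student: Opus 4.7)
The plan is to reduce the statement to the classical characterization of $\Pi^1_1$ predicates in terms of well-foundedness of trees, and then express the existence of a hyperarithmetic rank function as an arithmetic condition on the witness $X$. First, I would invoke the normal form theorem for $\Pi^1_1$ predicates: $\varphi(Y)$ is equivalent to $\forall g \in \omega^\omega \; \exists n \; R(Y \upharpoonright n, g \upharpoonright n)$ for some recursive predicate $R$. Setting $T_Y = \{\sigma \in \omega^{<\omega} : \forall m \leq |\sigma| \; \neg R(Y \upharpoonright m, \sigma \upharpoonright m)\}$, membership in $T_Y$ is recursive in $Y$, and $\varphi(Y)$ holds if and only if $T_Y$ is well-founded under extension.

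Next, I would use the classical theorem that a $Y$-recursive well-founded tree $T_Y$ has rank bounded by $\omega_1^{\mathrm{CK},Y}$ and therefore admits a rank function $\rho\colon T_Y \to O^Y$ taking values in Kleene's system of ordinal notations relative to $Y$. When $T_Y$ is well-founded, the canonical rank function, which assigns each $\sigma \in T_Y$ an $O^Y$-notation for its tree rank, is $\Delta^1_1(Y)$ and hence hyperarithmetic in $Y$. The arithmetic predicate $\psi(X,Y)$ would then assert that $X$ codes a function $f\colon T_Y \to \omega$ satisfying the local rank equations with respect to the $O$-notation primitives: for each $\sigma$ terminal in $T_Y$, $f(\sigma) = 1$; for each non-terminal $\sigma$ with $T_Y$-extensions indexed by $A_\sigma$, $f(\sigma)$ has the form $3 \cdot 5^e$ where $\varphi_e$ is a total recursive function enumerating the values $\{f(\sigma^\frown n) : n \in A_\sigma\}$ as immediate $O$-predecessors in the canonical arithmetic sense. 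These local conditions can be expressed arithmetically in $X$ and $Y$.

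The forward direction is then immediate: if $\varphi(Y)$ holds, take $X$ to be the canonical $O^Y$-rank function for $T_Y$, which is $\Delta^1_1(Y)$, and $\psi(X,Y)$ holds by construction. The main obstacle, and the crux of the argument, is the backward direction: given $X \leq_h Y$ satisfying the local arithmetic conditions, one must show $T_Y$ has no infinite path. The difficulty is that the local conditions alone do not force the values of $f$ to lie in $O^Y$, since membership in $O^Y$ is a genuinely $\Pi^1_1$ condition. The resolution exploits the constraint $X \leq_h Y$: an infinite path $g$ through $T_Y$ would produce a sequence $(f(g \upharpoonright n))_{n \in \omega}$ of purported notations in which each term stands in the arithmetic immediate-predecessor relation to the previous one, and from this data together with $X$ and $Y$, one can hyperarithmetically extract a genuine $<_O^Y$-descending sequence in the actual field of $O^Y$, contradicting the well-foundedness of $<_O^Y$. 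The delicate step here is this passage from the merely local, arithmetic rank data to a genuine descent in the notation system, and it relies essentially on Kleene's Theorem on Restricted Quantification (Theorem~\ref{HPPA:thm:kleenerestricted}) applied inside hyperarithmetic-in-$Y$ witnesses.
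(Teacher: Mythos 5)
The paper does not actually prove this theorem: it is quoted as a classical black box, with the proof deferred to Spector, Gandy, and Sacks (Theorem~III.3.5 and Exercise~III.3.13 of \cite{Sacks1990}). So the relevant comparison is with the standard literature proof. Your forward direction is essentially right and matches how that proof begins: the Kleene normal form, the passage to well-foundedness of the $Y$-recursive tree $T_{Y}$, Spector boundedness (the rank of a well-founded $Y$-recursive tree is below $\omega_{1}^{\mathrm{CK},Y}$), and the construction of a $\Delta^{1}_{1}(Y)$ rank function into $\mathcal{O}^{Y}$ by effective transfinite recursion are all correct.

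The gap is in the backward direction, precisely at the step you yourself flag as delicate. First, a minor point: if the clause ``the values $f(\sigma^{\frown}n)$ are immediate $\mathcal{O}$-predecessors of $f(\sigma)$'' is read via the genuine relation $<_{\mathcal{O}^{Y}}$, then $\psi$ is $\Pi^{1}_{1}$ rather than arithmetic; so you must use the formal, arithmetically (indeed r.e.) definable predecessor relation, which is not well-founded off of $\mathcal{O}^{Y}$. But then your proposed contradiction evaporates: an infinite path $g$ through $T_{Y}$ yields a sequence $f(g\upharpoonright 0), f(g\upharpoonright 1),\ldots$ descending in the \emph{formal} predecessor relation, and a descending sequence of pseudo-notations is no contradiction at all. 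Nothing in the local conditions forces even $f(\langle\rangle)$ to lie in $\mathcal{O}^{Y}$ --- and note that if it did, you would be done immediately, since $\mathcal{O}^{Y}$ is closed downward under formal immediate predecessors and $<_{\mathcal{O}^{Y}}$ is well-founded; so the entire content of the theorem is concentrated in ruling out hyperarithmetic-in-$Y$ pseudo-rank-functions taking values among pseudo-notations. Your appeal to Kleene's Theorem on Restricted Quantification cannot supply this: that theorem is what shows the right-hand side $\exists\, X\leq_{h}Y\,\psi(X,Y)$ is $\Pi^{1}_{1}$ (the easy direction), not that it exhausts $\Pi^{1}_{1}$. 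The classical proofs close the gap with a genuinely different idea --- a pseudo-hierarchy/overspill argument showing that if such an $X\leq_{h}Y$ existed while $T_{Y}$ were ill-founded, the well-founded part of the associated ill-founded notation structure would have order type at least $\omega_{1}^{\mathrm{CK},Y}$, whence $X\oplus Y$ would compute $H^{Y}_{b}$ for cofinally many $b\in\mathcal{O}^{Y}$ and hence every set in $\mathrm{HYP}(Y)$, contradicting $X\leq_{h}Y$ because no single set hyperarithmetic in $Y$ bounds all of $\mathrm{HYP}(Y)$ under $\leq_{T}$. That argument (or the ramified-analytic-hierarchy variant in Sacks) is the real substance of the theorem, and it is absent from your sketch.
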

\begin{proof}
See Spector and Gandy (\cite{Spector1959}, \cite{Gandy1960}), Sacks~\cite{Sacks1990} Theorem~III.3.5 p.~61 and Exercise~III.3.13 p.~62.
\end{proof}

\begin{rmk}
The following proposition is non-trivial only because the second-order quantifiers must be evaluated with respect to the second-order part~$S_{1}\subseteq P(\omega)$ of the structure $(\omega, S_{1})$ and not with respect to $P(\omega)$ itself. For instance, one cannot infer that $(\omega, \mathrm{HYP}(Y))\models \neg  {\tt \Pi^{1}_{1}-CA}_{0}$ simply from the fact that $\mathcal{O}^{Y}$ is $\Pi^{1}_{1}$ but not $\Sigma^{1}_{1}$, since to say this is merely to say that $\mathcal{O}^{Y}$ is $\Pi^{1}_{1}$-definable but not $\Sigma^{1}_{1}$-definable on the structure $(\omega, P(\omega))$.
\end{rmk}

\begin{prop}\label{HPPA:appdfadsfad}
Suppose that $Y\in 2^{\omega}$. Then $(\omega, \mathrm{ARITH}(Y))\models {\tt ACA}_{0}+\neg {\tt \Delta^{1}_{1}-CA}_{0}$ and $(\omega, \mathrm{HYP}(Y))\models {\tt \Sigma^{1}_{1}-AC}_{0}+\neg {\tt \Pi^{1}_{1}-CA}_{0}$.
\end{prop}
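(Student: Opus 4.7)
The statement has four parts, which split by model. For $(\omega, \mathrm{ARITH}(Y))$: that $\models {\tt ACA}_{0}$ is essentially immediate, since Proposition~\ref{hppa:propaboutarithm} shows that arithmetical definability inside such structures coincides with $\partial$-free (or $\#$-free) arithmetical definability, and $\mathrm{ARITH}(Y)$ is closed under the latter; the first-order axioms Q1--Q8 and induction hold on $\omega$. For the harder half, $\models \neg {\tt \Delta^{1}_{1}-CA}_{0}$, I would exhibit the $\omega$-th Turing jump $Y^{(\omega)}$, coded as $\{\langle k, i \rangle : i \in Y^{(k)}\}$, as a $\Delta^{1}_{1}$-definable set not in $\mathrm{ARITH}(Y)$. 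Non-membership is standard: if $Y^{(\omega)} \leq_{a} Y$ then $Y^{(\omega)} \leq_{T} Y^{(n)}$ for some $n$, yet $Y^{(n+1)} \leq_{T} Y^{(\omega)}$, a contradiction. For the $\Delta^{1}_{1}$-definability I would give the two parallel forms
\begin{align*}
& \exists\, S\, [\, S \text{ codes } S_{0},\ldots,S_{k} \text{ with } S_{0}=Y,\; S_{j+1}=S_{j}',\; \text{and}\; i \in S_{k}\,], \\
& \forall\, S\, [\, S \text{ codes } S_{0},\ldots,S_{k} \text{ with } S_{0}=Y,\; S_{j+1}=S_{j}'\;\to\; i \in S_{k}\,]
\end{align*}
and observe that for each fixed $k$ the canonical witness $(Y, Y', \ldots, Y^{(k)})$ is arithmetic in $Y$ (so the existential is realised inside $\mathrm{ARITH}(Y)$), while uniqueness of iterated jump makes the universal pick out the same condition.

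For $(\omega, \mathrm{HYP}(Y))$: to prove $\models {\tt \Sigma^{1}_{1}-AC}_{0}$ I would invoke Kreisel's $\Sigma^{1}_{1}$-selection principle---if $\varphi(n,X)$ is arithmetic in parameters $\vec{Q}$ from $\mathrm{HYP}(Y)$ and $\forall n\, \exists X\, \varphi(n,X)$ holds, then some function $f$ that is $\Delta^{1}_{1}$ in $Y$ and $\vec{Q}$ satisfies $\varphi(n, f(n))$. Given an instance of the choice schema (\ref{HPPA:eqn:thechoiceschema}) interpreted in the model, one contracts the witnessing existential of the $\Sigma^{1}_{1}$-matrix with the quantifier supplied by the premise, applies Kreisel-selection, and extracts an $R$ whose graph lies in $\mathrm{HYP}(Y)$ and whose columns verify the required matrix. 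For $\models \neg {\tt \Pi^{1}_{1}-CA}_{0}$ I would use the Spector-Gandy Theorem~\ref{HPPA:thm:specgandy} to rewrite Kleene's $\mathcal{O}^{Y}$ as $\exists X \leq_{h} Y\; \psi(n,X,Y)$ for some arithmetic $\psi$. This is a $\Sigma^{1}_{1}$-formula whose second-order existential ranges over precisely the second-order part of the model, so it defines $\mathcal{O}^{Y}$ in $(\omega, \mathrm{HYP}(Y))$; Kleene's theorem gives $\mathcal{O}^{Y} \notin \mathrm{HYP}(Y)$, so ${\tt \Sigma^{1}_{1}}$-comprehension---and hence, by complementation inside the ambient ${\tt ACA}_{0}$, ${\tt \Pi^{1}_{1}-CA}_{0}$---fails in the model.

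The main obstacle is the subtle point that all second-order quantifiers must be interpreted over $\mathrm{ARITH}(Y)$ or $\mathrm{HYP}(Y)$ rather than over all of $P(\omega)$, so one cannot simply quote a ``true'' $\Delta^{1}_{1}$-definition of $Y^{(\omega)}$ or a ``true'' $\Pi^{1}_{1}$-definition of $\mathcal{O}^{Y}$ and declare victory: one must verify that the formula still picks out the intended set once its quantifiers are restricted to the nonstandard second-order part. The verifications succeed for complementary reasons---for $Y^{(\omega)}$, because the canonical witnessing sequence is itself arithmetic and the jump is functionally determined, and for $\mathcal{O}^{Y}$, because Spector-Gandy furnishes witnesses of bounded hyperarithmetic rank. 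These two absoluteness-type phenomena, together with Kreisel-selection, form the technical heart of the proof.
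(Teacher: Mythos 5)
Your proposal is correct and follows essentially the same route as the paper: the same witness $Y^{(\omega)}$ (with the same pair of existential/universal definitions evaluated over $\mathrm{ARITH}(Y)$) refutes $\Delta^{1}_{1}$-comprehension, the $\Sigma^{1}_{1}$-choice schema is verified by the standard Kleene-restricted-quantification/selection argument that the paper delegates to Simpson, and $\mathcal{O}^{Y}$ together with Spector--Gandy refutes $\Pi^{1}_{1}$-comprehension. The only cosmetic difference is that the paper derives $Y^{(\omega)}\notin\mathrm{ARITH}(Y)$ from Tarski's theorem on truth rather than from the strictness of the jump hierarchy.
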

\begin{proof}
For the fact that $(\omega, \mathrm{ARITH}(Y))\models  {\tt ACA}_{0}$, see Simpson~\cite{Simpson2009aa} Theorem~VIII.1.13 p.~313. Suppose that $(\omega, \mathrm{ARITH}(Y))\models {\tt \Delta^{1}_{1}-CA}_{0}$. But note that
\begin{align}\label{hppa:align:delta11defntruth}
(n,m)\in Y^{(\omega)} & \Longleftrightarrow \exists \; X \in \mathrm{ARITH}(Y) \; X =\oplus_{i=1}^{n} Y^{(i)} \; \& \; m\in X \notag \\ & \Longleftrightarrow \forall \; X \in \mathrm{ARITH}(Y) \; X =\oplus_{i=1}^{n} Y^{(i)} \rightarrow m\in X
\end{align}
and hence $Y^{(\omega)}\in \mathrm{ARITH}(Y)$, which would contradict Tarski's Theorem on Truth. Hence, in fact $(\omega, \mathrm{ARITH}(Y))\models \neg {\tt \Sigma^{1}_{1}-AC}_{0}$. For the fact that 
 $(\omega, \mathrm{HYP}(Y))\models {\tt \Sigma^{1}_{1}-AC}_{0}$, see Simpson~\cite{Simpson2009aa} Theorem~VIII.4.5 p.~334 and Theorem~VIII.4.8 p.~335. This proof uses Kleene's Theorem on Restricted Quantification~\ref{HPPA:thm:kleenerestricted}, and below in Theorem~\ref{HPPA:thm:main} we will emulate this proof in the setting of ${\tt BL}^{2}$. Suppose for the sake of contradiction that $(\omega, \mathrm{HYP}(Y))\models {\tt \Pi^{1}_{1}-CA}_{0}$. Since $\mathcal{O}^{Y}$ is $\Pi^{1,Y}_{1}$, by the Spector-Gandy Theorem~\ref{HPPA:thm:specgandy}, there is an arithmetic predicate $\psi(n,X,Y)$ such that $n\in \mathcal{O}^{Y}\Longleftrightarrow \exists \; X\leq_{h} Y \; \psi(n,X,Y)$. Then $\mathcal{O}^{Y}$ is $\Sigma^{1}_{1}$-definable on $(\omega, \mathrm{HYP}(Y))$ and hence exists in $\mathrm{HYP}(Y)$ by ${\tt \Pi^{1}_{1}-CA}_{0}$, which contradicts that $\mathcal{O}^{Y}$ is not in $\mathrm{HYP}(Y)$.
\end{proof}

\begin{cor}\label{HPPA:cor:specgandy}
Suppose that there is a $\Pi^{1}_{1}$-formula $\theta(X,Y,Z)$ such that for all $Z\in 2^{\omega}$ the set $G_{Z}=\{(X,Y)\in 2^{\omega}\times 2^{\omega}: \theta(X,Y,Z)\}$ is the graph of a function $g_{Z}:\mathrm{HYP}(Z)\rightarrow \mathrm{HYP}(Z)$. Then the graph $G_{Z}$ of $g_{Z}$ is $\Sigma^{1}_{1}$-definable in the structure $(\omega, \mathrm{HYP}(Z))$ uniformly in $Z$.
\end{cor}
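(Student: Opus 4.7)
The plan is to combine the Spector--Gandy Theorem (Theorem~\ref{HPPA:thm:specgandy}) with the fact that $\mathrm{HYP}(Z)$ is closed under hyperarithmetic reducibility. The starting observation is that, since $\theta(X,Y,Z)$ is $\Pi^{1}_{1}$, the Spector--Gandy Theorem applied with the triple $(X,Y,Z)$ as parameter yields an arithmetic formula $\psi(X,Y,Z,W)$ such that
\begin{equation}
\theta(X,Y,Z) \Longleftrightarrow \exists \; W \leq_{h} (X\oplus Y\oplus Z) \; \psi(X,Y,Z,W).
\end{equation}
I would take my candidate defining formula over $(\omega, \mathrm{HYP}(Z))$ to be the $\Sigma^{1}_{1}$ formula $\varphi(X,Y,Z) \equiv \exists \; W \; \psi(X,Y,Z,W)$, viewing $Z$ as a parameter.

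Next I would verify that this formula defines $G_{Z}$ in the structure $(\omega,\mathrm{HYP}(Z))$. Suppose $X,Y\in\mathrm{HYP}(Z)$, i.e.\ $X,Y\leq_{h} Z$. Then $X\oplus Y\oplus Z\equiv_{h} Z$, so the condition ``$W\leq_{h}(X\oplus Y\oplus Z)$'' is equivalent to ``$W\in\mathrm{HYP}(Z)$''. Moreover, since $\psi$ is arithmetic, its truth value is absolute between $(\omega,P(\omega))$ and $(\omega,\mathrm{HYP}(Z))$ for parameters drawn from $\mathrm{HYP}(Z)$. Chaining these equivalences gives, for $X,Y\in\mathrm{HYP}(Z)$,
\begin{equation}
\theta(X,Y,Z) \;\Longleftrightarrow\; \exists \; W \leq_{h} Z \; \psi(X,Y,Z,W) \;\Longleftrightarrow\; (\omega,\mathrm{HYP}(Z))\models \varphi(X,Y,Z),
\end{equation}
which is precisely the required $\Sigma^{1}_{1}$-definability of $G_{Z}$, uniformly in the parameter~$Z$.

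The only delicate point is ensuring that $X,Y$ really do lie in $\mathrm{HYP}(Z)$, so that the passage from $X\oplus Y\oplus Z$ back to $Z$ in the hyperarithmetic bound is legitimate; this is given for free, because the hypothesis is that $g_{Z}$ maps $\mathrm{HYP}(Z)$ into $\mathrm{HYP}(Z)$ and we only care about defining the graph restricted to the second-order part of the relevant structure. I do not expect any serious obstacle: Spector--Gandy does the real work of converting a $\Pi^{1}_{1}$ matrix into an existential hyperarithmetic statement over an arithmetic kernel, and the closure of $\mathrm{HYP}(Z)$ under $\leq_{h}$ then lets the bounded hyperarithmetic quantifier be read off simply as an unrestricted second-order existential quantifier in the structure $(\omega,\mathrm{HYP}(Z))$.
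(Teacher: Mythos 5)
Your proposal is correct and follows essentially the same route as the paper: apply Spector--Gandy to $\theta$, use the hypothesis that $g_{Z}$ maps $\mathrm{HYP}(Z)$ into $\mathrm{HYP}(Z)$ to collapse the bound $W\leq_{h}X\oplus Y\oplus Z$ to $W\leq_{h}Z$, and then read the bounded hyperarithmetic quantifier as an unrestricted second-order existential over $(\omega,\mathrm{HYP}(Z))$. The only cosmetic difference is that the paper establishes the equivalence $\theta(X,Y,Z)\leftrightarrow\exists\,W\leq_{h}Z\,\psi(X,Y,Z,W)$ for all reals $X,Y$ via the implication $\theta(X,Y,Z)\Rightarrow X\oplus Y\leq_{h}Z$, whereas you restrict at the outset to $X,Y\in\mathrm{HYP}(Z)$, which suffices since $G_{Z}$ is contained in $\mathrm{HYP}(Z)\times\mathrm{HYP}(Z)$ by hypothesis.
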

\begin{proof}
Note that since $g_{Z}:\mathrm{HYP}(Z)\rightarrow \mathrm{HYP}(Z)$, we have that for all $X,Y,Z\in 2^{\omega}$
\begin{equation}
\theta(X,Y,Z) \Longrightarrow X\oplus Y\leq_{h} Z
\end{equation}
By the Spector-Gandy Theorem~\ref{HPPA:thm:specgandy}, there is an arithmetical predicate $\psi(X,Y,Z,W)$ such that for all $X,Y,Z\in 2^{\omega}$
\begin{equation}
\theta(X,Y,Z) \Longleftrightarrow \exists \; W\leq_{h} X\oplus Y\oplus Z  \; \psi(X,Y,Z,W)
\end{equation}
Putting the two previous equations together, we have that for all $X,Y,Z\in 2^{\omega}$
\begin{equation}
\theta(X,Y,Z) \Longleftrightarrow \exists \; W\leq_{h} Z  \; \psi(X,Y,Z,W)
\end{equation}
Then for all $X,Y,Z\in 2^{\omega}$
\begin{equation}
g_{Z}(X)=Y \Longleftrightarrow (\omega, \mathrm{HYP}(Z))\models \exists \; W\; \psi(X,Y,Z,W)
\end{equation}
Hence, in fact the graph $G_{Z}$ of $g_{Z}$ is $\Sigma^{1}_{1}$-definable in the structure $(\omega, \mathrm{HYP}(Z))$ uniformly in~$Z$.
\end{proof}

\begin{thm}\label{hppa:thm:kondo} (Kondo's Uniformization Theorem) \index{Kondo's UniformizationTheorem, Theorem~\ref{hppa:thm:kondo}}Suppose that $\varphi(X,Y)$ is a~$\Pi^{1}_{1}$ predicate. Then there is a $\Pi^{1}_{1}$-predicate $\varphi^{\prime}(X,Y)$ such that
\begin{align}
& \forall \; X, Y \; [\varphi^{\prime}(X,Y)\rightarrow \varphi(X,Y)] \label{hppa:eqn:compareasdfasdf3} \\
& \forall \; X \; [\exists \; Y \; \varphi(X,Y)]\rightarrow [\exists ! Y\; \varphi^{\prime}(X,Y)] \label{hppa:eqn:compareasdfasdf4}
\end{align}
Moreover, this is provable in ${\tt \Pi^{1}_{1}-CA}_{0}$.
\end{thm}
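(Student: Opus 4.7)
The plan is to reduce to the classical representation of $\Pi^{1}_{1}$ sets by well-foundedness of trees, and then use the norm method to single out a unique $Y$ among the witnesses. First, I would invoke the normal-form theorem for $\Pi^{1}_{1}$ formulas (formalizable in ${\tt ACA}_{0}$) to obtain a uniformly recursive assignment $(X,Y) \mapsto T^{X,Y}$, where each $T^{X,Y}$ is a tree on $\omega$ recursive in $X \oplus Y$ satisfying $\varphi(X,Y) \Longleftrightarrow T^{X,Y}$ is well-founded. For each well-founded $T$, let $|T|$ denote its ordinal rank. Intuitively, I want to declare $\varphi'(X,Y)$ to hold when $Y$ is the witness whose tree has least rank, breaking ties lexicographically.

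The key technical lemma is essentially Moschovakis's theorem that the canonical $\Pi^{1}_{1}$-norm given by $(X,Y) \mapsto |T^{X,Y}|$ admits $\Pi^{1}_{1}$ comparisons: both of the relations
\begin{align*}
(X_1,Y_1) \leq^{*} (X_2,Y_2) &\Longleftrightarrow \varphi(X_1,Y_1) \wedge \bigl(\neg \varphi(X_2,Y_2) \vee |T^{X_1,Y_1}| \leq |T^{X_2,Y_2}|\bigr), \\
(X_1,Y_1) <^{*} (X_2,Y_2) &\Longleftrightarrow \varphi(X_1,Y_1) \wedge \bigl(\neg \varphi(X_2,Y_2) \vee |T^{X_1,Y_1}| < |T^{X_2,Y_2}|\bigr)
\end{align*}
are $\Pi^{1}_{1}$. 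The naive formulation (existence of an order-preserving embedding of $T^{X_1,Y_1}$ into $T^{X_2,Y_2}$, or into a proper initial segment thereof) is only $\Sigma^{1}_{1}$; the trick is to build an auxiliary tree whose well-foundedness simultaneously codes both the well-foundedness of $T^{X_1,Y_1}$ and the absence of a certain order-preserving embedding of $T^{X_2,Y_2}$ into $T^{X_1,Y_1}$, thereby absorbing the existential second-order quantifier into universal ones.

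With this lemma in hand, I would define
\[ \varphi'(X,Y) \equiv \varphi(X,Y) \wedge \forall Y' \bigl[(X,Y) \leq^{*} (X,Y')\bigr] \wedge \forall Y' \bigl[(X,Y') \leq^{*} (X,Y) \to Y \leq_{\mathrm{lex}} Y'\bigr]. \]
All three conjuncts are $\Pi^{1}_{1}$, so $\varphi'$ is $\Pi^{1}_{1}$; the first two clearly imply $\varphi(X,Y)$, giving (\ref{hppa:eqn:compareasdfasdf3}). For (\ref{hppa:eqn:compareasdfasdf4}), if $\exists Y\,\varphi(X,Y)$, then the ranks $\{|T^{X,Y}| : \varphi(X,Y)\}$ form a nonempty class of ordinals, hence have a least element realized by at least one $Y$, and among those $Y$ the lexicographic order (on $2^\omega$, restricted to the clopen tree of binary strings) has a unique least element by standard König-style reasoning.

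The main obstacle is the comparison lemma, and specifically verifying that the auxiliary-tree construction carries through in ${\tt \Pi^{1}_{1}-CA_{0}}$. Fortunately, that construction requires only arithmetical comprehension to form the trees and embeddings, plus one appeal to Kleene's Theorem on Restricted Quantification (Theorem~\ref{HPPA:thm:kleenerestricted}) to collapse a bounded hyperarithmetic existential quantifier into a $\Pi^{1}_{1}$ predicate, both of which are available in ${\tt \Pi^{1}_{1}-CA_{0}}$. The verification of (\ref{hppa:eqn:compareasdfasdf4}) is then routine since once $\varphi'$ has been written as a $\Pi^{1}_{1}$ formula, uniqueness follows from the defining clauses rather than from any additional comprehension principle.
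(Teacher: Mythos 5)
There is a genuine gap in the final step. Your strategy is: minimize the single norm $(X,Y)\mapsto |T^{X,Y}|$ over all witnesses $Y$, and then take the lexicographically least $Y$ among the minimizers. The second half fails. The set $A_{X}=\{Y: \varphi(X,Y)\;\wedge\; |T^{X,Y}| \mbox{ minimal}\}$ is a $\Pi^{1}_{1}$ set of reals with no reason to be closed, and a non-closed set need not contain its lexicographic infimum. For a concrete obstruction, take $\varphi(Y)$ defining the countable set $\{e_{n}:n\in\omega\}$, where $e_{n}$ is the characteristic function of $\{n\}$ (a $\Pi^{0}_{2}$, hence $\Pi^{1}_{1}$, set); under a symmetric tree representation all the $e_{n}$ receive the same rank, yet $e_{0}>_{\mathrm{lex}}e_{1}>_{\mathrm{lex}}\cdots$ with lexicographic infimum the all-zero sequence, which is not in the set. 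So your third conjunct can be unsatisfiable even when $\exists\,Y\,\varphi(X,Y)$, and clause~(\ref{hppa:eqn:compareasdfasdf4}) fails. The ``standard K\"onig-style reasoning'' you invoke is exactly what is unavailable here: it works for closed ($\Pi^{0}_{1}$) sets, not for $\Pi^{1}_{1}$ sets.

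The repair is the classical one, and it is where all the real work in Kondo's theorem lives: one needs not a single norm but a $\Pi^{1}_{1}$-\emph{scale}, i.e.\ the infinite sequence of norms $\psi_{n}(X,Y)$ obtained by interleaving the ranks $|T^{X,Y}_{s}|$ of the individual nodes $s$ of the tree with the first $n$ digits of $Y$, ordered lexicographically as tuples. One then defines $\varphi'(X,Y)$ to say that $Y$ is $\leq^{*}_{n}$-minimal for \emph{every} $n$, successively. Existence of such a $Y$ is proved by taking a sequence of witnesses $Y_{i}$ that successively minimize $\psi_{0},\psi_{1},\ldots$; the interleaved digits force $Y_{i}$ to converge to some $Y$, and the lower semicontinuity property of the scale (if $Y_{i}\to Y$ and each $\psi_{n}(X,Y_{i})$ is eventually constant, then $\varphi(X,Y)$ holds and $\psi_{n}(X,Y)$ is at most the limit value) is what guarantees the limit point is actually a witness. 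That semicontinuity argument is the heart of the proof and is entirely absent from your sketch. Your comparison lemma (that $\leq^{*}$ and $<^{*}$ are $\Pi^{1}_{1}$, via the auxiliary-tree trick) is correct and is indeed a necessary ingredient, and it does relativize to each $\psi_{n}$ uniformly, so the rest of your architecture survives once the scale replaces the single norm. For what it is worth, the paper does not prove this theorem at all: it cites Moschovakis (pp.~235--236) for the classical result and Simpson (Theorem~VI.2.6) for its provability in ${\tt \Pi^{1}_{1}-CA}_{0}$, and the cited proof is precisely the scale argument just described.
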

\begin{proof}
See Moschovakis~\cite{Moschovakis1980} pp.~235-236. Simpson notes that Kondo's theorem is provable in ${\tt \Pi^{1}_{1}-CA}_{0}$ (cf. \cite{Simpson2009aa}~Theorem~VI.2.6~p.~225).
\end{proof}

\begin{rmk}
The following two propositions use some of the preceding material to fill in some information about the provability relation (cf. Figure~\ref{HPPA:figure1}).
\end{rmk}

\begin{prop}\label{HPPA:prop:280} There are models of ${\tt ABL}_{0}+\neg {\tt \Delta^{1}_{1}-BL}_{0}$.
\end{prop}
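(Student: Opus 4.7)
The plan is to instantiate the framework already developed in Proposition~\ref{HPPA:prop:whatitporves222}. First I would take $M$ to be a countable algebraically closed field (for instance, $M=\overline{\mathbb{Q}}$) and set $S_{n}=D(M^{n})$, the parameter-definable subsets of $M^{n}$. Since $M$ is countable and there are only countably many first-order formulas in the language of rings with parameters from $M$, the set $S_{1}=D(M)$ is countable of cardinality at most $|M|$, so we may fix any injection $\partial:S_{1}\rightarrow M$.

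Next I would invoke Proposition~\ref{HPPA:prop:truethat} to conclude that $(M,S_{1},S_{2},\ldots,\partial)$ is a model of ${\tt ABL}_{0}$, which handles the positive half of the claim.

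For the negative half, I would argue by contradiction using Corollary~\ref{HPPA:prop:whatitporves}. If the structure $(M,S_{1},S_{2},\ldots,\partial)$ were a model of ${\tt \Delta^{1}_{1}-BL}_{0}$, then the corollary would supply an injective non-surjective function $s:M\rightarrow M$ whose graph lies in $S_{2}=D(M^{2})$. But such a function would be a definable injection from the algebraically closed field $M$ to itself that fails to be surjective, contradicting Ax's Theorem (cf. Theorem~\ref{HPPA:prop:axtheorem}), exactly as in the proof of Proposition~\ref{HPPA:prop:whatitporves222}(ii).

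The proof is genuinely short because all the heavy lifting has already been done: the existence of the definable injection $\partial$ is a cardinality observation, the verification of ${\tt ABL}_{0}$ is Proposition~\ref{HPPA:prop:truettoon}, and the failure of $\Delta^{1}_{1}$-comprehension is reduced by Corollary~\ref{HPPA:prop:whatitporves} to the standard model-theoretic fact that algebraically closed fields admit no injective non-surjective definable endofunctions. Thus there is no real obstacle; the only mild subtlety is making sure that $|S_{1}|\leq|M|$ so that the injection $\partial$ exists, which is immediate once one takes $M$ of infinite cardinality.
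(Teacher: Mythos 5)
Your proof is correct, but it takes a genuinely different route from the paper's. The paper works over the natural numbers: it fixes an arbitrary injection $\partial:\mathrm{ARITH}\rightarrow\omega$, obtains ${\tt ABL}_{0}$ from Proposition~\ref{HPPA:prop:truethat} exactly as you do, and then refutes ${\tt \Delta^{1}_{1}-BL}_{0}$ by exhibiting a concrete $\Delta^{1}_{1}$-definable set missing from the second-order part, namely $\emptyset^{(\omega)}$, which is $\Delta^{1}_{1}$ over $\mathrm{ARITH}$ (using the graphs of addition and multiplication as parameters, cf.\ equation~(\ref{hppa:align:delta11defntruth})) but not arithmetical by Tarski's theorem on truth. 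You instead instantiate Proposition~\ref{HPPA:prop:whatitporves222}: over an algebraically closed field with $S_{n}=D(M^{n})$, Corollary~\ref{HPPA:prop:whatitporves} forces any model of ${\tt \Delta^{1}_{1}-BL}_{0}$ to contain the graph of an injective non-surjective $s:M\rightarrow M$, which Ax's Theorem~\ref{HPPA:prop:axtheorem} forbids; the cardinality observation $\left|D(M)\right|\leq\left|M\right|$ needed for $\partial$ to exist is indeed immediate for any infinite $M$. The trade-offs: the paper's witness is a ``standard'' model in the sense of \S~\ref{HPPA:secdfadsfsadfkkk} and its negative half needs only the undefinability of truth, a much lighter tool than Ax's Theorem; your argument establishes something stronger about its particular structure --- \emph{no} choice of $\partial$ whatsoever yields a model of ${\tt \Delta^{1}_{1}-BL}_{0}$, since the obstruction lies in the second-order part itself rather than in one missing $\Delta^{1}_{1}$ set --- but at the cost of importing a nontrivial result from the model theory of fields. (One housekeeping point: the citation in your final paragraph is garbled; the intended reference is Proposition~\ref{HPPA:prop:truethat}.)
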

\begin{proof}
Choose any injection $\partial: \mathrm{ARITH}\rightarrow \omega$. Then by Proposition~\ref{HPPA:prop:truethat} the structure $(\omega, \mathrm{ARITH}, \partial)$ is a model of ${\tt ABL}_{0}$. Further, since the graphs of addition and multiplication are in $\mathrm{ARITH}$, if  $(\omega, \mathrm{ARITH}, \partial)\models {\tt \Delta^{1}_{1}-BL}_{0}$, then one would have that $\emptyset^{(\omega)}\in \mathrm{ARITH}$ (cf. equation~(\ref{hppa:align:delta11defntruth})), which would contradict Tarski's theorem on truth.
\end{proof}

\begin{rmk}
The construction in the following proposition is the same construction as Boolos used to prove the interpretability of ${\tt HP}^{2}$ in ${\tt PA}^{2}$ (cf. the proof of Theorem~\ref{hppa:thm:boolos}).
\end{rmk}

\begin{prop}\label{HPPA:prop:28} There are models of ${\tt AHP}_{0}+\neg {\tt \Delta^{1}_{1}-HP}_{0}$ and ${\tt \Sigma^{1}_{1}-PH}_{0}+\neg {\tt \Pi^{1}_{1}-HP}_{0}$ and ${\tt \Delta^{1}_{1}-HP}_{0}+\neg {\tt \Sigma^{1}_{1}-PH}_{0}$
\end{prop}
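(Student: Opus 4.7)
The strategy for all three non-implications is to apply Boolos's construction from the proof of Theorem~\ref{hppa:thm:boolos} to suitable existing models of corresponding subsystems of ${\tt PA}^{2}$. Recall that Boolos's construction converts a model $\mathcal{M}=(\omega, S_{1}, S_{2}, \ldots, \oplus, \otimes)$ into $\mathcal{N}=(\omega, S_{1}, S_{2}, \ldots, \#)$ by defining $\#(X)=|X|+1$ for finite $X$ and $\#(X)=0$ for infinite $X$. The first plan-ingredient is a transfer lemma: $\mathcal{M}$ and $\mathcal{N}$ share the same second-order part; the function $\#$ is arithmetically definable in $\mathcal{M}$ (since the graph $\#X=k$ is an arithmetic predicate in $X,k$); and the graph of $\oplus$ and $\otimes$ can be recovered from $\#$ in $\mathcal{N}$ via Frege's construction (Theorem~\ref{HPPA:thm:howweroll2}) together with Proposition~\ref{hppa:propaboutarithm} for bookkeeping. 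Consequently a subset (or relation) on $\omega$ is $\Gamma$-definable in $\mathcal{N}$ iff it is $\Gamma$-definable in $\mathcal{M}$ for $\Gamma\in\{\Pi^{1}_{n},\Sigma^{1}_{n},\Delta^{1}_{n}\}$, and analogously for the antecedents/consequents of the $\Sigma^{1}_{1}$-choice schema. Hence $\mathcal{N}$ satisfies a given comprehension or choice schema iff $\mathcal{M}$ does.

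For (1), take $\mathcal{M}=(\omega,\mathrm{ARITH},\oplus,\otimes)$, which by Proposition~\ref{HPPA:appdfadsfad} models ${\tt ACA}_{0}+\neg{\tt \Delta^{1}_{1}-CA}_{0}$. Applying Boolos's construction and the transfer lemma yields $\mathcal{N}=(\omega,\mathrm{ARITH},\#)\models{\tt AHP}_{0}+\neg{\tt \Delta^{1}_{1}-HP}_{0}$. For (2), take $\mathcal{M}=(\omega,\mathrm{HYP},\oplus,\otimes)$, which by Proposition~\ref{HPPA:appdfadsfad} models ${\tt \Sigma^{1}_{1}-AC}_{0}+\neg{\tt \Pi^{1}_{1}-CA}_{0}$. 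Boolos's construction then gives $\mathcal{N}=(\omega,\mathrm{HYP},\#)\models{\tt \Sigma^{1}_{1}-PH}_{0}+\neg{\tt \Pi^{1}_{1}-HP}_{0}$.

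For (3), one needs an $\omega$-model of ${\tt \Delta^{1}_{1}-CA}_{0}+\neg{\tt \Sigma^{1}_{1}-AC}_{0}$; such a model is produced by the standard construction in reverse mathematics (cf.\ Simpson~\cite{Simpson2009aa} Chapter~VIII, where one takes the intersection of $\mathrm{HYP}$ with a suitably chosen Turing ideal so as to destroy $\Sigma^{1}_{1}$-choice while preserving $\Delta^{1}_{1}$-comprehension). Applying Boolos's construction and the transfer lemma to this model yields the desired structure modeling ${\tt \Delta^{1}_{1}-HP}_{0}+\neg{\tt \Sigma^{1}_{1}-PH}_{0}$.

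The main obstacle is the transfer lemma in the \emph{negation} direction. The positive direction is essentially immediate from the arithmetic definability of $\#$ over $\mathcal{M}$; what requires care is the reverse translation from formulas in the signature of ${\tt HP}^{2}$ (using $\#$) back to formulas in the signature of ${\tt PA}^{2}$ (using $\oplus,\otimes$). Here one uses that, by Frege's construction, $\oplus$ and $\otimes$ are arithmetically definable uniformly over $\mathcal{N}$ from $\#$; since these definitions involve only bounded complexity over arithmetic, they preserve the projective level of any formula into which they are substituted, so that closure of $S_1$ under $\Gamma$-comprehension in $\mathcal{N}$ forces closure of $S_1$ under $\Gamma$-comprehension in $\mathcal{M}$, contradicting the choice of $\mathcal{M}$.
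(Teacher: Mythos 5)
Your overall strategy coincides with the paper's: apply Boolos's construction to $\omega$-models of the corresponding subsystems of ${\tt PA}^{2}$ and transfer the positive and negative facts. Two steps, however, do not work as you have written them. First, in the negative direction of your transfer lemma you propose to recover $\oplus$ and $\otimes$ inside $\mathcal{N}$ ``via Frege's construction (Theorem~\ref{HPPA:thm:howweroll2}).'' That theorem interprets arithmetic in ${\tt \Pi^{1}_{n}-HP}_{0}$ for $n>0$ and essentially uses $\Pi^{1}_{1}$-comprehension (to form the intersection of all inductive sets and the relation $R=\{(\#W,\#X):W\prec X\}$); none of the models at issue satisfies $\Pi^{1}_{1}$-comprehension, so the construction is unavailable, and even where available it yields only an isomorphic copy rather than the actual $+,\times$ on $\omega$. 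The repair is much simpler and is what the paper does: since the first-order domain is literally $\omega$ and the second-order part contains every arithmetical relation, the graphs of $+$ and $\times$ are already present as relation \emph{parameters}; substituting them turns any $\Gamma$-formula of the ${\tt PA}^{2}$-language into a $\Gamma$-formula of the ${\tt HP}^{2}$-language with parameters, so a counterexample to $\Gamma$-comprehension (or choice) over $\mathcal{M}$ (e.g.\ $\emptyset^{(\omega)}$ over $\mathrm{ARITH}$, or $\mathcal{O}$ over $\mathrm{HYP}$ via Spector--Gandy) is a counterexample over $\mathcal{N}$.

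Second, and more seriously, your witness for case (3) does not exist as described. You propose ``the intersection of $\mathrm{HYP}$ with a suitably chosen Turing ideal so as to destroy $\Sigma^{1}_{1}$-choice while preserving $\Delta^{1}_{1}$-comprehension.'' But $\mathrm{HYP}$ is the \emph{minimum} $\omega$-model of ${\tt \Delta^{1}_{1}-CA}_{0}$, so every $\omega$-model of ${\tt \Delta^{1}_{1}-CA}_{0}$ contains $\mathrm{HYP}$; a subclass of $\mathrm{HYP}$ satisfying $\Delta^{1}_{1}$-comprehension must equal $\mathrm{HYP}$, which satisfies $\Sigma^{1}_{1}$-choice. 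One must instead go \emph{up} from $\mathrm{HYP}$: the paper invokes Steel's forcing construction of reals $G_{n}$ with $(\omega,\bigcup_{n=1}^{\infty}\mathrm{HYP}^{G_{1}\oplus\cdots\oplus G_{n}})\models {\tt \Delta^{1}_{1}-CA}_{0}+\neg{\tt \Sigma^{1}_{1}-AC}_{0}$, and then applies the same Boolos construction and parameter-based transfer to that structure.
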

\begin{proof}
Define a function $\#:\mathrm{ARITH}\rightarrow \omega$ by $\#X=0$ if $X$ is infinite and $\#X=\left|X\right|+1$ if $X$ is finite. By Simpson~\cite{Simpson2009aa} Lemma~II.3.6 p.~70, ${\tt ACA}_{0}$ proves that any two infinite sets are bijective, and hence $(\omega, \mathrm{ARITH}, \#)$ is a model of Hume's~Principle.  Further, it satisfies the arithmetical comprehension schema, since if $X\subseteq \omega$ is defined by an arithmetical formula, then by Proposition~\ref{hppa:propaboutarithm} it is defined by an arithmetical formula that does not include any instances of $\#$. Hence, since $\mathrm{ARITH}$  is closed under arithmetical comprehension, it follows that $X$ is in $\mathrm{ARITH}$, so that the structure $(\omega, \mathrm{ARITH}, \#)$ satisfies the arithmetical comprehension schema. Since $\emptyset^{(\omega)}\notin \mathrm{ARITH}$ but $\emptyset^{(\omega)}$ is $\Delta^{1}_{1}$-definable over $\mathrm{ARITH}$ using the graphs of addition and multiplication as parameters (cf. equation~(\ref{hppa:align:delta11defntruth})), we have that  $(\omega, \mathrm{ARITH}, \#)$ is a model of ${\tt AHP}_{0}+\neg {\tt \Delta^{1}_{1}-HP}_{0}$. Similarly, using the fact that the graph of $\#$ is arithmetical, we can argue that  $(\omega, \mathrm{HYP}, \#)$ is a model of ${\tt \Sigma^{1}_{1}-PH}_{0}+\neg {\tt \Pi^{1}_{1}-HP}_{0}$. Likewise, Steel constructs a sequence of reals $G_{n}$ such that $(\omega, \bigcup_{n=1}^{\infty }\mathrm{HYP}^{G_{1}\oplus \cdots \oplus G_{n}})$ is a model of ${\tt \Delta^{1}_{1}-CA}_{0}+\neg {\tt \Sigma^{1}_{1}-AC}_{0}$ (\cite{Steel1978}~Theorem~4 pp.~68~ff), and we can argue as before that $(\omega, \bigcup_{n=1}^{\infty }\mathrm{HYP}^{G_{1}\oplus \cdots \oplus G_{n}}, \#)$ is a model of  ${\tt \Delta^{1}_{1}-HP}_{0}+\neg {\tt \Sigma^{1}_{1}-PH}_{0}$.
\end{proof}

\begin{rmk}
The following two propositions use elementary considerations about arithmetical sets (cf. Definition~\ref{hppa:defn:standard3333}) to record some observations about natural functions whose existence cannot be proven in ${\tt ABL}_{0}$ or ${\tt AHP}_{0}$. For the motivation for these propositions, see  \S~2.2, and in particular around equation~(\ref{HPPA:eqn:counterthecounter}). The only reason for including these propositions here (as opposed to earlier) is that it seemed prudent to delay their proof until the arithmetical sets had been introduced, which we did earlier in this section (cf. Definition~\ref{hppa:defn:standard3333}). Note that the construction in the following proposition is analogous to the construction used by Boolos to prove the interpretability of ${\tt HP}^{2}$ in ${\tt PA}^{2}$ (cf. the proof of Theorem~\ref{hppa:thm:boolos}).
\end{rmk}

\begin{prop}\label{hppa:binaryiswierd1}
There is a structure $M$ and a function $\#: D(M)\rightarrow M$, where $D(M^{n})$ is the definable subsets of $M^{n}$, such that $(M, D(M), D(M^{2}), \ldots, \#)$ is a model of ${\tt AHP}_{0}$, and further there is binary relation $R$ in $D(M^{2})$ such that the set $\{(n,m): \#(R_{n})=m\}$ does not exist in $D(M^{2})$, where $R_{n}=\{x: Rnx\}$.
\end{prop}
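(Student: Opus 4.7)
The plan is to use a variant of the Boolos-style construction from Proposition~\ref{HPPA:prop:28}. I would take $M$ to be the standard model $(\omega,+,\times,0,s,\leq)$ of arithmetic, so that $D(M^{n})=\mathrm{ARITH}$, but modify the Boolos cardinality function so as to encode a non-arithmetical set into it. The crucial observation is that in any arithmetical formula in the signature of ${\tt HP}^{2}$, the symbol $\#$ can only occur applied to free set variables, since arithmetical formulas contain no bound relation quantifiers. Hence each occurrence of $\#(X)$ in such a formula reduces to a specific element of $\omega$ once the parameters are fixed, so that $(\omega, \mathrm{ARITH}, \#)$ will satisfy the arithmetical comprehension schema regardless of whether $\#$ is itself an arithmetical function.

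Concretely, I would fix a non-arithmetical set $A\subseteq \omega$ (for instance $A=\emptyset^{(\omega)}$) and define an injection $h:\omega\to \omega\setminus\{0\}$ by letting $h(n)=3n+1$ if $n\in A$ and $h(n)=3n+2$ if $n\notin A$, so that the graph of $h$ is Turing-equivalent to $A$ and hence not arithmetical. Then I would set $\#(X)=h(|X|)$ when $X$ is finite and $\#(X)=0$ when $X$ is infinite. To check that Hume's Principle holds for $(\omega,\mathrm{ARITH},\#)$, one observes that two arithmetical sets receive the same $\#$-value precisely when they have the same (finite or infinite) cardinality, which by Simpson~\cite{Simpson2009aa}~Lemma~II.3.6~p.~70 is equivalent to the existence of an arithmetical bijection between them. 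Combined with the comprehension observation from the previous paragraph, this yields $(\omega,\mathrm{ARITH},\#)\models {\tt AHP}_{0}$.

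For the witnessing relation, I would take $R=\{(n,m)\in\omega^{2}:m<n\}$, which clearly lies in $\mathrm{ARITH}$ and has $R_{n}=\{0,\ldots,n-1\}$, so that $|R_{n}|=n$ and $\#(R_{n})=h(n)$. Then $\{(n,m):\#(R_{n})=m\}$ is exactly the graph of $h$; since $A$ is computable from this graph, the set does not lie in $\mathrm{ARITH}$, which is what we need.

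The only delicate step is the verification of arithmetical comprehension in the signature that now includes the non-arithmetical function symbol $\#$, but this is handled uniformly by the observation that arithmetical formulas cannot bind a set variable and so cannot ``see'' $\#$ except through parameters that determine a specific value. The same observation, applied in reverse, explains why the combinatorially trivial-looking relation $\{(n,m):\#(R_{n})=m\}$ is genuinely out of reach of arithmetical comprehension in this model: its natural defining formula \emph{does} require a bound set variable quantifying over the section $R_{n}$, and it is precisely this quantifier that lets $\#$ interact non-trivially with the ambient definability class.
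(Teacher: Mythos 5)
Your proposal is correct and follows essentially the same route as the paper: both twist the Boolos-style cardinality map on the arithmetical sets over $\omega$ by a non-arithmetical injection of the finite cardinalities (the paper uses an enumeration $n\mapsto z_{n}$ of a non-arithmetical real, you use $h$ encoding $A$ in its graph), verify ${\tt AHP}_{0}$ via the observation that arithmetical formulas only see $\#$ through parameters, and use the same witnessing relation $R=\{(n,m):m<n\}$. The only difference is cosmetic, in how non-arithmeticity of the resulting graph is certified (range versus direct Turing-equivalence).
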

\begin{proof}
Let $M$ be the standard model of first-order arithmetic $(\omega,+,\times)$ so that $D(M)$ are the arithmetical sets $\mathrm{ARITH}$. Choose a real $Z\notin \mathrm{ARITH}$, such as $\emptyset^{(\omega)}$, and enumerate $Z$ as $z_{0}, z_{1}, z_{2}, \ldots$. Define the function $\#:\mathrm{ARITH}\rightarrow \omega$ by $\#(X)=z_{n}$ if $X$ is finite and $\left|X\right|=n$ and define $\#(X)=z_{\infty}$ for some fixed $z_{\infty}\notin Z$ if $X$ is infinite. This structure satisfies arithmetical comprehension, since if $X\subseteq M$ is defined by an arithmetical formula, then by Proposition~\ref{hppa:propaboutarithm} it is defined by an arithmetical formula which does not include any instances of $\#$. Hence, since $D(M)$  is closed under arithmetical comprehension, it follows that $X$ is in $D(M)$, so that the structure $(M, D(M), D(M^{2}), \ldots, \#)$ satisfies the arithmetical comprehension schema.  Further, by Simpson~\cite{Simpson2009aa} Lemma~II.3.6 p.~70, ${\tt ACA}_{0}$ proves that any two infinite sets are bijective, and hence the structure $(M, D(M), D(M^{2}), \ldots, \#)$ is a model of Hume's~Principle. Hence, $(M, D(M), D(M^{2}), \ldots, \#)$ is a model of ${\tt AHP}_{0}$. Consider now the set $R=\{(n,m): m< n\}$, which is clearly arithmetical and so exists in $D(M^{2})$. Then $R_{n}=\{x: Rnx\}=\{0, \ldots, n-1\}$ and $\#(R_{n})=z_{n}$. Then the set 
\begin{equation}
\{(n,m): \#(R_{n})=m\}=\{(n,m):z_{n}=m\}
\end{equation}
is equal to the graph of $n\mapsto z_{n}$, which is not arithmetical: for, if it were arithmetical, then its range $Z$ would be be arithmetical, which contradicts the hypothesis on $Z$.
\end{proof}

\begin{prop}\label{hppa:binaryiswierd2}
There is a structure $M$ and an injection $\partial: D(M)\rightarrow M$, where $D(M^{n})$ is the definable subsets of $M^{n}$, such that $(M, D(M), D(M^{2}), \ldots, \partial)$ is a model of ${\tt ABL}_{0}$, and further there is binary relation $R$ in $D(M^{2})$ such that the set $\{(n,m): \partial(R_{n})=m\}$ does not exist in $D(M^{2})$, where $R_{n}=\{x: Rnx\}$.
\end{prop}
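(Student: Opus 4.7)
The plan is to mimic the construction in Proposition~\ref{hppa:binaryiswierd1}, but adjusted so that $\partial$ is actually an injection on $\mathrm{ARITH}$ rather than a many-to-one cardinality function. As before, take $M$ to be the standard model $(\omega,+,\times)$, so that $D(M)$ is $\mathrm{ARITH}$, and take $R=\{(n,m): m<n\}$, which is clearly arithmetical. Then $R_n=\{0,1,\ldots,n-1\}$, and the point will be to arrange $\partial(R_n)=z_n$ where $(z_n)_{n\in\omega}$ is a one-to-one enumeration of some chosen non-arithmetical $Z\subseteq\omega$ (e.g.\ $Z=\emptyset^{(\omega)}$, which is non-arithmetical by Tarski's theorem and, being non-recursive, has both $Z$ and $\omega\setminus Z$ infinite).

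First I would fix such a $Z$, fix a bijective enumeration $(z_n)_{n\in\omega}$ of $Z$, and then define $\partial:\mathrm{ARITH}\to\omega$ as follows. On the sets $R_n=\{0,1,\ldots,n-1\}$ for $n\geq 0$, set $\partial(R_n)=z_n$. On the remaining arithmetical sets $\mathrm{ARITH}\setminus\{R_n:n\in\omega\}$, use the fact that this residual family is countable and $\omega\setminus Z$ is infinite to extend $\partial$ injectively into $\omega\setminus Z$ (any enumeration works). By construction $\partial$ is a total injection on $\mathrm{ARITH}$.

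Next I would verify that $(M,D(M),D(M^2),\ldots,\partial)$ models ${\tt ABL}_0$. Basic Law~V holds because $\partial$ is an injection (cf.\ the discussion following equation~(\ref{HPPA:eqn:introooooo:bl5})). For arithmetical comprehension, suppose $Y\subseteq M^k$ is defined by an arithmetical formula $\varphi$ that may mention $\partial$. By Proposition~\ref{hppa:propaboutarithm} any occurrence of $\partial(P)$ inside an arithmetical formula must have $P$ as a set parameter from $S_1$, so $\partial(P)$ is just a first-order parameter from $\omega$; rewriting all such terms as first-order parameters produces an arithmetical formula in the original signature of $M$, and so $Y\in\mathrm{ARITH}=D(M^k)$. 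The same argument applies to $n$-ary relations. Invoking Proposition~\ref{HPPA:prop:truethat} in spirit, this yields ${\tt ABL}_0$.

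Finally, I would show that $\{(n,m):\partial(R_n)=m\}$ is not in $D(M^2)$. By construction this graph is precisely $\{(n,z_n):n\in\omega\}$. If it were arithmetical, then so would be its projection onto the second coordinate, namely the range $Z=\{z_n:n\in\omega\}$, contradicting the choice of $Z$ as non-arithmetical. The main (mild) obstacle is simply ensuring that the partial prescription $R_n\mapsto z_n$ is consistent with extending to an injection on all of $\mathrm{ARITH}$, which is handled by requiring $\omega\setminus Z$ to be infinite so that a back-and-forth/enumeration extension succeeds.
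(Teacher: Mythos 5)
Your proposal is correct and follows essentially the same route as the paper's proof: same structure $M=(\omega,+,\times)$, same choice of a non-arithmetical $Z=\emptyset^{(\omega)}$, an injection $\partial$ prescribed on a definable family of sets so that $\partial(R_n)=z_n$, and the same concluding observation that the graph of $n\mapsto z_n$ cannot be arithmetical since its range $Z$ is not. The only (immaterial) difference is that the paper uses the diagonal $R=\{(n,m):n=m\}$, so that $R_n=\{n\}$ and $\partial$ is pinned down on singletons, whereas you use initial segments.
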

\begin{proof}
Let $M$ be the standard model of first-order arithmetic $(\omega,+,\times)$ so that $D(M)$ are the arithmetical sets $\mathrm{ARITH}$. Choose a real $Z\notin \mathrm{ARITH}$, such as $\emptyset^{(\omega)}$, and enumerate $Z$ as $z_{0}, z_{1}, z_{2}, \ldots$. Choose an injection $\partial:\mathrm{ARITH}\rightarrow \omega$ such that $\partial(\{n\})=z_{n}$, which we can do since $Z$ is coinfinite (since it is not arithmetical). Then by Proposition~\ref{HPPA:prop:truethat}, the structure $(M, D(M), D(M^{2}), \ldots, \partial)$ is a model of ${\tt ABL}_{0}$. Consider now the diagonal $R=\{(n,m): n=m\}$ which is clearly arithmetical and so exists in $D(M^{2})$. Then $R_{n}=\{x:Rnx\}=\{n\}$ and $\partial(R_{n})=\partial(\{n\})=z_{n}$. Then the set 
\begin{equation}
\{(n,m): \partial(R_{n})=m\}=\{(n,m):z_{n}=m\}
\end{equation}
is equal to the graph of $n\mapsto z_{n}$, which is not arithmetical: for, if it were arithmetical, then its range $Z$ would be be arithmetical, which contradicts the hypothesis on $Z$.
\end{proof}

\subsection{Standard Models of the Hyperarithmetic Subsystems of ${\tt BL}^{2}$}\label{HPPA:ehdfasdfasdfsd}


\begin{rmk}
Recall from Proposition~\ref{HPPA:prop:theinjection} that ${\tt \Delta^{1}_{1}-BL}_{0}$ proves the existence of the graph of an injective function $s:M\rightarrow M$ such that $s(x)=\partial(\{x\})$. This function is is mentioned in the following axiom.
\end{rmk}

\begin{defn}\label{HPPA:eqn:defn:inf}
The following sentence $\mathrm{Inf}$ is a sentence in the signature of ${\tt BL}^{2}$:\index{$\mathrm{Inf}$, the sentence in signature of ${\tt BL}^{2}$, Definition~\ref{HPPA:eqn:defn:inf}}

\begin{align}
\mathrm{Inf} \equiv & \; \exists \; s: M\rightarrow M \; [\forall \; x \; s(x)=\partial(\{x\})] 
 \; \& \; \exists \; N \; [\partial(\emptyset)\in N \; \& \;\forall \; x \; x\in N \rightarrow sx\in N] \notag \\
&\;  \& \;\forall \; N^{\prime} \; [\partial(\emptyset)\in N^{\prime} \; \&\; \forall \; x \; x\in N^{\prime} \rightarrow sx\in N^{\prime}] \rightarrow N\subseteq N^{\prime} \notag \\
&\; \& \;\exists \; \oplus: N^{2}\rightarrow N \; \exists \; \otimes: N^{2}\rightarrow N \; \exists \; \preceq \; \subseteq\; N^{2} \; [(N, \partial(\emptyset), s, \oplus, \otimes, \preceq)\models (\mathrm{Q1})-(\mathrm{Q8})]
\end{align}
Intuitively, $\mathrm{Inf}$ says that there is a smallest set $N$ which contains the zero element $\partial(\emptyset)$ and which is closed under the successor function $s(x)=\partial(\{x\})$ and which has addition and multiplication functions $\oplus$ and $\otimes$ and an ordering relation $\preceq$ which satisfy the eight axioms of Robinson's~Q.
\end{defn}

\begin{rmk}\label{hppa:rmk:fw}
The following theorem and its corollary is the main result of \S~\ref{HPPA:secdfadsfsadfkkk}. Recall that the Russell paradox showed that ${\tt BL}_{0}$ and ${\tt \Pi^{1}_{1}-BL}_{0}$ is inconsistent (cf. Proposition~(\ref{HPPA:form:thebadformtheprop})). Recently Ferreira and Wehmeier (\cite{Ferreira2002}) showed that ${\tt \Delta^{1}_{1}-BL}_{0}$ is consistent, using Barwise and Schlipf's recursively-saturated model construction. In \S~\ref{HPPA:gendafdsfad}, we present a generalization of this construction (cf. Theorem~\ref{HPPA:thm:metatheorem}), which we apply to ${\tt \Delta^{1}_{1}-BL}_{0}$ and ${\tt \Delta^{1}_{1}-HP}_{0}$ (cf. Proposition~\ref{HPPA:lineeeddafdsfa}, Corollary~\ref{HPPA:cor:thebomb}, and Theorem~\ref{HPPA:thmasdfadsf}). However, the recursively-saturated model construction does not provide one with natural models, simply because most natural structures are not recursively saturated (unless of course they are saturated {\it tout court}). Hence, this raises the question of whether there are natural models of ${\tt \Delta^{1}_{1}-BL}_{0}$. The following theorem constructs a model of ${\tt \Delta^{1}_{1}-BL}_{0}$ which is mutually interpretable with the minimal $\omega$-model of ${\tt \Delta^{1}_{1}-CA}_{0}$, namely, the model whose second-order part consists of the hyperarithmetic sets.
\end{rmk}

\begin{thm}\label{HPPA:thm:main} For any real $Y\in 2^{\omega}$, there is a map $\partial_{Y}:\mathrm{HYP}(Y)\rightarrow \omega$ with $\Pi^{1,Y}_{1}$-graph such that~(i) the structure $M_{Y}=(\omega, \mathrm{HYP}(Y), \partial_{Y})$ is a model of (a) ${\tt \Sigma^{1}_{1}-LB}_{0}$ and (b) the sentence~$\mathrm{Inf}$, and such that~(ii) the two structures 
\begin{equation}
M_{Y}=(\omega, \mathrm{HYP}(Y), \partial_{Y}), \hspace{5mm}(\omega, 0,s,+,\times, \leq, \mathrm{HYP}(Y))
\end{equation}
are mutually interpretable uniformly in~$Y$, in the following sense: (a) the map $\partial_{Y}:\mathrm{HYP}(Y)\rightarrow \omega$ is definable in $(\omega, \mathrm{HYP}(Y), 0,s,+,\times, \leq)$ uniformly in~$Y$, and (b) an isomorphic copy $H_{Y}$ of the structure $(\omega, \mathrm{HYP}(Y), 0,s,+,\times, \leq)$ is definable in the structure $M_{Y}=(\omega, \mathrm{HYP}(Y), \partial_{Y})$ uniformly in $Y$. Moreover, all these facts are provable in ${\tt \Pi^{1}_{1}-CA}_{0}$.
\end{thm}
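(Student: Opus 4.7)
The plan is to construct $\partial_{Y}$ via Kondo's uniformization theorem (Theorem~\ref{hppa:thm:kondo}). Let $R(X,e)$ be a canonical $\Pi^{1,Y}_{1}$-predicate expressing ``$e$ codes $X$ as a hyperarithmetic set relative to $Y$'', for instance via a pair of matching $\Sigma^{1,Y}_{1}$- and $\Pi^{1,Y}_{1}$-indices. Applying Kondo to $R$ produces a $\Pi^{1,Y}_{1}$ predicate $R^{*}(X,e)$ which is the graph of a partial function assigning to each $X\in\mathrm{HYP}(Y)$ a unique code. Define $\partial_{Y}(X)=e$ iff $R^{*}(X,e)$. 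This yields an injection $\partial_{Y}\colon\mathrm{HYP}(Y)\to\omega$ with $\Pi^{1,Y}_{1}$-graph, since two hyperarithmetic sets with the same code would agree as $R^{*}$-images.

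To prove~(i)(a), I would verify each schema of ${\tt \Sigma^{1}_{1}-LB}_{0}$ in turn. Basic~Law~V is immediate from injectivity of $\partial_{Y}$. For arithmetical comprehension, Proposition~\ref{hppa:propaboutarithm} lets me replace every subterm $\partial_{Y}(P)$ occurring inside an arithmetical formula by its value as a parameter from $\omega$; the remaining formula is ordinary arithmetical, and so defines a set in $\mathrm{HYP}(Y)$ (cf.~Proposition~\ref{HPPA:appdfadsfad}). The principal work is $\Sigma^{1}_{1}$-choice. Here I would emulate the classical proof that $(\omega,\mathrm{HYP}(Y))\models{\tt \Sigma^{1}_{1}-AC}_{0}$ cited in Proposition~\ref{HPPA:appdfadsfad}. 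Given $\varphi(\overline{n},P)$ a $\Sigma^{1}_{1}$-formula of $M_{Y}$, I would rewrite each occurrence of $\partial_{Y}$ using its $\Pi^{1,Y}_{1}$-graph, producing a formula in the ordinary second-order language whose set quantifiers are implicitly restricted to $\mathrm{HYP}(Y)$. Kleene's restricted quantification (Theorem~\ref{HPPA:thm:kleenerestricted}) keeps the complexity $\Pi^{1,Y}_{1}$, and Spector--Gandy (Theorem~\ref{HPPA:thm:specgandy}) combined with Corollary~\ref{HPPA:cor:specgandy} then produces a uniform hyperarithmetic selector whose columns lie in $\mathrm{HYP}(Y)$.

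For~(i)(b), set $s(x)=\partial_{Y}(\{x\})$. Because the graph ``$y=\partial_{Y}(\{x\})$'' is $\Pi^{1,Y}_{1}$ and, by totality of $s$, also $\Sigma^{1,Y}_{1}$, it is $\Delta^{1,Y}_{1}$, hence hyperarithmetic; so the forward orbit $N$ of $\partial_{Y}(\emptyset)$ under $s$ lies in $\mathrm{HYP}(Y)$ and is definable by a $\Pi^{1,Y}_{1}$ intersection formula. Transporting the usual arithmetical graphs of $+,\times,\leq$ on $\omega$ along the canonical isomorphism $\omega\to N$ (itself hyperarithmetic) produces $\oplus,\otimes,\preceq\in\mathrm{HYP}(Y)$ satisfying Q1--Q8 on $N$, witnessing $\mathrm{Inf}$. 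For~(ii)(a), the defining formula $R^{*}(X,e)$ for $\partial_{Y}$ already lives in the signature of $(\omega,\mathrm{HYP}(Y),0,s,+,\times,\leq)$ and is uniform in $Y$. For~(ii)(b), let $H_{Y}$ consist of $N$ together with the operations just built and second-order part $\{X\cap N:X\in\mathrm{HYP}(Y)\}$; this is definable in $M_{Y}$, and the canonical isomorphism $\omega\to N$ lifts to an isomorphism between $(\omega,\mathrm{HYP}(Y),0,s,+,\times,\leq)$ and $H_{Y}$.

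The main obstacle is the $\Sigma^{1}_{1}$-choice verification, as the new function symbol $\partial_{Y}$ with its merely $\Pi^{1,Y}_{1}$-graph threatens to push formulas outside the hyperarithmetic range. The resolution is the implicit restriction of all set quantifiers in $M_{Y}$ to $\mathrm{HYP}(Y)$, so that Kleene's restricted quantification absorbs the extra complexity. Each ingredient---Kondo's uniformization, Kleene's restricted quantification, Spector--Gandy, and Corollary~\ref{HPPA:cor:specgandy}---is provable in ${\tt \Pi^{1}_{1}-CA}_{0}$ as noted in the excerpt, so the entire construction can be assembled uniformly in $Y$ inside that base theory, yielding the closing formalization claim.
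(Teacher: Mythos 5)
Your construction of $\partial_{Y}$ (Kondo applied to the ``$n$ is a hyperarithmetic-in-$Y$ index of $X$'' predicate), your treatment of $\mathrm{Inf}$ via the orbit of $\partial_{Y}(\emptyset)$ under $s(x)=\partial_{Y}(\{x\})$, and your handling of (ii)(a)--(b) all match the paper's proof essentially step for step. The one place where your sketch would not go through as written is the $\Sigma^{1}_{1}$-choice verification, which is the heart of (i)(a).

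Two points there. First, the selector cannot come from Spector--Gandy: that theorem converts a $\Pi^{1}_{1}$ predicate into the form $\exists\, X\leq_{h}Y\,\psi$ with $\psi$ arithmetic, but it does not single out a witness. The paper instead applies Kondo's uniformization a \emph{second} time, to the $\Pi^{1,Y}_{1}$ predicate $Q(Y\oplus\{z\},X)\equiv X\in\mathrm{HYP}(Y)\,\&\,\exists e\,[\partial_{Y}(X)=e\wedge\varphi(z,X,e)]$, obtaining a single-valued $q_{Y}$. Second, once you assemble the candidate choice relation $R_{Y}=\{(z,x):\exists X\in\mathrm{HYP}(Y)\; q_{Y}(z)=X\wedge x\in X\}$, showing it is $\Pi^{1,Y}_{1}$ (via Kleene's restricted quantification) is not enough: you need $R_{Y}$ to be an \emph{element} of $\mathrm{HYP}(Y)$, i.e.\ $\Delta^{1,Y}_{1}$. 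The missing half is the $\Sigma^{1,Y}_{1}$ bound, which you get by using single-valuedness of the uniformization to rewrite $R_{Y}$ with a universal quantifier over $\mathrm{HYP}(Y)$, namely $R_{Y}=\{(z,x):\forall X\in\mathrm{HYP}(Y)\;(q_{Y}(z)=X\rightarrow x\in X)\}$, and then applying the dual form of Kleene's theorem. Your proposal gestures at ``a uniform hyperarithmetic selector whose columns lie in $\mathrm{HYP}(Y)$'' but supplies neither the uniformization step that produces it nor the two-sided complexity computation that places it in the model; everything else in your outline is sound and agrees with the paper.
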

\begin{proof}
Define $P(Y\oplus X,n)$ iff $X\in \mathrm{HYP}(Y)$ and $n=\langle a,e\rangle$ is a hyperarithmetical-in-$Y$ index of $X$:
\begin{equation}
P(Y\oplus X,\langle a, e\rangle) \equiv X\in \mathrm{HYP}(Y) \; \& \; a\in \mathcal{O}^{Y} \; \&\; X = \{e\}^{H_{a}^{Y}}
\end{equation}
Since the relation $X\in \mathrm{HYP}(Y)$ is $\Pi^{1}_{1}$ and membership in $H_{a}^{Y}$ is $\Delta^{1,Y}_{1}$ for $a\in \mathcal{O}^{Y}$
, we have that $P(Y\oplus X,n)$ is a $\Pi^{1}_{1}$-predicate. By Kondo uniformization (Theorem~\ref{hppa:thm:kondo}), there is a $\Pi^{1}_{1}$-uniformization $P^{\prime}$ of $P$. For $Y\in 2^{\omega}$, define $\partial_{Y}(X)=n$ if and only if $P^{\prime}(Y\oplus X,n)$. Since $\partial_{Y}(X)=n$ implies that $n$ is a hyperarithmetical-in-$Y$ index of $X$, we have that $\partial_{Y}: \mathrm{HYP}(Y)\rightarrow \omega$ is an injection and hence $M_{Y}=(\omega, \mathrm{HYP}(Y), \partial_{Y})$ is a model of Basic~Law~V. Note that since $\partial_{Y}:\mathrm{HYP}(Y)\rightarrow \omega$ has a $\Pi^{1,Y}_{1}$-graph, the Corollary to the Spector-Gandy Theorem (cf. Corollary~\ref{HPPA:cor:specgandy}) implies that  $\partial_{Y}:\mathrm{HYP}(Y)\rightarrow \omega$ is definable in the structure $(\omega, \mathrm{HYP}(Y),0,s,+,\times, \leq)$, and this establishes~(ii)(a).

To establish~(i)(a), note that since $\partial_{Y}:\mathrm{HYP}(Y)\rightarrow \omega$ is an injection, it follows that $M_{Y}=(\omega, \mathrm{HYP}(Y), \partial_{Y})$ is a model ${\tt ABL}_{0}$ (as in the proof of Proposition~\ref{HPPA:prop:truethat}). To see that it also models the $\Sigma^{1}_{1}$-choice schema~(\ref{HPPA:eqn:thechoiceschema}), suppose that $M_{Y}\models \forall \; z\; \exists \; X\; \varphi(z,X, \partial_{Y}(X))$, where $\varphi$ is an arithmetical formula. (The proof for the case where $z$ is replaced by a tuple $\overline{z}$, or where there are multiple existential set quantifiers and multiple existential relation quantifiers, or where there are parameters from the model present in $\varphi$ is exactly similar). Then $M_{Y}\models \forall \; z\; \exists \; X\; \exists \; e \; [\partial_{Y}(X)= e \; \wedge\; \varphi(z,X, e)$]. Define a relation $Q(Y \oplus \{z\},X)$ as follows:
\begin{equation}
Q(Y\oplus \{z\},X) \Longleftrightarrow X\in \mathrm{HYP}(Y) \; \&\; \exists \; e \; [\partial_{Y}(X) = e \; \wedge\; \varphi(z,X, e)]
\end{equation}
Then $Q$ is a $\Pi^{1}_{1}$-predicate. By Kondo uniformization, there is a $\Pi^{1}_{1}$-uniformization $Q^{\prime}$ of $Q$. For $Y\in 2^{\omega}$, define $q_{Y}(z)=X$ if and only if $Q^{\prime}(Y\oplus \{z\},X)$ and let
\begin{equation}
R_{Y}= \{(z, x): \exists \; X\in \mathrm{HYP}(Y) \; q_{Y}(z)=X \wedge x\in X\}
\end{equation}
Then by Kleene's Theorem on Restricted Quantification~\ref{HPPA:thm:kleenerestricted}, $R_{Y}$ is $\Pi^{1,Y}_{1}$-definable. Moreover, since $Q^{\prime}$ is a uniformization, we also have 
\begin{equation}
R_{Y}= \{(z, x): \forall \; X\in \mathrm{HYP}(Y) \; q_{Y}(z)=X \rightarrow  x\in X\}
\end{equation}
Again, by Kleene's Theorem on Restricted Quantification~(\ref{HPPA:thm:kleenerestricted}), the set $R_{Y}$ is $\Sigma^{1,Y}_{1}$-definable. Hence $R_{Y}$ is $\Delta^{1,Y}_{1}$ and so $R_{Y}\in \mathrm{HYP}(Y)$. Finally, since $Q^{\prime}$ is a uniformization, we have that $M_{Y}\models \forall \; z \; \varphi(z, (R_{Y})_{z}, \partial_{Y}((R_{Y})_{z}))$, so in fact $M_{Y}$ is a model of ${\tt \Sigma^{1}_{1}-BL}_{0}$ and this establishes~(i)(a).

To show~(i)(b) and~(ii)(b), we first prove~(ii)(b) and then note how our proof of~(ii)(b) in fact establishes~(i)(b). Recall that by Proposition~\ref{HPPA:prop:theinjection}, there is an injective function $s_{Y}:\omega\rightarrow \omega$ whose graph is in $\mathrm{HYP}(Y)$ such that $s_{Y}(n)=\partial_{Y}(\{n\})$ for all $n\in \omega$. Define an $s_{Y}$-recursive function  $f_{Y}:\omega\rightarrow \omega$: 
\begin{equation}\label{HPPA:eqn:dsafdasfdasf}
f_{Y}(0)=\partial_{Y}(\emptyset) \; \;\;\& \;\;\; f_{Y}(n+1)=s_{Y}(f_{Y}(n)) 
\end{equation}
Let $N_{Y}$ be the range of $f_{Y}$, so that both the graph of $f_{Y}$ and its range $N_{Y}$ are in $\mathrm{HYP}(Y)$. Since $N_{Y}=\mathrm{rng}(f_{Y})$ and $\mathrm{dom}(f_{Y})=\omega$, the following induction principle holds:
\begin{equation}\label{HPPA:eqn:hypinduction}
\forall \; P \; [f_{Y}(0)\in P \; \& \; \forall \; n \in \omega \; f_{Y}(n)\in P \rightarrow f_{Y}(n+1)\in P]\rightarrow N_{Y}\subseteq P
\end{equation}
Using this form of induction, one can show that $f_{Y}: \omega\rightarrow N_{Y}$ is injective, so that its inverse $f_{Y}^{-1}: N_{Y}\rightarrow \omega$~exists and is likewise in $\mathrm{HYP}(Y)$. Further, one can arithmetically define from $N_{Y}, f_{Y}$ and $f_{Y}^{-1}$ the functions $\oplus_{Y}:N^{2}_{Y}\rightarrow N_{Y}$ and $\otimes_{Y}:N^{2}_{Y}\rightarrow N_{Y}$ as follows:
\begin{equation}\label{HPPA:eqn:hyprecursion}
f_{Y}(x) \oplus_{Y} f_{Y}(y) = f_{Y}(f^{-1}_{Y}(x)+f^{-1}_{Y}(y)) \; \; \; \; \; \; \; \; \; \; f_{Y}(x) \otimes_{Y} f_{Y}(y) = f_{Y}(f^{-1}_{Y}(x)\cdot f^{-1}_{Y}(y))
\end{equation}
and then arithmetically define a relation $\preceq$ on $N^{2}_{Y}$ by
\begin{equation}\label{HPPA:eqn:dfadsafdasfadsfadsfadsfdsa}
x\preceq_{Y} y \Longleftrightarrow \exists \; z\in N_{Y} \; x\oplus_{Y} z = y
\end{equation}
Further one can extend the map to $\overline{f}_{Y}:\mathrm{HYP}(Y)\rightarrow (P(N_{Y})\cap \mathrm{HYP}(Y))$ by setting 
\begin{equation}
\overline{f}_{Y}(X)=\{f_{Y}(n): n\in X\}
\end{equation}
and define the following structure in the signature of $(\omega, 0,s,+,\times, \leq, \mathrm{HYP}(Y))$:
\begin{equation}
H_{Y} = (N_{Y}, \partial_{Y}(\emptyset), s_{Y}, \oplus_{Y}, \otimes_{Y}, \preceq_{Y}, \overline{f}_{Y}(\mathrm{HYP}(Y)))
\end{equation}
Then the functions $f_{Y}$ and $\overline{f}_{Y}$ witness that the two structures $(\omega, 0,s,+,\times, \leq, \mathrm{HYP}(Y))$ and $H_{Y}$ are isomorphic. 

Further, note that $H_{Y}$ is definable within $M_{Y}$: for, by the induction principle~(\ref{HPPA:eqn:hypinduction}) one can show that $N_{Y}$ is the unique smallest set containing $\partial_{Y}(\emptyset)$ and closed under $s_{Y}$, and using equation~(\ref{HPPA:eqn:hyprecursion}) and the induction principle (\ref{HPPA:eqn:hypinduction}) one can show that $\oplus_{Y}$ and $\otimes_{Y}$ are the unique functions on $N_{Y}$ satisfying the following recursion clauses
\begin{align}
&x\oplus_{Y} \partial_{Y}(\emptyset) = x & x\oplus_{Y} (s_{Y}(z))= s_{Y}(x \oplus_{Y} z) \\
& x\otimes_{Y} \partial_{Y}(\emptyset) =\partial_{Y}(\emptyset) & x\otimes_{Y} (s_{Y}(z))= (x\otimes_{Y} z)\oplus_{Y} x
\end{align}
Hence, since $H_{Y}$ and $(\omega, \mathrm{HYP}(Y), 0,s,+,\times, \leq)$ are isomorphic and since $H_{Y}$ is definable in $M_{Y}$, we have established~(ii)(b). Finally, note by construction that the structure $H_{Y}$ witnesses that $M_{Y}$ is a model of the axiom~$\mathrm{Inf}$, so that we have established~(i)(b).
\end{proof}

\begin{cor}\label{HPPA:thm:main:cor}
 ${\tt \Sigma^{1}_{1}-AC}_{0}\leq_{\mathrm{I}} {\tt \Sigma^{1}_{1}-LB}_{0}+\mathrm{Inf}<_{\mathrm{I}} {\tt \Pi^{1}_{1}-CA}_{0}$.
\end{cor}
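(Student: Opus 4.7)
The plan is to establish the corollary in two halves: first, the interpretation ${\tt \Sigma^{1}_{1}-AC}_{0}\leq_{\mathrm{I}} {\tt \Sigma^{1}_{1}-LB}_{0}+\mathrm{Inf}$ by reading off an interpreted model from the witnesses supplied by $\mathrm{Inf}$, and second, the strict bound ${\tt \Sigma^{1}_{1}-LB}_{0}+\mathrm{Inf}<_{\mathrm{I}} {\tt \Pi^{1}_{1}-CA}_{0}$ by combining Theorem~\ref{HPPA:thm:main} with Proposition~\ref{HPPA:conpropa}.

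For the first inequality, let $\mathcal{M}=(M,S_{1},S_{2},\ldots,\partial)$ be an arbitrary model of ${\tt \Sigma^{1}_{1}-LB}_{0}+\mathrm{Inf}$. By Proposition~\ref{HPPA:prop:theinjection} the successor map $s(x)=\partial(\{x\})$ already has graph in $S_{2}$, and $\mathrm{Inf}$ supplies a unique minimal set $N\in S_{1}$ containing $\partial(\emptyset)$ and closed under $s$, together with operations $\oplus,\otimes$ and a relation $\preceq$ on $N$ satisfying Q1--Q8. The key subtle point is that these operations must be definable from $\mathcal{M}$ \emph{without parameters}, so I would verify that they are uniquely pinned down by $N$-induction: given two witnesses $\oplus_{1},\oplus_{2}$ to Q4--Q5 on $N$, the set $\{y\in N:\forall x\in N\; \oplus_{1}(x,y)=\oplus_{2}(x,y)\}$ lies in $S_{1}$ by arithmetical comprehension, contains $\partial(\emptyset)$, and is closed under $s$, so by the minimality clause of $\mathrm{Inf}$ equals $N$; the same argument handles $\otimes$, and Q8 forces $\preceq$. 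Hence the structure
\[
\mathcal{N}=(N,\;\partial(\emptyset),\;s,\;\oplus,\;\otimes,\;\preceq,\;S_{1}\cap P(N),\;S_{2}\cap P(N^{2}),\ldots)
\]
is parameter-freely definable in $\mathcal{M}$. I would then verify that $\mathcal{N}\models {\tt \Sigma^{1}_{1}-AC}_{0}$ by noting that Q1--Q8 follow from $\mathrm{Inf}$; the full second-order induction axiom~(\ref{HPPA:eqn:MIaxiom}) holds because any $F\in S_{1}\cap P(N)$ containing $\partial(\emptyset)$ and closed under $s$ is eligible for the minimality clause and hence contains $N$; and the arithmetical comprehension and $\Sigma^{1}_{1}$-choice schemas transfer from $\mathcal{M}$ to $\mathcal{N}$ because any formula in the signature of ${\tt PA}^{2}$ with second-order quantifiers relativised to $N$ (via arithmetical intersection with $N$) translates into a formula of the same complexity in the signature of ${\tt BL}^{2}$.

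For the second inequality, Theorem~\ref{HPPA:thm:main} applied to $Y=\emptyset$ already constructs a model $M_{\emptyset}$ of ${\tt \Sigma^{1}_{1}-LB}_{0}+\mathrm{Inf}$, and the theorem explicitly asserts that this construction is formalizable in ${\tt \Pi^{1}_{1}-CA}_{0}$; this gives ${\tt \Sigma^{1}_{1}-LB}_{0}+\mathrm{Inf}\leq_{\mathrm{I}} {\tt \Pi^{1}_{1}-CA}_{0}$, and by soundness (provable already in ${\tt ACA}_{0}$), the same formalization yields ${\tt \Pi^{1}_{1}-CA}_{0}\vdash\mathrm{Con}({\tt \Sigma^{1}_{1}-LB}_{0}+\mathrm{Inf})$. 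Since ${\tt \Pi^{1}_{1}-CA}_{0}$ is finitely axiomatizable and ${\tt \Sigma^{1}_{1}-LB}_{0}+\mathrm{Inf}$ is a computable theory in a computable signature, equation~(\ref{HPPA:eqn:theonlyway}) of Proposition~\ref{HPPA:conpropa} upgrades $\leq_{\mathrm{I}}$ to $<_{\mathrm{I}}$.

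The principal obstacle is the parameter-free uniqueness of $\oplus$ and $\otimes$ on $N$: $\mathrm{Inf}$ only postulates their existence, so without the $N$-induction argument above the interpretation would be parameter-dependent, and the map from models of $\mathcal{M}$ to interpreted models $\mathcal{N}$ would not be canonical. The remainder reduces to routine translation between signatures and direct appeals to Theorem~\ref{HPPA:thm:main} and Proposition~\ref{HPPA:conpropa}.
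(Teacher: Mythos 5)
Your proof is correct and takes essentially the same route as the paper's: the paper likewise gets the first inequality by observing that $\mathrm{Inf}$ ``literally provides an interpretation'' of ${\tt \Sigma^{1}_{1}-AC}_{0}$, and gets the strict inequality by combining the ${\tt \Pi^{1}_{1}-CA}_{0}$-formalizability of Theorem~\ref{HPPA:thm:main} with Proposition~\ref{HPPA:conpropa}. The only difference is that you spell out what the paper leaves implicit, namely the parameter-free (induction-based) uniqueness of $\oplus$, $\otimes$, $\preceq$ and the transfer of the comprehension and choice schemas to the interpreted structure.
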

\begin{proof}
Note that ${\tt \Sigma^{1}_{1}-AC}_{0}\leq_{\mathrm{I}} {\tt \Sigma^{1}_{1}-LB}_{0}+\mathrm{Inf}$ because the sentence $\mathrm{Inf}$ (cf. Definition~\ref{HPPA:eqn:defn:inf}) literally provides an interpretation. To see that $ {\tt \Sigma^{1}_{1}-LB}_{0}+\mathrm{Inf}<_{\mathrm{I}} {\tt \Pi^{1}_{1}-CA}_{0}$, note that since the previous theorem can be proven in ${\tt \Pi^{1}_{1}-CA}_{0}$, it follows that ${\tt \Pi^{1}_{1}-CA}_{0}$ proves the consistency of ${\tt \Sigma^{1}_{1}-LB}_{0}+\mathrm{Inf}$. Thus the result follows from Proposition~\ref{HPPA:conpropa}.
\end{proof}

\section{Barwise-Schlipf Models of the Hyperarithmetic Subsystems of ${\tt BL}^{2}$ and ${\tt HP}^{2}$}\label{HPPA:bssssssss}


In this section, we turn to building models of subsystems of ${\tt BL}^{2}$ and ${\tt HP}^{2}$ on top of various recursively saturated fields. In particular, \S~\ref{HPPA:gendafdsfad} is devoted to the statement and proof of a generalization of a theorem of Barwise-Schlipf and Ferreira-Wehmeier (Theorem~\ref{HPPA:thm:metatheorem}). Then in \S\S~\ref{hppa:lastsec2}-\ref{hppa:lastsec} three applications of this theorem are presented. The major result here is Corollary~\ref{HPPA:cor:thebomb}, which says that ${\tt \Sigma^{1}_{1}-PH}_{0}<_{\mathrm{I}} {\tt ACA}_{0}$, and this fills in a key piece of Figure~\ref{HPPA:figure2} about the interpretability relation.

\subsection{Generalization of the Barwise-Schlipf/Ferreira-Wehmeier Metatheorems}\label{HPPA:gendafdsfad}

The main theorem of this section (Theorem~\ref{HPPA:thm:metatheorem}) is a generalization of the way in which Barwise-Schlipf (\cite{Barwise1975aa}) built models of ${\tt \Delta^{1}_{1}-CA}_{0}$ on top of recursively saturated models of Peano arithmetic, and the way in which Ferreira-Wehmeier (\cite{Ferreira2002}) built models of ${\tt \Delta^{1}_{1}-BL}_{0}$ on top of recursively saturated structures. The new addition is the concept of a uniformly definable function $\partial: D(M)\rightarrow M$ (Definition~\ref{hppa:def:dafdsaf898999}). Subsequent to defining this notion, the definitions of definable skolem functions and recursively saturated structures are recalled, and then Theorem~\ref{HPPA:thm:metatheorem} is stated and proven.

\begin{defn}\label{hppa:def:dafdsaf898999}
Suppose that $M$ is an $L$-structure and let $D(M^{n})$ be the definable subsets of $M^{n}$. Then $\partial: D(M)\rightarrow M$ is {\it uniformly definable} if for all $L$-formula $\theta(x,\overline{y})$ with all free variables displayed and with a non-empty set $\overline{y}$ of parameter variables, there is an $L$-formula $\theta^{\prime}(x,\overline{y})$ with the same free variables, such that $\{\partial(\theta(\cdot,\overline{a} ))\}=\{b: M\models \theta^{\prime}(b,\overline{a})\}$ for all $\overline{a}\in M$,\index{Uniformly definable map $\partial: D(M)\rightarrow M$, Definition~\ref{hppa:def:dafdsaf898999}} i.e.:
\begin{equation}
\overline{a},b\in M \Longrightarrow [\partial(\theta(\cdot, \overline{a}))=b \Longleftrightarrow M\models \theta^{\prime}(b,\overline{a})]
\end{equation}
\end{defn}

\begin{defn}
Suppose that $L$ is countable and that $M$ is an $L$-structure and that $B\in 2^{\omega}$. Then $\partial: D(M)\rightarrow M$ is {\it $B$-computably uniformly definable} if it is uniformly definable and the map $\theta\mapsto \theta^{\prime}$ is $B$-computable.
\end{defn}

\begin{defn}\label{HPPA:defn:parametericskolemfunctions}
Suppose that $M$ is an $L$-structure. Then $M$ has {\it definable skolem functions} if for every definable set $P\subseteq M^{m+n}$ there is a definable set $P^{\prime}\subseteq M^{m+n}$ such that\index{Definable skolem functions, Definition~\ref{HPPA:defn:parametericskolemfunctions}}
\begin{align}
& M\models \forall \overline{x},\overline{y} \; [P^{\prime}\overline{x}\overline{y} \rightarrow P\overline{x}\overline{y}] \label{hppa:eqn:compareasdfasdf1}\\
& M\models \forall \; \overline{x} \; [\exists \; \overline{y} \; P\overline{x}\overline{y}]\rightarrow [\exists !\; \overline{y}\; P^{\prime}\overline{x}\overline{y}] \label{hppa:eqn:compareasdfasdf2}
\end{align}
\end{defn}

\begin{rmk}
Note that in this definition, the parameters used to define $P^{\prime}$ may exceed those used to define $P$. Note also the obvious similarity between  definable skolem functions and the uniformization results, such as Kondo's Uniformization Theorem~\ref{hppa:thm:kondo}, which we employed in Theorem~\ref{HPPA:thm:main}. In particular, equations~(\ref{hppa:eqn:compareasdfasdf1})-(\ref{hppa:eqn:compareasdfasdf2}) are nearly identical to equations~(\ref{hppa:eqn:compareasdfasdf3})-(\ref{hppa:eqn:compareasdfasdf4}).
\end{rmk}

\begin{defn}
Suppose that $M$ is an $L$-structure and $A\subseteq M$. A set of $A$-formulas $p(\overline{v})$ in finitely many variables $\overline{v}$ is {\it realized} in $M$ if there is an $\overline{b}$ in $M$ such that $M\models \theta(\overline{b})$ for every $A$-formula $\theta(\overline{v})$ in $p(\overline{v})$. A set of $A$-formulas $p(\overline{v})$ is {\it finitely realized} in $M$ if every finite subset $p_{0}(\overline{v})$ of $p(\overline{v})$ is realized in $M$. The structure $M$ is {\it saturated} if for every $A\subseteq M$ with $\left|A\right|<\left|M\right|$ and every set of $A$-formulas $p(\overline{v})$, if $p(\overline{v})$ is finitely realized in $M$ then $p(\overline{v})$ is realized in $M$.
\end{defn}

\begin{defn}\label{hppa:index:compsat}
Suppose that $L$ and $M$ are countable and $B\in 2^{\omega}$. Then $M$ is {\it $B$-recursively saturated} if for every finite $A\subseteq M$ and every $B$-computable set of $A$-formulas $p(\overline{v})$, if $p(\overline{v})$ is finitely realized in $M$ then $p(\overline{v})$ is realized in $M$.\index{Recursively saturated structure, Definition~\ref{hppa:index:compsat}}
\end{defn}

\begin{rmk}
The following proposition records the very elementary observation that saturated structures (resp. $B$-recursively saturated structures) have a kind of compactness property, in that each covering of $M^{n}$ by definable sets has a finite sub-covering (resp. each $B$-recursive covering of $M^{n}$ by definable sets has a finite sub-covering).
\end{rmk}

\begin{prop}\label{HPPA:prop:saturatedequalscompact}
Suppose that $M$ is a saturated $L$-structure (resp. $B$-recursively saturated $L$-structure) and that $A\subseteq M$ with $\left|A\right|<\left|M\right|$. Further, suppose that $\{\theta_{i}(\overline{v})\}_{i\in I}$ is a set of $A$-formulas (resp. $B$-computable set of $A$-formulas). Then 
\begin{equation}\label{HPPA:eqn:satequalscompact}
[M\models \forall \; \overline{a} \; \bigvee_{i\in I} \theta_{i}(\overline{v})] \Longrightarrow [\exists \; \mbox{ finite } I_{0}\subseteq I \; M\models \forall \; \overline{a} \; \bigvee_{i\in I_{0}} \theta_{i}(\overline{v})]
\end{equation}
\end{prop}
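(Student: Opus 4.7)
The plan is to prove the contrapositive via the standard compactness-style argument for saturated structures. Suppose no finite $I_0 \subseteq I$ satisfies the right-hand side. I would then consider the set of $A$-formulas
\[
p(\overline{v}) = \{\neg \theta_i(\overline{v}) : i \in I\}.
\]
The hypothesis that no finite subcover works means that for every finite $I_0 \subseteq I$, there exists $\overline{a} \in M$ with $M \models \bigwedge_{i \in I_0} \neg \theta_i(\overline{a})$. This is exactly the statement that every finite subset of $p(\overline{v})$ is realized in $M$, so $p(\overline{v})$ is finitely realized.

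In the saturated case, since $|A| < |M|$, saturation applies directly and yields some $\overline{b} \in M$ realizing $p(\overline{v})$ in its entirety; that is, $M \models \neg \theta_i(\overline{b})$ for all $i \in I$, contradicting the assumed covering $M \models \forall \overline{a} \bigvee_{i \in I} \theta_i(\overline{a})$. In the $B$-recursively saturated case, I need to verify that $p(\overline{v})$ is a $B$-computable set of formulas with parameters from a finite $A$; this is immediate from the fact that $\{\theta_i(\overline{v})\}_{i \in I}$ is $B$-computable and the map $\theta \mapsto \neg \theta$ is computable, so Definition~\ref{hppa:index:compsat} applies and again yields a realizing tuple $\overline{b}$, producing the same contradiction.

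I do not expect any serious obstacle: the argument is essentially just the contrapositive of the definition of (recursive) saturation, together with the observation that the finitely-realized hypothesis is exactly the negation of the finite-subcover conclusion. The only small bookkeeping point is to ensure that in the recursively saturated case the parameter set remains finite (which is automatic, as all the $\theta_i$ are $A$-formulas for a fixed finite $A$) and that negating each formula in a $B$-computable set preserves $B$-computability.
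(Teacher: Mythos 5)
Your proof is correct and is essentially the paper's own argument: the paper likewise observes that the contrapositive of the displayed implication is precisely the statement that the type $p(\overline{v})=\{\neg\theta_{i}(\overline{v}):i\in I\}$ is realized whenever it is finitely realized, which is (recursive) saturation. Your additional bookkeeping about $B$-computability of the negated family and finiteness of the parameter set in the recursively saturated case is sound and only makes explicit what the paper leaves implicit.
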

\begin{proof}
The contrapositive of equation~(\ref{HPPA:eqn:satequalscompact}) says that if the set of $A$-formulas $p(\overline{v})=\{\neg \theta_{i}(\overline{a}): i\in I\}$ is finitely realized, then it is realized.
\end{proof}

\begin{thm}\label{HPPA:thm:metatheorem}
Suppose that $M$ is an $L$-structure and $\partial: D(M)\rightarrow M$ such that the structure $N=(M, D(M), D(M^{2}), \ldots, \partial)$ models ${\tt ABL}_{0}$ (resp. ${\tt AHP}_{0}$). Suppose that $B\in 2^{\omega}$. Then
\begin{enumerate}
\item[(i)] If $\partial: D(M)\rightarrow M$ is uniformly definable and $M$ is saturated, then the structure $N$ models ${\tt \Delta^{1}_{1}-BL}_{0}$ (resp. ${\tt \Delta^{1}_{1}-HP}_{0}$).
\item[(ii)] If $\partial: D(M)\rightarrow M$ is uniformly definable and $M$ is saturated, then the structure $N$ models ${\tt \Sigma^{1}_{1}-LB}_{0}$ (resp. ${\tt \Sigma^{1}_{1}-PH}_{0}$) if and only if $M$ has definable skolem functions.
\item[(iii)] If $\partial: D(M)\rightarrow M$ is $B$-computably uniformly definable and $M$ is $B$-recursively saturated, then the structure $N$ models ${\tt \Delta^{1}_{1}-BL}_{0}$ (resp. ${\tt \Delta^{1}_{1}-HP}_{0}$).
\item[(iv)] If $\partial: D(M)\rightarrow M$ is $B$-computably uniformly definable and $M$ is $B$-recursively saturated, then the structure $N$ models ${\tt \Sigma^{1}_{1}-LB}_{0}$ (resp. ${\tt \Sigma^{1}_{1}-PH}_{0}$) if and only if $M$ has definable skolem functions.
\end{enumerate}
\end{thm}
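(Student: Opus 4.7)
The plan is to use the standard ``reduction to infinite disjunction plus compactness'' strategy familiar from Barwise-Schlipf, but to track $\partial$-terms explicitly using the new hypothesis of uniform definability. The core reduction runs as follows: because every set in $D(M^k)$ is coded by a pair consisting of an $L$-formula $\rho(\overline{x},\overline{y})$ and a parameter tuple $\overline{a}$, each second-order quantifier $\exists R \; \chi(R,\ldots)$ (with $R$ of arity $k$) expands to the countable disjunction $\bigvee_{\rho} \exists \overline{a} \; \chi(\{\overline{x} : \rho(\overline{x},\overline{a})\},\ldots)$, with $\rho$ ranging over $L$-formulas of appropriate arity and the substitution made simultaneously at every occurrence of $R$ in the body. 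Any occurrence of $\partial$ applied to such a defined set is then pushed into the first-order part: by uniform definability, the assertion $\partial(\rho(\cdot,\overline{a})) = t$ is equivalent in $M$ to $\rho^{\prime}(t,\overline{a})$ for a translate $\rho^{\prime}$ (computable-from-$\rho$ in the effective case). Iterating this rewrite from the outside in, every $\Sigma^1_1$-formula $\varphi(\overline{n})$ in the signature of $N$ becomes a countable disjunction $\bigvee_\rho \sigma_\rho(\overline{n})$ of $L$-formulas in $M$, with the translation $\rho \mapsto \sigma_\rho$ being $B$-computable under the hypotheses of (iii)/(iv). The identical reduction applies in the ${\tt HP}^2$-case with $\#$ replacing $\partial$.

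For parts (i) and (iii), suppose $N \models \forall \overline{n} \; (\varphi(\overline{n}) \leftrightarrow \psi(\overline{n}))$ with $\varphi$ a $\Sigma^1_1$-formula and $\psi$ a $\Pi^1_1$-formula. Apply the reduction to $\varphi$ to obtain $\bigvee_\rho \sigma_\rho(\overline{n})$ and to $\neg\psi$ (which is $\Sigma^1_1$) to obtain $\bigvee_\rho \tau_\rho(\overline{n})$, so that $M \models \forall \overline{n} \; \bigvee_\rho (\sigma_\rho(\overline{n}) \vee \tau_\rho(\overline{n}))$. By Proposition~\ref{HPPA:prop:saturatedequalscompact} (applied with the $B$-computability noted above in the effective case) there is a finite $I_0$ such that $M \models \forall \overline{n} \; \bigvee_{\rho \in I_0} (\sigma_\rho(\overline{n}) \vee \tau_\rho(\overline{n}))$. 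Then $\{\overline{n} : N \models \varphi(\overline{n})\}$ coincides with $\{\overline{n} : M \models \bigvee_{\rho \in I_0} \sigma_\rho(\overline{n})\}$: the $\supseteq$-direction is automatic, and conversely if $\varphi(\overline{n})$ holds then $\psi(\overline{n})$ holds so no $\tau_\rho(\overline{n})$ is true, whence the finite cover forces some $\sigma_\rho$ with $\rho \in I_0$ to be true. This witnessing set lies in $D(M^k)$, establishing $\Delta^1_1$-comprehension.

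For the forward direction of (ii) and (iv), given a definable $P \subseteq M^{m+n}$ take the arithmetical formula $\Phi(\overline{x},Y)$ saying ``$Y$ is a singleton witnessing $\exists \overline{y}\; P(\overline{x},\overline{y})$ when such a witness exists, and $Y = \emptyset$ otherwise'' (with the standard tupling device for $n>1$). Then $\forall \overline{x} \exists Y \; \Phi(\overline{x},Y)$ is trivial in $N$, so the $\Sigma^1_1$-choice schema supplies a definable $R$ whose columns deliver skolem witnesses, and $R$ functions as the set $P^{\prime}$ of Definition~\ref{HPPA:defn:parametericskolemfunctions}. For the backward direction, assume $M$ has definable skolem functions and $N \models \forall \overline{n} \exists P \; \varphi(\overline{n},P)$ with $\varphi$ a $\Sigma^1_1$-formula. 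The reduction together with saturation-compactness produces a finite enumeration $\rho_1, \ldots, \rho_k$ of formulas with $M \models \forall \overline{n} \bigvee_{i \leq k} \exists \overline{c}\; \sigma_{\rho_i}(\overline{n},\overline{c})$. For each $i$ the set $A_i = \{\overline{n} : \exists \overline{c}\; \sigma_{\rho_i}(\overline{n},\overline{c})\}$ is definable and admits a definable skolem function $g_i$ returning a suitable $\overline{c}$; splitting $M^{|\overline{n}|}$ into definable pieces $B_i = A_i \setminus \bigcup_{j<i} A_j$, the relation $R$ with $R_{\overline{n}} = \{\overline{x} : \rho_i(\overline{x}, g_i(\overline{n}))\}$ for $\overline{n} \in B_i$ is definable and witnesses the desired instance of $\Sigma^1_1$-choice.

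The main obstacle is the bookkeeping in the core reduction, specifically the treatment of nested occurrences of $\partial$ within the scope of second-order quantifiers: $\partial$ can appear applied to an existentially quantified set variable $R$, and the substitution $R \mapsto \rho(\cdot,\overline{a})$ must be followed by the uniform-definability rewrite in every such position, with the new parameters $\overline{a}$ remaining free so that the outer $\exists \overline{a}$ binds them correctly. The formal argument is a routine structural induction on formula complexity, but the fact that this goes through is precisely what forces the stronger hypothesis of \emph{uniform} (rather than pointwise) definability of $\partial$, and is the place where the ${\tt HP}^2$-case is not literally identical to the ${\tt BL}^2$-case: there the corresponding rewrite for $\#$ must handle the quantifier-alternation hidden in Hume's Principle, but the analogous uniform definability clause neutralises it.
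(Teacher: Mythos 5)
Your proposal is correct and follows essentially the same route as the paper's proof: translate second-order quantifiers into countable disjunctions over defining formulas with parameter tuples, eliminate $\partial$-terms via the uniform-definability rewrite $\theta\mapsto\theta^{\prime}$, apply saturation-as-compactness (Proposition~\ref{HPPA:prop:saturatedequalscompact}) to extract a finite subdisjunction, and use definable skolem functions (resp.\ singleton witnesses fed into the choice schema) for the two directions of (ii)/(iv). The only quibble is your closing remark: the quantifier alternation hidden in Hume's Principle never enters the verification of the comprehension or choice schemas, since $N\models{\tt AHP}_0$ is a hypothesis rather than something to be re-proved and $\#$ occurs in the relevant formulas only as a function symbol applied to set terms, so the two cases really are word-for-word identical, as the paper asserts.
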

\begin{proof}
In all four parts of this proof, the proof is identical between Basic~Law~V and Hume's~Principle, and so we only include the proofs for the case of Basic~Law~V. Further, the proof of~(i) and~(iii) are parallel and the proof of~(ii) and~(iv) are parallel, and so we present the proofs of~(i) and~(iii) simultaneously and the proofs of~(ii) and~(iv) simultaneously. For~(i) and~(iii), suppose that  $\partial: D(M)\rightarrow M$ is uniformly definable (resp. $B$-computably uniformly definable) and $M$ is saturated (resp. $B$-recursively saturated). To see that $N$ is a model of ${\tt \Delta^{1}_{1}-BL}_{0}$, suppose that there is a subset $Z$ of $M^{n}$ which is defined on $N$ by a $\Sigma^{1}_{1}$-formula $\varphi(\overline{z})$ and by a $\Pi^{1}_{1}$-formula $\psi(\overline{z})$. Let us suppose that $\varphi(\overline{z})$ and $\psi(\overline{z})$ use exactly one set parameter $A\in D(M)$ where 
\begin{equation}\label{HPPA:defn:Arho}
A=\{w\in M: M\models \rho(w, \overline{a})\}
\end{equation}
and where $\rho(w, \overline{v})$ is an $\emptyset$-formula with $\overline{a}\in M$, since the proof in the case where there are multiple parameters, with some being objects, some sets, and some binary relations etc., is exactly identical. Further, let us suppose that $\varphi(\overline{z})\equiv \exists \; X \; \varphi_{0}(\overline{z}, X, \partial(X),A)$ and that $\psi(\overline{z})\equiv \forall \; X \; \psi_{0}(\overline{z},X, \partial(X), A)$, since the proof in the case where there are multiple existential (resp. universal) set-quantifiers or relation-quantifiers in $\varphi(\overline{z})$ (resp. $\psi(\overline{z}))$ is exactly identical.  Then
\begin{equation}\label{HPPA:eqn:bigsuppp}
\overline{z}\in Z \Longleftrightarrow N\models \exists \; X \; \varphi_{0}(\overline{z}, X, \partial(X), A) \Longleftrightarrow N\models \forall \; X \; \psi_{0}(\overline{z},X, \partial(X), A)
\end{equation}
Then
\begin{equation}\label{HPPA:eqn:5555}
N\models \forall \; \overline{z} \; \exists \; X \; \varphi_{0}(\overline{z}, X, \partial(X),A) \vee \neg \psi_{0}(\overline{z},X, \partial(X),A)
\end{equation}
Let us abbreviate 
\begin{equation}\label{HPPA:eqn:dfdfadfabbb}
\xi_{0}(\overline{z}, X, \partial(X),A) \equiv \varphi_{0}(\overline{z}, X, \partial(X),A) \vee \neg \psi_{0}(\overline{z},X, \partial(X),A)
\end{equation}
so that equation~(\ref{HPPA:eqn:5555}) becomes
\begin{equation}
N\models \forall \; \overline{z} \; \exists \; X \; \xi_{0}(\overline{z}, X, \partial(X),A)
\end{equation}
Then this translates into $M$ as 
\begin{equation}
M\models \forall \; \overline{z} \; \bigvee_{\theta(x, \overline{y}) } \exists \; \overline{b} \; \xi_{0}(\overline{z}, \theta(\cdot, \overline{b}), \partial(\theta(\cdot, \overline{b})), \rho(\cdot, \overline{a}))
\end{equation}
where $\theta(x, \overline{y})$ ranges over $\emptyset$-formulas with non-empty set of parameter variables $\overline{y}$. Since the map $\partial:D(M)\rightarrow M$ is uniformly definable (resp. $B$-computably uniformly definable) via the map $\theta\mapsto \theta^{\prime}$, we have
\begin{equation}\label{HPPA:eqn:helpadsfadsf333}
M\models \forall \; \overline{z} \; \bigvee_{\theta(x, \overline{y})} \exists \; \overline{b} \; \exists \; c \; (\theta^{\prime}(c, \overline{b}) \; \& \; \xi_{0}(\overline{z}, \theta(\cdot, \overline{b}), c, \rho(\cdot, \overline{a}))
\end{equation}
Since $M$ is saturated (resp. $B$-recursively saturated), an application of Proposition~\ref{HPPA:prop:saturatedequalscompact} implies that there is $K>0$ and there are $\emptyset$-formulas $\theta_{1}(x, \overline{y}), \ldots, \theta_{K}(x, \overline{y})$ such that 
 \begin{equation}
M\models \forall \; \overline{z} \; \bigvee_{i=1}^{K} \exists \; \overline{b} \; \exists \; c \; (\theta^{\prime}_{i}(c, \overline{b}) \; \& \; \xi_{0}(\overline{z}, \theta_{i}(\cdot, \overline{b}), c, \rho(\cdot, \overline{a})))
\end{equation}
Then by definition of $\xi_{0}$ (cf. equation~\ref{HPPA:eqn:dfdfadfabbb})), we have:
 \begin{equation}
M\models \forall \; \overline{z} \; \bigvee_{i=1}^{K} \exists \; \overline{b} \; \exists \; c \; (\theta^{\prime}_{i}(c, \overline{b}) \; \& \; (\varphi_{0}(\overline{z}, \theta_{i}(\cdot, \overline{b}), c, \rho(\cdot, \overline{a})) \vee \neg \psi_{0}(\overline{z},\theta_{i}(\cdot, \overline{b}), c, \rho(\cdot, \overline{a}))))
\end{equation}
It follows from equation~(\ref{HPPA:eqn:bigsuppp}) that
\begin{equation}
Z = \{\overline{z}\in M^{n} : M\models \bigvee_{i=1}^{K} \exists \; \overline{b} \; \exists \; c \; (\theta^{\prime}_{i}(c, \overline{b}) \; \& \; (\varphi_{0}(\overline{z}, \theta_{i}(\cdot, \overline{b}), c, \rho(\cdot, \overline{a})))\}
\end{equation}
Hence $Z\in D(M^{n})$ and so $N$ satisfies ${\tt \Delta^{1}_{1}-BL}_{0}$. Hence, this completes the proof of parts~(i) and~(iii).

We turn to the proofs of parts~(ii) and~(iv). First, we handle the proof of the right-to-left direction, which is quite similar to the proof from the above paragraph.  Suppose that $\partial: D(M)\rightarrow M$ is uniformly definable (resp. $B$-computably uniformly definable) and $M$ is saturated (resp. $B$-recursively saturated) and has definable skolem functions. To see that $N$ is a model of ${\tt \Sigma^{1}_{1}-LB}_{0}$, suppose that
\begin{equation}\label{HPPA:eqn:dafdfasdfasdfdsafdsfasdfdjj}
N\models \forall \; \overline{z} \; \exists \; X \; \xi_{0}(\overline{z}, X, \partial(X), A)
\end{equation}
where $\xi_{0}$ is arithmetical and where $A\in D(M)$ is a set parameter with 
\begin{equation}\label{HPPA:defn:Arho}
A=\{w\in M: M\models \rho(w, \overline{a})\}
\end{equation}
and where $\rho(w, \overline{v})$ is an $\emptyset$-formula with $\overline{a}\in M$. (As in the proof in the previous paragraph, the case of multiple parameters or multiple set or relation quantifiers is exactly similar). Then equation~(\ref{HPPA:eqn:dafdfasdfasdfdsafdsfasdfdjj}) translates into $M$ as 
\begin{equation}
M\models \forall \; \overline{z} \; \bigvee_{\theta(x, \overline{y}) } \exists \; \overline{b} \; \xi_{0}(\overline{z}, \theta(\cdot, \overline{b}), \partial(\theta(\cdot, \overline{b})), \rho(\cdot, \overline{a}))
\end{equation}
where $\theta(x, \overline{y})$ ranges over $\emptyset$-formulas with non-empty set of parameter variables $\overline{y}$. Since $\partial:D(M)\rightarrow M$ is uniformly definable (resp. $B$-computably uniformly definable) via the map $\theta\mapsto \theta^{\prime}$, we have
\begin{equation}\label{HPPA:eqn:helpadsfadsf}
M\models \forall \; \overline{z} \; \bigvee_{\theta(x, \overline{y})} \exists \; \overline{b} \; \exists \; c \; (\theta^{\prime}(c, \overline{b}) \; \& \; \xi_{0}(\overline{z}, \theta(\cdot, \overline{b}), c, \rho(\cdot, \overline{a}))
\end{equation}
Since $M$ is saturated (resp. $B$-recursively saturated), an application of Proposition~\ref{HPPA:prop:saturatedequalscompact} implies that there is $K>0$ and there are $\emptyset$-formulas $\theta_{1}(x, \overline{y}), \ldots, \theta_{K}(x, \overline{y})$ such that 
 \begin{equation}
M\models \forall \; \overline{z} \; \bigvee_{i=1}^{K} \exists \; \overline{b} \; \exists \; c \; (\theta^{\prime}_{i}(c, \overline{b}) \; \& \; \xi_{0}(\overline{z}, \theta_{i}(\cdot, \overline{b}), c, \rho(\cdot, \overline{a})))
\end{equation}
Then by adding dummy variables if need be, we can move the disjunction to the right as follows:
 \begin{equation}
M\models \forall \; \overline{z} \; \exists \; \overline{b} \; \exists \; c \; \bigvee_{i=1}^{K} \;(\theta^{\prime}_{i}(c, \overline{b}) \; \& \; \xi_{0}(\overline{z}, \theta_{i}(\cdot, \overline{b}), c, \rho(\cdot, \overline{a})))
\end{equation}
and one can take the first such $i$ as follows:
 \begin{equation}
M\models \forall \; \overline{z} \; \exists \; \overline{b} \; \exists \; c \; \bigvee_{i=1}^{K} \; [(\theta^{\prime}_{i}(c, \overline{b}) \; \& \; \xi_{0}(\overline{z}, \theta_{i}(\cdot, \overline{b}), c, \rho(\cdot, \overline{a}))) \; \& \; \bigwedge_{j<i} \neg (\theta^{\prime}_{j}(c, \overline{b}) \; \& \; \xi_{0}(\overline{z}, \theta_{j}(\cdot, \overline{b}), c, \rho(\cdot, \overline{a})))]
\end{equation}
Then since $M$ has definable skolem functions, there is a possibly larger finite set of parameters $\overline{a}^{\prime}\supseteq \overline{a}$ and $\overline{a}^{\prime}$-definable functions $f,g$ such that
 \begin{align}
M\models \forall \; \overline{z} \; \bigvee_{i=1}^{K}\; & \; [(\theta^{\prime}_{i}(g(\overline{z}), f(\overline{z})) \; \& \; \xi_{0}(\overline{z}, \theta_{i}(\cdot, f(\overline{z})), g(\overline{z}), \rho(\cdot, \overline{a})))\notag  \\ & \; \& \; \bigwedge_{j<i} \neg (\theta^{\prime}_{j}(g(\overline{z}), f(\overline{z})) \; \& \; \xi_{0}(\overline{z}, \theta_{j}(\cdot, f(\overline{z})), g(\overline{z}), \rho(\cdot, \overline{a})))]
\end{align}
Then there is a partition of $M^{n}$ into the $\overline{a}^{\prime}$-definable sets $P_{1}, \ldots, P_{K}$ which are defined as follows:
\begin{align}\label{HPPA:eqn:defPi}
P_{i} =\{\overline{z}\in M^{n}: M\models & [(\theta^{\prime}_{i}(g(\overline{z}), f(\overline{z})) \; \& \; \xi_{0}(\overline{z}, \theta_{i}(\cdot, f(\overline{z})), g(\overline{z}), \rho(\cdot, \overline{a}))) \notag \\ & \; \& \; \bigwedge_{j<i} \neg (\theta^{\prime}_{j}(g(\overline{z}), f(\overline{z})) \; \& \; \xi_{0}(\overline{z}, \theta_{j}(\cdot, f(\overline{z})), g(\overline{z}), \rho(\cdot, \overline{a})))]\}
\end{align}
Then define the $\overline{a}^{\prime}$-definable relation
\begin{equation}
R = \{(\overline{z}, w): \bigvee_{i=1}^{K} [\overline{z}\in P_{i}\rightarrow \theta_{i}(w, f(\overline{z}))]\}
\end{equation}
so that
\begin{align}
& \overline{z} \in P_{i} \Longrightarrow R_{\overline{z}} =\{w\in M: (\overline{z}, w)\in R\}= \{w\in M: M\models \theta_{i}(w, f(\overline{z}))\}= \theta_{i}(\cdot, f(\overline{z})) \\
& \overline{z} \in P_{i} \Longrightarrow \{\partial(R_{\overline{z}})\}=\{\partial( \theta_{i}(\cdot, f(\overline{z}))\} =\{c\in M: M\models \theta^{\prime}_{i}(c, f(\overline{z}))\}=\{g(\overline{z})\} \\
& \overline{z} \in P_{i} \Longrightarrow \partial(R_{\overline{z}}) = g(\overline{z})
\end{align}
Putting these things together and glancing back at the definition of $P_{i}$ in equation~(\ref{HPPA:eqn:defPi}) we have,
\begin{equation}
\overline{z} \in P_{i} \Longrightarrow M\models  (\theta^{\prime}_{i}(g(\overline{z}), f(\overline{z})) \; \& \; \xi_{0}(\overline{z}, \theta_{i}(\cdot, f(\overline{z})), g(\overline{z}), \rho(\cdot, \overline{a}))) \Longrightarrow N\models \xi_{0}(\overline{z}, R_{\overline{z}}, \partial(R_{\overline{z}}), A)
\end{equation}
Since the sets $P_{1}, \ldots, P_{K}$ partition $M^{n}$ we have
\begin{equation}
N\models \forall \; \overline{z} \; \xi_{0}(\overline{z}, R_{\overline{z}}, \partial(R_{\overline{z}}), A)
\end{equation}
and this implies that $N$ models ${\tt \Sigma^{1}_{1}-BL}_{0}$. Hence we have established the right-to-left direction of~(ii) and~(iv).

We want to establish the left-to-right direction of~(ii) and~(iv). Suppose that $\partial: D(M)\rightarrow M$ is uniformly definable (resp. $B$-computably uniformly definable) and $M$ is saturated (resp. $B$-recursively saturated) and that $N$ models ${\tt \Sigma^{1}_{1}-BL}_{0}$. Suppose that $P\subseteq M^{m+n}$ is definable, perhaps with a finite set $\overline{a}$ of parameters from $M$. Note that for every $\overline{x}\in M^{m}$ with a tuple $\overline{y}\in M^{n}$ such that $P\overline{x}\overline{y}$, we can arbitrarily choose one such $\overline{y}\in M^{n}$ and form the $\overline{y}$-definable singleton $\{\overline{y}\}$. This implies that
\begin{equation}
N \models \; \forall \; \overline{x} \; \exists \; R \; [(\exists \; \overline{y} \; P\overline{x}\overline{y}) \rightarrow ((\exists ! \; \overline{y} \; R \overline{y}) \; \& \; (\forall \; \overline{y} \; R\overline{y}\rightarrow P\overline{x}\overline{y}))]
\end{equation}
Since $N\models {\tt \Sigma^{1}_{1}-LB}_{0}$, one then has
\begin{equation}
N \models \exists \; P^{\prime} \; \forall \; \overline{x} \; [(\exists \; \overline{y} \; P\overline{x}\overline{y}) \rightarrow ((\exists ! \; \overline{y} \; P^{\prime}_{\overline{x}} \overline{y}) \; \& \; (\forall \; \overline{y} \; P^{\prime}_{\overline{x}}\overline{y}\rightarrow P\overline{x}\overline{y}))]
\end{equation}
Since $P^{\prime}_{\overline{x}}\overline{y}$ if and only if $P^{\prime}\overline{x} \overline{y}$, this implies that
\begin{equation}
N \models \exists \; P^{\prime} \; \forall \; \overline{x} \; [(\exists \; \overline{y} \; P\overline{x}\overline{y}) \rightarrow ((\exists ! \; \overline{y} \; P^{\prime}\overline{x} \overline{y}) \; \& \; (\forall \; \overline{y} \; P^{\prime} \overline{x}\overline{y}\rightarrow P\overline{x}\overline{y}))]
\end{equation}
Finally, let $P^{\prime\prime} = P^{\prime}\cap P$. Then
\begin{align}
& M\models \forall \overline{x},\overline{y} \; (P^{\prime\prime}\overline{x}\overline{y} \rightarrow P\overline{x}\overline{y}) \\
& M\models \forall \; \overline{x} \; [(\exists \; \overline{y} \; P\overline{x}\overline{y})\rightarrow (\exists !\; \overline{y}\; P^{\prime\prime}\overline{x}\overline{y})] 
\end{align}
Hence, $M$ has definable skolem functions. 
\end{proof}

\subsection{Application to Algebraically Closed Fields}\label{hppa:lastsec2}

\begin{rmk}
In this section, we apply Theorem~\ref{HPPA:thm:metatheorem} to construct models of ${\tt \Delta^{1}_{1}-HP}_{0}$ on top of certain~algebraically closed fields (cf. Theorem~\ref{hppa:coolacfthm}). The primary application of this construction is to answer a question posed by Linnebo (cf. Remark~\ref{hppa:rmk:linnebo} and Theorem~\ref{HPPA:lineeeddafdsfa}). Prior to doing this, we recall Ax's Theorem and note one elementary consequence of this theorem.
\end{rmk}

\begin{thm}\label{HPPA:prop:axtheorem}
(Ax's Theorem) \index{Ax's Theorem, Theorem~\ref{HPPA:prop:axtheorem}}Suppose that $k$ is an algebraically closed field and $f:k\rightarrow k$ is a definable injective function. Then $f$ is surjective.
\end{thm}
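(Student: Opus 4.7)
The plan is to reduce via the Lefschetz principle to the case of algebraic closures of finite fields, where the result follows from a Galois-equivariance argument. Fix a formula $\varphi(x,y,\bar z)$ in the language of rings, with $\bar z$ designating parameters. The statement $\psi_\varphi$ expressing ``for all $\bar a$, if $\varphi(x,y,\bar a)$ is the graph of an injective function $k\to k$, then it is also the graph of a surjective function'' is a single first-order sentence. Since an arbitrary definable injection is defined by some $\varphi$ with some parameters, it suffices to establish $\psi_\varphi$ in every algebraically closed field, for every formula~$\varphi$.

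Next I would invoke completeness of $\mathrm{ACF}_p$ for each $p\in\{0,2,3,5,\ldots\}$: it suffices to verify $\psi_\varphi$ in a single algebraically closed field of each characteristic. For each prime $p$, take $k=\overline{\mathbb{F}_p}$, and suppose $f:k\to k$ is defined by $\varphi(x,y,\bar a)$. The parameters $\bar a$ lie in some finite subfield $\mathbb{F}_{p^n}$. For every $m$ divisible by $n$, I claim $f$ restricts to a function $\mathbb{F}_{p^m}\to\mathbb{F}_{p^m}$. Indeed, any $\sigma\in\mathrm{Gal}(k/\mathbb{F}_{p^m})$ fixes both the parameters $\bar a$ and any $x\in\mathbb{F}_{p^m}$, so from $\varphi(x,f(x),\bar a)$ one obtains $\varphi(\sigma x,\sigma f(x),\bar a)$, hence $\varphi(x,\sigma f(x),\bar a)$, and functionality forces $\sigma f(x)=f(x)$; thus $f(x)$ is fixed by all of $\mathrm{Gal}(k/\mathbb{F}_{p^m})$, so $f(x)\in\mathbb{F}_{p^m}$. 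The restriction $f\!\upharpoonright\!\mathbb{F}_{p^m}$ is therefore an injection of a finite set into itself, hence a surjection. Since $k=\bigcup_{n\mid m}\mathbb{F}_{p^m}$, the original $f$ is surjective.

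For characteristic $0$, I would apply the Lefschetz principle: if $\psi_\varphi$ failed in some algebraically closed field of characteristic $0$, then completeness of $\mathrm{ACF}_0$ would give $\mathrm{ACF}_0\vdash\neg\psi_\varphi$, and compactness would yield some $N$ such that the axioms of $\mathrm{ACF}$ together with the sentences $\underbrace{1+\cdots+1}_{k}\neq 0$ for $k\le N$ already prove $\neg\psi_\varphi$; this would force $\overline{\mathbb{F}_p}\models\neg\psi_\varphi$ for all primes $p>N$, contradicting the previous paragraph.

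The main obstacle, modest as it is, is the Galois-equivariance step: one must verify that being definable with parameters $\bar a$ is preserved under automorphisms fixing $\bar a$, which is immediate since field automorphisms preserve satisfaction of ring-language formulas with the relevant parameters held fixed. All remaining steps (completeness of $\mathrm{ACF}_p$, the transfer/compactness argument, and counting for finite sets) are entirely routine.
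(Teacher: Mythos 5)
Your proof is correct, and it is essentially the classical Ax--Grothendieck argument that the paper's cited sources (Ax's Theorem~C, Poizat's Lemma~4.3) contain: the paper itself offers no proof of this theorem, only the citation. Each step checks out: $\psi_\varphi$ is genuinely first-order, completeness of $\mathrm{ACF}_p$ reduces to $\overline{\mathbb{F}_p}$, the Galois-equivariance step is sound (the fixed field of $\mathrm{Gal}(\overline{\mathbb{F}_p}/\mathbb{F}_{p^m})$ is exactly $\mathbb{F}_{p^m}$ --- indeed the single Frobenius $x\mapsto x^{p^m}$ already suffices, since its fixed points are the roots of $x^{p^m}-x$), and the compactness transfer to characteristic $0$ is standard. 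The only difference from the references is that they prove the stronger statement with $k$ replaced by a definable subset of $k^n$, but the paper only ever invokes the $f\colon k\to k$ case, which is exactly what you establish.
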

\begin{proof}
See Ax~\cite{Ax1968} Theorem~C pp.~241, 270 or Poizat~\cite{Poizat2001aa} Lemma~4.3 pp.~70-71, in which is proved the stronger result wherein $k$ is replaced by a definable subset of $k^{n}$.
\end{proof}

\begin{defn}\label{hppa:index:stomniddd}
A structure $k$ is {\it strongly minimal} if every definable $X\subseteq k$ is finite or cofinite.\index{Strongly minimal structure, Definition~\ref{hppa:index:stomniddd}}
\end{defn}

\begin{prop}
Every algebraically closed field is strongly minimal.
\end{prop}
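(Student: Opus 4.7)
The plan is to invoke quantifier elimination for the theory of algebraically closed fields (ACF) and then make the routine observation that every quantifier-free formula in one free variable, possibly with parameters, cuts out a finite or cofinite subset of $k$.

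First I would recall (or cite Marker's standard reference on model theory of fields) that ACF admits quantifier elimination in the language of rings $\{+,-,\cdot,0,1\}$. Consequently, if $X\subseteq k$ is definable with parameters $\overline{a}\in k^m$, then $X$ is defined by a quantifier-free formula $\varphi(x,\overline{a})$. Such a $\varphi$ is, up to logical equivalence, a finite Boolean combination of atomic formulas of the form $p(x,\overline{a})=0$, where $p$ is a polynomial with integer coefficients. Substituting the parameters $\overline{a}$, each atomic formula becomes an equation $q(x)=0$ for some polynomial $q\in k[x]$.

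Second, I would observe that if $q(x)\in k[x]$ is the zero polynomial, then $\{x\in k: q(x)=0\}=k$ (cofinite), while if $q$ is nonzero, then $\{x\in k: q(x)=0\}$ has at most $\deg q$ elements and is therefore finite. Thus each atomic piece of $\varphi(x,\overline{a})$ defines either a finite or a cofinite subset of $k$. Since the collection of finite-or-cofinite subsets of $k$ is closed under finite unions, finite intersections, and complements (here one uses that $k$ is infinite, which holds because $k$ is algebraically closed), any Boolean combination of such sets is again finite or cofinite. Hence $X$ is finite or cofinite, as required.

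The main obstacle, if one does not wish to cite quantifier elimination as a black box, is the proof of quantifier elimination itself; but this is completely standard (see, e.g., Marker, \emph{Model Theory: An Introduction}, Corollary~3.2.8) and is not the point of this proposition. With quantifier elimination in hand, the verification that finite-or-cofinite subsets form a Boolean algebra on the infinite set $k$ is entirely routine, so the proof is very short.
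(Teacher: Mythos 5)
Your argument is correct and is exactly the standard proof that the paper delegates to its citation of Marker: quantifier elimination for ACF reduces every definable subset of $k$ to a Boolean combination of zero sets of polynomials, each of which is finite or all of $k$, and the finite-or-cofinite sets form a Boolean algebra on the infinite set $k$. Nothing is missing; the paper itself gives no argument beyond the reference, so your write-up simply supplies the expected details.
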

\begin{proof}
See Marker~\cite{Marker2006ac} p.~5.
\end{proof}

\begin{prop}\label{HPPA:prop:defnhpadsfasd}
 Suppose that $k$ is an algebraically closed field and that $X,Y\subseteq k$ are definable. Then the following are equivalent:
\begin{enumerate}
\item[(i)] There is definable bijection $f:X\rightarrow Y$
\item[(ii)] Either both $X$ and $Y$ are finite and of the same cardinality, or both $X$ and $Y$ are cofinite and $k\setminus X$ and $k\setminus Y$ are of the same cardinality.
\end{enumerate}
\end{prop}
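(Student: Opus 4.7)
The plan is to exploit the fact that an algebraically closed field $k$ is strongly minimal, so every definable $X \subseteq k$ is finite or cofinite, and to invoke Ax's Theorem (Theorem~\ref{HPPA:prop:axtheorem}). The key preliminary remark is that $k$ is infinite, so if either clause of the equivalence holds then $X$ and $Y$ cannot consist of one finite and one cofinite set: a finite set and a cofinite (hence infinite) set admit no bijection whatsoever, definable or not. Hence in either direction one may assume at the outset that $X$ and $Y$ are both finite or both cofinite.

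For the direction (ii)~$\Rightarrow$~(i) in the finite case, if $X = \{x_1, \ldots, x_m\}$ and $Y = \{y_1, \ldots, y_m\}$, the map $x_i \mapsto y_i$ has graph $\bigcup_{i \leq m} \{(x_i, y_i)\}$, which is definable using these points as parameters. In the cofinite case, set $A = k \setminus X$ and $B = k \setminus Y$ and observe that $|A| = |B|$ gives $|B \cap X| = |A \cap Y|$; choose any bijection $\sigma : B \cap X \to A \cap Y$ (its graph is definable since $B \cap X$ is finite), and define $f : X \to Y$ to be the identity on $X \cap Y$ and to agree with $\sigma$ on $B \cap X$. Then $f$ is a definable bijection.

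For the direction (i)~$\Rightarrow$~(ii), if $X, Y$ are both finite then any bijection forces $|X| = |Y|$ and there is nothing further to verify. If $X, Y$ are both cofinite, write $k \setminus X = \{a_1, \ldots, a_m\}$ and $k \setminus Y = \{b_1, \ldots, b_n\}$, and suppose toward a contradiction that $m < n$. Extend the given definable bijection $f : X \to Y$ to a function $g : k \to k$ by declaring $g(a_i) = b_i$ for $i = 1, \ldots, m$; the graph of $g$ is definable with $a_1, \ldots, a_m, b_1, \ldots, b_m$ as additional parameters. Then $g$ is a definable injection $k \to k$ whose range equals $Y \cup \{b_1, \ldots, b_m\}$, which omits $\{b_{m+1}, \ldots, b_n\}$, contradicting Ax's Theorem. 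Symmetrically $n < m$ is impossible, so $m = n$.

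The main obstacle is really just bookkeeping in the cofinite case; once strong minimality reduces matters to the finite-versus-cofinite dichotomy, Ax's Theorem disposes of the only nontrivial direction in a single stroke.
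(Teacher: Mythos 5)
Your proof is correct and follows essentially the same route as the paper's: strong minimality reduces everything to the finite/cofinite dichotomy, the nontrivial direction (cofinite case of (i)$\Rightarrow$(ii)) is handled by extending the bijection over the finite complement to get a definable injective non-surjective map $k\to k$ contradicting Ax's Theorem, and (ii)$\Rightarrow$(i) is an explicit finite-parameter construction (your partition of $X$ into $X\cap Y$ and $B\cap X$ is just a tidier bookkeeping of the paper's renumbering of the two complements). No gaps.
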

\begin{proof}
Suppose that~(i) holds. Then by strong minimality and the fact that an infinite set cannot be bijective with a finite set, either both $X$ and $Y$ are finite or both $X$ and $Y$ are cofinite. If $X$ and $Y$ are both finite then the fact that there is a definable bijection between them implies that $X$ and $Y$ have the same cardinality. If $X$ and $Y$ are both cofinite but $k\setminus X$ and $k\setminus Y$ are not of the same cardinality, then without loss of generality $k\setminus X =\{a_{1}, \ldots, a_{m}\}$ and $k\setminus Y =\{b_{1}, \ldots, b_{n}\}$ where $m<n$. Then define a function $\overline{f}:k\rightarrow k$ by $\overline{f}\upharpoonright X = f$ and $f(a_{i})=b_{i}$ for $i\leq m$. Then $f: k\rightarrow k$ is an injection that is not a surjection, since $b_{n}$ is not the in the range of $f$. This contradicts Ax's Theorem~\ref{HPPA:prop:axtheorem}. So, in fact, $k\setminus X$ and $k\setminus Y$ are of the same cardinality. Then~(ii) holds.

Conversely, suppose that~(ii) holds. If both $X$ and $Y$ are finite of the same cardinality, then simply enumerate the elements of $X$ and $Y$ and use these elements as parameters to define a bijection $f:X\rightarrow Y$. If $X$ and $Y$ are both cofinite and $k\setminus X$ and $k\setminus Y$ are of the same finite cardinality, then enumerate $k\setminus X=\{y_{1}, \ldots, y_{n}\}$ and $k\setminus Y=\{x_{1}, \ldots, x_{n}\}$. By renumbering, we can assume without loss of generality that $(k\setminus X)\cap (k\setminus Y)=\{x_{1}, \ldots, x_{m}\}=\{y_{1}, \ldots, y_{m}\}$ where $m\leq n$ and $x_{1}=y_{1}, \ldots, x_{m}=y_{m}$. If $m=n$ then this implies that $(k\setminus X)= (k\setminus Y)$ and $X=Y$, and we can choose the definable bijection $f:X\rightarrow Y$ to be the identity map. If $m<n$, then note that $\{x_{m+1}, \ldots, x_{n}\}\subseteq X$ and $\{y_{m+1}, \ldots, y_{n}\}\subseteq Y$ and $X\setminus \{x_{m+1}, \ldots, x_{n}\}\subseteq Y$ and $Y\setminus \{y_{m+1}, \ldots, y_{n}\}\subseteq X$. Then we can choose the definable bijection $f:X\rightarrow Y$ which is given by the identity on $X\setminus \{x_{m+1}, \ldots, x_{n}\}$ and by $f(x_{i})=y_{i}$ on $\{x_{m+1}, \ldots, x_{n}\}$.
\end{proof}

\begin{prop}\label{HPPA:prop:acfdsf}
Algebraically closed fields do not have definable skolem functions.
\end{prop}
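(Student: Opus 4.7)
I would prove the proposition by exhibiting a single definable relation $P\subseteq k^2$ that admits no definable uniformization. Choose a prime $\ell$ different from $\mathrm{char}(k)$ (take $\ell=2$ if $\mathrm{char}(k)\neq 2$, and $\ell=3$ if $\mathrm{char}(k)=2$), and let $\zeta\in k$ be a primitive $\ell$-th root of unity, which exists because $k$ is algebraically closed. Consider the test relation
\begin{equation*}
P(x,y)\equiv(y^\ell=x).
\end{equation*}
Since $\ell\neq\mathrm{char}(k)$, for each nonzero $x\in k$ the polynomial $y^\ell-x$ has $\ell$ distinct roots in~$k$, namely $\{b,\zeta b,\ldots,\zeta^{\ell-1}b\}$ for any one root~$b$. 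A definable uniformization of~$P$ would yield a definable function $g:k\to k$ with $g(x)^\ell=x$ for all~$x\in k$.

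Suppose for contradiction that such a~$g$ exists, definable with some parameter tuple~$\bar{c}$. The plan is to derive a contradiction from the strong minimality of~$k$ (cf. Definition~\ref{hppa:index:stomniddd} and the proposition immediately following it that every algebraically closed field is strongly minimal). Consider the definable function $x\mapsto g(x^\ell)$ on $k^*=k\setminus\{0\}$; since $g(x^\ell)^\ell=x^\ell$, we have $g(x^\ell)\in\{x,\zeta x,\zeta^2 x,\ldots,\zeta^{\ell-1}x\}$ for each $x\in k^*$. This partitions $k^*$ into the pairwise disjoint definable sets
\begin{equation*}
S_j=\{x\in k^*:g(x^\ell)=\zeta^j x\},\qquad j=0,1,\ldots,\ell-1,
\end{equation*}
whose union is~$k^*$. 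By strong minimality each $S_j$ is finite or cofinite in~$k$; since the $S_j$ are pairwise disjoint and cover the cofinite set~$k^*$, exactly one of them, say $S_{j^*}$, is cofinite and the rest are finite. Since translation by~$\zeta$ is a bijection of~$k^*$ sending cofinite sets to cofinite sets, both $S_{j^*}$ and $\zeta^{-1}\cdot S_{j^*}$ are cofinite, so for cofinitely many $x\in k^*$ one has simultaneously $x\in S_{j^*}$ and $\zeta x\in S_{j^*}$. For any such~$x$, the identity $(\zeta x)^\ell=\zeta^\ell x^\ell=x^\ell$ gives
\begin{equation*}
\zeta^{j^*}x=g(x^\ell)=g((\zeta x)^\ell)=\zeta^{j^*}(\zeta x)=\zeta^{j^*+1}x,
\end{equation*}
whence $\zeta^{j^*}(1-\zeta)x=0$; since $\zeta\neq 1$ and $\zeta^{j^*}\neq 0$, this forces $x=0$, contradicting that the identity held for cofinitely many $x\in k^*$.

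\textbf{Main obstacle.} The argument is short once the test relation is in view, and I expect no serious obstacles. The principal conceptual point is that strong minimality of~$k$ substitutes for a Galois-theoretic analysis: no automorphism of~$k$ need be invoked, because a definable~$g$ would be forced on a cofinite set to make multiplicatively incompatible choices of $\ell$-th roots. The two minor technical points to verify are (i)~that the partition $S_0,\ldots,S_{\ell-1}$ is genuinely definable in the sense of Definition~\ref{HPPA:defn:parametericskolemfunctions}, which is immediate since that definition allows~$\zeta$ and~$\bar{c}$ as parameters, and (ii)~that both characteristic cases are covered — this is automatic from the choice $\ell=2$ or $\ell=3$, with the same argument going through in either case.
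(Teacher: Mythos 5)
Your proof is correct, and it takes a genuinely different (and in one respect better) route than the paper's. The paper also tests the Skolem property on a root-extraction formula, namely $\varphi(x,y)\equiv x=y^{2}$, but derives the contradiction more directly: a definable $f$ with $f(x)^{2}=x$ is injective, so $\mathrm{rng}(f)$ is infinite, yet for each nonzero $x$ it omits one of the two square roots, so $\mathrm{rng}(f)$ is also coinfinite, contradicting strong minimality. Your argument instead partitions $k^{*}$ by which $\ell$-th root the Skolem function selects and derives a multiplicative incompatibility on the cofinite piece via a root of unity. What your version buys is the characteristic~$2$ case: there the squaring map is the Frobenius, hence a bijection, so every element has a \emph{unique} square root, $\mathrm{rng}(f)=k$, and the paper's argument as written gives no contradiction (indeed $x=y^{2}$ \emph{does} admit a definable Skolem function in characteristic~$2$); your switch to $\ell=3$ handles this cleanly. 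What the paper's version buys is brevity — no roots of unity and no partition argument are needed once one restricts to $\ell$ coprime to the characteristic, since "exactly one root of each conjugate pair" already yields an infinite, coinfinite definable set. One trivial slip: you say "translation by $\zeta$" where you mean multiplication by $\zeta$; the argument is unaffected.
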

\begin{proof}
Let $\varphi(x,y)\equiv x=y^{2}$. Then $k\models \forall \; x\; \exists \; y \; x=y^{2}$. If $k$ has definable skolem functions, then there is a definable function $f:k\rightarrow k$ such that $k\models \forall \;x \; x=(f(x))^{2}$. Then $\mathrm{rng}(f)$ is a definable set which includes exactly one square root for each $x\in k$. Then $\mathrm{rng}(f)$ is infinite and coinfinite, which contradicts strong minimality.
\end{proof}

\begin{thm}\label{hppa:coolacfthm}
Suppose that $k$ is a saturated algebraically closed field of characteristic zero. Then there is a uniformly definable function $\#:D(k)\rightarrow k$ such that $(k, D(k), D(k^{2}), \ldots, \#)$ is a model of ${\tt \Delta^{1}_{1}-HP}_{0}+ \neg {\tt \Sigma^{1}_{1}-PH}_{0}+\neg {\tt \Pi^{1}_{1}-HP}_{0}$. Further, there is {\it no} function $\partial:D(k)\rightarrow k$ such that $(k, D(k), D(k^{2}), \ldots, \partial)$ is a model of ${\tt \Delta^{1}_{1}-BL}_{0}$.
\end{thm}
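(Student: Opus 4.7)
The plan is to exploit strong minimality of $k$ together with the classification of definable bijections in Proposition~\ref{HPPA:prop:defnhpadsfasd}. Every $X \in D(k)$ is finite or cofinite, and the canonical map $\mathbb{Z} \hookrightarrow k$ (available because $k$ has characteristic zero) lets me set
\[
\#(X) = \begin{cases} n & \text{if } |X| = n < \omega, \\ -n-1 & \text{if } |k \setminus X| = n < \omega. \end{cases}
\]
By Proposition~\ref{HPPA:prop:defnhpadsfasd}, two elements $X, Y$ of $D(k)$ admit a bijection whose graph lies in $D(k^2)$ if and only if $\#(X) = \#(Y)$, so $(k, D(k), D(k^2), \ldots, \#)$ satisfies Hume's Principle.

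For uniform definability, I would use quantifier elimination in ACF. Any formula $\theta(x, \overline{y})$ is equivalent to a Boolean combination of polynomial equations in $x$ whose $x$-degrees are bounded by some $N_\theta$ depending only on $\theta$, and hence for every $\overline{a}$ either $|\theta(\cdot, \overline{a})| \leq N_\theta$ or $|k \setminus \theta(\cdot, \overline{a})| \leq N_\theta$. Since the predicates ``$\theta(\cdot, \overline{a})$ has exactly $n$ elements'' and ``$k \setminus \theta(\cdot, \overline{a})$ has exactly $n$ elements'' are first-order in $\overline{a}$, I obtain a formula $\theta'(b, \overline{y})$ with $\theta'(b, \overline{a}) \Leftrightarrow \#(\theta(\cdot, \overline{a})) = b$. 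Iterated substitution $\theta \mapsto \theta'$ eliminates $\#$-terms from any arithmetical formula, yielding arithmetical comprehension in the expanded signature, so $(k, D(k), D(k^2), \ldots, \#) \models {\tt AHP}_0$. Applying Theorem~\ref{HPPA:thm:metatheorem}(i), saturation of $k$ combined with uniform definability of $\#$ promotes this to ${\tt \Delta^{1}_{1}-HP}_0$. For $\neg {\tt \Sigma^{1}_{1}-PH}_0$, I invoke Theorem~\ref{HPPA:thm:metatheorem}(ii): ${\tt \Sigma^{1}_{1}-PH}_0$ would hold iff $k$ admitted definable skolem functions, which it does not by Proposition~\ref{HPPA:prop:acfdsf}.

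For $\neg {\tt \Pi^{1}_{1}-HP}_0$, I would combine Frege's construction with Ax's Theorem. If the model satisfied ${\tt \Pi^{1}_{1}-HP}_0$, then the construction in Theorem~\ref{HPPA:thm:howweroll2} yields $N \in D(k)$ containing $0 := \#\emptyset$ and closed under a total successor function $s : N \to N$ whose graph lies in $D(k^2)$, with $s$ injective and $0 \notin s[N]$. By strong minimality $N$ is finite or cofinite; on a finite $N$ the conditions $0 \in N$, $s$ injective, and $0 \notin s[N]$ are jointly impossible. Hence $N$ is cofinite, and the definable function $s' : k \to k$ given by $s$ on $N$ and by the identity on $k \setminus N$ is injective with $0 \notin s'[k]$, contradicting Ax's Theorem (Theorem~\ref{HPPA:prop:axtheorem}). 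The final clause, that no $\partial : D(k) \to k$ makes $(k, D(k), D(k^2), \ldots, \partial)$ a model of ${\tt \Delta^{1}_{1}-BL}_0$, is exactly Proposition~\ref{HPPA:prop:whatitporves222}(ii). I expect the main obstacle to be the bookkeeping for uniform definability of $\#$: one must extract the bound $N_\theta$ from quantifier elimination and write out $\theta'$ uniformly across all parameter configurations.
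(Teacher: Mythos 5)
Your proposal is correct and follows the paper's overall architecture: the same map $\#X=\left|X\right|$ for finite $X$ and $\#X=-(\left|k\setminus X\right|+1)$ for cofinite $X$, the same appeal to Proposition~\ref{HPPA:prop:defnhpadsfasd} for Hume's Principle, and the same application of Theorem~\ref{HPPA:thm:metatheorem}~(i)--(ii) together with Proposition~\ref{HPPA:prop:acfdsf} to get ${\tt \Delta^{1}_{1}-HP}_{0}+\neg{\tt \Sigma^{1}_{1}-PH}_{0}$, and the same Ax's-Theorem argument (via Corollary~\ref{HPPA:prop:whatitporves}) for the non-existence of a $\partial$ giving ${\tt \Delta^{1}_{1}-BL}_{0}$. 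You deviate in two places. First, you extract the bound $N_{\theta}$ for uniform definability from quantifier elimination (degree bounds on the polynomials appearing in a quantifier-free equivalent of $\theta$), whereas the paper gets it from saturation via the compactness principle of Proposition~\ref{HPPA:prop:saturatedequalscompact}; your route is effective and in fact shows $\#$ is \emph{computably} uniformly definable, which is more than the theorem asks for and would let one weaken ``saturated'' to ``recursively saturated'' as in the o-minimal case. Second, for $\neg{\tt \Pi^{1}_{1}-HP}_{0}$ you run Frege's construction (Theorem~\ref{HPPA:thm:howweroll2}) inside the putative model to manufacture a definable injective non-surjective self-map of $k$, contradicting Ax's Theorem; this works (your pigeonhole step rules out finite $N$, strong minimality forces $N$ cofinite, and extending $s$ by the identity off $N$ preserves injectivity while still missing $\#\emptyset$), but the paper's argument is shorter: $\mathrm{rng}(\#)=\mathbb{Z}$ is $\Sigma^{1}_{1}$-definable in the second-order structure, so $\Pi^{1}_{1}$-comprehension would put $\mathbb{Z}$ into $D(k)$, contradicting strong minimality since $\mathbb{Z}$ is infinite and coinfinite in $k$. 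Both of your substitutions are sound; the only cosmetic caveat is that Proposition~\ref{HPPA:prop:whatitporves222}~(ii) is stated for \emph{some} algebraically closed field, so for the final clause you should say that its proof applies verbatim to the given $k$ (or just cite Corollary~\ref{HPPA:prop:whatitporves} and Ax's Theorem directly, as the paper does).
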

\begin{proof}
Since $k$ is a field of characteristic zero, the prime field of $k$ is $\mathbb{Q}$ and the integers $\mathbb{Z}$ are hence embedded into $k$ via $\mathbb{Q}$. Using this embedding, define $\#:D(k)\rightarrow k$ by $\#X=\left|X\right|$ if $X$ is finite and $\#X=-(\left|k\setminus X\right|+1)$ if $X$ is cofinite. Then by Proposition~\ref{HPPA:prop:defnhpadsfasd}, the structure $(k, D(k), D(k^{2}), \ldots, \#)$ is a model of Hume's~Principle. To apply Theorem~\ref{HPPA:thm:metatheorem}~(i)-(ii), we need to show that $\#: D(k)\rightarrow k$ is uniformly definable. Suppose that $\theta(x, \overline{y})$ is an $\emptyset$-formula with non-empty set $\overline{y}$ of parameter variables. Then by strong minimality, for any $\overline{a}$ we have that $\theta(\cdot, \overline{a})$ is finite or $\neg\theta(\cdot, \overline{a})$ is finite. Then 
\begin{equation}
k\models \forall \; \overline{a} \; \bigvee_{N\geq 0} \; [\left| \theta(\cdot, \overline{a})\right|\leq N \vee \left| \neg\theta(\cdot, \overline{a})\right|\leq N]
\end{equation}
Since $k$ is saturated, by Proposition~\ref{HPPA:prop:saturatedequalscompact}, there is an integer $N_{\theta}>0$ such that
\begin{equation}
k\models \forall \; \overline{a} \; \bigvee_{i=0}^{N_{\theta}} \; [\left| \theta(\cdot, \overline{a})\right|\leq i \vee \left| \neg\theta(\cdot, \overline{a})\right|\leq i]
\end{equation}
Then for each such formula $\theta(x,\overline{y})$ we define the following $\emptyset$-formula $\theta^{\prime}(x,\overline{y})$ as follows:
\begin{equation}
\theta^{\prime}(x,\overline{y})\equiv \bigvee_{i=0}^{N_{\theta}} [\left| \theta(\cdot, \overline{y})\right| = i \; \& \; x=i] \vee [\left| \neg\theta(\cdot, \overline{y})\right| = i \; \& \; x=-(i+1)]
\end{equation}
Hence, by definition, we have that for any $\overline{a}$
\begin{equation}
\{\#(\theta(\cdot, \overline{a}))\} = \{c: k\models \theta^{\prime}(c, \overline{a})\}
\end{equation}
The map $\#: D(k)\rightarrow k$ is uniformly definable. Hence, by Theorem~\ref{HPPA:thm:metatheorem}~(i)-(ii) and Proposition~\ref{HPPA:prop:acfdsf}, we have that $(k, D(k),D(k^{2}), \ldots, \#)$ is a model of ${\tt \Delta^{1}_{1}-HP}_{0}+\neg {\tt \Sigma^{1}_{1}-PH}_{0}$. Further, since the set $\mathrm{rng}(\#)=\mathbb{Z}$ is definable by a $\Sigma^{1}_{1}$-formula in the structure $(k, D(k), D(k^{2}), \ldots, \#)$ but is {\it not} definable in $k$ since $k$ is strongly minimal, we have that $(k, D(k),D(k^{2}), \ldots, \#)$ is a model of $\neg {\tt \Pi^{1}_{1}-HP}_{0}$.

Now let us note why there is {\it no} function $\partial:D(k)\rightarrow k$ such that $(k, D(k), D(k^{2}), \ldots, \partial)$ is a model of ${\tt \Delta^{1}_{1}-BL}_{0}$. If there was such a function, then by Corollary~\ref{HPPA:prop:whatitporves} it would follow that there was an injective non-surjective function $s:k\rightarrow k$ whose graph is in $D(k^{2})$, which would contradict Ax's Theorem (\ref{HPPA:prop:axtheorem}).
\end{proof}

\begin{rmk}\label{HPPA:axrmk}
If we knew that all the parts of the proof of the above theorem were formalizable in ${\tt ACA}_{0}$, then we could infer from the proof of the above theorem and Proposition~\ref{HPPA:conpropa} that ${\tt \Delta^{1}_{1}-HP}_{0}<_{\mathrm{I}} {\tt ACA}_{0}$. It is clear from the proof that this comes down to determining whether or not Ax's Theorem~\ref{HPPA:prop:axtheorem} is provable in ${\tt ACA}_{0}$. However, note that in the next subsection, we will prove Corollary~\ref{HPPA:cor:thebomb}, which assures us that ${\tt \Delta^{1}_{1}-HP}_{0}<_{\mathrm{I}} {\tt ACA}_{0}$.
\end{rmk}

\begin{rmk}
In conjunction with Corollary~\ref{HPPA:prop:whatitporves}, the following corollary shows that there is a stark contrast between ${\tt \Delta^{1}_{1}-HP}_{0}$ and ${\tt \Delta^{1}_{1}-BL}_{0}$ on the score of whether they require the existence of injective non-surjective functions.
\end{rmk}

\begin{cor}\label{hppa:corcorcor3434123431}
There is a model $(M, S_{1}, S_{2}, \ldots, \#)$ of ${\tt \Delta^{1}_{1}-HP}_{0}$ such that there is no injective non-surjective function $s:M\rightarrow M$ such that $\mathrm{graph}(s)$ is in $S_{2}$.
\end{cor}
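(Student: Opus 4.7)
The plan is to simply reuse the model constructed in Theorem~\ref{hppa:coolacfthm}. Let $k$ be a saturated algebraically closed field of characteristic zero, and let $\#: D(k)\rightarrow k$ be the uniformly definable function provided by that theorem, so that the structure $(k, D(k), D(k^{2}), \ldots, \#)$ is already known to be a model of ${\tt \Delta^{1}_{1}-HP}_{0}$. I will take this as the witnessing model and set $M=k$ and $S_{n}=D(k^{n})$.

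It then remains to verify that there is no injective non-surjective function $s:M\rightarrow M$ whose graph lies in $S_{2}=D(k^{2})$. But such an $s$ would by definition be a definable injection $s:k\rightarrow k$ that is not surjective, directly contradicting Ax's Theorem (Theorem~\ref{HPPA:prop:axtheorem}). Hence no such $s$ exists, and the model produced in Theorem~\ref{hppa:coolacfthm} satisfies the conclusion of the corollary.

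There is no real obstacle; the corollary is essentially a restatement of the second half of Theorem~\ref{hppa:coolacfthm} in the bare signature of ${\tt HP}^{2}$, emphasizing the contrast with Corollary~\ref{HPPA:prop:whatitporves}: whereas any model of ${\tt \Delta^{1}_{1}-BL}_{0}$ is forced to contain the graph of an injective non-surjective function $s(x)=\partial(\{x\})$, the ${\tt HP}^{2}$-side imposes no such constraint, and algebraically closed fields provide a natural example where all definable self-injections are in fact bijections.
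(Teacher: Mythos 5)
Your proposal is correct and follows exactly the paper's own argument: the paper also derives the corollary immediately from the model of Theorem~\ref{hppa:coolacfthm} together with Ax's Theorem~\ref{HPPA:prop:axtheorem}. Nothing is missing.
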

\begin{proof}
This follows immediately from the construction in Theorem~\ref{hppa:coolacfthm} and Ax's Theorem~\ref{HPPA:prop:axtheorem}.
\end{proof}

\begin{rmk}\label{hppa:rmk:linnebo}
Linnebo presented a description of properties that models of ${\tt AHP}_{0}$ and ${\tt \Delta^{1}_{1}-HP}_{0}$ must have {\it if} they fail to model a certain sort of successor axiom (\cite{Linnebo2004} pp.~164-165), and he additionally showed that there was a model of ${\tt AHP}_{0}$ which did not model this successor axiom (\cite{Linnebo2004} Theorem~2 p.~164). Linnebo then remarked that it was unknown whether there was a model of ${\tt \Delta^{1}_{1}-HP}_{0}$ that did not model the successor axiom (cf. \cite{Linnebo2004} Remark~6 p.~168). Subsequent to defining this successor axiom, we now show that the model from the previous theorem does not model this axiom. We also explain why certain properties  identified by Linnebo hold in this model.
\end{rmk}

\begin{defn}\label{hppa:index:defnsasssaaxiomslin}
The following are formulas in the language of ${\tt HP}^{2}$ (cf. Linnebo~\cite{Linnebo2004} pp.~158-160):
\begin{enumerate}
\item[(i)] $P(n,m) \Longleftrightarrow \exists \; X, Y \; \#X=n \; \& \; \#Y =m \; \& \; \exists \; y\in Y \; X=Y\setminus \{y\}$
\item[(ii)] $F$ is {\it hereditary} if $Fn$ and $P(n,m)$ implies $Fm$
\item[(iii)] $F$ is {\it closed} if $P(\#\emptyset, m)$ implies $Fm$
\item[(iv)] $n$ is a {\it pseudo-number} if $n=\#\emptyset$ or $n$ is contained in all hereditary, closed $F$.
\item[(v)] The successor axiom (${\tt SA}$) says that for any pseudo-number $n$, there is $m$ such that $P(n,m)$.\index{${\tt SA}$, the successor axiom, sentence in signature of ${\tt HP}^{2}$, Definition~\ref{hppa:index:defnsasssaaxiomslin}~(v)}
\end{enumerate}
\end{defn}

\begin{prop}\label{HPPA:lineeeddafdsfa}
Suppose that $k$ is a saturated algebraically closed field of characteristic zero. Suppose that $\#:D(k)\rightarrow k$ by $\#X=\left|X\right|$ if $X$ is finite and $\#X=-(\left|k\setminus X\right|+1)$ if $X$ is cofinite. Then $(k, D(k), D(k^{2}), \ldots, \#)\models {\tt \Delta^{1}_{1}-HP}_{0}+\neg {\tt SA}$.
\end{prop}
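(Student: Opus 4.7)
The plan is to invoke Theorem~\ref{hppa:coolacfthm} directly for the fact that $(k, D(k), D(k^{2}), \ldots, \#)$ already models ${\tt \Delta^{1}_{1}-HP}_{0}$, so the entire task reduces to exhibiting a pseudo-number in the model that has no $P$-successor. The natural candidate is $-1 \in \mathbb{Z} \subseteq k$, since $-1 = \#(k)$ under the given definition of $\#$, and $k$ is the ``maximal'' cofinite set, one that cannot be extended by an additional element.

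First I would compute the relation $P$ explicitly. By strong minimality, every definable $X \subseteq k$ is finite or cofinite, so $\#X \in \mathbb{Z}$. A routine case analysis gives $P(\ell, \ell+1)$ for $\ell \geq 0$ (using $X$ of size $\ell$ and any $y \notin X$) and $P(n, n+1)$ for $n \leq -2$ (using $X$ cofinite with $\#X = n$ and any $y \in k \setminus X$), while $P(-1, m)$ fails for every $m \in k$: the only definable $X$ with $\#X = -1$ is $X = k$ itself, and no $y \in k$ satisfies $y \notin X$.

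The main step is then to verify that $-1$ is a pseudo-number, i.e., that $-1 \in F$ for every hereditary closed $F \in D(k)$. Since $P(0,1)$ is the only pair beginning at $0$, closedness forces $1 \in F$, and iterating heredity along $P(\ell, \ell+1)$ for $\ell \geq 1$ yields $\{1, 2, 3, \ldots\} \subseteq F$. By strong minimality, $F$ must then be cofinite; write $k \setminus F = \{a_1, \ldots, a_n\}$. The key combinatorial observation is that no $a_i$ can be an integer. Indeed, if some $a_i \geq 2$ were in $k \setminus F$, then the contrapositive of heredity applied along $P(a_i - 1, a_i), P(a_i - 2, a_i - 1), \ldots, P(1,2)$ would yield $1 \in k \setminus F$, contradicting closedness. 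If some $a_i \leq -1$ were in $k \setminus F$, then the contrapositive applied along $P(a_i - 1, a_i), P(a_i - 2, a_i - 1), \ldots$ (all well-defined since the first coordinates lie in the range $\leq -2$) would produce infinitely many negative integers in $k \setminus F$, contradicting cofiniteness. Hence every integer lies in $F$; in particular, $-1 \in F$.

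Combining these steps, $-1$ is a pseudo-number with no $P$-successor, so $\neg{\tt SA}$ holds in the model. The main obstacle is the careful bookkeeping in the third paragraph, in particular the juxtaposition of two facts about $-1$: that $P(-1, m)$ never holds (because the pre-image of $-1$ under $\#$ is just the whole field $k$), and yet that $-1$ is nonetheless forced into every hereditary closed definable set. Beyond this, strong minimality of algebraically closed fields and the explicit formula for $\#$ supply everything needed.
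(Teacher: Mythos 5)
Your argument is correct and follows essentially the same route as the paper's proof: compute $P$ explicitly, show that every hereditary closed $F\in D(k)$ contains all nonzero integers (the positives via closedness plus upward heredity, the negatives because a missing negative integer would propagate downward under the contrapositive of heredity and violate the cofiniteness forced by strong minimality), and observe that $\#k=-1$ is therefore a pseudo-number with no $P$-successor. The only slip is the assertion that \emph{every} integer lies in $F$: since $P(n,0)$ never holds (it would require $\exists\, y\in\emptyset$), nothing forces $0$ into a hereditary closed $F$ (e.g.\ $F=k\setminus\{0\}$ is hereditary and closed), but this is harmless because you only need $-1\in F$, which your case $a_i\leq -1$ handles correctly.
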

\begin{proof}
Before we begin, it is perhaps helpful to informally state the definition of $\#$ given above and describe how it interacts with the predicate $P(n,m)$. If $X$ is a finite set with $n$ elements, then $\#X=n$, and if $X$ is a cofinite set with $n$ elements in its complement, then $\#X=-(n+1)$. So, for example, the set $X=\{\sqrt{2}, -1\}$ has $\#X=2$, and the set $X=\{a\in k: k\models a^{2}+1\neq 0\}$ has $\#X=-(2+1)=-3$, and the set $X=k$ has $\#X=-1$, and the set $X=\emptyset$ has $\#X=0$. Further, if $X$ is finite, then by choosing an element $y\notin X$, we have $P(\#X, \#(X\cup \{y\}))$. For example, if $X$ is finite and has $n$ elements and $y\notin X$, we have that $\#X=n$ and $\#(X\cup \{y\})=n+1$, so that $P(n, n+1)$. Conversely, if $X$ is cofinite and has $n>0$ elements in its complement and $y\notin X$, then we have that $X\cup \{y\}$ has $n-1$ elements in its complement, so that $\#X=-(n+1)=-n-1$ and $\#(X\cup \{y\})=-((n-1)+1)=-n$ and hence $P(-n-1, -n)$. For example, we have $P(0,1), P(1,2), P(2,3),\ldots$ and $\ldots, P(-4,-3), P(-3,-2), P(-2, -1)$.

Now we begin the proof. In particular, we want to begin by describing what the hereditary, closed sets $F\in D(k)$ look like. So suppose that $F\in D(k)$ is hereditary and closed. First we claim that $\mathbb{N}\setminus \{0\}\subseteq F$. For, by the definition of $P(n,m)$ and $\#$, we have that $F$'s being closed implies that $P(0,1)$ and hence $1\in F$. So suppose that $n\in (\mathbb{N}\setminus \{0\})\cap F$. Then by the definition of $P(n,m)$ and $\#$, we have that $F$'s being hereditary implies that $P(n,n+1)$ and hence $n+1\in F$. By induction, we have that if $F\in D(k)$ is hereditary and closed then $\mathbb{N}\setminus \{0\}\subseteq F$.

We want to claim that $\{n\in \mathbb{Z}: n\neq 0\}\subseteq F$. Suppose not. That is, suppose that there are some negative integers that are not in $F$. Then, since  $F\in D(k)$ is infinite, strong minimality implies that $F$ is co-finite. So there are at most finitely many negative integers that are not in $F$. Suppose that we write these negative integers in increasing order as $a_{1}<a_{2}<\cdots<a_{n}$. (E.g. if $\mathbb{Z}\setminus F=\{-5,-10,-12\}$ then $a_{1}=-12, a_{2}=-10$ and $a_{3}=-5$). This implies that $a_{1}-1\in F$. But then by the definition of~$P(n,m)$ and~$\#$, we have that $F$'s being hereditary implies that $P(a_{1}-1,a_{1})$ and hence $Fa_{1}$, which is a contradiction. Hence, in fact we have that $\{n\in \mathbb{Z}: n\neq 0\}\subseteq F$. So, what we have shown in this paragraph is that if $F\in D(k)$ is hereditary and closed, then $\{n\in \mathbb{Z}: n\neq 0\}\subseteq F$.

This, of course, implies that every element of $\mathbb{Z}$ is a pseduo-number. Conversely, it is not difficult to see that all the pseudo-numbers are elements of $\mathbb{Z}$. Suppose that $a\in k$ is not an integer. Then the set $F=k\setminus \{a\}$ is a hereditary closed set that does not contain $a$. Hence, what we have shown in this paragraph is that the pseduo-numbers in the structure $(k, D(k), D(k^{2}), \ldots, \#)$ are precisely the integers.

Now we are in a position to show that $(k, D(k), D(k^{2}), \ldots, \#)\models \neg {\tt SA}$. For, consider the set $k\in D(k)$. By definition $\#k=-(\left|k\setminus k\right|+1)=-1$. Hence, by the results of the previous paragraph, we have that $\#k$ is a pseudo-number. So suppose that ${\tt SA}$ held on the structure $(k, D(k), D(k^{2}), \ldots, \#)$. Then there would be $m$ such that $P(\#k,m)$. Then by definition, there would be sets $X,Y\in D(k)$ such that $\#k=\#X$ and $m=\#Y$ and $\exists \; y\in Y \; X=Y\setminus \{y\}$. Since Hume's~Principle holds on the structure $(k, D(k), D(k^{2}), \ldots, \#)$, we have that $\#k=\#X$ implies that there is a bijection $f:X\rightarrow Y$ that is definable in the structure $k$. By Proposition~\ref{HPPA:prop:defnhpadsfasd}, we have that $k\setminus k$ and $k\setminus X$ are of the same cardinality, so that $X=k$. But then the condition that $y\in Y\setminus X$ implies that $y\in k\setminus k$, which is a contradiction. So, in fact, ${\tt SA}$ does not hold on the structure $(k, D(k), D(k^{2}), \ldots, \#)$.
\end{proof}

\begin{rmk}
In the course of his proof of the existence of a model of ${\tt AHP}_{0}+\neg {\tt SA}$, Linnebo noted several properties which must be had by such models  (cf. \cite{Linnebo2004} pp.~164-165). Since models of ${\tt \Delta^{1}_{1}-HP}_{0}+\neg {\tt SA}$ are automatically models of ${\tt AHP}_{0}+\neg {\tt SA}$, Linnebo's results predict several properties of the model from the previous proposition. In this remark, we briefly explain why the properties identified by Linnebo hold on this structure. First, Linnebo notes that the example of a pseduo-number $n$ witnessing that ${\tt SA}$ fails on the structure $(k, D(k), D(k^{2}), \ldots, \#)$ must be such that $n=\#k$. In the last paragraph of the previous proposition, we showed that $n=\#k$ was such a counterexample. Second, Linnebo notes that the example of a structure $(k, D(k), D(k^{2}), \ldots, \#)\models \neg \tt{SA}$ must be such that $k\setminus X\neq \emptyset$ implies $\#k\neq \#X$. In the context of the model constructed in the previous proposition, this is a consequence of Ax's Theorem (or Proposition~\ref{HPPA:prop:defnhpadsfasd}). Finally, Linnebo notes that the example of a structure $(k, D(k), D(k^{2}), \ldots, \#)\models \neg \tt{SA}$ must contain a copy of both $\omega$ and $\omega^{\ast}$ ordered by the $P$-relation, that is, this structure must contain a copy of the positive integers and the negative integers ordered by the $P$-relation. In the model constructed in the previous theorem, this is reflected in the fact that the pseduo-numbers are precisely the integers.
\end{rmk}

\subsection{Application to O-Minimal Expansions of Real-Closed Fields}

\begin{rmk}
In this section, we apply Theorem~\ref{HPPA:thm:metatheorem} to construct models of ${\tt \Sigma^{1}_{1}-PH}_{0}$ on top~of certain~o-minimal expansions of real-closed fields (cf. Theorem~\ref{HPPA:thm:thebomb}), and an  effectivization of this construction allows us to conclude that ${\tt \Sigma^{1}_{1}-PH}_{0}<_{\mathrm{I}} {\tt ACA}_{0}$ (cf. Corollary~\ref{HPPA:cor:thebomb}), thus filling in a key piece of the interpretability relation (cf. Figure~\ref{HPPA:figure2}). Prior to doing this, we recall some basic notions pertaining to the model theory of o-minimal expansions of real-closed fields, such as dimension and Euler characteristic; the reader who is already familiar with these notions may wish to proceed directly to   Theorem~\ref{HPPA:thm:thebomb}.
\end{rmk}

\begin{defn}\label{defn:hpapa:indexlobminimal}
Suppose that $L$ is a signature extending the signature of linear orders, and suppose that $M$ is an $L$-structure such that $(M, \leq)$ is a dense linear order. Then $M$ is {\it o-minimal} if every definable set is a finite union of points and intervals.\index{O-minimal structure, Definition~\ref{hppa:index:defnsasssaaxiomslin}}
\end{defn}

\begin{prop}
Every real-closed ordered field is o-minimal.
\end{prop}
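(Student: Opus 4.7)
The plan is to reduce o-minimality to the classical quantifier elimination result for real-closed ordered fields. By Tarski's theorem, the theory of real-closed ordered fields admits quantifier elimination in the language $\{+,-,\cdot,0,1,\leq\}$. Hence any definable subset $X \subseteq M$ (allowing parameters from $M$) is defined by a quantifier-free formula $\varphi(x,\overline{a})$ in this language. Such a quantifier-free formula is a Boolean combination of atomic formulas, each of which, after clearing denominators and moving terms to one side, has the form $p(x) = 0$ or $p(x) \geq 0$ for some polynomial $p(x) \in M[x]$.

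The core step is then to verify that each atomic piece is already a finite union of points and open intervals (with endpoints in $M \cup \{-\infty, +\infty\}$). For an equation $p(x) = 0$ with $p \neq 0$, this follows because a nonzero polynomial of degree $n$ over a field has at most $n$ roots, so the solution set is finite. For an inequality $p(x) \geq 0$, I would first observe that a real-closed field satisfies the intermediate value property for polynomials: if $p(a) < 0 < p(b)$ then $p$ has a root strictly between $a$ and $b$. Consequently, between any two consecutive roots of $p$, the polynomial $p$ has constant sign, and likewise on the unbounded tails outside all roots. Since $p$ has finitely many roots $r_1 < \cdots < r_k$, the set $\{x : p(x) \geq 0\}$ is a finite union of the $r_i$ together with some subset of the open intervals $(-\infty,r_1), (r_1,r_2), \ldots, (r_k, +\infty)$; hence it is a finite union of points and intervals.

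Finally, I would note that the collection of subsets of $M$ that are finite unions of points and intervals is closed under finite unions, finite intersections, and complements — this is a routine check, since the complement of a single interval is a union of at most two intervals (possibly with endpoints) and intersections of intervals are intervals. So arbitrary Boolean combinations of the atomic pieces analyzed above are again finite unions of points and intervals. This yields the conclusion for all definable $X \subseteq M$, which is precisely the o-minimality condition in Definition~\ref{defn:hpapa:indexlobminimal}.

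The main technical input — and the only nontrivial obstacle — is the intermediate value property for polynomials over a real-closed field, which is what allows the passage from ``finitely many roots'' to ``constant sign between consecutive roots.'' Everything else is either Tarski's quantifier elimination (invoked as a black box) or elementary Boolean algebra of intervals. A concise reference for the whole package is van~den~Dries \cite{vandenDries1998aa}, where this argument is carried out in detail.
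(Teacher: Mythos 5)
Your proof is correct and is essentially the standard argument that the paper delegates entirely to its citation of Marker's Corollary~2.5: quantifier elimination for real-closed ordered fields, reduction to sign conditions on polynomials, finiteness of root sets plus the polynomial intermediate value property to get constant sign between consecutive roots, and closure of finite unions of points and intervals under Boolean operations. Since the paper gives no proof of its own beyond the reference, your write-up supplies exactly the argument being cited, with no gaps worth flagging (only the degenerate case $p=0$, where the solution set is all of $M$ or empty, deserves a passing mention).
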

\begin{proof}
See Marker~\cite{Marker2006ac} Corollary~2.5 p.~11.
\end{proof}

\begin{defn}\label{HPPA:defn:cellls}\index{Cells, in o-minimal structures, Definition~\ref{HPPA:defn:cellls}}
Suppose that $M$ is an o-minimal structure. If $X$ is a definable subset of $M^{n}$, then let $C(X)$ be the set of definable continuous functions $f:X\rightarrow M$, and let $C_{\infty}(X)$ be $C(X)$ plus the two constant functions $-\infty, \infty$. Further, if $f,g\in C_{\infty}(X)$ and $f<g$ on $X$, then let
\begin{equation}
(f,g)_{X} = \{ (x,r)\in X\times R: f(x)<r<g(x)\}
\end{equation}
Then inductively define the notion of a $\sigma$-cell, where $\sigma\in 2^{<\omega}$ is a finite sequence of zeros and ones. First, $0$-cells are points and $1$-cells are open intervals, including $(-\infty, a)$, $(a, \infty)$. Second, given a $\sigma$-cell $X$, the $\sigma 0$-cells are graphs of functions $f\in C(X)$, and the $\sigma 1$-cells are sets $(f,g)_{X}$ where $f,g\in C_{\infty}(X)$. 
\end{defn}

\begin{defn}\label{HPPA:defn:decomp}\index{Cell Decomposition, in o-minimal structures, Definition~\ref{HPPA:defn:decomp}}
Suppose that $M$ is an o-minimal structure. A {\it decomposition} of $M^{n}$ is defined inductively as follows. A decomposition of $M^{1}$ is a finite partition of $M$ with the following form:
\begin{equation}
 \{(-\infty, a_{1}), (a_{1}, a_{2}), \ldots, (a_{k}, +\infty), \{a_{1}\}, \ldots, \{a_{k}\}\}
 \end{equation}
 where $a_{1}<a_{2}<\cdots<a_{k}$.  A decomposition of $M^{m+1}=M^{m}\times M$ is a finite partition of $M^{m+1}$ into cells $\{A_{1}, \ldots, A_{n}\}$ such that the set of projections $\{\pi(A_{1}), \ldots, \pi(A_{n})\}$ is a decomposition of $M^{m}$, where $\pi: M^{m+1}\rightarrow M^{m}$ by $\pi(x_{1}, \ldots, x_{m+1})=(x_{1}, \ldots, x_{m})$. A decomposition of $M^{m}$ is said to {\it partition} a definable set $X\subseteq M^{m}$ if $X$ can be written as a finite union of cells in the decomposition.
\end{defn}

\begin{thm}\label{index:hppa:celldoeomcthemknight}
(Cell Decomposition Theorem)\index{Cell Decomposition Theorem~\ref{index:hppa:celldoeomcthemknight}} Suppose that $M$ is an o-minimal structure. For any finite sequence of $B$-definable sets $A_{1}, \ldots, A_{k}\subseteq M^{m}$, there is a decomposition of $M^{m}$ partitioning each of the $A_{i}$. Moreover, the cells in the decomposition are $B$-definable.
\end{thm}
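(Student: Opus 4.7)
The plan is to prove the theorem by induction on $m$, simultaneously with two auxiliary statements that are mutually entangled with cell decomposition in the o-minimal setting. Call the target statement $(\mathrm{CD}_m)$; the two auxiliaries are $(\mathrm{Mon})$, the \emph{Monotonicity Theorem} (every definable $f:I\to M$ on an interval $I\subseteq M$ is, after removing finitely many points, a union of finitely many maximal intervals on each of which $f$ is continuous and either constant or strictly monotone), and $(\mathrm{UF}_m)$, the \emph{Uniform Finiteness Theorem} (a definable family $\{X_a\}_{a\in M^m}$ of finite subsets of $M$ has uniformly bounded cardinality). I would first establish $(\mathrm{Mon})$ directly from o-minimality by partitioning $I$ into the definable sets $I^+, I^-, I^c$ of points where $f$ is locally strictly increasing, locally strictly decreasing, or locally constant, showing by o-minimality that these exhaust $I$ up to a finite exceptional set, and then using o-minimality again on each subinterval to extend the local behavior to global monotonicity or constancy.

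The base case $(\mathrm{CD}_1)$ is essentially a restatement of o-minimality: each $A_i\subseteq M$ is a finite union of points and intervals, so taking the union of all their endpoints and enumerating them as $a_1<\cdots<a_k$ yields a single decomposition of $M$ that partitions every $A_i$. The base case $(\mathrm{UF}_1)$ can be extracted from $(\mathrm{Mon})$ and $(\mathrm{CD}_1)$ applied to a definable $X\subseteq M^2$ with finite vertical fibers: by partitioning the base via $(\mathrm{CD}_1)$ one may assume $X$ is a single cell, and then $(\mathrm{Mon})$ applied to the functions selecting successive endpoints of the fibers controls how the cardinality of fibers varies across the base interval.

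For the inductive step $(\mathrm{CD}_m)\Rightarrow (\mathrm{CD}_{m+1})$, write $M^{m+1}=M^m\times M$ and, for each $a\in M^m$ and each $i$, consider the finite set $E_{i,a}$ of endpoints of the fiber $(A_i)_a\subseteq M$ (which is finite by o-minimality applied to the fiber). By $(\mathrm{UF}_m)$ the cardinalities $|E_{i,a}|$ are uniformly bounded, so $M^m$ can be partitioned into finitely many definable pieces on which each cardinality is constant, permitting the endpoints to be enumerated as definable functions $g_1(a)<\cdots<g_r(a)$. Refining these pieces via the inductive hypothesis $(\mathrm{CD}_m)$ and then applying a cell-wise version of $(\mathrm{Mon})$ (derived by iterating the one-variable version along each coordinate, using $(\mathrm{CD}_m)$ to control the domain) yields a decomposition of $M^m$ into cells $C$ on which each $g_j$ restricts to a continuous function, i.e.\ $g_j\in C_\infty(C)$. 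Stacking the graphs of the $g_j$ and the strips $(g_j,g_{j+1})_C$ (with the conventions $g_0=-\infty$ and $g_{r+1}=+\infty$) over each such $C$ produces the desired decomposition of $M^{m+1}$, which partitions every $A_i$ by construction.

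The main obstacle is the careful orchestration of the simultaneous induction between $(\mathrm{CD}_m)$ and $(\mathrm{UF}_m)$: the cell decomposition in dimension $m+1$ requires uniform finiteness in dimension $m$, but the natural proof of uniform finiteness tends to invoke cell decomposition in one higher dimension. Ordering the induction as $(\mathrm{Mon})$, then $(\mathrm{CD}_1)$, $(\mathrm{UF}_1)$, $(\mathrm{CD}_2)$, $(\mathrm{UF}_2)$, $\ldots$ avoids circularity, but requires that each step of $(\mathrm{UF}_m)$ be extracted from $(\mathrm{CD}_m)$ and the previously established auxiliaries alone. The bookkeeping for $B$-definability is routine: every construction (endpoints, enumeration by cardinality type, refinement by continuity loci) is first-order definable from the parameters defining the $A_i$, so all cells remain $B$-definable throughout.
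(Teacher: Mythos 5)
The paper does not actually prove this theorem---it only cites van den Dries (\emph{Tame Topology and O-minimal Structures}, Theorem 2.11)---so the question is whether your outline, which follows the standard architecture of that proof (Monotonicity, uniform finiteness, and cell decomposition handled by a simultaneous induction on the ambient dimension), would go through on its own. The base case and the final stacking construction (graphs of the $g_j$ and the strips $(g_j,g_{j+1})_C$ over each base cell) are right. But there is a genuine gap at the step you describe as ``applying a cell-wise version of $(\mathrm{Mon})$ (derived by iterating the one-variable version along each coordinate).'' What you actually need there is that each boundary function $g_j$ is continuous on the cells of some refinement of the base decomposition, i.e.\ the statement that every definable function $f:A\to M$ with $A\subseteq M^m$ is continuous on each cell of some decomposition of $A$. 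This does \emph{not} follow by iterating the one-variable Monotonicity Theorem coordinate by coordinate: that procedure only yields continuity separately in each variable, and separate continuity does not imply joint continuity. In the standard proof this piecewise-continuity statement is a second inductive hypothesis $(\mathrm{II}_m)$ carried along with cell decomposition $(\mathrm{I}_m)$ throughout the whole induction, and the passage from separate to joint continuity requires a dedicated lemma (a definable function on an open box that is continuous and monotone in each variable separately is jointly continuous on some sub-box), which is the technical heart of the theorem. Your outline elides exactly this.

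A second, smaller gap concerns $(\mathrm{UF}_m)$ for $m>1$. You correctly observe that the naive proof of uniform finiteness invokes cell decomposition in dimension $m+1$ and that your ordering of the induction therefore requires $(\mathrm{UF}_m)$ to be ``extracted from $(\mathrm{CD}_m)$ and the previously established auxiliaries alone''---but you never say how this extraction goes. The standard route is to prove the two-variable finiteness lemma first (from Monotonicity alone) and then reduce the general case, via a decomposition of the base $M^m$ and the piecewise-continuity hypothesis, to that two-variable case. Without this reduction made explicit, the induction as you have ordered it does not close; as written, the proposal asserts that the circularity can be avoided rather than demonstrating it.
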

\begin{proof}
See van den Dries \cite{Dries1998} Theorem~2.11 p.~52.
\end{proof}

\begin{defn}\label{index:hppa:dimeul}
Suppose that $M$ is an o-minimal structure and that $X\subseteq M^{n}$. Then define\index{Dimension, in o-minimal structures, Definition~\ref{index:hppa:dimeul}}
\begin{align}
& \dim(X)=\max\{i_{1}+\cdots+i_{n}: X \mbox{ contains a } (i_{1}, \ldots, i_{n})\mbox{-cell}\} \\
& E(X) =k_{0}-k_{1}+k_{2}-\cdots = \sum_{d=0}^{n} k_{d} (-1)^{d}
\end{align}
where $k_{d}$ is the number of $d$-dimensional cells contained in some cell decomposition of $X$.\index{Euler characteristic, in o-minimal structures, Definition~\ref{index:hppa:dimeul}}
\end{defn}

\begin{rmk}
Note that if $X\subseteq M$, then $\dim(X)>0$ if and only if $X$ contains an open interval. Note that the above definition of Euler dimension can be shown to be independent of the choice of the cell decomposition (cf. \cite{Dries1998} Proposition~2.2 p.~70).
\end{rmk}

\begin{prop}\label{HPPA:prop:defnbound}
Suppose that $M$ is an o-minimal structure and that $\theta(\overline{x}, \overline{y})$ is a $\emptyset$-formula. Then there is a positive integer $N_{\theta}>0$ such that for all $\overline{b}\in M$, it is the case that 
\begin{equation}
\left| \dim(\theta(\cdot, \overline{b})\right|, \left| E(\theta(\cdot, \overline{b}))\right|<N_{\theta}
\end{equation}
Further, for each integer $k$, it is the case that the sets 
\begin{equation}
\{\overline{b}\in M: \dim(\theta(\cdot, \overline{b}))=k\} \; \; \; \& \; \; \; \{\overline{b}\in M: E(\theta(\cdot, \overline{b}))=k\}
\end{equation}
are $\emptyset$-definable. Moreover, the formulas that define these sets and the positive integer $N_{\theta}$ can be uniformly computed from $\theta$.
\end{prop}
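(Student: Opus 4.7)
The plan is to apply the Cell Decomposition Theorem~\ref{index:hppa:celldoeomcthemknight} to the single $\emptyset$-definable set
\[
A \;=\; \{(\overline{y},\overline{x})\in M^{m+n} : M\models \theta(\overline{x},\overline{y})\},
\]
where $n=|\overline{x}|$, $m=|\overline{y}|$, and the parameter variables $\overline{y}$ are listed \emph{first} so that fibering over $\overline{y}$ corresponds to iterating the projection $\pi$ of Definition~\ref{HPPA:defn:decomp} exactly $n$ times. The theorem yields a decomposition of $M^{m+n}$ into finitely many $\emptyset$-definable cells $C_1,\ldots,C_s$, with $A$ equal to the union of some of them. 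The $n$-fold iterate of $\pi$ then induces an $\emptyset$-definable decomposition $\mathcal{D}=\{D_1,\ldots,D_t\}$ of $M^m$, and each $C_j$ projects onto a uniquely determined cell $D_{\iota(j)}\in\mathcal{D}$.

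The heart of the argument is a uniformity lemma: for each $j$, the cell type of the fiber $(C_j)_{\overline{b}}=\{\overline{x}\in M^n : (\overline{b},\overline{x})\in C_j\}$ as a subset of $M^n$ is \emph{constant} in $\overline{b}$ as $\overline{b}$ ranges over $D_{\iota(j)}$; more precisely, if $C_j$ is a $\sigma$-cell with $\sigma\in\{0,1\}^{m+n}$, then $(C_j)_{\overline{b}}$ is a $(\sigma_{m+1},\ldots,\sigma_{m+n})$-cell. This follows by induction from Definition~\ref{HPPA:defn:cellls}: a graph cell specialises at $\overline{b}$ to the singleton $\{f(\overline{b})\}$ (a $0$-cell), and a band specialises to the open interval $(f(\overline{b}),g(\overline{b}))$ (a $1$-cell); adding one coordinate at a time preserves the pattern. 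In particular the dimension $d_j:=\dim((C_j)_{\overline{b}})$ equals the number of $1$'s in $(\sigma_{m+1},\ldots,\sigma_{m+n})$ and does not depend on $\overline{b}$.

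With the lemma in hand, fix any $D\in\mathcal{D}$ and any $\overline{b}\in D$. Because the $D_{\iota(j)}$ partition $M^m$, the fiber $\theta(\cdot,\overline{b})=A_{\overline{b}}$ is the disjoint union of the cells $(C_j)_{\overline{b}}$ taken over $J_D:=\{j:C_j\subseteq A,\ D_{\iota(j)}=D\}$, and this is a cell decomposition of $A_{\overline{b}}$ whose combinatorics depend only on $D$. The counts $k_{d,D}:=|\{j\in J_D:d_j=d\}|$, and thus the values $\delta_D:=\dim(A_{\overline{b}})=\max_{j\in J_D}d_j$ and $\varepsilon_D:=E(A_{\overline{b}})=\sum_{d=0}^{n}(-1)^d k_{d,D}$, are constants attached to $D$. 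Setting
\[
N_\theta \;:=\; 1+\max_{D\in\mathcal{D}}\bigl(|\delta_D|+|\varepsilon_D|\bigr),
\]
which is finite since $\mathcal{D}$ is finite, yields the required uniform bound. Moreover
\[
\{\overline{b}\in M^m : \dim(\theta(\cdot,\overline{b}))=k\} \;=\; \bigcup_{D\in\mathcal{D},\,\delta_D=k} D
\]
and the analogous identity for $E$ exhibit these sets as finite unions of $\emptyset$-definable cells, hence as $\emptyset$-definable.

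For the final effectivity clause, the standard proof of the Cell Decomposition Theorem (e.g.~van den Dries~\cite{Dries1998}) is algorithmic in the defining formula: from $\theta$ one primitive-recursively computes formulas for the cells $C_j$ while simultaneously recording the binary types $\sigma_j$, and thereby also the projection cells comprising $\mathcal{D}$. Assembling the combinatorial data $\{d_j,J_D,k_{d,D},\delta_D,\varepsilon_D\}$, the bound $N_\theta$, and the formulas defining $\{\dim=k\}$ and $\{E=k\}$ is then a routine uniform procedure in $\theta$. The main obstacle I anticipate is stating the uniformity lemma cleanly — making precise that the cell type of the fibers is an invariant of the cell and not of the chosen base point — but this is ultimately straightforward bookkeeping against Definition~\ref{HPPA:defn:cellls}.
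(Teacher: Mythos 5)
Your argument is correct and is essentially the proof the paper points to: the paper's ``proof'' of this proposition is just a citation to van den Dries (Propositions 1.5 and 2.10 of Chapter 4), and those are proved exactly as you do --- decompose the graph of $\theta$ with the parameter variables listed first, observe that the fibers of a cell over points of its base cell are cells of a fixed type, and read off $\dim$ and $E$ cellwise. The only steps you elide are standard: that the fibers of a decomposition of $M^{m+n}$ over a point form a decomposition of $M^{n}$ (needed so that your formula for $E(A_{\overline{b}})$ is legitimate, together with the independence of $E$ from the chosen decomposition), and that $\dim$, defined as a supremum over \emph{all} cells contained in the set, is attained on the cells of any decomposition partitioning it.

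The one place where you overclaim is the final effectivity clause. Cell decomposition is \emph{not} literally ``algorithmic in the defining formula'' for an arbitrary o-minimal structure: the number and the defining formulas of the cells depend on the structure (e.g.\ on how many monotonicity breakpoints a definable function has), not on $\theta$ alone, so nothing is primitive recursive in $\theta$ without further input. The uniform computability asserted in the proposition only makes sense relative to (an oracle for) $Th(M)$, or for structures with effectively given quantifier elimination such as real-closed fields --- which is precisely the restriction the paper imposes later when it formalizes this material in ${\tt ACA}_{0}$. Your bookkeeping goes through verbatim once the computation is understood as relative to the theory; you should just say so rather than attribute the effectivity to the cell decomposition proof itself.
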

\begin{proof}
See van den Dries~\cite{Dries1998} Proposition~1.5 p.~65 and Proposition~2.10 p. 72.
\end{proof}

\begin{prop}\label{HPPA:prop:defnbiject}
Suppose that $M$ is an o-minimal expansion of a real-closed field, and suppose that $X\subseteq M^{n}$ and $Y\subseteq M^{m}$ are definable sets. Then there is a definable bijection $f:X\rightarrow Y$ if and only if $\dim(X)=\dim(Y)$ and $E(X)=E(Y)$.
\end{prop}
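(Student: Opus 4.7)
The plan is to prove both directions separately, with the forward direction being a matter of invariance and the backward direction requiring an explicit construction.

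For the forward direction (the existence of a definable bijection $f:X\to Y$ implies $\dim(X)=\dim(Y)$ and $E(X)=E(Y)$), I would appeal to the invariance of these two numerical invariants under definable bijections in o-minimal structures, which is a standard part of the theory. Dimension invariance is relatively elementary: $\dim(X)$ can be characterized as the maximum $k$ such that some coordinate projection of $X$ onto a $k$-dimensional sub-product has nonempty interior, and a definable bijection transports this property. Euler-characteristic invariance is the more substantive half of van den Dries's development: it proceeds by decomposing a definable bijection via cell decomposition, reducing to the case of definably trivial fibrations, and using additivity of $E$ over finite disjoint unions of cells together with the product formula $E(A\times B)=E(A)\cdot E(B)$.

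For the reverse direction, the plan is to reduce both $X$ and $Y$ by a sequence of definable bijections to a common ``normal form'' depending only on $d=\dim(X)=\dim(Y)$ and $e=E(X)=E(Y)$. First, apply the Cell Decomposition Theorem \ref{index:hppa:celldoeomcthemknight} to write $X$ and $Y$ as finite disjoint unions of cells. Second, use the real-closed field structure to show that every $j$-dimensional cell is definably bijective with the standard open box $(0,1)^{j}$: one proceeds by induction on the dimension of the ambient affine space, using rescaling by the definable functions that bound the cell fibrewise, together with the fact that any two nonempty open intervals in a real-closed field are definably bijective via an affine map. Third, one is reduced to showing that two finite disjoint unions of standard open boxes $\bigsqcup_j (0,1)^{j}$ (with multiplicities $(k_0,k_1,\ldots,k_d)$ and $(\ell_0,\ldots,\ell_d)$, each with top multiplicity positive) are definably bijective whenever $\sum_j (-1)^j k_j = \sum_j (-1)^j \ell_j$.

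To carry out this final step, I would work with the ``splitting move'': the open box $(0,1)^{j}$ admits a definable bijection with the disjoint union $(0,1)^{j}\sqcup(0,1)^{j}\sqcup(0,1)^{j-1}$, obtained by cutting along the hyperplane $x_1=\tfrac12$ (identifying the slice with $(0,1)^{j-1}$) and re-parametrizing each half as a full open box via an affine stretch. This move preserves the alternating count $\sum_j (-1)^j k_j$ while altering the profile $(k_0,\ldots,k_d)$. Conversely, one has an ``inverse splitting'' that merges two top-dimensional cells glued along a codimension-one face into a single top cell. Combining these moves, one can transport any multiplicity profile to a canonical representative (for instance, one top-dimensional cell together with a fixed number of isolated points determined by $e-(-1)^d$, with additional lower-dimensional cells inserted as necessary when $e-(-1)^d$ has the wrong sign).

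The main obstacle I anticipate is the bookkeeping in this last step: one needs to check that the splitting moves generate enough definable bijections to equate any two profiles with the same alternating sum and the same top dimension. This amounts to a combinatorial lemma about integer vectors $(k_0,\ldots,k_d)$ with $k_d>0$, under the moves $k_j \mapsto k_j+2, k_{j-1}\mapsto k_{j-1}+1, k_j\mapsto k_j-1$ (and its inverse), which preserve $\sum (-1)^j k_j$; showing that these moves act transitively on the set of profiles with fixed top dimension and fixed alternating sum is the combinatorial heart of the argument. Once this lemma is in hand, the reverse direction follows by concatenating the canonical reductions for $X$ and for $Y$.
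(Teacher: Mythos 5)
Your proposal is correct, and it is essentially the proof the paper points to: the paper does not prove this proposition itself but cites van den Dries's book, and what you have written is a faithful reconstruction of that argument (invariance of $\dim$ and $E$ under definable bijections for the forward direction; cell decomposition, reduction of every $j$-cell to the standard box $(0,1)^{j}$ via the field structure, and a combinatorial manipulation of disjoint unions of boxes for the converse). Two small remarks on the points you flagged. First, the combinatorial lemma you isolate does hold and is provable by induction on $d$: given two profiles $(k_{0},\ldots,k_{d})$ and $(\ell_{0},\ldots,\ell_{d})$ with $k_{d},\ell_{d}>0$ and equal alternating sums, apply the level-$d$ splitting move $a\geq 1$ times to the first and $b\geq 1$ times to the second with $k_{d}+a=\ell_{d}+b$; the truncated profiles $(k_{0},\ldots,k_{d-1}+a)$ and $(\ell_{0},\ldots,\ell_{d-1}+b)$ then have positive top entries and equal alternating sums, so the induction hypothesis connects them by moves at levels $<d$, which do not disturb the top entry. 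Second, a minor point in your reduction of cells to boxes: an affine map suffices to identify two bounded nonempty open intervals, but the fibres of a cell may be unbounded (e.g.\ $(f,+\infty)_{X}$ or all of $M$), so there you need a non-affine definable bijection such as $t\mapsto t/(1+t)$ on $(0,\infty)$ or $t\mapsto t/(1+\left|t\right|)$ on $M$; the real-closed field structure supplies these, so nothing breaks.
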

\begin{proof}
See van den Dries \cite{Dries1998} p.~132.
\end{proof}

\begin{rmk}
As a simple illustration of this fact, consider the example of the two sets
\begin{equation}
X=(-2,-1)\sqcup \{0\} \sqcup (1,2) \;\;\;\;\;\;\;\;\; Y=(-1,1)
\end{equation}
Both have dimension 1, since they both contain intervals, and their Euler characteristics are the same, namely, $E(X)=1-2=-1$ and $E(Y)=0-1=-1$. Hence, the above proposition predicts that there is a definable bijection $f:X\rightarrow Y$, and in fact this is the case: one simply sends $(-2,-1)$ to $(-1, 0)$ and one sends $0$ to $0$ and one sends $(1,2)$ to $(0,1)$.
\end{rmk}

\begin{prop}\label{HPPA:rmk:rcfdsf}
O-minimal expansions of real closed fields have definable skolem functions.
\end{prop}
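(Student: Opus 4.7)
The plan is to combine the Cell Decomposition Theorem~\ref{index:hppa:celldoeomcthemknight} with the ordered field structure to uniformly select a canonical point from every non-empty definable fiber. First I would reduce to the case $n=1$ by induction on $n$: given a definable $P \subseteq M^{m+n}$, apply the $n=1$ result to the projection $\pi(P) = \{(\overline{x}, y_1) : \exists y_2, \ldots, y_n \, P(\overline{x}, y_1, \ldots, y_n)\}$ to definably pick $y_1 = f_1(\overline{x})$, then iterate with the set $P(\overline{x}, f_1(\overline{x}), y_2, \ldots, y_n)$. At each stage we enlarge the parameter set; this is allowed by Definition~\ref{HPPA:defn:parametericskolemfunctions}.

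For the base case $n=1$, apply the Cell Decomposition Theorem to partition $M^{m+1}$ into definable cells $C_1, \ldots, C_k$ that jointly partition $P$. Each cell $C_i$ projects to a definable cell $\pi(C_i) \subseteq M^m$, and for every $\overline{x} \in \pi(C_i)$ the fiber $(C_i)_{\overline{x}}$ is, by Definition~\ref{HPPA:defn:cellls}, either a singleton $\{g_i(\overline{x})\}$ (when $C_i$ is a $\sigma 0$-cell and hence the graph of a definable function $g_i$) or an open interval $(f_i(\overline{x}), h_i(\overline{x}))$ with $f_i, h_i \in C_\infty(\pi(C_i))$ (when $C_i$ is a $\sigma 1$-cell).

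Second, using the ordered field operations I define a canonical choice function $s_i : \pi(C_i) \to M$ on each cell: set $s_i(\overline{x}) = g_i(\overline{x})$ in the graph case; in the interval case set $s_i(\overline{x}) = \tfrac{1}{2}(f_i(\overline{x}) + h_i(\overline{x}))$ if both endpoints lie in $M$, $s_i(\overline{x}) = h_i(\overline{x}) - 1$ if $f_i = -\infty$ and $h_i \in M$, $s_i(\overline{x}) = f_i(\overline{x}) + 1$ if $h_i = +\infty$ and $f_i \in M$, and $s_i(\overline{x}) = 0$ if $f_i = -\infty$ and $h_i = +\infty$. Each $s_i$ is definable in the field language from the parameters defining $C_i$. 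Now define
\begin{equation}
P'(\overline{x}, y) \equiv \bigvee_{i=1}^{k} \Bigl[\, \overline{x} \in \pi(C_i) \,\wedge\, y = s_i(\overline{x}) \,\wedge\, \bigwedge_{j<i} \overline{x} \notin \pi(C_j) \,\Bigr].
\end{equation}
Then $P' \subseteq P$ because $s_i(\overline{x}) \in (C_i)_{\overline{x}} \subseteq P_{\overline{x}}$ by construction, and $P'$ picks a unique $y$ for each $\overline{x}$ with $P_{\overline{x}} \neq \emptyset$, namely $s_{i(\overline{x})}(\overline{x})$ where $i(\overline{x})$ is the least index $i$ such that $\overline{x} \in \pi(C_i)$.

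No serious obstacle is expected: cell decomposition, the definability of the endpoint functions $f_i, g_i, h_i$, and the field-theoretic operations used to form midpoints or shifts by $\pm 1$ are all available. The only point requiring care is verifying that the endpoint functions of a cell really are definable uniformly from a defining formula for $P$, and that the "least $i$" selection used to merge the $s_i$ into a single $P'$ produces a formula of the required shape; both points are routine consequences of the fact that the cell decomposition of Theorem~\ref{index:hppa:celldoeomcthemknight} is itself definable with the same parameters as $P$.
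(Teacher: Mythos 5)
Your proof is correct, and it is essentially the argument the paper is pointing to: the paper's ``proof'' is only a citation to van den Dries (p.~94), and the standard definable-choice argument given there is exactly your combination of cell decomposition with the ordered-field operations (midpoint of a bounded interval, endpoint $\pm 1$ for a half-bounded one, $0$ for the whole line), merged by a least-index selection. The reduction to $n=1$ by iterated projection and the handling of the $\pm\infty$ endpoint cases are both handled correctly, so nothing further is needed.
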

\begin{proof}
See van den Dries~\cite{Dries1998} p.~94. 
\end{proof}

\begin{thm}\label{HPPA:thm:thebomb} Suppose that $k$ is a  recursively-saturated o-minimal expansion of a real-closed field. Then there is a computably uniformly definable function $\#:D(k)\rightarrow k$ such that $(k, D(k), D(k^{2}), \ldots, \#)$ is a model of ${\tt \Sigma^{1}_{1}-PH}_{0}+\neg {\tt \Pi^{1}_{1}-HP}_{0}$. Further, there is no function $\partial:D(k)\rightarrow k$ such that $(k, D(k), D(k^{2}), \ldots, \partial)$ is a model of ${\tt \Delta^{1}_{1}-BL}_{0}$.
\end{thm}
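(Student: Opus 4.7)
My plan is to define $\#$ via the dimension and Euler characteristic invariants of o-minimal sets, which by Proposition~\ref{HPPA:prop:defnbiject} completely classify definable subsets of $k$ up to definable bijection. Since $k$ has characteristic zero, its prime field $\mathbb{Q}$ (and hence $\mathbb{Z}$) embeds in $k$; set $\#X = 2\,E(X)$ when $\dim X = 0$ and $\#X = 2\,E(X) + 1$ when $\dim X = 1$. This function takes values in $\mathbb{Z}\subseteq k$ and satisfies $\#X = \#Y$ iff $(\dim X, E(X)) = (\dim Y, E(Y))$, so Proposition~\ref{HPPA:prop:defnbiject} yields Hume's~Principle on $(k, D(k), D(k^2), \ldots, \#)$. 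To verify that $\#$ is $\emptyset$-computably uniformly definable, feed any $\emptyset$-formula $\theta(x, \overline{y})$ with non-empty parameter variables $\overline{y}$ into Proposition~\ref{HPPA:prop:defnbound} to obtain computably a uniform bound $N_{\theta}$ and $\emptyset$-formulas $\delta^{\theta}_{i}(\overline{y})$, $\varepsilon^{\theta}_{i}(\overline{y})$ defining $\{\overline{b}:\dim(\theta(\cdot,\overline{b}))=i\}$ and $\{\overline{b}:E(\theta(\cdot,\overline{b}))=i\}$; then
\[
\theta'(x,\overline{y}) \;\equiv\; \bigvee_{|i|<N_{\theta}} \bigl[(\delta^{\theta}_{0}(\overline{y}) \wedge \varepsilon^{\theta}_{i}(\overline{y}) \wedge x=2i) \vee (\delta^{\theta}_{1}(\overline{y}) \wedge \varepsilon^{\theta}_{i}(\overline{y}) \wedge x=2i+1)\bigr]
\]
defines the singleton $\{\#(\theta(\cdot,\overline{b}))\}$ uniformly in $\overline{b}$ and is computable from $\theta$.

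Arithmetical comprehension holds on $(k, D(k), D(k^{2}), \ldots, \#)$ because an arithmetical formula contains no bound relation variables, so any occurrence of $\#X$ has $X$ as a free parameter and may be replaced by a fixed constant of $k$, reducing the formula to one in the pure signature of $k$ and hence to a $D(k^{n})$-set. Thus $(k, D(k), D(k^{2}), \ldots, \#)$ models ${\tt AHP}_{0}$, and applying Theorem~\ref{HPPA:thm:metatheorem}(iv) with $B = \emptyset$ — using that o-minimal expansions of real-closed fields have definable skolem functions by Proposition~\ref{HPPA:rmk:rcfdsf} — yields ${\tt \Sigma^{1}_{1}-PH}_{0}$. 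To see $\neg{\tt \Pi^{1}_{1}-HP}_{0}$, observe that $\mathrm{rng}(\#)$ is $\Sigma^{1}_{1}$-definable in the expanded structure via $\exists\,X\ y=\#X$; since ${\tt \Pi^{1}_{1}-HP}_{0}$ proves $\Sigma^{1}_{1}$-comprehension (by complementing $\Pi^{1}_{1}$-formulas via arithmetical comprehension), it would force $\mathrm{rng}(\#)$ into $D(k)$. But $\mathrm{rng}(\#)\subseteq \mathbb{Z}\subseteq k$ is infinite and contains no interval of $k$, so it is not a finite union of points and intervals, contradicting the o-minimality of $k$.

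For the last clause, suppose toward contradiction that some $\partial: D(k) \to k$ made $(k, D(k), D(k^{2}), \ldots, \partial) \models {\tt \Delta^{1}_{1}-BL}_{0}$. By Proposition~\ref{HPPA:prop:theinjection}(b), the binary operation $s_{2}: k^{2}\to k$ defined by $s_{2}(x,y) = \partial(\{x,y\})$ has graph in $D(k^{3})$, hence is $k$-definable. Basic~Law~V forces every fiber $s_{2}^{-1}(z)$ to be contained in the two-element set $\{(x_{0},y_{0}),(y_{0},x_{0})\}$ for the unique unordered pair $\{x_{0},y_{0}\}$ satisfying $\partial\{x_{0},y_{0}\}=z$, so all fibers of $s_{2}$ are finite and thus of o-minimal dimension $0$. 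The o-minimal dimension theorem (van~den~Dries, Chapter~4, Proposition~1.5) then gives $2 = \dim(k^{2}) = \dim(s_{2}(k^{2})) \leq \dim(k) = 1$, a contradiction. The main conceptual step is substituting this o-minimal dimension inequality for the role Ax's theorem plays in the algebraically closed analogue (cf.~Theorem~\ref{hppa:coolacfthm}); the remaining work is the effective computation of $\theta \mapsto \theta'$ from Proposition~\ref{HPPA:prop:defnbound} and a routine verification of the hypotheses of Theorem~\ref{HPPA:thm:metatheorem}(iv).
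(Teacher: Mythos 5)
Your proof is correct and follows essentially the same route as the paper's: $\#X$ is built from the invariants $(\dim X, E(X))$, Hume's Principle comes from Proposition~\ref{HPPA:prop:defnbiject}, computable uniform definability from the bounds and definability clauses of Proposition~\ref{HPPA:prop:defnbound}, ${\tt \Sigma^{1}_{1}-PH}_{0}$ from Theorem~\ref{HPPA:thm:metatheorem}(iv) together with Proposition~\ref{HPPA:rmk:rcfdsf}, and $\neg{\tt \Pi^{1}_{1}-HP}_{0}$ from the fact that $\mathrm{rng}(\#)\subseteq\mathbb{Z}$ is $\Sigma^{1}_{1}$-definable but not $k$-definable by o-minimality. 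The only (harmless) variations are your parity encoding of $(\dim,E)$ in place of the paper's recursive pairing $\langle\cdot,\cdot\rangle:\mathbb{Z}^{2}\rightarrow\mathbb{Z}$, and, for the final clause, your appeal to the fiber-dimension theorem for definable maps with finite fibers where the paper instead restricts $s_{2}$ to the $2$-dimensional set $\{(x,y):x<y\}$, observes that Basic Law~V makes this restriction injective, and applies the left-to-right direction of Proposition~\ref{HPPA:prop:defnbiject} to the resulting definable bijection onto its range.
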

\begin{proof}
Since $k$ is a field of characteristic zero, the prime field of $k$ is $\mathbb{Q}$ and the integers $\mathbb{Z}$ are hence embedded into $k$ via $\mathbb{Q}$. Choose a recursive bijection  $\langle \cdot, \cdot\rangle: \mathbb{Z}^{2}\rightarrow \mathbb{Z}$. Using this embedding and this bijection, define $\#:D(k)\rightarrow k$ by $\#X=\langle \dim(X), E(X)\rangle$. Then by  Proposition~\ref{HPPA:prop:defnbiject}, the structure $(k, D(k), D(k^{2}), \ldots, \#)$ is a model of Hume's~Principle. To apply Theorem~\ref{HPPA:thm:metatheorem}~(iii)-(iv), we need to show that $\#: D(k)\rightarrow k$ is computably uniformly definable. So suppose that $\theta(x, \overline{y})$ is an $\emptyset$-formula with non-empty set $\overline{y}$ of parameter variables. Then by Proposition~\ref{HPPA:prop:defnbound}, from the formula $\theta(x,\overline{y})$ we can uniformly compute a positive integer $N_{\theta}>0$ such that
\begin{equation}
k\models \forall \; \overline{b} \; [\left| \dim(\theta(\cdot, \overline{b})\right|, \left| E(\theta(\cdot, \overline{b}))\right|<N_{\theta}]
\end{equation}
as well as $\emptyset$-formulas defining the sets $\{\overline{b}: \dim(\theta(\cdot, \overline{b})=n\}$ and $\{\overline{b}: E(\theta(\cdot, \overline{b}))=n\}$.
Then for each such formula $\theta(x,\overline{y})$ we define the following $\emptyset$-formula $\theta^{\prime}(x,\overline{y})$ as follows:
\begin{equation}
\theta^{\prime}(x,\overline{y})\equiv \bigvee_{i=0}^{N_{\theta}} \bigvee_{j=0}^{N_{\theta}} [\dim(\theta(\cdot, \overline{y}))=i \; \& \; E(\theta(\cdot, \overline{y}))= j]\rightarrow x = \langle i,j\rangle
\end{equation}
Hence, by definition, we have that for any $\overline{a}$
\begin{equation}
\{\#(\theta(\cdot, \overline{a}))\} = \{c: k\models \theta^{\prime}(c, \overline{a})\}
\end{equation}
Hence, by Theorem~\ref{HPPA:thm:metatheorem}~(iii)-(iv) and Proposition~\ref{HPPA:rmk:rcfdsf}, we have that $(k, D(k),D(k^{2}), \ldots, \#)$ is a model of ${\tt \Sigma^{1}_{1}-PH}_{0}$. Further, since the set $\mathrm{rng}(\#)=\mathbb{Z}$ is definable by a $\Sigma^{1}_{1}$-formula in the structure $(k, D(k), D(k^{2}), \ldots, \#)$ but is {\it not} definable in $k$ since $k$ is o-minimal, we have that $(k, D(k),D(k^{2}), \ldots, \#)$ is a model of $\neg {\tt \Pi^{1}_{1}-HP}_{0}$.

Now let us note why there is {\it no} function $\partial:D(k)\rightarrow k$ such that $(k, D(k), D(k^{2}), \ldots, \partial)$ is a model of ${\tt \Delta^{1}_{1}-BL}_{0}$. If there was such a function, then by Proposition~\ref{HPPA:prop:theinjection},  there would be a function $s:k^{2}\rightarrow k$ whose graph was in $D(k^{2})$ and which satisfied $s(x,y)=\partial(\{x,y\})$. Consider the definable set $X=\{(x,y)\in k^{2}: x<y\}$, and note that $\mathrm{dim}(X)=2$. Then $s\upharpoonright X: X\rightarrow k$ is an injection. For, suppose $s(x,y)=s(x^{\prime},y^{\prime})$ for  $(x,y),(x^{\prime},y^{\prime})\in X$. Then $\partial(\{x,y\})=\partial(\{x^{\prime},y^{\prime}\})$ and $x<y$ and $x^{\prime}<y^{\prime}$. Then by Basic Law~V, $\{x,y\}=\{x^{\prime},y^{\prime}\}$ and $x<y$ and $x^{\prime}<y^{\prime}$. Then $x=x^{\prime}$ and $y=y^{\prime}$. Hence, in fact, $s\upharpoonright X: X\rightarrow k$ is an injection. Then trivially $s\upharpoonright X: X\rightarrow \mathrm{rng}(s\upharpoonright X)$ is a bijection whose graph is in $D(k^{2})$. Then by the left-to-right direction of Proposition~\ref{HPPA:prop:defnbiject}, it would follow that 
\begin{equation}
2=\mathrm{dim}(X)=\mathrm{dim}( \mathrm{rng}(s\upharpoonright X))\leq \mathrm{dim}(k)=1
\end{equation}
which is a contradiction.
\end{proof}

\begin{rmk}
It is our claim that all of the results quoted and proved in this subsection can be proven in ${\tt ACA}_{0}$ for o-minimal structures $M$ with ${\tt ACA}_{0}$-provable quantifier-elimination, such as real-closed fields (cf. Marker \cite{Marker2006ac} Theorem~2.3 p.~10, Simpson~\cite{Simpson2009aa} Lemma~II.9.6 p.~98). The reason for this is that~(i) the proofs from van den Dries \cite{Dries1998} all concern properties of definable sets, as opposed to properties of the defining formula, and~(ii) the proofs from  van den Dries \cite{Dries1998} all proceed by induction on the cartesian power of the definable set. It is worthwhile to say a little bit more about each of these points.

In regard to~(i), the proofs in this section from van den Dries \cite{Dries1998} are all concerned with properties of a definable set $X$, so that the definable set $X$ has the property regardless of which particular formula is used to define $X$. For instance, the property of $X$'s being~a~cell has this feature, since a definable set $X\subseteq M$ is e.g. an interval or a point regardless of whether the formula $\varphi$ or the formula $\psi$ is being used to define it (where $\varphi$ and $\psi$ are two formulas that do in fact define $X$). By the same token, the proofs in this section from van den Dries \cite{Dries1998} are {\it not} concerned with the syntactic complexity of given formulas, for instance, whether or not they are $\Pi^{0}_{3}$-formulas or $\Pi^{0}_{4}$-formulas. Hence, if $M$ has quantifier-elimination, then for the purposes of the proofs in this section from van den Dries \cite{Dries1998}, we can take the quantifier-free formulas as representatives for the definable sets. For instance, in proving the Cell Decomposition Theorem in this manner, we would in fact prove that e.g. for every finite sequence of quantifier-free formulas $\varphi_{1}(\overline{x}), \ldots, \varphi_{k}(\overline{x})$ in $m$-free variables, there is a quantifier-free decomposition of $M^{m}$ partitioning each of the $\varphi_{i}(\overline{x})$.

In regard to~(ii), the proofs in this section from van den Dries \cite{Dries1998} all proceed by induction, where it is first shown that the definable subsets of $M$ have a given property, and then it is shown that if the definable subsets of $M^{n}$ have a given property, then the definable subsets of $M^{n+1}$ have this given property. Given our discussion in the previous paragraph, when proving these theorems in ${\tt ACA}_{0}$, we would in fact prove that the quantifier-free formulas $\varphi(x)$ have a given property, and that if the quantifier-free formulas $\varphi(x_{1}, \ldots, x_{n})$ have a given property, then the quantifier-free formulas $\varphi(x_{1}, \ldots, x_{n+1})$ have a given property. Since ${\tt ACA}_{0}$ has the mathematical induction axiom for all sets $X$, it suffices to note that ${\tt ACA}_{0}$ has enough comprehension to show that the sets $X$ on which it is doing induction exist. Here it suffices to note that the proofs in this section from van den Dries \cite{Dries1998} all concern properties of the definable sets that can be expressed by~(iii) finitely many quantifiers over quantifier-free definable sets and by~(iv) finitely many quantifiers over the structure $M$. For instance, to reiterate the point made in the last paragraph, in proving the Cell Decomposition Theorem in this fashion, we must show that for every $m$ and every finite sequence of quantifier-free formulas $\varphi_{1}(\overline{x}), \ldots, \varphi_{k}(\overline{x})$ in $m$-free variables, there is a quantifier-free decomposition of $M^{m}$ partitioning each of the $\varphi_{i}(\overline{x})$. In terms of~(iii), this involves a universal quantifier over quantifier-free definable sets followed by an existential quantifier over quantifier-free definable sets. In terms of~(iv), this involves a universal quantifier to say that e.g. the cells in the decomposition are disjoint and another universal quantifier to say that e.g. $\varphi_{i}(\overline{x})$ can be written as a finite union of pairwise disjoint cells in the decomposition. Since the number of quantifiers in~(iii) and~(iv) is fixed in advance, ${\tt ACA}_{0}$ can prove that the set on which one is doing induction exists. In this way, the proofs from van den Dries \cite{Dries1998} can be translated word-for-word into proofs in ${\tt ACA}_{0}$ for o-minimal structures $M$ which have ${\tt ACA}_{0}$-provable  quantifier-elimination, such as real-closed fields.
\end{rmk}

\begin{cor}\label{HPPA:cor:thebomb} ${\tt \Sigma^{1}_{1}-PH}_{0}<_{\mathrm{I}} {\tt ACA}_{0}$.
\end{cor}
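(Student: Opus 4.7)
The plan is to apply Proposition~\ref{HPPA:conpropa} in the form of equation~(\ref{HPPA:eqn:theonlyway}), with $T_{1} = {\tt ACA}_{0}$ and $T_{0} = {\tt \Sigma^{1}_{1}-PH}_{0}$. Both hypotheses hold: ${\tt ACA}_{0}$ is finitely axiomatizable (as noted after Proposition~\ref{HPPA:conpropa}) and ${\tt \Sigma^{1}_{1}-PH}_{0}$ is a computable theory in a computable signature. So it suffices to establish (a)~${\tt \Sigma^{1}_{1}-PH}_{0}\leq_{\mathrm{I}} {\tt ACA}_{0}$ and (b)~${\tt ACA}_{0}\vdash \mathrm{Con}({\tt \Sigma^{1}_{1}-PH}_{0})$.

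Both items will follow from formalizing the proof of Theorem~\ref{HPPA:thm:thebomb} inside ${\tt ACA}_{0}$. Working in an arbitrary model $\mathcal{M}\models {\tt ACA}_{0}$, one first builds a countable recursively saturated model $k$ of the theory of real-closed fields; this Henkin-style construction succeeds in ${\tt ACA}_{0}$ because the theory of real-closed fields is consistent, complete, and has ${\tt ACA}_{0}$-provable quantifier elimination (cf. Marker~\cite{Marker2006ac} Theorem~2.3 and Simpson~\cite{Simpson2009aa} Lemma~II.9.6). One then carries out the construction of the function $\#:D(k)\rightarrow k$ given in the proof of Theorem~\ref{HPPA:thm:thebomb}. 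As detailed in the remark preceding this corollary, the prerequisite results on cell decomposition, dimension, and Euler characteristic from van den Dries~\cite{Dries1998}, together with the existence of definable skolem functions and Proposition~\ref{HPPA:prop:defnbiject}, can all be proved inside ${\tt ACA}_{0}$: each concerns properties of definable sets identified with their quantifier-free defining formulas, and the inductions on cartesian power involve only a bounded number of quantifiers ranging over quantifier-free formulas and over $M$ itself.

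This formalized construction is uniform in the underlying model $\mathcal{M}$, so the map $\mathcal{M}\mapsto (k, D(k), D(k^{2}), \ldots, \#)$ gives the interpretation demanded by (a). For (b), observe that ${\tt ACA}_{0}$ thereby proves the existence of a structure satisfying ${\tt \Sigma^{1}_{1}-PH}_{0}$, which by the formalized completeness theorem (cf. Simpson~\cite{Simpson2009aa} Theorem~IV.3.3) is equivalent to $\mathrm{Con}({\tt \Sigma^{1}_{1}-PH}_{0})$. Together (a) and (b) complete the argument via equation~(\ref{HPPA:eqn:theonlyway}). The hard part is the formalization claim itself: although the preceding remark argues at length that van den Dries's development of o-minimality goes through in ${\tt ACA}_{0}$, a rigorous verification requires a careful case-by-case inspection of each induction in that development to confirm that only arithmetic comprehension is invoked and that the auxiliary objects (cell decompositions, skolem functions, dimension, Euler characteristic) can be constructed by arithmetic means.
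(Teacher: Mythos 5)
Your proposal is correct and follows essentially the same route as the paper: both apply Proposition~\ref{HPPA:conpropa} after noting that ${\tt ACA}_{0}$ proves the existence of a countable recursively saturated real-closed field and that the construction of Theorem~\ref{HPPA:thm:thebomb} can be formalized in ${\tt ACA}_{0}$ thanks to ${\tt ACA}_{0}$-provable quantifier elimination. The only cosmetic difference is that you spell out the two hypotheses of equation~(\ref{HPPA:eqn:theonlyway}) separately, whereas the paper leaves the $\leq_{\mathrm{I}}$ half implicit in the same formalized construction.
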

\begin{proof}
This follows from Proposition~\ref{HPPA:conpropa}, the fact that ${\tt ACA}_{0}$ proves the existence of recursively saturated elementary extensions (cf. Simpson~\cite{Simpson2009aa} Lemma~IX.4.2 pp.~379), and the fact that the proof of the previous theorem can be formalized in ${\tt ACA}_{0}$ for o-minimal expansions of real-closed fields with ${\tt ACA}_{0}$-provable quantifier-elimination, such as real-closed fields.
\end{proof}

\subsection{Application to Separably Closed Fields of Finite Imperfection Degree}\label{hppa:lastsec}

\begin{rmk}
In the two previous subsections, we applied Theorem~\ref{HPPA:thm:metatheorem}  to construct models of ${\tt \Delta^{1}_{1}-HP}_{0}$ on top~of various fields, such as certain algebraically closed fields and o-minimal expansions of real-closed fields. We noted in both Theorem~\ref{hppa:coolacfthm} and Theorem~\ref{HPPA:thm:thebomb} that this construction  {\it cannot} result in models of  ${\tt \Delta^{1}_{1}-BL}_{0}$. Hence, this raises the question of whether there is some natural field such that one can  apply Theorem~\ref{HPPA:thm:metatheorem}  to it to obtain models of  ${\tt \Delta^{1}_{1}-BL}_{0}$. In this section, we isolate certain model-theoretic conditions on a field (such a uniform elimination of imaginaries) which suffice to ensure that such a construction can succeed (cf. Theorem~\ref{HPPA:thmasdfadsf-alt}). Then we note that separably closed fields of finite imperfection degree satisfy these model-theoretic conditions (cf. Theorem~\ref{HPPA:thmasdfadsf}).
\end{rmk}

\begin{defn}\label{hppa:index:uniformelimination}
Suppose that $M$ is an $L$-structure. Then $M$ has {\it uniform elimination of imaginaries} if for every  $\emptyset$-definable equivalence relation $E$ on $M^{n}$ there is an $\emptyset$-definable function $f: M^{n}\rightarrow M^{m}$ for some $m>0$ such that \index{Uniform Elimination of Imaginaries, Definition~\ref{hppa:index:uniformelimination}}
\begin{equation}
\overline{z}E\overline{y}\Longleftrightarrow f(\overline{z})=f(\overline{y})
\end{equation}
\end{defn}

\begin{defn}\label{hppa:index:uniformelimination3333}
Suppose that $M$ is an $L$-structure. Then $M$ has a {\it $\emptyset$-definable pairing function} if there is an $\emptyset$-definable injection $\iota: M^{2}\rightarrow M$.\index{Definable pairing function~\ref{hppa:index:uniformelimination3333}}
\end{defn}

\begin{prop}\label{HPPA:prop:thehehdaf}
Suppose that $M$ has uniform elimination of imaginaries and an $\emptyset$-definable pairing function. Then for every $\emptyset$-definable equivalence relation $E$ on $M^{n}$ there is an $\emptyset$-definable function $f: M^{n}\rightarrow M$ such that
\begin{equation}
\overline{z}E\overline{y}\Longleftrightarrow f(\overline{z})=f(\overline{y})
\end{equation}
\end{prop}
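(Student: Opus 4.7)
The plan is to combine the uniform elimination of imaginaries (which gives a function into some Cartesian power $M^m$) with an iterated definable pairing (to collapse $M^m$ down into $M$). Since both constructions preserve $\emptyset$-definability and the pairing function is injective, composition will yield the desired function.

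First, I would apply uniform elimination of imaginaries to the $\emptyset$-definable equivalence relation $E$ on $M^n$ to obtain some $m > 0$ and an $\emptyset$-definable function $g : M^n \to M^m$ such that $\overline{z} E \overline{y} \Longleftrightarrow g(\overline{z}) = g(\overline{y})$.

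Second, using the $\emptyset$-definable pairing function $\iota : M^2 \to M$, I would inductively build an $\emptyset$-definable injection $\iota_m : M^m \to M$ for each $m \geq 1$. Set $\iota_1 = \mathrm{id}_M$, $\iota_2 = \iota$, and in general
\begin{equation}
\iota_{k+1}(x_1, \ldots, x_{k+1}) = \iota(\iota_k(x_1, \ldots, x_k), x_{k+1}).
\end{equation}
A routine induction on $k$ shows that each $\iota_k$ is $\emptyset$-definable (since $\iota$ is) and injective: if $\iota_{k+1}(\overline{x}) = \iota_{k+1}(\overline{y})$, then injectivity of $\iota$ forces $\iota_k(x_1,\ldots,x_k) = \iota_k(y_1,\ldots,y_k)$ and $x_{k+1} = y_{k+1}$, and the inductive hypothesis yields $\overline{x} = \overline{y}$.

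Third, I would set $f = \iota_m \circ g : M^n \to M$. Then $f$ is $\emptyset$-definable as a composition of $\emptyset$-definable functions, and the injectivity of $\iota_m$ yields the chain of equivalences
\begin{equation}
\overline{z} E \overline{y} \Longleftrightarrow g(\overline{z}) = g(\overline{y}) \Longleftrightarrow \iota_m(g(\overline{z})) = \iota_m(g(\overline{y})) \Longleftrightarrow f(\overline{z}) = f(\overline{y}),
\end{equation}
as required. There is no genuine obstacle here; the proposition is essentially the observation that the only gap between uniform elimination of imaginaries into $M^m$ and uniform elimination into $M$ is a definable coding of tuples by elements, which the hypothesis of a $\emptyset$-definable pairing function supplies directly.
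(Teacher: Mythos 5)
Your proposal is correct and follows essentially the same route as the paper: iterate the pairing function to build $\emptyset$-definable injections $M^{m}\rightarrow M$ and compose with the function supplied by uniform elimination of imaginaries. The paper's proof is the same construction (its $j_{k}$ is your $\iota_{k}$), with the injectivity induction left implicit.
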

\begin{proof}
By hypothesis, $M$ has an $\emptyset$-definable pairing function $\iota:M^{2}\rightarrow M$. Then define injections $j_{n}:M^{n}\rightarrow M$ recursively as follows:
\begin{align}
& j_{1}(x_{1}) = x_{1} \\
& j_{2}(x_{1}, x_{2}) = \iota(x_{1}, x_{2}) \\
& j_{n+1}(x_{1}, \ldots x_{n}, x_{n+1})= \iota(j_{n}(x_{1}, \ldots, x_{n}), x_{n+1})
\end{align}
Finally, given a function $f: M^{n}\rightarrow M^{m}$ for some $m>0$ which witnesses the uniform elimination of imaginaries, simply define $f^{\ast}=j_{m}\circ f$.
\end{proof}

\begin{prop}\label{HPPA:eqn:disjointsss}
Suppose that $M$ has an $\emptyset$-definable pairing function and that $\mathrm{dcl}(\emptyset)$ has at least two elements. Then there is a uniformly computable sequence of injections $\iota_{n}:M\rightarrow M$ such that $n\neq m$ implies $\mathrm{rng}(\iota_{n})\cap \mathrm{rng}(\iota_{m})=\emptyset$.
\end{prop}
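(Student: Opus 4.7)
The plan is to use the two distinct $\emptyset$-definable elements of $\mathrm{dcl}(\emptyset)$, call them $a_0$ and $a_1$, as two distinct ``tags'' which the pairing function $\iota$ can assemble into distinguishable prefixes. Since the map $\iota$ and the elements $a_0, a_1$ are all $\emptyset$-definable, every term built from them is itself $\emptyset$-definable, and a formula computing any particular term can be written down uniformly in its syntactic description.

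More concretely, I would set $\iota_0(x) = \iota(a_1, x)$ and recursively set $\iota_{n+1}(x) = \iota(a_0, \iota_n(x))$, so that $\iota_n(x) = \iota(a_0, \iota(a_0, \ldots, \iota(a_0, \iota(a_1, x))\ldots))$ with exactly $n$ copies of $a_0$ followed by the ``terminator'' $a_1$. The formula $\varphi_n(x, y)$ asserting $y = \iota_n(x)$ is computable uniformly in $n$ from the fixed formulas defining $\iota$, $a_0$, and $a_1$, so the sequence $\{\iota_n\}_{n<\omega}$ is uniformly computable in the required sense.

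Injectivity of each $\iota_n$ follows by a short induction on $n$ from the injectivity of $\iota$: if $\iota_n(x) = \iota_n(x')$ then one peels off the $n$ leading $a_0$ tags via injectivity, reducing to $\iota(a_1, x) = \iota(a_1, x')$ and hence $x = x'$. For disjointness, suppose $\iota_n(x) = \iota_m(y)$ with $n < m$. Peeling off $n$ leading copies of $\iota(a_0, \cdot)$ from both sides using injectivity of $\iota$ reduces this to $\iota_0(x) = \iota_{m-n}(y)$, i.e.\ $\iota(a_1, x) = \iota(a_0, \iota_{m-n-1}(y))$, and a final application of injectivity forces $a_1 = a_0$, contradicting the choice of $a_0, a_1$.

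The only potential subtlety is keeping the two ``layers'' of injectivity cleanly separated: the internal injectivity of each $\iota_n$ and the external disjointness between different $\iota_n$ and $\iota_m$. Both collapse to the single fact that $\iota$ is an injection together with $a_0 \neq a_1$, so the argument is entirely elementary; no further appeal to uniform elimination of imaginaries (Definition~\ref{hppa:index:uniformelimination}) or to Proposition~\ref{HPPA:prop:thehehdaf} is needed.
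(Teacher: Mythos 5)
Your proof is correct and follows essentially the same approach as the paper: both construct the $\iota_{n}$ by iterated application of the pairing function with prefixes drawn from the two elements of $\mathrm{dcl}(\emptyset)$, and both derive disjointness by peeling off outer applications of $\iota$ via its injectivity until the two distinct tags are forced to coincide. Your unary-count-plus-terminator encoding ($n$ copies of $a_{0}$ followed by a single $a_{1}$) is in fact slightly cleaner than the paper's alternating scheme, whose disjointness induction requires a case split on the parity of $n$.
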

\begin{proof}
Suppose that $\iota:M^{2}\rightarrow M$ is the $\emptyset$-definable injection and that $b,c\in \mathrm{dcl}(\emptyset)$ are distinct. Then define injections $\iota_{n}: M\rightarrow M$ recursively as follows:
\begin{align}
& \iota_{0}(x)=\iota(c, \iota(c,x)) \\
& \iota_{2s+1}(x) = \iota(b, \iota_{2s}(x)) \\
& \iota_{2s+2}(x) = \iota(c, \iota_{2s+1}(x))
\end{align}
By construction, all the functions $\iota_{n}:M\rightarrow M$ are injections.  So it remains to show by induction on $m\leq n$ that $\mathrm{rng}(\iota_{n})\cap \mathrm{rng}(\iota_{m})=\emptyset$ when $m\neq n$. Clearly this holds for $n=0$. So suppose it holds for $n$. If $n$ is even then $n=2s$ and $n+1=2s+1$. Suppose that $m<n+1$ is such that $\mathrm{rng}(\iota_{n+1})\cap \mathrm{rng}(\iota_{m})\neq \emptyset$. Then there are $x,y$ such that $\iota_{n+1}(x)=\iota_{m}(y)$. Expanding this equation on the left, we have $\iota(b, \iota_{2s}(x)) = \iota_{2s+1}(x) = \iota_{n+1}(x) = \iota_{m}(y)$. Then by construction, $\iota_{m}(y)=\iota(b, \iota_{2t}(y))$ for some $2t+1=m$. Then $\iota_{m-1}(y)=\iota_{2t}(y)=\iota_{2s}(x)=\iota_{n}(x)$, which contradicts our induction hypothesis on $n$. On the other hand, if $n$ is odd then $n=2s+1$ and $n+1=2s+2$. Suppose that $m<n+1$ is such that $\mathrm{rng}(\iota_{n+1})\cap \mathrm{rng}(\iota_{m})\neq \emptyset$. Then there are $x,y$ such that $\iota_{n+1}(x)=\iota_{m}(y)$. Then expanding this equation on the left we have $\iota(c, \iota_{2s+1}(x)) = \iota_{2s+2}(x)=\iota_{n+1}(x)=\iota_{m}(y)$. There are then two cases. First suppose that $m=0$. Then by construction $\iota_{m}(y)=\iota(c,\iota(c,y))$. Then $\iota(b, \iota_{2s}(x))=\iota_{2s+1}(x)=\iota(c,y)$, and so $b=c$, which is a contradiction. Second, suppose that $m>0$. Then by construction, $\iota_{m}(y)=\iota(c, \iota_{2t+1}(y))$ for some $2t+2=m$. Then $\iota_{m-1}(y)=\iota_{2t+1}(y)=\iota_{2s+1}(x)=\iota_{n}(x)$, contradicting our induction hypothesis on $n$.
\end{proof}

\begin{rmk}
Prior to proving the following theorem, let us here attempt to sketch the intuitive proof~idea. Suppose that $M$ has uniform elimination of imaginaries and a $\emptyset$-definable pairing function. Then given a formula $\theta(x,\overline{y})$ with a set of parameter variables $\overline{y}$ of length $\ell>0$, these assumptions yield an $\emptyset$-definable function $\partial_{\theta}: M^{\ell}\rightarrow M$ such that 
\begin{equation}
M\models [\forall \; x \; \theta(x,\overline{a})\leftrightarrow \theta(x,\overline{b})] \Longleftrightarrow \partial_{\theta}(\overline{a})=\partial_{\theta}(\overline{b})
\end{equation}
Intuitively, the idea is to build a model $(M, D(M), D(M^{2}), \ldots, \partial)$ of Basic Law V by setting 
\begin{equation}
\partial(\theta(\cdot, \overline{a}))=\partial_{\theta}(\overline{a})
\end{equation}
However, there are two potential problems. First, such a function will not be well-defined, since a given set $X\in D(M)$ will be defined by many formulas $\theta_{1}(\cdot, \overline{a}), \theta_{2}(\cdot, \overline{b}), \ldots$. Second, it is not obvious that such a function will be injective, which is required by Basic Law~V. Overcoming these problems is the only thing that makes the below proof non-trivial. In particular, the first problem is overcome simply by fixing beforehand an enumeration of the all potential defining formulas $\theta_{1}(x,\overline{y}), \ldots, \theta_{n}(x,\overline{y}), \ldots$, and then defining $\partial(X)$ to be $\partial_{\theta_{n}}(\overline{a})$ for the first $\theta_{n}(x,\overline{a})$ in the enumeration that defines $X$ for some tuple~$\overline{a}$. The second problem is overcome by including additional hypotheses on $M$ which ensure that we can partition $M=\bigsqcup_{n} M_{n}$ and likewise ensure that $\partial_{\theta_{n}}(\overline{a})$ always takes values in $M_{n}$. The previous proposition was in effect devoted to explaining why the hypothesis of a  $\emptyset$-definable pairing function with $\left|\mathrm{dcl}(\emptyset)\right|>1$ ensure that we can construct such a partition.
\end{rmk}

\begin{thm}\label{HPPA:thmasdfadsf-alt}
Suppose that $M$ is a $Th(M)$-computably saturated structure such that (i)~$M$ has uniform elimination of imaginaries, (ii)~$M$ has an $\emptyset$-definable pairing function, and (iii) $\mathrm{dcl}(\emptyset)$ has at least two elements. Then there is a $Th(M)$-computably uniformly definable function $\partial:D(M)\rightarrow M$ such that $(M, D(M), D(M^{2}), \ldots, \partial)$ is a model of ${\tt \Delta^{1}_{1}-BL}_{0}$.
\end{thm}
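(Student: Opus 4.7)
The plan is to use uniform elimination of imaginaries together with the $\emptyset$-definable pairing function to assign, to each parameterised formula $\theta_n(x,\bar y)$, a $\emptyset$-definable ``canonical value'' map on parameters, and then to define $\partial(X)$ to be the value coming from the \emph{minimal} formula-index defining $X$, with the disjoint-range trick of Proposition~\ref{HPPA:eqn:disjointsss} used to keep different indices apart. First, fix a computable enumeration $\theta_1(x,\bar y_1), \theta_2(x,\bar y_2), \ldots$ of all $L$-formulas in $x$ with a non-empty parameter tuple $\bar y_n$ (padding with a dummy parameter variable if necessary). For each $n$, the relation $\bar a \sim_n \bar b \iff M \models \forall x\,(\theta_n(x,\bar a) \leftrightarrow \theta_n(x,\bar b))$ is a $\emptyset$-definable equivalence relation on $M^{\ell_n}$, so by hypotheses~(i)-(ii) and Proposition~\ref{HPPA:prop:thehehdaf} there is an $\emptyset$-definable function $f_n : M^{\ell_n} \to M$ whose fibres are exactly the $\sim_n$-classes; moreover, since $Th(M)$ decides the truth of first-order sentences, a formula defining such an $f_n$ can be found $Th(M)$-computably uniformly in~$n$. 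By hypotheses~(ii)-(iii) and Proposition~\ref{HPPA:eqn:disjointsss}, there is a $Th(M)$-computable sequence of $\emptyset$-definable injections $\iota_n : M \to M$ with pairwise disjoint ranges.

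Next, I would define $\partial : D(M) \to M$ by setting $\partial(X) = \iota_{m}(f_m(\bar a))$, where $m = m(X)$ is the least index such that $X = \theta_m(\cdot,\bar a)$ for some $\bar a \in M^{\ell_m}$. This is independent of the choice of witness $\bar a$ because any two witnesses are $\sim_m$-equivalent and $f_m$ is constant on $\sim_m$-classes. Injectivity is immediate: if $\partial(X) = \partial(Y)$, then disjointness of the ranges of the $\iota_n$ forces $m(X) = m(Y) = m$, and then equality of $f_m$-values forces $\sim_m$-equivalence of the witnesses, hence $X = Y$. By Proposition~\ref{HPPA:prop:truethat}, this already yields that $N := (M, D(M), D(M^2), \ldots, \partial)$ is a model of~${\tt ABL}_0$.

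To show that $\partial$ is $Th(M)$-computably uniformly definable in the sense of Definition~\ref{hppa:def:dafdsaf898999}, I would, given $\theta_n$, produce the formula
\[
\theta_n'(x,\bar y) \;\equiv\; \bigvee_{m \le n}\Bigl[\,\bigwedge_{k<m}\neg\exists \bar w_k\,\forall u\,(\theta_k(u,\bar w_k) \leftrightarrow \theta_n(u,\bar y)) \;\wedge\; \exists \bar z\bigl(\forall u\,(\theta_m(u,\bar z) \leftrightarrow \theta_n(u,\bar y)) \wedge x = \iota_m(f_m(\bar z))\bigr)\Bigr].
\]
This is a finite first-order $L$-formula, and the map $\theta_n \mapsto \theta_n'$ is $Th(M)$-computable because the enumeration of the $\theta_k$ and the formulas for the $f_m$ and $\iota_m$ are all $Th(M)$-computable. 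By construction, $\{b : M \models \theta_n'(b,\bar a)\} = \{\partial(\theta_n(\cdot,\bar a))\}$ for every $\bar a$. Applying Theorem~\ref{HPPA:thm:metatheorem}(iii) with $B = Th(M)$ then upgrades $N \models {\tt ABL}_0$ to $N \models {\tt \Delta^{1}_{1}-BL}_{0}$.

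The main obstacle is engineering $\partial$ so that it is simultaneously well-defined, injective, and uniformly definable by a \emph{finite} first-order formula. The minimal-index selection handles well-definedness, the disjoint-range trick from Proposition~\ref{HPPA:eqn:disjointsss} handles injectivity across different indices, and the fact that each $\theta_n'$ only has to quantify over witnesses for the finitely many $k \le n$ keeps the formula finite; the key subtlety is that all of the auxiliary data ($\{f_n\}$, $\{\iota_n\}$, and the enumeration) must be uniformly recoverable from $Th(M)$, which is exactly what allows the application of the computably uniform case of Theorem~\ref{HPPA:thm:metatheorem}.
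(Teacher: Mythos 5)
Your proposal is correct and follows essentially the same route as the paper's proof: the same enumeration of formulas, the same use of Proposition~\ref{HPPA:prop:thehehdaf} to get the $\emptyset$-definable functions $f_n$ collapsing the equivalence relations $\overline{y}E_n\overline{z}$, the same disjoint-range injections $\iota_n$ from Proposition~\ref{HPPA:eqn:disjointsss}, and the same least-index selection (your explicit minimal-$m$ disjunction is just the paper's partition $U_{n,1},\ldots,U_{n,n}$ of $M^{\ell_n}$ written out), followed by the same appeal to Theorem~\ref{HPPA:thm:metatheorem}. The well-definedness and injectivity arguments match the paper's as well.
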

\begin{proof}
To apply Theorem~\ref{HPPA:thm:metatheorem}~(iii)-(iv), we need to define an injection $\partial: D(M)\rightarrow M$ that is $Th(M)$-computably uniformly definable. Choose a fixed computable enumeration of the $\emptyset$-formulas $\theta(x,\overline{y})$ with non-empty set $\overline{y}$ of parameter variables of length $\ell_{n}$ as $\theta_{1}(x,\overline{y}), \ldots, \theta_{n}(x,\overline{y}), \ldots$. For each $n>0$ and $0<m\leq n$, consider the following $\emptyset$-definable sets $U_{n,m}\subseteq M^{\ell_{n}}$, where again $\ell_{n}$ is the length of the tuple $\overline{y}$ in $\theta_{n}(x, \overline{y})$:
\begin{align}
& U_{1,1}=M^{\ell_{1}} \\
& U_{2,1}=\{\overline{a}\in M^{\ell_{2}}: \exists \; \overline{b}\in M^{\ell_{1}} \; [\forall \; x \; \theta_{2}(x, \overline{a})\leftrightarrow \theta_{1}(x, \overline{b})]\} \\
& U_{2,2}=M^{\ell_{2}}\setminus U_{2,1} \\
& U_{3,1} =\{\overline{a}\in M^{\ell_{3}}: \exists \; \overline{b}\in M^{\ell_{1}} \; [\forall \; x \; \theta_{3}(x, \overline{a})\leftrightarrow \theta_{1}(x, \overline{b})]\} \\
& U_{3,2} =\{\overline{a}\in M^{\ell_{3}}: \exists \; \overline{b}\in M^{\ell_{2}} \; [\forall \; x \; \theta_{3}(x, \overline{a})\leftrightarrow \theta_{2}(x, \overline{b})]\}\setminus U_{3,1} \\
& U_{3,3} = M^{\ell_{3}}\setminus (U_{3,1}\cup U_{3,2})
\end{align}
Note that for a fixed $n>0$ that the sets $U_{n,1}, \ldots, U_{n,n}$ partition $M^{\ell^{n}}$ and that the formulas defining these sets are uniformly computable from $n$. Then define $\emptyset$-definable equivalence relations on $M^{\ell_{n}}$ as follows:
\begin{equation}\label{HPPA:eqn:rhsdefining4}
\overline{y} E_{n} \overline{z} \Longleftrightarrow M\models [\forall \; x \; \theta_{n}(x, \overline{y})\leftrightarrow \theta_{n}(x,\overline{z})]
\end{equation}
Note by definition that any two elements $\overline{y}$ and $\overline{z}$ which are $E_{n}$-equivalent are in the same member of the partition $U_{n,1}, \ldots, U_{n,n}$ of $M^{\ell_{n}}$. By Proposition~\ref{HPPA:prop:thehehdaf} from $\theta_{n}(x, \overline{y})$ we can uniformly $Th(M)$-compute a $\emptyset$-definable function $f_{n}:M^{\ell_{n}}\rightarrow M$ such that 
\begin{equation}\label{HPPA:eqn:rhsdefining3}
M\models [\forall \; x \; \theta_{n}(x, \overline{y})\leftrightarrow \theta_{n}(x,\overline{z})] \Longleftrightarrow \overline{y} E_{n} \overline{z} \Longleftrightarrow f_{n}(\overline{y})=f_{n}(\overline{z})
\end{equation}
By Proposition~\ref{HPPA:eqn:disjointsss}, we can uniformly compute a sequence of injections $\iota_{n}: M\rightarrow M$ with disjoint ranges, and we can define $g_{n}=\iota_{n}\circ f_{n}$. Finally, define $\partial:D(M)\rightarrow M$ by
\begin{equation}\label{HPPA:eqn:rhsdefining1}
\partial(\theta_{n}(\cdot, \overline{a}))= c\Longleftrightarrow \bigwedge_{m=1}^{n}[\overline{a}\in U_{n,m} \rightarrow (\exists \; \overline{b}\in M^{\ell_{m}} \; \& \; \forall \; x \; \theta_{n}(x, \overline{a})\leftrightarrow \theta_{m}(x, \overline{b}) \; \& \; c=g_{m}(\overline{b}))]
\end{equation}

First let us show that $\partial:D(M)\rightarrow M$ is a well-defined function. So suppose that $\theta_{n}(\cdot, \overline{a})$ and $c$ satisfy the right-hand side of equation~(\ref{HPPA:eqn:rhsdefining1}) and that $\theta_{n^{\prime}}(\cdot, \overline{a}^{\prime})$ and $c^{\prime}$ also satisfy the right-hand side of equation~(\ref{HPPA:eqn:rhsdefining1}), and suppose that $\theta_{n}(\cdot, \overline{a})$ and $\theta_{n^{\prime}}(\cdot, \overline{a}^{\prime})$ define the same set. Then we must show that $c=c^{\prime}$. Without loss of generality, $n^{\prime}\leq n$. If $n^{\prime}=n$, then since $\theta_{n}(\cdot, \overline{a})$ and $\theta_{n^{\prime}}(\cdot, \overline{a}^{\prime})$ define the same set, we have that $\overline{a}$ and $\overline{a}^{\prime}$ are $E_{n}$-equivalent and hence are in the same set $U_{n,m}$. Then by the right-hand side of equation~(\ref{HPPA:eqn:rhsdefining1}), we have that there are $\overline{b}, \overline{b}^{\prime}\in M^{\ell_{m}}$ such that 
\begin{align}
& M\models \forall \; x \;  \theta_{m}(x, \overline{b})\leftrightarrow \theta_{n}(x, \overline{a}) \leftrightarrow \theta_{n}(x, \overline{a}^{\prime})\leftrightarrow \theta_{m}(x, \overline{b}^{\prime}) \label{HPPA:eqn:rhsdefining2}\\
& c=g_{m}(\overline{b}) \label{HPPA:eqn:rhsdefining5}\\
& c^{\prime} = g_{m}(\overline{b}^{\prime}) \label{HPPA:eqn:rhsdefining6}
\end{align}
But by equation~(\ref{HPPA:eqn:rhsdefining2}), we have that $\overline{b}$ and $\overline{b}^{\prime}$ are $E_{m}$-equivalent, and hence by equation~(\ref{HPPA:eqn:rhsdefining3}), we have that $f_{m}(\overline{b})=f_{m}(\overline{b}^{\prime})$ and so by equations~(\ref{HPPA:eqn:rhsdefining5})-(\ref{HPPA:eqn:rhsdefining6}) we have that 
\begin{equation}
c=g_{m}(\overline{b})=\iota_{m}\circ f_{m}(\overline{b})=\iota_{m}\circ f_{m}(\overline{b}^{\prime}) =g_{m}(\overline{b}^{\prime})=c^{\prime}
\end{equation}
In the case where $n^{\prime}<n$, we have that $\overline{a}\in U_{n,m}$ and $\overline{a}^{\prime}\in U_{n^{\prime},m^{\prime}}$ and so by the right-hand side of equation~(\ref{HPPA:eqn:rhsdefining1}), we have that there is $\overline{b}\in M^{\ell_{m}}, \overline{b}^{\prime}\in M^{\ell_{m^{\prime}}}$ such that 
\begin{align}
& M\models \forall \; x \;  \theta_{m}(x, \overline{b})\leftrightarrow \theta_{n}(x, \overline{a}) \leftrightarrow \theta_{n}(x, \overline{a}^{\prime})\leftrightarrow \theta_{m^{\prime}}(x, \overline{b}^{\prime}) \label{HPPA:eqn:rhsdefining20}\\
& c=g_{m}(\overline{b}) \label{HPPA:eqn:rhsdefining21}\\
& c^{\prime} = g_{m^{\prime}}(\overline{b}^{\prime}) \label{HPPA:eqn:rhsdefining22}
\end{align}
Then by equation~(\ref{HPPA:eqn:rhsdefining20}) and the definition of the sets $U_{n,m}$, we must have that $m=m^{\prime}$.  
Then by equation~(\ref{HPPA:eqn:rhsdefining20}) again, we have that $\overline{b}$ and $\overline{b}^{\prime}$ are $E_{m}$-equivalent, and, hence, by equation~(\ref{HPPA:eqn:rhsdefining3}), we have that $f_{m}(\overline{b})=f_{m}(\overline{b}^{\prime})$, and so by equations~(\ref{HPPA:eqn:rhsdefining21})-(\ref{HPPA:eqn:rhsdefining22}) we have that 
\begin{equation}
c=g_{m}(\overline{b})=\iota_{m}\circ f_{m}(\overline{b})=\iota_{m}\circ f_{m}(\overline{b}^{\prime}) =g_{m}(\overline{b}^{\prime})=c^{\prime}
\end{equation}
Therefore, $\partial:D(M)\rightarrow M$ is a well-defined function. 

Now let us show that $\partial:D(M)\rightarrow M$ is an injection. Suppose that $\theta_{n}(\cdot, \overline{a})$ and $c$ satisfy the right-hand side of equation~(\ref{HPPA:eqn:rhsdefining1}) and that $\theta_{n^{\prime}}(\cdot, \overline{a}^{\prime})$ and $c^{\prime}$ satisfy the right-hand side of equation~(\ref{HPPA:eqn:rhsdefining1}) and suppose that $c=c^{\prime}$. Then we must show that $\theta_{n}(\cdot, \overline{a})$ and $\theta_{n^{\prime}}(\cdot, \overline{a}^{\prime})$ define the same set. We have that $\overline{a}\in U_{n,m}$ and $\overline{a}^{\prime}\in U_{n^{\prime},m^{\prime}}$, and by the right-hand side of equation~(\ref{HPPA:eqn:rhsdefining1}), we have that there is $\overline{b}\in M^{\ell_{m}}, \overline{b}^{\prime}\in M^{\ell_{m^{\prime}}}$ such that 
\begin{align}
& M\models \forall \; x \; \theta_{n}(x, \overline{a})\leftrightarrow \theta_{m}(x, \overline{b}) \label{HPPA:eqn:rhsdefining29}\\
& M\models \forall \; x \; \theta_{n^{\prime}}(x, \overline{a}^{\prime})\leftrightarrow \theta_{m^{\prime}}(x, \overline{b}^{\prime}) \label{HPPA:eqn:rhsdefining30}\\
& g_{m}(\overline{b})=c=c^{\prime}=g_{m^{\prime}}(\overline{b}^{\prime})  \label{HPPA:eqn:rhsdefining31}
\end{align}
Since $g_{m}=\iota_{m}\circ f_{m}$ and since the functions $\iota_{m}$ have distinct ranges, equation~(\ref{HPPA:eqn:rhsdefining31}) implies that $m=m^{\prime}$ and since $g_{m}=\iota_{m}\circ f_{m}$ and $\iota_{m}$ is an injection, we have that equation~(\ref{HPPA:eqn:rhsdefining31}) implies that $f_{m}(\overline{b})=f_{m^{\prime}}(\overline{b}^{\prime})$, which by equation~(\ref{HPPA:eqn:rhsdefining3}) implies that $\theta_{m}(\cdot, \overline{b})$ and $\theta_{m^{\prime}}(\cdot, \overline{b}^{\prime})$ define the same set. This in turn implies with equations~(\ref{HPPA:eqn:rhsdefining29})-(\ref{HPPA:eqn:rhsdefining30}) that $\theta_{n}(\cdot, \overline{a})$ and $\theta_{n^{\prime}}(\cdot, \overline{a}^{\prime})$ define the same set, which is what we wanted to show. Hence, in fact $\partial:D(M)\rightarrow M$ is an injection.

So, $\partial:D(M)\rightarrow M$ is well-defined and indeed an injection. Note that by its very definition in equation~(\ref{HPPA:eqn:rhsdefining1}), we have that $\partial:D(M)\rightarrow M$ is $Th(M)$-computably uniformly definable. Hence, by Theorem~\ref{HPPA:thm:metatheorem}~(iii)-(iv), we have that $(M, D(M), D(M^{2}), \ldots, \partial)$ is a model of ${\tt \Delta^{1}_{1}-BL}_{0}$.
\end{proof}

\begin{defn}\label{hppa:index:sepd33399267}
Suppose that $k$ is field of characteristic $p>0$. Then $k$ is a {\it separably closed field of finite imperfection degree} if~(i) there is a finite set $B\subseteq k$ such that the set of monomials $\{b_{1}^{m_{1}}\cdots b_{e}^{m_{e}}:  0\leq m_{i}<p \; \& \; b_{1}, \ldots, b_{e}\in B\}$ is a basis for $k$ over $k^{p}$, and if~(ii) every $f\in k[x]$ such that $f^{\prime}\neq 0$ has a root in $k$.\index{Separably closed field of finite imperfection degree, Definition~\ref{hppa:index:sepd33399267}}
\end{defn}

\begin{thm}\label{HPPA:thmasdfadsf}
Suppose that $k$ is a recursively saturated separably closed field of finite imperfection degree. Then there is a computably uniformly definable function $\partial:D(k)\rightarrow k$ such that $(k, D(k), D(k^{2}), \ldots, \partial)$ is a model of ${\tt \Delta^{1}_{1}-BL}_{0}$.
\end{thm}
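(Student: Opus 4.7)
The plan is to invoke Theorem~\ref{HPPA:thmasdfadsf-alt}, reducing the task to verifying its three hypotheses for $k$ (working, where necessary, in the expansion of the language of rings by constants for a $p$-basis together with the associated $\lambda$-functions, which is harmless because such an expansion preserves recursive saturation). Since $0 \neq 1$ lie in $\mathrm{dcl}(\emptyset)$ in any field, hypothesis (iii) is immediate.

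For the $\emptyset$-definable pairing function demanded by hypothesis (ii), I exploit the imperfection. Since the imperfection degree $e$ is finite with $e \geq 1$, the field $k$ is a vector space over $k^p$ of dimension $p^e \geq 2$, with a basis given by monomials $b^{\overline{m}} = b_1^{m_1}\cdots b_e^{m_e}$ in a fixed $p$-basis $B = \{b_1,\ldots,b_e\}$. Naming $B$ and the associated $\lambda$-functions $\lambda_{\overline{m}}$ (so that $x = \sum_{\overline{m}} \lambda_{\overline{m}}(x)^p \cdot b^{\overline{m}}$ uniquely), the map
\[
\iota(x,y) \;=\; x^{p} \cdot b^{\overline{m}_1} \;+\; y^{p} \cdot b^{\overline{m}_2}
\]
for two distinct basis monomials $b^{\overline{m}_1}, b^{\overline{m}_2}$ is an $\emptyset$-definable injection $k^2 \to k$: if $\iota(x,y) = \iota(x',y')$ then uniqueness of the representation over $k^p$ forces $x^p = (x')^p$ and $y^p = (y')^p$, and since Frobenius is injective, $x=x'$ and $y=y'$.

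Hypothesis (i), uniform elimination of imaginaries for separably closed fields of finite imperfection degree (in the language enriched by the $\lambda$-functions), is a theorem of Delon; I will cite it directly. Together with the assumed recursive saturation, the three hypotheses of Theorem~\ref{HPPA:thmasdfadsf-alt} are then satisfied, and the computably uniformly definable injection $\partial: D(k) \to k$ is produced, making $(k, D(k), D(k^2), \ldots, \partial)$ a model of ${\tt \Delta^{1}_{1}-BL}_0$.

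The main obstacle is hypothesis (i): the invocation of Delon's elimination-of-imaginaries result is the substantive ingredient, and care must be taken that the statement of elimination of imaginaries being cited is the \emph{uniform} version in the sense of Definition~\ref{hppa:index:uniformelimination} (rather than, say, a version producing only a canonical parameter in $k^{\mathrm{eq}}$). Assuming this is in place, the remaining verifications — the explicit construction of the pairing function, and the compatibility of recursive saturation with the modest language expansion — are routine.
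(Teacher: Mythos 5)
Your proposal is correct and follows essentially the same route as the paper: the paper's proof is a one-line appeal to Theorem~\ref{HPPA:thmasdfadsf-alt} together with a citation to Messmer's survey for the fact that separably closed fields of finite imperfection degree (with names for a $p$-basis) satisfy its hypotheses; your explicit pairing function via the $p$-basis monomials and the Frobenius is a nice concrete verification of hypothesis~(ii) that the paper leaves to the citation. One small point you omit: Theorem~\ref{HPPA:thmasdfadsf-alt} only delivers a $Th(k)$-computably uniformly definable $\partial$ from a $Th(k)$-computably saturated structure, so to obtain the stated \emph{computably} uniformly definable function from mere recursive saturation you also need $Th(k)$ (in the expanded language with the $p$-basis named) to be computable; this is the decidability of the theory of separably closed fields of fixed characteristic and imperfection degree, which the paper explicitly invokes via Messmer and which you should add alongside your citation of Delon.
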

\begin{proof}
This follows immediately from the fact that such fields satisfy the antecedents of the previous theorem and have a computable theory when names are added for the finite set $B$ from the previous definition (cf. Messmer~\cite{Messmer2006aa} Proposition~4.2 p.~140, p.~143, Remark~4.4 p.~141).
\end{proof}

\begin{rmk}\label{HPPA:seprmk}
If we knew that all the elements of the proof of the previous theorem were formalizable in ${\tt ACA}_{0}$, then we could infer from the proof of the above theorem and Proposition~\ref{HPPA:conpropa} that we have ${\tt \Delta^{1}_{1}-BL}_{0}<_{\mathrm{I}} {\tt ACA}_{0}$. It is clear from the proof  that this comes down to determining whether or not the uniform elimination of imaginaries for separably closed fields of finite imperfection degree is provable in ${\tt ACA}_{0}$.
\end{rmk}

\section{Further Questions}\label{HPPA:hppa:furtherquestions}

\begin{Q}
In Figure~\ref{HPPA:figure1}, we summarized what is known about the provability relation. Two questions which remain open are the following:  does ${\tt \Delta^{1}_{1}-BL}_{0}$ imply ${\tt \Sigma^{1}_{1}-LB}_{0}$ and does ${\tt \Pi^{1}_{1}-HP}_{0}$ imply ${\tt \Sigma^{1}_{1}-PH}_{0}$?
\end{Q}

\begin{Q}\label{HPPA:axq}
In Remark~\ref{HPPA:axrmk}, we noted that if Ax's Theorem~\ref{HPPA:prop:axtheorem} is provable in ${\tt ACA}_{0}$, then we would have another proof of ${\tt \Delta^{1}_{1}-HP}_{0}<_{\mathrm{I}} {\tt ACA}_{0}$ besides the proof from Corollary~\ref{HPPA:cor:thebomb}. Hence, is Ax's Theorem~\ref{HPPA:prop:axtheorem} provable in ${\tt ACA}_{0}$?
\end{Q}

\begin{Q}\label{HPPA:sepq}
In Remark~\ref{HPPA:seprmk}, we noted that if the uniform elimination of imaginaries for separably closed fields is provable in ${\tt ACA}_{0}$, then we would have ${\tt \Delta^{1}_{1}-BL}_{0}<_{\mathrm{I}} {\tt ACA}_{0}$. Hence, is the uniform elimination of imaginaries for separably closed fields provable in ${\tt ACA}_{0}$?
\end{Q}

\begin{Q} The results in Heck \cite{Heck1993}, Ganea \cite{Ganea2007}, and Visser \cite{Visser2009ab} imply that ${\tt ABL}_{0}$ is mutually interpretable with Robinson's~${\tt Q}$. Is ${\tt \Delta^{1}_{1}-BL}_{0}$ mutually interpretable  with Robinson's~$Q$?
\end{Q}

\begin{Q} What is the exact interpretability strength of ${\tt AHP_{0}}$ and ${\tt \Delta^{1}_{1}-HP}_{0}$? Are these theories interpretable in Robinson's~$Q$?
\end{Q}

\begin{Q}
In \S~2.2, and in particular around equation~(\ref{HPPA:eqn:counterthecounter}), we pointed out that there is no function symbol in our language for the mapping $(R, n)\mapsto \#(R_{n})$, where $R$ is a binary relation and $R_{n}=\{m: Rnm\}$. The inclusion of such a function symbol will not affect systems which contain the $\Delta^{1}_{1}$-comprehension schema, since the graph of this function is $\Delta^{1}_{1}$-definable (cf. equation~(\ref{HPPA:eqn:counterthecounter})). However, in Propositions~\ref{hppa:binaryiswierd1}-\ref{hppa:binaryiswierd2}, we pointed that ${\tt AHP}_{0}$ and ${\tt ABL}_{0}$ do not prove the existence of the graph of this function $(R, n)\mapsto \#(R_{n})$, in the sense that  ${\tt AHP}_{0}$ and ${\tt ABL}_{0}$ do not prove that the binary relation $\{(n,m): \#(R_{n})=m\}$ exists for every binary relation $R$. Does the addition of this function symbol affect the interpretability strength of ${\tt AHP}_{0}$ and ${\tt ABL}_{0}$? In particular, do the Heck-Visser-Ganea results about the mutual interpretability of ${\tt ABL}_{0}$ and Robinson's $Q$ mentioned in \S~\ref{HPPA:mainresultsasas} still hold if we add a function symbol for $(R, n)\mapsto \#(R_{n})$?
\end{Q}

\section{Acknowledgements}

\noindent This is material from my dissertation, which I wrote at the University of Notre~Dame under the supervision of Dr.~Peter~Cholak and Dr.~Michael~Detlefsen, and I would like to take this opportunity to thank them  for their support and guidance during my graduate work.

Further, this paper has been materially improved by my having had the opportunity to present it to several gracious audiences. In particular, I would like to thank Logan Axon, Joshua Cole, Stephen Flood, and Christopher Porter for listening to me speak on this material in Dr.~Cholak's seminar, and I would like to thank Antonio Montalb\'an and the other organizers and participants in the University of Chicago Logic Seminar for listening to me speak on this material in~March~2009. I would also like to thank {\O}ystein Linnebo, Richard Pettigrew, and Albert Visser, with whom I had some very helpful discussions of this material subsequent to my arrival in Paris in summer~2009.

During my graduate studies I was fortunate enough to be the beneficiary of generous financial support from many institutions and groups, whom I would like to thank here, including: the Mathematics Department at Notre Dame, the Philosophy Department at Notre Dame, the Ahtna Heritage Foundation, DAAD (Deutscher Akademischer Austausch Dienst), the George-August Universit\"at G\"ottingen, the National Science Foundation (under NSF Grants 02-45167, EMSW21-RTG-03-53748, EMSW21-RTG-0739007, and DMS-0800198), the Alexander von Humboldt Stiftung TransCoop Program, and the Ideals of Proof Project, which in turn was funded and supported by ANR (L'Agence nationale de la recherche), Universit\'e Paris~Diderot -- Paris~7, Universit\'e Nancy~2, Coll\`ege~de~France, and Notre~Dame.





\newpage

\bibliographystyle{plain}
\bibliography{walsh-sean.bib}







\end{document}